\newtheorem{thm}{Theorem}[section]
\newtheorem{cor}[thm]{Corollary}
\newtheorem{prop}[thm]{Proposition}
\newtheorem{lem}[thm]{Lemma}
\theoremstyle{definition}
\newcommand{\continuation}{(continued)}
\newenvironment{continueexample}[1]
 {\renewcommand{\continuation}{\ref{#1}}\excont[continued]}
 {\endexcont}
\newtheorem{example}[thm]{Example}
\newtheorem{definition}[thm]{Definition}
\newcommand{\hlight}[1]{{\color{blue} #1}}
\newcommand{\supth}{{}^{\rm{th}}}
\newcommand{\ncirc}[1]{++(#1,0) circle (5pt)}
\DeclareMathOperator{\lspan}{\operatorname{span}}
\DeclareMathOperator{\codim}{\operatorname{codim}}
\DeclareMathOperator{\End}{\operatorname{End}}
\DeclareMathOperator{\Ann}{\operatorname{Ann}}
\DeclareMathOperator{\Ker}{\operatorname{Ker}}
\newcommand{\cc}{c}
\newcommand{\hcc}{{\hat{c}}}
\newcommand{\cof}[1]{\cc(#1)}
\newcommand{\e}{\mathrm{e}}
\newcommand{\fA}{\mathfrak{A}}
\newcommand{\rE}{\mathrm{E}}
\newcommand{\bev}[2]{\operatorname{ev}(#1,#2)}
\newcommand{\hbev}[2]{\widehat{\operatorname{ev}(#1,#2)}}
\newcommand{\al}{\alpha}
\newcommand{\alq}{\alpha^{(q)}}
\newcommand{\bq}{b^{(q)}}
\newcommand{\bH}{\boldsymbol{H}}
\renewcommand{\boxdot}{{\ \clap{\raise0.25ex\hbox{$\bullet$}}\clap{$\square$}\ }}
\newcommand{\la}{\left\langle}
\newcommand{\ra}{\right\rangle}
\newcommand{\lp}{\left(}
\newcommand{\rp}{\right)}
\newcommand{\Gr}{{\operatorname{Gr}}}
\newcommand{\Grad}{{\operatorname{Gr}^{\mathrm{ad}}}}
\newcommand{\cA}{\mathcal{A}}
\newcommand{\cC}{\mathbb{D}}
\newcommand{\cG}{\mathcal{G}}
\newcommand{\cJ}{\mathcal{J}}
\newcommand{\cK}{\mathcal{K}}
\newcommand{\cL}{\mathcal{L}}
\newcommand{\cS}{\mathcal{S}}
\newcommand{\cP}{\mathcal{P}}
\newcommand{\tT}{\tilde{T}}
\newcommand{\hpi}{\hat{\pi}}
\newcommand{\hPhi}{\hat{\Phi}}
\newcommand{\tC}{\tilde{C}}
\newcommand{\piflat}{\pi^\flat}
\newcommand{\rEflat}{\rE^\flat}
\newcommand{\hrE}{\mathrm{\hat{E}}}
\newcommand{\pinat}{\pi^\natural}
\newcommand{\sigsharp}{\sigma^\sharp}
\newcommand{\cD}{\mathcal{D}}
\newcommand{\cR}{\mathcal{R}}
\newcommand{\Wr}{\operatorname{Wr}}
\newcommand{\N}{\mathbb{N}}
\newcommand{\Q}{\mathbb{Q}}
\newcommand{\Nz}{{\N_0}}
\newcommand{\Z}{\mathbb{Z}}
\newcommand{\C}{\mathbb{C}}
\newcommand{\R}{\mathbb{R}}
\newcommand{\bt}{\boldsymbol{t}}
\newcommand{\balpha}{\boldsymbol{\alpha}}
\newcommand{\balphaq}{\boldsymbol{\alpha^{(q)}}}
\newcommand{\sS}{\mathsf{S}}
\newcommand{\Ks}{{\accentset{*}{K}}}
\newcommand{\Ws}{{\accentset{*}{W}}}
\newcommand{\Kslam}{\Ks{}^{(\lambda)}}
\newcommand{\Klam}{K^{(\lambda)}}
\newcommand{\upslam}{\upsilon^{(\lambda)}}
\newcommand{\Knat}{K^\natural}
\newcommand{\KC}{K_C}
\newcommand{\KCp}{K_{C'}}
\newcommand{\Hlam}{H^{(\lambda)}}
\newcommand{\Clam}{C^{(\lambda)}}
\newcommand{\Cslam}{\accentset{*}{C}{}^{(\lambda)}}
\newcommand{\hClam}{\hat{C}^{(\lambda)}}
\newcommand{\chilam}{\chi^{(\lambda)}}
\newcommand{\Ilam}{\mathcal{I}^{(\lambda)}}
\newcommand{\Slam}{S^{(\lambda)}}
\newcommand{\wlam}{w^{(\lambda)}}
\newcommand{\Wlam}{W^{(\lambda)}}
\newcommand{\hWlam}{\hW^{(\lambda)}}
\newcommand{\taulam}{\tau^{(\lambda)}}
\newcommand{\ulam}{u^{(\lambda)}}
\newcommand{\Glam}{G^{(\lambda)}}
\newcommand{\Gamlam}{\Gamma^{(\lambda)}}
\newcommand{\cGlam}{\cG^{(\lambda)}}
\newcommand{\glam}{g^{(\lambda)}}
\newcommand{\gamlam}{\gamma^{(\lambda)}}
\newcommand{\Tlam}{T^{(\lambda)}}
\newcommand{\tTlam}{\tT^{(\lambda)}}
\newcommand{\Llam}{L^{(\lambda)}}
\newcommand{\Rlam}{R^{(\lambda)}}
\newcommand{\Uslam}{\accentset{*}{U}{}^{(\lambda)}}
\newcommand{\Wslam}{\Ws^{(\lambda)}}
\newcommand{\nulam}{\nu^{(\lambda)}}
\newcommand{\kaplam}{\kappa^{(\lambda)}}
\newcommand{\kaplamn}[1]{\kappa^{(\lambda)}_{#1}}
\newcommand{\cKlam}{\mathcal{K}^{(\lambda)}}
\newcommand{\Mlam}{{\mathcal{M}^{(\lambda)}}}
\newcommand{\cRlam}{\cR^{(\lambda)}}
\newcommand{\cDlam}{\cD^{(\lambda)}}
\newcommand{\cDslam}{\accentset{*}{\cD}^{(\lambda)}}
\newcommand{\cSlam}{{\cS^{(\lambda)}}}
\newcommand{\cRs}{\accentset{*}{\cR}}
\newcommand{\cJslam}{\accentset{*}{\cJ}{}^{(\lambda)}}
\newcommand{\cRslam}{\cRs{}^{(\lambda)}}
\newcommand{\cSslam}{\accentset{*}{\cS}^{(\lambda)}}
\newcommand{\fSlam}{\fS^{(\lambda)}}
\newcommand{\fAlam}{\fA^{(\lambda)}}
\newcommand{\Thetlam}{\Theta^{(\lambda)}}
\newcommand{\siglam}{\sigma^{(\lambda)}}
\newcommand{\cAlam}{\cA^{(\lambda)}}
\newcommand{\cAslam}{\accentset{*}{\cA}^{(\lambda)}}
\newcommand{\Flam}{\mathcal{F}^{(\lambda)}}
\newcommand{\Jlam}{\cJ^{(\lambda)}}
\newcommand{\Philam}{\Phi^{(\lambda)}}
\newcommand{\Psilam}{\Psi^{(\lambda)}}
\newcommand{\E}{\mathrm{E}}
\newcommand{\fS}{\mathfrak{S}}
\newcommand{\hc}{\hat{c}}
\newcommand{\hC}{\hat{C}}
\newcommand{\hW}{\hat{W}}
\title{The Adelic Grassmannian and Exceptional Hermite Polynomials}
\author{Alex Kasman} \address{ Dept. of Mathematics, College of
Charleston, Charleston SC 29424, USA}
\author{Robert Milson} \address{Dept. of Mathematics and Statistics,
Dalhousie University, Halifax NS B3H 3J5, Canada.}
\email{kasmana@cofc.edu, rmilson@dal.ca}
\begin{document}
\begin{abstract}
  It is shown that when dependence on the second flow of the KP
  hierarchy is added, the resulting semi-stationary wave
  function of certain points in George Wilson's adelic Grassmannian
  are generating functions of the exceptional Hermite orthogonal
  polynomials.  This surprising correspondence between different
  mathematical objects that were not previously known to be so closely
  related is interesting in its own right, but also proves useful in
  two ways: it leads to new algorithms for effectively computing the
  associated differential and difference operators and it also answers
  some open questions about them.
\end{abstract}

\maketitle

\section{Introduction}

Suppose that operators $L$ and $\Lambda$ share an eigenfunction
$\psi(x,z)$ so that
\begin{equation}
L\psi=p(z)\psi\qquad\hbox{and}\qquad \Lambda\psi=\pi(x)\psi.\label{eq:bisp}
\end{equation}
where $L$ is an operator
acting on functions of $x$ that is independent of $z$, $\Lambda$ is an
operator on functions of $z$ that is independent of $x$, and the
eigenvalues $p(z)$ and $\pi(x)$ are both non-constant functions.  In
this case we say that the operators $L$ and $\Lambda$ are
\textit{bispectral} and that $(L,\Lambda,\psi)$ is a \textit{bispectral triple.}

The term ``bispectrality'' was used to describe this situation in
\cite{DG} where the authors identified all bispectral
 Schr\"odinger operators of the form $L=\partial_x^2+u(x)$ sharing an
 eigenfunction with an ordinary differential operator $\Lambda$ in $z$.
Bispectrality has since been considered in much greater
generality, allowing $L$ and $\Lambda$ to be any sorts of operators
acting on functions of variables that can be either scalar or vector
valued and either discrete or continuous.

A fundamental breakthrough in the study of bispectral operators was
George Wilson's 1993 paper \cite{wilson} in which he made use of the
Sato Grassmannian $\Gr$ which was developed for producing solutions to the
KP hierarchy \cite{Sato,SW}.  That construction normally associates to
a point $W\in\Gr$ a
pseudo-differential operator $\cL_W$ depending on time variables
$t_i$ for $i=1,2,3\ldots$ and a ``wave\ function''
$\psi_W(t_1,t_2,\ldots,z)=(1+O(z^{-1}))\textup{exp}\sum_{i=1}^{\infty}t_iz^i$ satisfying the equations
\begin{equation}
\cL_W\psi_W=z\psi_W\qquad\hbox{and}\qquad
{\partial_{t_i}}\psi_W=(\cL_W^i)_+\psi_W\label{eqn:KPhierarchy}
\end{equation}
which together are equivalent to a hierarchy of nonlinear evolution
equations for the coefficients of $\cL_W$. The connection to
bispectrality is most apparent when one ``turns off'' all but one of
the time variables. By setting $t_1=x$ and $t_i=0$ for $i>1$ the wave\
function becomes a function of only the variables $x$ and $z$ as in
the definition of bispectrality.  Wilson showed that if $W$ is in the
``adelic Grassmannian'' $\Grad\subset\Gr$ then there is a non-trivial
ring of \textit{ordinary} differential operators in $x$ having
$\psi_W$ as an eigenfunction \textit{and} there exists another point
$\beta(W)\in\Grad$ such that $\psi_W(x,z)=\psi_{\beta(W)}(z,x)$.
Wilson generalized the notion of a bispectral triple to allow for $L$
and $\Lambda$ to be elements of commutative algebras, rather than
fixed operators, and showed that (up to trivial renormalizations)
$\Grad$ is the moduli space of bispectral ordinary differential
operators that commute with operators of relatively prime order
\cite{wilson}.

Following Wilson's seminal paper, it is now recognized that it is more
natural to define a bispectral triple as two commutative algebras of
operators and a common eigenfunction.  Furthermore, nearly all papers
on this subject now produce bispectral algebras using versions of
Wilson's construction that have been suitably modified to different
settings.  For example, to consider the case of differential operators
that do not commute with operators of relatively prime order the
scalar eigenfunction is replaced with a vector eigenfunction and to
study bispectral difference operators the (pseudo)-differential
operators are replaced with infinite matrices (e.g. see \cite{KRAiry}
and \cite{GY}).

Classical orthogonal polynomials are families of orthogonal
polynomials that are the eigenfunction of a second-order
Sturm-Liouville eigenvalue problem.  As such, they may be considered
as the eigenfunctions of a differential-difference bispectral triple,
where the 3-term recurrence plays the role of the differential
equation in spectral parameter.  This idea can be naturally connected
to the adelic grassmannian \cite{HI}.  See also \cite{GH,iliev} and the
references therein for an application of such ideas to Krall
polynomials, and the Askey-Wilson scheme.

Exceptional orthogonal polynomials\cite{GKM,OS1} generalize classical
orthogonal polynomials, because they are the eigenfunctions of a
second-order Sturm-Liouville problem, but fall outside the
Askey-Wilson scheme by allowing for polynomial sequences that omit a
finite number of ``exceptional'' degrees.  This relaxed assumption
implies that exceptional polynomials cannot satisfy a 3-term
recurrence relations.  Indeed, unlike the differential-differential
bispectral problem investigated by Duistermaat and Gr\"unbaum
\cite{DG}, the dual eigenvalue problem for exceptional Hermite
polynomials consists of an algebra of commuting difference operators
\cite{GKKM}.  A similar situation seems to hold for the case of
exceptional Laguerre and Jacobi polynomials \cite{bondunstev,duran}
and for discrete exceptional polynomials \cite{duran2,OS2}.  This
observation suggests that the ensemble consisting of (i) a family of
exceptional orthogonal polynomials, (ii) the corresponding exceptional
second-order operator, and (iii) higher order recurrences should also
be regarded as an instance of a differential-difference bispectral
triple.

In the case of exceptional Hermite polynomials, the second-order
exceptional operator in question is known to have trivial monodromy
\cite{oblomkov}.  It is also known that every exceptional operator is
related by a Darboux transformation to a classical operator
\cite{GFGM}.  All of this is a further indication that bispectrality
should be a key concept in the theory of exceptional polynomials.

The present paper grew out of an investigation of questions
surrounding the bispectrality of the exceptional Hermite orthogonal
polynomials \cite{duran,GGM,GKKM}.  Since the differential operators
that have these as eigenfunctions all have even degree and since the
operator in the other variable is a difference operator, one might
expect that Wilson's construction should be suitably modified to
address these questions.  However, the most surprising result to be
presented below is the fact that essentially no modification is
needed; the exceptional Hermite orthogonal polynomials were already
present (but unnoticed) in Wilson's original construction.  This
observation turns out to be quite useful, greatly simplifying the
construction of the exceptional Hermite orthogonal polynomials and the
associated operators, and providing answers to some open questions
surrounding them.

 The organization of the paper is as follows.
\begin{itemize}
\item Section \ref{sect:SF} collects some background material on
  partitions, Maya diagrams and  Schur functions.
\item Section \ref{sect:grad} is a quick review of $\Grad$, Wilson's
  adelic grassmannian and of the bispectral involution.
\item Section \ref{sect:XOP} reviews classical and exceptional Hermite
  polynomials and shows how these objects are naturally associated
  with certain points in $\Grad$.  In particular,
  Theorem~\ref{thm:generatingxops} shows that the wave function
  corresponding to self-dual points $\Wlam \in \Grad$ labelled by a
  partition $\lambda$ serve as generating functions for the family of
  exceptional Hermite orthogonal polynomials (cf.~\cite{GGM,GKKM})
  associated with the same partition. The only modification needed to
  Wilson's original construction is that rather than setting all time
  variables $t_i$ with $i>1$ equal to zero, one must instead only set
  time variables with index $i>2$ equal to zero.  The second time
  variable $y=t_2$ plays the role of a scaling parameter for the
  exceptional orthogonal polynomials.    
\item Theorem~\ref{thm:Hlamlincomb} gives a useful formula for the
  exceptional Hermite polynomials as linear combinations of the
  classical Hermite polynomials with coefficients derived from wave
  functions in $\Grad$.  This formulation has a significant
  computational and conceptual advantage over the usual formulation in
  terms of Wronskians.
\item Section~\ref{sect:1ptcond}: The exceptional Hermite
  orthogonal polynomials are known to be annihilated by point
  supported distributions, however it was not previously known how to
  determine which distributions annihilated a given instance of the
  exceptional polynomials.  In the context of $\Grad$, this question
  is answered easily using Wilson's bispectral involution
  $W\mapsto \beta(W)$.
\item Section~\ref{sect:bispect} introduces the bispectral triple
  associated with a given family of exceptional Hermite polynomials.
  In this context, it is natural to introduce and to study
  non-commutative stabilizer algebras corresponding to the points in
  $\Grad$.  The bispectral involution defines an anti-isomorphism
  between two such algebras.  The eigenvalue relations engendered by
  the bispectral triple can then be conveniently constructed as the
  restriction of this anti-isomorphism to the commutative subalgebras
  corresponding to the eigenvalues.
\item Exceptional Hermite polynomials are known to satisfy lowering
  relations \cite{GGMM} and higher-order recurrence relations
  \cite{GKKM}.  Section~\ref{sect:lowop}: the algebra of lowering
  operators is naturally isomorphic to the stabilizer algebra of
  $\Wlam$.  The corresponding algebraic structure has can be easily
  understood in terms of certain combinatorial properties of the
  partition $\lambda$.  Section~\ref{sect:rr}: The ring of the
  corresponding higher-order Jacobi operators can then be seen to be
  the stabilizer algebra of $\beta(\hWlam)$, the bispectral dual of
  the curve generated by $\Wlam$ under the second KP flow.  The
  $\Grad$-based construction allows for a significant computational
  advantage in determining the form of these exceptional Jacobi
  operators.
\item Section~\ref{sect:algex} collects some examples and details of
  calculations related the intertwining relations, lowering operators
  and exceptional recurrence relations.
\end{itemize}




\subsection{Conventions and Notations}\label{sec:conventions}

Let $\cP$ denote the ring of complex valued univariate polynomials
regarded as mappings without reference to any particular variables.
If $U\subset \cP$ is a polynomial subspace and $x$ is an
indeterminate, we will employ the notation $U(x)$ to denote a subspace
of the corresponding $\C[x] \simeq \cP$.
Most of the functions encountered below will depend on several
variables, and so we will use $\partial_x$ to indicate denote the
elementary partial-derivative operator
$\displaystyle\frac{\partial}{\partial x}$.
The symbol $\Wr_x$ denotes the usual Wronskian determinant with
respect to the indeterminate $x$. The symbol $\Wr$, without any
subscript, denotes the Wronskian taken with respect to the first
argument.  The symbol $\sS$ denotes the unit shift operator.  If
$f_n,\; n\in \Nz$ is a sequence indexed by an indeterminate $n$, we
will write $\sS_n f_n := f_{n+1}$.

For later convenience, we also adopt the following unusual convention.
When a differential operator is constructed by the substitution of an
elementary partial-derivative operator into a multi-variable function,
then it is understood that the derivatives all appear to the right of
all other variables --- regardless of whether they commute as
operators.  In contrast, when substituting a differential operator
into a univariate polynomial, it is understood that powers of the
operator are computed through composition.  For example, let
$\pi(x,y,z)=x^2y^{-1}z-xy^3z^4$.  Then, according to this convention
we have
\begin{align*}
  \pi(\partial_z,y,z)&=y^{-1}z\partial_z^2-y^3z^4\partial_z\\
  \pi(x,y,\partial_x)&=x^2y^{-1}\partial_x-xy^3\partial_x^4.
\end{align*}
In particular, applying either of those differential operators to the
function $e^{xz}$ results in $\pi(x,y,z)e^{xz}$.  On the other hand,
if $\gamma(z)=z^3$ then we define
\[\gamma(z\partial_z)=(z\partial_z)\circ (z\partial_z) \circ (z\partial_z)
  = z^3\partial_z^3+3z^2\partial_z^2+z\partial_z.\]

The following notations are all rigorously introduced as needed later
in the text, but are briefly summarized here for the reader's
convenience.  The symbol $\N= \{ 1,2,\ldots \}$ is the set of natural
numbers, while $\Nz = \{0,1,2,\ldots\}$ is the set of non-negative
integers.  The usual univariate classical Hermite polynomials will be
denoted by $h_n(x),\; n\in \Nz$ while $H_n(x,y),\; n\in \Nz$ are the
same polynomials converted to a bivariate form through the inclusion
of a scaling parameter.  Similarly, $h^{(\lambda)}_n(x)$ will denote
the exceptional Hermite polynomials in their univariate form,
$\Hlam_n(x,y),$ where $n$ is the degree, are the bivariate exceptional
Hermites, and finally $\Rlam_m(x,y)$, where $m$ is the difference of
the degrees of the numerator and denominator, are rational functions
made by dividing the bivariate exceptional Hermite polynomials by a
denominator polynomial $\taulam(x,y)$.  The span of the former will be
denoted by $\Uslam\subset \cP$ while the span of the latter will be
denoted by $\Wslam= \taulam{}^{-1}\Uslam$.

The symbol $\lambda$ will denote a partition and $\Mlam$ the
corresponding Maya diagram.  For each $\lambda$, the there are certain
canonically defined sets $\Ilam$ and $\Jlam\subset \Z$ that serve as
the index sets of $\Hlam_n$ and $\Rlam_m$, respectively.  The symbols
$\cKlam, \Glam_q,\; q\in \Z$ denote finite sets of integers that
encode various combinatorial properties of $\lambda$.  The symbols
$\kaplam(m), \gamlam_q(m),\; q\in \Z$ denote the corresponding monic
polynomials whose roots are precisely these sets.

The symbol $\cC$ will denote the vector space of distributions
generated by 1-point functionals\footnote{Wilson in \cite{wilson}
  refers to these as 1-point conditions; our $\cC$ is Wilson's
  $\mathcal{C}$.}.  The symbol $\cC_\zeta$ refers to the subspace of
functionals with support at a particular $\zeta\in \C$.  The symbol
$\Wlam\in \Grad$ refers to a point in the adelic Grassmannian that is
canonically associated to a partition $\lambda$.  These are the points
whose $\tau$-function is a Schur polynomial; they are discussed by
Wilson in Section 10, Example 2 of \cite{wilson}.  The bispectral
involution will be denoted by $\beta\colon \Grad\to \Grad$.  As in
Wilson, the application of a functional to a function will be denoted
using angle brackets, as in $\la c, f \ra$.  We will also use angle
brackets to denote the inner product relative to which the Hermite
polynomials are orthogonal.  To avoid any possible confusion, we will
add a subscripted $H$ and write $\la \cdot, \cdot\ra_H$ in such cases.

\section{Partitions and Schur Functions }\label{sect:SF}
\subsection{Partitions and Maya Diagrams}
A \textit{partition} $\lambda$ is a decreasing, non-negative integer sequence
$\lambda_1 \geq \lambda_2 \geq \cdots $ such that
\[ |\lambda| := \sum_{i=1}^\infty \lambda_i< \infty.\] Implicit in
this definition is the assumption that $\lambda_i=0$ for $i$
sufficiently large.  The length of $\lambda$, which we will denote by
${\ell(\lambda)}$, is the number of non-zero elements of the
sequence.

A closely related concept is that of a \textit{Maya diagram}.  A Maya
diagram is a subset of $\Z$ that contains a finite number of positive
integers and excludes a finite number of negative integers.  For a
given $\lambda$, define the strictly decreasing sequence
\begin{equation}
  \label{eq:midef} m_i(\lambda) = \lambda_i - i,\quad i=1,2,\ldots.
\end{equation}
The set
\begin{equation}
  \label{eq:Mlam}
  \Mlam = \{ m_i(\lambda) \colon i = 1,2,\ldots \}
\end{equation}
is a Maya diagram because $m_{i+1}(\lambda) = m_i(\lambda)-1$ for
$i\geq {\ell(\lambda)}+1$.  Conversely, if $M\subset \Z$ is a Maya
diagram, then
\[M=\Mlam + l = \{ m_i(\lambda)+l \colon i=1,2,\ldots \}  \]
for some partition $\lambda$ and $l\in \Z$.

To a partition $\lambda$ and an integer $l\geq\ell(\lambda)$, define
the \textit{index set} of length $l$ associated to $\lambda$ to be
\begin{align}
  \label{eq:cKlamn}
  \cKlam_{l} &= \{ m_1(\lambda) + l,\ldots, m_l(\lambda) + l \}.
\end{align}
Since $l\geq {\ell(\lambda)}$, it follows that $m_l(\lambda) +l \geq 0$ and
$m_{l+1}(\lambda) +l < 0$. Hence $\cKlam_{l}$ consists precisely of
the non-negative elements of $\Mlam + l$. 
Observe that when $l=\ell(\lambda)$ then
$ m_{l}(\lambda)+{\ell(\lambda)}>0$ and
$m_j(\lambda)+\ell(\lambda)<0,\; j> \ell(\lambda)$, by definition.  In
this case, it is convenient to drop the subscript and write
\begin{equation}
  \label{eq:cKlamdef}
  \cKlam:= \cKlam_{{\ell(\lambda)}} = \{ m_1(\lambda) + \ell(\lambda),
  \ldots,  m_\ell(\lambda) + \ell(\lambda) \}.
\end{equation}
Thus, $\cKlam$ is the smallest index set, and also the only index set
consisting of strictly positive elements.  The correspondence
$\lambda \mapsto \cKlam$ is a bijection between the set of partitions
and the set of finite subsets of $\N$.



For a partition $\lambda$, let
\begin{equation}
  \label{eq:Jlamdef}
  \Jlam := \Z \setminus \Mlam
\end{equation}
denote the complement of the corresponding Maya diagram\footnote{Note
  that $-\Jlam$ is itself a Maya diagram.}.  An integer can be
``inserted'' into the partition $\lambda$ to produce a new partition
as follows.  For $m\in \Jlam$ let $m\triangleright\lambda$ denote the
partition
\begin{equation}
  \label{eq:lmlist}
  \lambda_1 -1, \ldots , \lambda_j -1, m+j, \lambda_{j+1},
  \lambda_{j+2},\ldots,
\end{equation}
where $j$ is the smallest natural number such that
$m+j \geq \lambda_{j+1}$. The sequence \eqref{eq:lmlist} is a
partition because $m\in\Jlam$
implies that either $j=0$ and $m>m_1(\lambda)$, or that
\[ \lambda_j-j = m_j(\lambda) > m> m_{j+1}(\lambda) =
  \lambda_{j+1}-j-1.\] Another way to understand the transformation
$\lambda\mapsto m\triangleright\lambda$ is to observe that it adds one
element to the corresponding index sets. To wit,
\begin{equation}
  \label{eq:Klambdam}
  \cK^{(m\triangleright\lambda)}_{l+1} = \cKlam_{l} \cup \{ m+l\},\quad
  l\geq \ell(\lambda).  
\end{equation}

\subsection{Schur Functions}
For every $k\in \N$, define the ordinary Bell polynomials
$B_k(t_1,\ldots, t_k) \in \C[t_1,\ldots, t_k]$ as the
coefficients of the power generating function
\begin{equation}
  \label{eq:Bgf} \psi_0(\bt;z):=\exp\left(\sum_{k=1}^\infty t_k z^k\right) =
  \sum_{k=0}^\infty B_k(t_1,\ldots, t_k) z^k,\quad 
  \bt=(t_1,t_2,\ldots).
\end{equation} Since the above generating function  can also
be written as
\[ \psi_0(\bt;z) = \sum_{j=0}^\infty
  \frac{1}{\mu!}\left(\sum_{k=0}^\infty t_k z^k\right)^\mu ,\] the
multinomial formula implies that
\begin{equation}
  \label{eq:Bkmu}
  \begin{aligned} B_k(t_1,\ldots, t_k) &= \sum_{ \Vert \mu\Vert=k} \frac{t^{\mu_1}_1}{\mu_1!}
\frac{t^{\mu_2}_2}{\mu_2!}  \cdots \frac{t^{\mu_{k}}_{k}}{\mu_{k}!},\qquad
\Vert \mu \Vert = \mu_1 + 2\mu_2 + \cdots + {k} \mu_{k}\\ &=
\frac{t_1^k}{k!} + \frac{t_1^{k-2}t_2}{(k-2)!} + \cdots + t_{k-1} t_1
+ t_k.
  \end{aligned}
\end{equation}


For any partition $\lambda$, define the Schur function
$\Slam(t_1,\ldots, t_N)\in \Q[t_1,\ldots, t_N],\; N =|\lambda|$ to
be the multivariate polynomial
 \begin{equation}
   \label{eq:Slamdef} \Slam =
   \det(B_{\lambda_i-i+j})_{i,j=1}^l
 \end{equation}
 where $l$ is any integer satisfying $l\geq\ell(\lambda)$ and  $B_k=0$ when $k<0$.
 Moreover, since
 \[ \partial_{t_i} B_j(t_1,\ldots, t_j) = B_{j-i}(t_1,\ldots,
   t_{j-i}),\quad j\geq i,\] we may re-express \eqref{eq:Slamdef} in
 terms of a Wronskian determinant:
\begin{equation}
  \label{eq:Slamwronsk}
  \Slam = \Wr[B_{k_l},\ldots, B_{k_1}],
\end{equation}
where $k_1>\cdots > k_l$ are the elements of the index set $\cKlam_{l}$
defined in \eqref{eq:cKlamn}.  
It is important to note that \eqref{eq:Slamdef} and
\eqref{eq:Slamwronsk} yield the same polynomial $S_{\lambda}$
regardless of the value of $l\geq \ell(\lambda)$ \footnote{This is
  because increasing the value of $n$ by one has the effect of
  increasing the size of the matrix, adding a new first row and
  column, but since there is necessarily a $1$ in the top-left corner
  and zeroes below it, the determinant is unchanged.}.

The Schur functions, $\Slam$, are closely related to the
representation theory of the symmetric group on $n$ elements.  Such
irreducible representations are labelled by partitions $\lambda$ such
that $|\lambda|=N$.  The conjugacy classes of the symmetric group
correspond to cycle types $c_\mu=(1^{\mu_1},2^{\mu_2},\ldots)$ where
\[\Vert \mu \Vert := \sum_j j\,\mu_j = N. \]
It is known \cite[Section I.7]{macdonald} that
\begin{equation}
  \label{eq:Slambdachi}  
   \Slam(t_1,\ldots, t_N) = \sum_{ \Vert \mu\Vert=N}
  \chilam(c_\mu)\frac{t^{\mu_1}_1}{\mu_1!}
  \frac{t^{\mu_2}_2}{\mu_2!}  \cdots
  \frac{t^{\mu_{\ell(\lambda)}}_{\ell(\lambda)}}{\mu_{\ell(\lambda)}!} 
\end{equation}
where $\chilam$ denotes the character of the representation
labelled by $\lambda$.  By the hook-length formula, the coefficient of
$t_1^N$ in $\Slam$, is equal to
\begin{equation}
  \label{eq:hooklength}
  \frac{\chilam(1^N)}{N!} = \frac{d_\lambda}{N!}=
  \frac{\prod_{i<j\leq n} (k_i-k_j)}{\prod_i
    k_i! },
\end{equation}
where $d_\lambda$ is the dimension of the irreducible representation
corresponding to $\lambda$, and where $k_1>\cdots > k_l$ are the
elements of the index set $\cKlam_{l}$ defined in \eqref{eq:cKlamn}.

\section{Sato Theory and the Adelic Grassmannian}
\label{sect:grad}
\subsection{The Adelic Grassmannian}

For $k\in \N$ and $\zeta \in \C$ we let $\bev{k}{\zeta}:\cP\to \cP$
denote the evaluation functional composed with the $k\supth$-order
derivative:
\begin{equation}
  \label{eq:ekzeta}
  \la \bev{k}{\zeta}, f\ra = f^{(k)}(\zeta),\quad f\in \cP.
\end{equation}
Let
\[ \cC_\zeta = \lspan\{ \bev{k}{\zeta} : k\in \Nz \} \] denote the
vector space of all 1-point functionals with support
at a fixed $\zeta\in \C$ and let
\[\cC = \bigoplus_{\zeta\in \C} \cC_\zeta \]
be the vector space spanned by 1-point functionals with arbitrary
support.  As the need arises, we proceed with the understanding that
functionals in $\cC$ are also allowed to act on rational and analytic
functions (with the appropriate domain safeguards).  In situations
where a functional acts on a multi-variable function, we adopt the
convention that $c(z)$ indicates that $c\in \cC$ acts on a function of
the variable $z$.


We will say that a subspace $C\subset \cC$ is \emph{homogeneous} if it
has a basis of one-point functionals.  Thus, $C\subset \cC$ is
homogeneous if and only if
\[ C = \bigoplus_{\zeta} (C\cap \cC_\zeta) .\]

Let $C\subset \cC$ be a finite-dimensional subspace of 
differential functionals, and let
\begin{equation}
  \label{eq:UCdef}
  \Ker C = \{ f\in \cP : \la \cc,f\ra = 0 \text{ for all
  }  \cc\in C \} 
\end{equation}
be the joint kernel of the elements of $C$.  It is easy to show that
$\dim C=\codim \Ker C$, where the latter denotes the codimension of
$\Ker C\subset \cP$.   Dually, if $U=\Ker C$ for some finite-dimensional
$C\subset \cC$, then
\[C=\Ann U= \{ \cc\in \cC \colon \la \cc, f \ra = 0 \text{ for all } f
  \in U \}.\]

Let $C \subset \cC$ be a homogeneous, finite-dimensional subspace of
functionals.   Define
 \begin{equation}
   \label{eq:CWdef}
  q_C(z):= \prod_{i=1}^n (z-\zeta_i),\qquad
  W_C:= q_C^{-1} \Ker C
\end{equation}
where $\cc_i\in \cC_{\zeta_i},\; i=1,\ldots, n,\; \zeta_i\in\C$ is a
choice of basis for $C$.  It is evident that $q_C$ and $W_C$ are
independent of the choice of basis.

\begin{definition}
  We define $\Grad$, the adelic Grassmannian \cite{wilson}, to be the
  set of all subspaces of the form $W_C$ where $C\subset \cC$ is
  homogeneous.  We will say that $C\subset \cC$ is \emph{reduced} if
  for all $\zeta\in \C$ we have $\bev0\zeta\notin C$.  Equivalently,
  $C$ is reduced if and only if the elements of $\Ker C$ do not have a
  shared root.
\end{definition}

\begin{prop}
  \label{prop:Creduced}
  For every $W\in \Grad$ there exists a unique reduced homogeneous
  $\tC\subset \cC$ such that $W= W_{\tC}$.
\end{prop}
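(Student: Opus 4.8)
The plan is to prove existence by induction on $\dim C$, peeling off one evaluation functional at a time, and to prove uniqueness by characterizing $q_{\tC}$ and $\Ker\tC$ intrinsically in terms of $W$.

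For existence, write $W = W_C$ with $C$ homogeneous and induct on $n = \dim C$; the case $n = 0$ is trivial since $C = \{0\}$ is reduced, and if $C$ is already reduced there is nothing to prove. Otherwise $\bev0\eta\in C$ for some $\eta\in\C$, so every element of $\Ker C$ vanishes at $\eta$ and $U' := (z-\eta)^{-1}\Ker C$ is again a subspace of $\cP$; a codimension count against $(z-\eta)\cP$ gives $\codim U' = n-1$. I would take $C' := \Ann U'$ and show that $\dim C' = n-1$, that $C'$ is homogeneous, that $q_{C'} = q_C/(z-\eta)$, and that $W_{C'} = W_C$; applying the inductive hypothesis to $W = W_{C'}$ then yields the reduced $\tC$. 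The tool for analyzing $C'$ is the linear map $c\mapsto c^\sharp$ on $C$, defined componentwise on each homogeneous part by $\langle (c_\zeta)^\sharp, g\rangle := \langle c_\zeta, (z-\eta)g\rangle$, which by the Leibniz rule sends $\cC_\zeta$ into $\cC_\zeta$. One checks that $\langle c^\sharp, g\rangle = \langle c, (z-\eta)g\rangle$, so $c^\sharp$ annihilates $U'$; that $c_\zeta\mapsto (c_\zeta)^\sharp$ is an isomorphism of $\cC_\zeta$ for every $\zeta\neq\eta$; and that on $\cC_\eta$ its kernel is exactly $\C\,\bev0\eta$. Thus $\{c^\sharp : c\in C\}$ is a homogeneous subspace of $\Ann U'$ of dimension $n-1$; since $\Ann U'$ embeds into $(\cP/U')^*$ and hence has dimension at most $\codim U' = n-1$, it coincides with $C' = \Ann U'$. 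The same bookkeeping shows $\dim(C'\cap\cC_\eta) = \dim(C\cap\cC_\eta) - 1$ and $\dim(C'\cap\cC_\zeta) = \dim(C\cap\cC_\zeta)$ for $\zeta\neq\eta$, so $q_{C'} = q_C/(z-\eta)$; and $\Ker C' = U'$ follows from $\dim C' = \codim U'$ together with $U'\subseteq\Ker(\Ann U')$ and the identity $\dim D = \codim\Ker D$. Hence $W_{C'} = (q_C/(z-\eta))^{-1}U' = q_C^{-1}\Ker C = W_C$.

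For uniqueness, I would show that if $\tC$ is reduced and $W = W_{\tC}$, then $q_{\tC}$ is the monic generator of the ideal $I_W := \{p\in\cP : pW\subseteq\cP\}$ of the principal ideal domain $\cP$ (this ideal is nonzero, since $q_{\tC}\in I_W$ because $q_{\tC}W = \Ker\tC\subseteq\cP$). Indeed, if $q_0$ is the generator and $q_{\tC} = q_0 p$ with $p$ nonconstant, then $q_0 W = p^{-1}\Ker\tC\subseteq\cP$ forces $\Ker\tC\subseteq(z-\eta)\cP$ for each root $\eta$ of $p$, whence $\bev0\eta\in\Ann\Ker\tC = \tC$, contradicting that $\tC$ is reduced. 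So $q_{\tC} = q_0$ is determined by $W$, and then $\Ker\tC = q_{\tC}W$ and $\tC = \Ann\Ker\tC$ are determined by $W$ as well; therefore two reduced homogeneous presentations of $W$ must coincide.

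The step I expect to be the main obstacle is the bookkeeping in the existence argument: verifying that deleting a single functional $\bev0\eta$ simultaneously drops $\dim C$ by one, preserves homogeneity, and removes exactly one factor $(z-\eta)$ from $q_C$. This all reduces to the precise description of the kernel of the "multiply the test function by $(z-\eta)$" adjoint on each $\cC_\zeta$ --- it is the entire zeroth-order part at $\eta$ and is trivial at every other point --- combined with the soft bound $\dim\Ann U'\le\codim U'$, which is what lets one identify the explicitly built space $\{c^\sharp : c\in C\}$ with the full annihilator $\Ann U'$.
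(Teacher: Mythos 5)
Your proposal is correct, and in fact the paper offers no proof of this proposition at all --- it is stated as a bare fact (with the justification evidently regarded as routine, resting on the duality facts $\dim C=\codim\Ker C$ and $C=\Ann\Ker C$ recorded just before the definition of $\Grad$) --- so there is no argument of the paper's to compare yours against. Your two halves both check out against those stated facts: in the existence step, the adjoint $c\mapsto c^\sharp$ of multiplication by $(z-\eta)$ does preserve each $\cC_\zeta$ by the Leibniz rule, is injective on $\cC_\zeta$ for $\zeta\neq\eta$ and has kernel exactly $\C\,\bev0\eta$ on $\cC_\eta$, so $C^\sharp$ is a homogeneous $(n-1)$-dimensional subspace of $\Ann U'$, and the soft bound $\dim\Ann U'\leq\codim U'=n-1$ forces $C^\sharp=\Ann U'$, with the multiplicity bookkeeping giving $q_{C'}=q_C/(z-\eta)$ and hence $W_{C'}=W_C$; the induction then terminates at a reduced presentation. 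In the uniqueness step, identifying $q_{\tC}$ with the monic generator of the ideal $\{p\in\cP: pW\subseteq\cP\}$ is exactly the right intrinsic characterization: a nonconstant cofactor $p$ would force a common root $\eta$ of $\Ker\tC$, hence $\bev0\eta\in\Ann\Ker\tC=\tC$, contradicting reducedness, and then $\Ker\tC=q_{\tC}W$ and $\tC=\Ann\Ker\tC$ are determined by $W$. The only cosmetic point is that you do not need $c\mapsto c^\sharp$ to be an isomorphism of $\cC_\zeta$ for $\zeta\neq\eta$ (injectivity is all the dimension count uses), and it is worth saying explicitly that $C^\sharp$ inherits homogeneity because the sharp map preserves the grading $\cC=\bigoplus_\zeta\cC_\zeta$; with those remarks your write-up would serve as a complete proof of the proposition the paper leaves unproved.
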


\subsection{KP  Wave\ functions and Wilson's Bispectral Algebras}
\label{sect:bfunc}
Sato theory associates a wave function and a rational solution of the
KP hierarchy to each point in $\Grad$ as follows \cite{Sato} (see also
\cite{wilson,SW}).

Let $W_C\in \Grad,\; C\in \cC$ as per \eqref{eq:CWdef}, and let
$\cc_i\in \cC_{\zeta_i},\; i=1,\ldots, l$ be a basis of $C$.
Let
\[ \KC = \partial_{t_1}^l + \sum_{i=1}^l a_i(\bt)
  \partial_{t_1}^{l-i} \] denote the monic differential operator whose
action on an arbitrary function $f$ is
\[
  \KC f(\bt) = \frac{\Wr[\phi_1(\bt),\ldots,\phi_l(\bt),f(\bt)] }{
    \Wr[\phi_1(\bt),\ldots,\phi_l(\bt)]}.
\]
where
\[ \phi_i(\bt) :=\langle \cc_{i},\psi_0(\bt,z) \rangle \] with
$\psi_0(\bt,z)$ the generating function of the Bell polynomials
previously introduced in \eqref{eq:Bgf}.

The
\textit{dynamical wave function} associated to $W=W_C$ is defined to be
\begin{equation}
\label{eq:PsiWdef}
\psi_W(\bt;z)=
\frac{1}{q_C(z)} \KC \psi_0(\bt,z)=
\frac{\Wr_{t_1}[\phi_1(\bt),\ldots,\phi_l(\bt),\psi_0(\bt,z)]}{q_C(z)\tau_C(\bt)}
\end{equation}
where
\begin{equation}
  \label{eq:tauC}
  \tau_C(\bt) =   \Wr_{t_1}[\phi_1(\bt),\ldots,\phi_l(\bt)].
\end{equation}
The dynamical wave function can also derived from the $\tau$-function
using the so-called Miwa shift \cite[Equation (5.16)]{SW}:
\begin{align}
  \label{eq:miwashift}
  \psi_W(\bt,z) &= \frac{\phi_W(\bt,z)}{\tau_C(\bt)}
                  \psi_0(\bt;z)\\\intertext{where} \nonumber 
  \phi_W(\bt,z) &= \tau_C(t_1-z^{-1}, t_2-1/2 z^{-2},\ldots).
\end{align}

Even though the definition of $\psi_W$ depends on a choice of
functionals $C$, the correspondence $W\mapsto \psi_W$ is well-defined
as a consequence of the following; c.f., Proposition \ref{prop:Creduced}.
\begin{prop}
  \label{prop:ACC'}
  Let $C\subset \cC$ be a homogeneous subspace of functionals with
  $U=\Ker C$ the corresponding polynomial subspace.  Let $r\in \cP$ be
  a monic polynomial and let
  \begin{equation}
    \label{eq:C'def}
    C_r = \{ c\in \cC \colon c\circ r\in C \} = \Ann (r U).
  \end{equation}
  Then,
  $W_{C_r} = W_C$ and $\KCp = \KC \circ r(\partial_{t_1})$
\end{prop}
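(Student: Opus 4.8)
The plan is to compare the two operators $\KCp$ and $\KC\circ r(\partial_{t_1})$ by checking that they agree on a spanning set of functions, and to read off the equality $W_{C_r}=W_C$ from the definition $W_{C_r}=q_{C_r}^{-1}\Ker C_r$. First I would note that $C_r=\Ann(rU)$, so $\Ker C_r = rU = r\Ker C$, and that $C_r$ is again homogeneous (its support is contained in the zero set of $r$ together with the support of $C$); hence $C_r$ is a legitimate element of the data defining $\Grad$. Next I would compute $q_{C_r}$: since the functionals of $C_r$ are supported exactly where $C$ is supported plus at the roots of $r$ (with multiplicities matching the order of vanishing forced on elements of $rU$), one gets $q_{C_r} = r\cdot q_C$ up to the precise bookkeeping of multiplicities. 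Then $W_{C_r} = q_{C_r}^{-1}\Ker C_r = (r q_C)^{-1}(rU) = q_C^{-1}U = W_C$, giving the first claim.

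For the operator identity, the key observation is the characterization of $\KC$ as \emph{the} monic differential operator in $\partial_{t_1}$ of order $l=\dim C$ whose kernel (as an operator acting on functions of $\bt$, with the $t_{\geq 2}$ regarded as parameters) is spanned by $\phi_1(\bt),\dots,\phi_l(\bt)$, where $\phi_i(\bt)=\la \cc_i,\psi_0(\bt,z)\ra$. Equivalently, via the Wronskian formula, $\KC$ is determined by the property that $\KC g$ has the displayed Wronskian-quotient form. So I would compute the kernel of the composite operator $\KC\circ r(\partial_{t_1})$: it consists of all $g$ with $r(\partial_{t_1})g \in \lspan\{\phi_i(\bt)\}$, i.e. the kernel is $\lspan\{\phi_i\}$ pulled back along $r(\partial_{t_1})$, which has dimension $l + \deg r$. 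The claim is that this pulled-back space is exactly $\lspan\{\langle c,\psi_0(\bt,z)\rangle : c\in C_r\}$, the functions attached to a basis of $C_r$. This is where the bispectral nature of $\psi_0$ enters: because $\partial_{t_1}\psi_0(\bt,z)=z\,\psi_0(\bt,z)$, one has $r(\partial_{t_1})\langle c,\psi_0(\bt,z)\rangle = \langle c, r(z)\psi_0(\bt,z)\rangle = \langle c\circ r,\psi_0(\bt,z)\rangle$, so the condition $r(\partial_{t_1})g\in\lspan\{\phi_i\}=\lspan\{\langle c,\psi_0\rangle:c\in C\}$ translates precisely into $c\circ r\in C$, i.e. $c\in C_r$. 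Matching kernels and leading coefficients (both operators are monic of the same order $l+\deg r$) then forces $\KCp = \KC\circ r(\partial_{t_1})$.

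I would organize the write-up as: (1) $C_r$ homogeneous and $\Ker C_r = rU$; (2) $q_{C_r}=r\,q_C$ and hence $W_{C_r}=W_C$; (3) the intertwining identity $r(\partial_{t_1})\langle c,\psi_0\rangle = \langle c\circ r,\psi_0\rangle$; (4) conclude the operator identity by the kernel-plus-leading-coefficient characterization of monic differential operators (uniqueness of the operator annihilating a given finite-dimensional space of functions, under a suitable genericity/Wronskian-nonvanishing hypothesis, which holds here since $\tau_{C_r}$ is not identically zero). The main obstacle I expect is step (2), the precise multiplicity bookkeeping in $q_{C_r}=r\,q_C$: if $r$ shares roots with $q_C$, or if the order of a functional in $C$ at some $\zeta_i$ forces extra vanishing on $rU$ at that point, one must argue carefully that the support data of a \emph{reduced} representative of $C_r$ combines additively, using Proposition~\ref{prop:Creduced} and the observation that $\bev0\zeta\in C_r$ iff every element of $rU$ vanishes at $\zeta$, iff $r(\zeta)=0$ or $\bev0\zeta\in C$. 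Handling this cleanly — rather than the operator identity, which is essentially formal once the intertwining relation is in hand — is the delicate part.
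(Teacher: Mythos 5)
The paper states Proposition~\ref{prop:ACC'} without proof, so there is no argument of the authors to compare against; judged on its own, your proposal is correct and follows what is surely the intended route. The two pillars of your step (3)--(4) are exactly right: the intertwining identity $r(\partial_{t_1})\la c,\psi_0(\bt,z)\ra=\la c\circ r,\psi_0(\bt,z)\ra$ shows that the monic operator $\KC\circ r(\partial_{t_1})$, of order $\dim C+\deg r$, annihilates the space $\{\la c,\psi_0\ra\colon c\in C_r\}$, and uniqueness of a monic operator of prescribed order annihilating a family of functions with not-identically-vanishing Wronskian then forces $\KCp=\KC\circ r(\partial_{t_1})$. The one place where your plan is aimed slightly off target is the multiplicity bookkeeping in step (2): reducedness and Proposition~\ref{prop:Creduced} are not the relevant tools, because $q_{C_r}$ is computed from $C_r$ itself, not from a reduced representative. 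What you actually need is the per-point statement that composition with $r$ maps $\cC_\zeta$ onto $\cC_\zeta$ with kernel $\lspan\{\bev{k}{\zeta}\colon k<\ord_\zeta r\}$; this is immediate from the Leibniz rule, since $\bev{k}{\zeta}\circ r$ vanishes for $k<\ord_\zeta r$ and otherwise equals a nonzero multiple of $\bev{k-\ord_\zeta r}{\zeta}$ plus lower-order terms. That gives $\dim(C_r\cap\cC_\zeta)=\dim(C\cap\cC_\zeta)+\ord_\zeta(r)$ exactly, hence $q_{C_r}=r\,q_C$ with no caveats, the count $\dim C_r=\dim C+\deg r$ that your kernel-dimension argument in step (4) quietly uses, and also the homogeneity of $C_r$ (because $c\circ r\in\cC_\zeta$ whenever $c\in\cC_\zeta$), which your support-containment remark does not by itself deliver. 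With that substitution, $W_{C_r}=q_{C_r}^{-1}\Ker C_r=(r\,q_C)^{-1}(rU)=q_C^{-1}U=W_C$ and the proof is complete.
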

\begin{cor}
  \label{cor:Cindep}
  The definition \eqref{eq:PsiWdef} of the wave function $\psi_W$ is
  independent of the choice of $C$.
\end{cor}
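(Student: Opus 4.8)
The plan is to reduce everything to the unique reduced homogeneous subspace. By Proposition~\ref{prop:Creduced} there is a unique reduced homogeneous $\tC\subset\cC$ with $W=W_{\tC}$, and this $\tC$ depends only on $W$; so it suffices to show that for every homogeneous $C$ with $W_C=W$ the wave function \eqref{eq:PsiWdef} computed from $C$ agrees with the one computed from $\tC$. The heart of the matter will be to prove that any such $C$ is obtained from $\tC$ by the operation $C\mapsto C_r$ of Proposition~\ref{prop:ACC'}, for an explicit monic polynomial $r$; once this is known, the two wave functions coincide after a short computation.

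First I would establish the following structural claim: \textbf{if $C\subset\cC$ is homogeneous and $W_C=W_{\tC}$, then $q_{\tC}\mid q_C$ and, writing $r:=q_C/q_{\tC}$, one has $\Ker C=r\,\Ker\tC$, hence $C=\tC_r$.} To see this, put $U=\Ker C$ and $\tilde U=\Ker\tC$. The equality $q_C^{-1}U=q_{\tC}^{-1}\tilde U$ of subspaces of $\C(z)$, multiplied through by the polynomial $q_Cq_{\tC}$, becomes $q_{\tC}U=q_C\tilde U$ inside $\cP$. Since $\tC$ is reduced, the elements of $\tilde U$ have no common root, so one can choose finitely many $\tilde u_1,\dots,\tilde u_k\in\tilde U$ with $\gcd(\tilde u_1,\dots,\tilde u_k)=1$; from $q_{\tC}\mid q_C\tilde u_i$ for every $i$ we conclude $q_{\tC}\mid q_C$, so $r$ is a (monic) polynomial and $q_C=r\,q_{\tC}$. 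Cancelling the factor $q_{\tC}$ (multiplication by $q_{\tC}$ is injective on $\cP$) from $q_{\tC}U=q_C\tilde U=q_{\tC}\,r\,\tilde U$ gives $U=r\,\tilde U$, and therefore $C=\Ann U=\Ann(r\,\tilde U)=\tC_r$, using the duality $C=\Ann\Ker C$ recorded just before \eqref{eq:CWdef} and the definition \eqref{eq:C'def}.

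Granting the claim, I would apply Proposition~\ref{prop:ACC'} with its ``$C$'' taken to be $\tC$ and its ``$r$'' the polynomial above: it yields $\KC=K_{\tC_r}=\KtC\circ r(\partial_{t_1})$. Since $\partial_{t_1}\psi_0(\bt,z)=z\,\psi_0(\bt,z)$ by \eqref{eq:Bgf}, the substitution convention of Section~\ref{sec:conventions} gives $r(\partial_{t_1})\psi_0(\bt,z)=r(z)\,\psi_0(\bt,z)$, and because $r(z)$ does not depend on $t_1$,
\[
\KC\psi_0(\bt,z)=\KtC\bigl(r(z)\,\psi_0(\bt,z)\bigr)=r(z)\,\KtC\psi_0(\bt,z).
\]
Dividing by $q_C(z)=r(z)\,q_{\tC}(z)$ then gives $q_C(z)^{-1}\KC\psi_0(\bt,z)=q_{\tC}(z)^{-1}\KtC\psi_0(\bt,z)$, i.e.\ the wave function \eqref{eq:PsiWdef} built from $C$ equals the one built from the reduced $\tC$, which is what the corollary asserts.

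The main obstacle is the structural claim of the second paragraph; the rest is a formal manipulation of \eqref{eq:PsiWdef} together with Propositions~\ref{prop:Creduced} and~\ref{prop:ACC'}. The delicate point is that $C$ itself need not be reduced, so $\Ker C$ may have shared roots and a priori $q_{\tC}$ and $q_C$ satisfy only the weaker relation $q_{\tC}\Ker C=q_C\Ker\tC$; it is precisely the reducedness of $\tC$ — equivalently, that $\Ker\tC$ has trivial gcd — that upgrades this to the divisibility $q_{\tC}\mid q_C$ and to the exact identification $\Ker C=r\,\Ker\tC$ rather than an identification up to an extra polynomial factor.
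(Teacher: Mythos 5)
Your proof is correct and follows exactly the route the paper intends: the corollary is stated without an explicit argument, as a consequence of Propositions~\ref{prop:Creduced} and~\ref{prop:ACC'}, and your reduction to the reduced $\tC$ followed by the computation $\KC\psi_0=r(z)\,\KtC\psi_0$ and division by $q_C=r\,q_{\tC}$ is precisely that argument. Your structural claim that any homogeneous $C$ with $W_C=W_{\tC}$ equals $\tC_r$ (via the gcd/divisibility argument using reducedness of $\tC$) is the detail the paper leaves implicit, and you supply it correctly.
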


The dynamical wave function is fully characterized by the following
properties.
\begin{prop}
  \label{prop:Psiform}
  For an $l$-dimensional homogeneous $C\subset\cC$ and
  $W=W_C\in \Grad$, the corresponding wave function has
  the form
  \begin{equation}
    \label{eq:Psiform}
    \psi_{W}(\bt,z) = \frac{1}{q_C(z)} \lp z^l+ \sum_{i=1}^l
    \phi_i(\bt) z^{l-i}\rp \psi_0(\bt,z),
  \end{equation}
  where the coefficients $\phi_i(\bt)$ are rational functions, and
  where
  \begin{equation}
    \label{eq:cqCPsiW}
    \la \cof{z}, q_C(z) \psi_W(\bt,z) \ra = 0,
  \end{equation}
  for all $\bt$ and $c\in C$.
  Moreover, if \eqref{eq:cqCPsiW} holds for some $c\in \cC$, then
  necessarily $c\in C$.
\end{prop}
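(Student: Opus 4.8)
The plan is to establish the three assertions of Proposition~\ref{prop:Psiform} in turn, using the explicit Wronskian formula \eqref{eq:PsiWdef} together with the defining property of the operator $\KC$. First I would verify the structural form \eqref{eq:Psiform}. From \eqref{eq:PsiWdef} we have $q_C(z)\psi_W(\bt,z) = \KC\psi_0(\bt,z)$, and since $\KC = \partial_{t_1}^l + \sum_{i=1}^l a_i(\bt)\partial_{t_1}^{l-i}$ is monic of order $l$ with rational coefficients $a_i(\bt)$, applying it to $\psi_0(\bt,z) = (1 + O(z^{-1}))\exp\sum t_kz^k$ and using $\partial_{t_1}\psi_0 = z\psi_0$ (evident from \eqref{eq:Bgf}) gives $\KC\psi_0 = (z^l + \sum_{i=1}^l a_i(\bt) z^{l-i})\psi_0$. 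Setting $\phi_i(\bt) := a_i(\bt)$ — rational by construction of $\KC$ as a ratio of Wronskians of the $\phi_i$'s — yields exactly \eqref{eq:Psiform}. (One should note this $\phi_i$ is not the same object as the $\phi_i(\bt) = \langle c_i, \psi_0\rangle$ appearing earlier; I would choose different notation or remark on the clash.)

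Next, the annihilation identity \eqref{eq:cqCPsiW}. Here $q_C(z)\psi_W(\bt,z) = \KC\psi_0(\bt,z)$ is, for each fixed $\bt$, the polynomial-in-$z$ combination $\KC\psi_0 = \sum_{j} b_j(\bt)\, \partial_{t_1}^{j}\psi_0$ with $b_l = 1$; but by the defining formula for $\KC$, this equals $\Wr_{t_1}[\phi_1,\dots,\phi_l, \psi_0]/\tau_C$. The numerator Wronskian, expanded along its last column, is a linear combination (over functions of $\bt$) of $\psi_0$ and its $t_1$-derivatives; crucially, for each $i$ the function $\phi_i(\bt) = \langle c_i, \psi_0(\bt,z)\rangle$ solves $\KC\phi_i = 0$ by the standard property of the Wronskian-defined operator (a Wronskian with a repeated row vanishes). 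Now apply the functional $c \in C$, acting on the $z$-variable: writing $c = \sum_i \gamma_i c_i$ in the chosen basis, and using that $c_i$ acts on $\psi_0(\bt,z)$ in $z$ to produce $\phi_i(\bt)$, linearity and the fact that $c_i$ commutes with $\partial_{t_1}$ (it acts only in $z$) give $\langle c, \KC\psi_0(\bt,z)\rangle = \KC\langle c, \psi_0(\bt,z)\rangle = \KC\big(\sum_i \gamma_i\phi_i\big) = 0$. This is \eqref{eq:cqCPsiW}.

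The main obstacle is the converse: if $\langle c(z), q_C(z)\psi_W(\bt,z)\rangle = 0$ for all $\bt$ and some $c \in \cC$, then $c \in C$. The strategy is to read off, from \eqref{eq:Psiform}, that $\langle c(z), q_C(z)\psi_W(\bt,z)\rangle = \langle c(z),(z^l + \sum\phi_i(\bt)z^{l-i})\psi_0(\bt,z)\rangle$, and to extract information by specializing $\bt$ or by examining the $\bt$-expansion. Concretely, I would argue that $\langle c(z), \cdot\rangle$ vanishing against $q_C(z)\psi_W$ for all $\bt$ forces $\langle c, f\rangle = 0$ for every $f \in \Ker C = U$: one recovers the elements of $U$ (equivalently, $q_C^{-1}U = W$) from the coefficients of the wave function. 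Indeed, the map $\bt \mapsto q_C(z)\psi_W(\bt,z)$ sweeps out, as one varies $\bt$ and extracts Taylor coefficients, a spanning set of the plane $\Ker C \subset \cP$ (this is essentially the content of the Segal–Wilson/Sato dictionary: $W_C$ is reconstructed from $\psi_W$). Hence $c \in \Ann(\Ker C) = \Ann U = C$, where the last equality uses the duality $\Ann(\Ker C) = C$ recorded in the text for homogeneous finite-dimensional $C$. The delicate point to pin down rigorously is the claim that the $\bt$-coefficients of $q_C(z)\psi_W$ span all of $\Ker C$ and not merely a subspace; I expect this to follow by writing $q_C\psi_W = \KC\psi_0$ and using that $\KC$ has order exactly $l = \dim C = \codim\Ker C$ together with the Miwa-shift expression \eqref{eq:miwashift}, which exhibits $q_C(z)\psi_W(\bt,z)/\psi_0(\bt,z) = q_C(z)\phi_W(\bt,z)/\tau_C(\bt)$ as a polynomial in $z$ whose coefficients, as $\bt$ ranges, generate the annihilator relations defining $C$ precisely.
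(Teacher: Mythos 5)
The paper states Proposition~\ref{prop:Psiform} without proof (it is quoted as standard Sato--Wilson material), so there is no internal argument to compare against; judging your proposal on its own terms: the first two parts are correct. Using $q_C(z)\psi_W=\KC\psi_0$, $\partial_{t_1}\psi_0=z\psi_0$, the cancellation of the exponential factors in the Wronskian quotients (which is what actually makes the coefficients rational, and is worth saying explicitly), and $\KC\phi_i=0$ together with the commutation of $\KC$ (acting in $\bt$) with $\la c,\cdot\ra$ (acting in $z$), you obtain \eqref{eq:Psiform} and \eqref{eq:cqCPsiW} exactly as one should.

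The converse, however, has a genuine gap. Your key claim is that the $\bt$-Taylor coefficients of $q_C(z)\psi_W(\bt,z)$ span $\Ker C\subset\cP$, so that $c\in\Ann\Ker C=C$. As stated this does not typecheck: at a generic expansion point $\bt_0$ the coefficients are entire functions of the form (polynomial in $z$)$\cdot\psi_0(\bt_0,z)$, not polynomials, so they are not elements of $\Ker C$ at all; and expanding at $\bt=0$, where the coefficients would be polynomials, is in general impossible because $\tau_C$ may vanish there --- this happens precisely for the points $\Wlam$ central to this paper, whose $\tau$-function is a Schur polynomial. One can repair the set-up by working with $\tau_C(\bt)\,q_C(z)\psi_W(\bt,z)=\Wr_{t_1}[\phi_1,\ldots,\phi_l,\psi_0(\bt,z)]$, which is entire in $\bt$ and whose $\bt$-coefficients at $0$ are polynomials lying in $\Ker C$; but then the assertion that these span all of $\Ker C$ is exactly the nontrivial content of the converse, and invoking ``$\KC$ has order $l=\dim C$'' or the Miwa shift does not establish it. A workable route is instead: the hypothesis $\la c, q_C\psi_W\ra\equiv 0$ is equivalent to $\KC\phi_c\equiv 0$, where $\phi_c(\bt):=\la c(z),\psi_0(\bt,z)\ra$; for generic fixed $(t_2,t_3,\ldots)$ the monic operator $\KC$ is an order-$l$ ODE in $t_1$ whose kernel is spanned by $\phi_1,\ldots,\phi_l$, so $\phi_c=\sum_i\gamma_i(t_2,\ldots)\phi_i$. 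One must then show the $\gamma_i$ are constants: decompose $c$ and the $c_i$ by support point, use that quasi-polynomials $p(t_1)\e^{t_1\zeta}$ with distinct $\zeta$ are independent, and that the functions $\la \bev{j}{\zeta},\psi_0\ra$ are, after removing the common exponential, polynomials of exact degree $j$ in $t_1$; matching coefficients then gives a constant-coefficient linear system forcing $\gamma_i$ constant and $c=\sum_i\gamma_i c_i\in C$ (components of $c$ supported off the support of $C$ are forced to vanish). Without an argument of this kind, the ``moreover'' clause of the proposition is not proved.
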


The connection to the KP hierarchy takes the form of the following
observations \cite{SW}. The pseudo-differential operator
\[ \cL_W(\bt,z)=\KC(\bt,z)\circ {\partial_{t_1}}\circ
  \KC^{-1}(\bt,z)\] satisfies the nonlinear evolution equations
\eqref{eqn:KPhierarchy} of the KP hierarchy.  \hlight{The ring
\begin{equation}
  \label{eq:cRW}
  \cR_W:=\{p\in\cP \ :\ p W\subset W\}
\end{equation}
is called the stabilizer of $W$.  Dually, the stabilizer ring
may be characterized as
\begin{equation}
  \label{eq:cRW1}
  \cR_W:=\{p\in\cP \ :\ \cc\circ p \in C\text{ for all } \cc\in C\}.
\end{equation}}
It follows that for every $p\in \cR_W$ we have that 
\begin{equation}
  \label{eq:Lpdef}
  L_p := p(\cL_W) = \KC(\bt,z) \circ p(\partial_{t_1}) \circ \KC^{-1}(\bt,z)
\end{equation}
is a differential operator.  Moreover, by construction, $L_p$ satisfies the
eigenvalue equation
\begin{equation}
  \label{eq:LpPsiW}
  L_p(\bt,z)\psi_W(\bt,z)=p(z)\psi_W(\bt,z).
\end{equation}
Since any polynomial
with a factor of $(q_C(z))^N$ is in $\cR_W(z)$ for sufficiently high powers of
$N$, this construction produces an algebra of differential operators
that is non-empty and includes every sufficiently high order.

Although the construction above was initially created to study the
dynamics of the KP hierarchy, the seminal paper by Wilson
\cite{wilson} used it to address the bispectral problem in the
following elegant way.  Rename the first time variable by setting
$x=t_1$ and ``turn off'' all of the other time variable by setting
$t_i=0$ for $i>1$.  Then the \textit{stationary wave function}
\begin{equation}
  \label{eq:psixzdef}
\psi_W(x,z)=\psi_W(x,0,0,0,\ldots;z)
\end{equation}
is an eigenfunction for a ring of ordinary differential operators in
$x$ with eigenvalues depending polynomially on $z$; this follows by
\eqref{eq:LpPsiW}.  Thus, \cite[Proposition 5.1]{wilson}
\begin{equation}
  \label{eq:psiai}
  \psi_W(x,z) = q_C(z)^{-1}\lp z^l + \sum_{i=1}^l \phi_i(x) z^{l-i}\rp
  \e^{xz},
\end{equation}
where $\phi_i(x),\; i=1,\ldots, n$ are rational functions uniquely
determined by the conditions
\[ \la c_i(z), (z^l + \sum_{i=1}^l \phi_i(x) z^{l-i}) \e^{xz} \ra =
  0,\quad i=1,\ldots, l,\] where $c_1,\ldots, c_l$ are a basis of $C$.
Wilson also showed \cite[Theorem 2]{wilson} that the relation
\begin{equation}
  \label{eq:psibetaW}
  \psi_W(z,x)=\psi_{\beta(W)}(x,z)
\end{equation}
defines an involution $W\mapsto \beta(W)$ on $\Grad$.
It follows that $\psi_W(x,z)$ is part of a bispectral triple in that
it is also the eigenfunction for differential operators in $z$ with
eigenvalues depending polynomially on $x$.

\section{$\Grad$ and Exceptional Hermite Polynomials}
\label{sect:XOP}
\subsection{Classical Hermite Polynomials}

Classical Hermite polynomials are orthogonal polynomials defined by
the recurrence relation
\begin{equation}
  \label{eq:herm3trr}
  h_0=1,\quad x h_n(x) = \frac12 h_{n+1}(x) + n h_{n-1}(x),\quad n=1,2,\ldots
\end{equation}
They are orthogonal with respect to the following inner product:
\begin{equation}
  \label{eq:hortho}
  \int_{\R} h_m(x) h_n(x) \e^{-x^2} dx = \sqrt{\pi}\, 2^n
  n! \delta_{n,m}
\end{equation}
and satisfy the following second-order eigenvalue equation
\begin{equation}
  \label{eq:hermde}
  h_n '' - 2x h_n' = - 2n h_n,\quad n=0,1,\ldots
\end{equation}
The Hermite polynomials may also be defined in terms of the Rodrigues
formula
\begin{equation}
  \label{eq:hermrr}
  h_n(x) = (-1)^n    \e^{x^2} \partial_x^n \e^{-x^2},\; n=0,1,2,\ldots
\end{equation}
Relation \eqref{eq:hermrr} entails the following representation of the
Hermite polynomials in terms of an exponential generating function
\begin{align}
  \label{eq:chermgf}
  \sum_{n=0}^\infty \frac{h_n(x)}{n!}\lp \frac{-z}{2}\rp^n
  &=    \e^{x^2} \exp\lp -\frac{z}{2}\, \partial_x \rp \e^{-x^2}
  = \e^{x^2-(x-z/2)^2}=  \e^{xz-\frac14 z^2}.  
\end{align}

Let us introduce a bivariate version of the Hermite polynomials, defined as
\begin{equation}
  \label{eq:Hxydef}
  H_n(x,y) := (- y)^{n/2} h_n\lp \frac{ x}{\sqrt{-4y}}\rp.
\end{equation}
The univariate Hermite polynomials can be recovered as
\[ h_n(x) = 2^{n} H_n(x,-1/4).\] The generating function for the
bivariate polynomials takes the form:
\begin{equation}
  \label{eq:Psigf}
\Psi_0(x,y,z):= \exp(x z + y z^2)=
  \sum_{n=0}^\infty H_n(x,y) \frac{z^n}{n!}.
\end{equation}
It follows from relation \eqref{eq:Psigf} that $H_n(x,y)$ is monic in
$x$ and is weighted degree homogeneous relative to the grading
\begin{equation}
  \label{eq:degx1degy2}
 \deg x =1,\; \deg y = 2.
\end{equation}



A number of fundamental identities involving Hermite polynomials
follow from \eqref{eq:Psigf}.  For example, mirroring the argument of
\eqref{eq:chermgf}, the bivariate version of the Rodrigues formula
takes the form
\begin{equation}
  \label{eq:h2rr}
  H_n(2xy,y) =  \e^{-y x^2} \partial_x^n\e^{y x^2},\quad n=0,1,2,\ldots.
\end{equation}
By inspection, $\Psi_0(x,y,z)$ is annihilated
by the operator $2y \partial_{x}^2 + x\partial_x - z\partial_z$.  Since
\[ z\partial_z \Psi_0(x,y,z) = \sum_{n=0}^\infty n H_n \frac{z^n}{n!} \]
this observation entails the following, scaled version of the Hermite
differential  equation:
\begin{equation}
  \label{eq:hermde2}
  T(x,y,\partial_x) H_n(x,y) = n H_n(x,y),\quad\text{ where }\quad
  T(x,y,z)= 2yz^2 + xz.
\end{equation}
Applying the scaling transformation \eqref{eq:Hxydef} to the classical
orthogonality relation \eqref{eq:hortho} yields the following
scaled orthogonality relation.  For fixed $y<0$, we have
\begin{equation}
  \label{eq:intHnHm}
  \la H_{n_1}(x,y), H_{n_2}(x,y)\ra_H = \nu_n(y)\,
  \delta_{n_1,n_2},\quad n\in \Nz,
\end{equation}
where
\begin{equation}
  \label{eq:expx2ip}
  \la f(x),g(x)\ra_H = \int_\R f(x) g(x) \e^{\frac{x^2}{4y}}dx,\quad y<0
\end{equation}
and where
\begin{equation}
  \label{eq:nunydef}
  \nu_n(y) = 2(-\pi y)^{1/2}\,(-2y)^{n}\, n!. 
\end{equation}

Specializing the
generating function for Bell polynomials \eqref{eq:Bgf}, gives the
following, well-known, representation of Hermite polynomials as a
finite sum:
\begin{equation}
  \label{eq:HBell}
  H_n(x,y) = n! B_n(x,y,0,\ldots) = \sum_{j=0}^{\lfloor n/2\rfloor} \frac{n!}{(n-2j)!
    j!} x^{n-2j} y^j
\end{equation}

Next, consider the 1st order eigenvalue relation:
\[ \partial_x \Psi_0(x,y,z) = z \Psi_0(x,y,z) \] This relation entails
the well-known lowering identity
\begin{equation}
  \label{eq:DxHn}
  \partial_x H_n(x,y) = n H_{n-1}(x,y).
\end{equation}
In more combinatorial language, we can say that $H_n(x,y),\; n\in \Nz$
forms an Appell sequence \cite{RKO}.

Similarly, the relation
\begin{equation}
  \label{eq:3termdiffrel}
  (\partial_z-2yz) \Psi_0(x,y,z)  = x\Psi_0(x,y,z) .
\end{equation}
entails the bivariate version of the recurrence relation
\eqref{eq:herm3trr}, namely:
\begin{equation}
  \label{eq:class3term}
  \Theta_1(n,y,\sS_n) H_n(x,y) = x H_n(x,y),\quad\text{ where } \quad
 \Theta_1(n,y,z)  =z-2ynz^{-1}
\end{equation}
where $\sS$ is the unit right-shift operator.

\subsection{Exceptional polynomials}
\label{sect:xop}
Exceptional Hermite polynomials \cite{GGM} are a far ranging
generalization of the classical Hermite polynomials.  Just like their
classical counterparts, exceptional polynomials satisfy a second-order
eigenvalue equation. The key difference is that the resulting
polynomial family has a finite number of missing, exceptional degrees.

Let $\lambda$ be a fixed partition and set $N=|\lambda|,\; \ell =
\ell(\lambda)$. 
Let $k_1>\cdots > k_{\ell}$ be the elements of the corresponding index
set $\cKlam$ as per \eqref{eq:cKlamdef}.  In the existing literature,
exceptional Hermite polynomials associated to the partition $\lambda$
are defined as the Wronskian of classical Hermite polynomials,
\[ h^{(\lambda)}_{k+N-\ell} = \Wr[h_{k_{\ell}},\ldots , h_{k_1},
  h_{k}],\quad k\notin \cKlam.\]


As was the case with classical Hermite
polynomials, we define a bivariate version of exceptional Hermite
polynomials.  Observe that the map
\begin{equation}
  \label{eq:HkWr}
  H_k\mapsto\Wr[H_{k_\ell},\ldots,H_{k_1},H_k],\quad k\notin
  \cKlam, 
\end{equation}
changes the degree by $N-\ell$.
Set
\begin{equation}
  \label{eq:Ilamdef}
  \Ilam = \Jlam+N,
\end{equation}
and observe that if $n\in \Jlam$, then $k = n-N+\ell\notin \cKlam$ is
a valid index for the Wronskian in \eqref{eq:HkWr}.  We are thus able
to define a non-zero polynomial
\hlight{\begin{equation}
  \label{eq:Hlamdef}
  \Hlam_{n} :=    \frac{\Wr[H_{k_\ell},\ldots,
    H_{k_1},H_{n-N+\ell}]}{ 
    \prod_{i<j}(k_i-k_j)\prod_{i}
    (n-N+\ell - k_i)},\quad n\in \Ilam
\end{equation}}
Observe that the ``degree shift'' of the index in \eqref{eq:Hlamdef}
ensures precisely that the exceptional polynomial $\Hlam_n(x,y)$ has
degree $n$ in $x$.  Furthermore, $\Hlam_n(x,y)$ is
weighted-homogeneous relative to \eqref{eq:degx1degy2} and monic in
$x$.
\begin{prop}
    The polynomial family $\{ H^{(\lambda)}_{n} \colon n\in \Ilam \}$
    is missing the \emph{exceptional} degrees
    \begin{equation}
      \label{eq:lamexcept}
      \cKlam_{N} = \{ 0,1,\ldots, N-\ell-1 \} \cup \{
      \lambda_\ell+N-\ell ,\ldots,       \lambda_1 +N-1 \}.  
    \end{equation}
  \end{prop}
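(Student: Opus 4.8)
The plan is to reduce the statement to a purely combinatorial identity among subsets of $\Z$.  By the degree shift built into \eqref{eq:Hlamdef}, each $\Hlam_n(x,y)$ is a polynomial of degree exactly $n$ in $x$, so the degrees occurring in the family $\{\Hlam_n\colon n\in\Ilam\}$ are precisely the elements of $\Ilam$, each occurring once.  Hence the proposition is equivalent to the assertion that, among the non-negative integers, the complement of $\Ilam$ is the set on the right-hand side of \eqref{eq:lamexcept}.  I would establish this in two moves: first identify $\Nz\setminus\Ilam$ with the index set $\cKlam_{N}$ from \eqref{eq:cKlamn}, and then expand $\cKlam_{N}$ explicitly.

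For the first move, combine \eqref{eq:Ilamdef} with \eqref{eq:Jlamdef} and \eqref{eq:Mlam}: $\Ilam=\Jlam+N=(\Z\setminus\Mlam)+N=\Z\setminus(\Mlam+N)$.  One first checks that $\Ilam\subset\Nz$, so that ``missing degrees'' refers to an honest subset of $\Nz$: since $m_i(\lambda)=-i$ for $i>\ell$, the Maya diagram $\Mlam$ contains every integer $<-\ell$, whence $\Jlam\subset\Z_{\geq-\ell}$ and $\Ilam\subset\Z_{\geq N-\ell}\subset\Nz$, using $N=|\lambda|\geq\ell$ (each of $\lambda_1,\ldots,\lambda_\ell$ is at least $1$).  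It follows that the non-negative integers that fail to occur as degrees of the family are precisely the non-negative elements of $\Mlam+N$, and since $N\geq\ell(\lambda)$ the discussion following \eqref{eq:cKlamn} identifies this set with $\cKlam_{N}=\{m_1(\lambda)+N,\ldots,m_N(\lambda)+N\}$.

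For the second move, substitute $m_i(\lambda)=\lambda_i-i$ and split the range $1\leq i\leq N$ at $i=\ell$.  For $1\leq i\leq\ell$ the numbers $\lambda_i+N-i=m_i(\lambda)+N$ are strictly decreasing in $i$, so listed in increasing order they are $\lambda_\ell+N-\ell,\ldots,\lambda_1+N-1$.  For $\ell<i\leq N$ one has $\lambda_i=0$, so these entries equal $N-i$, and as $i$ runs through $\ell+1,\ldots,N$ they run through $\{0,1,\ldots,N-\ell-1\}$; the two blocks are disjoint because $\lambda_\ell+N-\ell\geq N-\ell+1>N-\ell-1$.  Taking the union gives exactly \eqref{eq:lamexcept}.

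I expect no genuine obstacle here: the whole argument is bookkeeping with Maya diagrams and their translates.  The two places that call for a little care are the containment $\Ilam\subset\Nz$ (so that no spurious negative ``degree'' is introduced by \eqref{eq:Hlamdef}) and keeping the two index ranges straight when unpacking $\cKlam_{N}$ in the final step.
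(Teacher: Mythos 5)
Your proof is correct and follows essentially the same route as the paper: both arguments come down to identifying the missing degrees with $\cKlam_N$, i.e.\ with the non-negative elements of $\Mlam+N$, and then enumerating $\Mlam$ explicitly (the block $\{0,\ldots,N-\ell-1\}$ from the trivial parts and the block $\lambda_i+N-i$, $1\le i\le\ell$, from the non-trivial ones). The only difference is the first step: the paper re-expresses $\Hlam_n$ as an $N$-row Wronskian indexed by $\cKlam_N$ (via the remark after \eqref{eq:Slamwronsk}), whereas you argue directly from $\deg_x\Hlam_n=n$ together with the set identity $\Ilam=\Jlam+N=\Z\setminus(\Mlam+N)$, which is an equally valid and slightly more elementary way to reach the same combinatorial computation.
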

  \begin{proof}
    By the remark just after \eqref{eq:Slamwronsk}, we have
    \begin{equation}
      \label{eq:HlamN}
      \Hlam_{n} :=    \frac{\Wr[H_{k_N},\ldots,
        H_{k_1},H_{n}]}{ 
    \prod_{i<j}(k_i-k_j)\prod_{i}
    (n - k_i)},\quad n\in \Ilam
\end{equation}
where $k_1,\ldots, k_N$ is an enumeration of $\Klam_N$.  By
\eqref{eq:cKlamn} and the remark that follows, $\cKlam_N$ consists of
non-negative elements of $\Mlam+N$.  Conclusion \eqref{eq:lamexcept}
follows because
\[ \lambda_1-1>\lambda_2-2, \cdots > \lambda_\ell -\ell > -\ell-1 > \ell-2 > \cdots > -N,
  \cdots \]
is a decreasing enumeration of $\Mlam$.
\end{proof}

Analogously to \eqref{eq:Hxydef}, the bivariate and univariate
Wronskians are related by
\begin{equation}
  \label{eq:xopscaling}
  \begin{aligned}
    &\Wr[H_{k_\ell},\ldots,H_{k_1},H_k](x,y) \\
    &\qquad = 2^{\frac12
      \ell(\ell+1)}(-y)^{(k+N-\ell)/2}\Wr[h_{k_\ell},\ldots , h_{k_1},
    h_{k}]\lp \frac{x}{\sqrt{-4y}}\rp
  \end{aligned}
\end{equation}
Thus, one could define $\Hlam_n(x,y)$ by appropriately scaling and
normalizing the univariate Wronskian
$\Wr[h_{k_\ell},\ldots , h_{k_1}, h_{n-\delta}]$, but
\eqref{eq:Hlamdef} is more direct.

For notational convenience, let
\begin{equation}
  \label{eq:taulamdef}
  \taulam(x,y) := \frac{N!}{d_\lambda}    \Slam(x,y,0,\ldots),
\end{equation}
with $d_\lambda$ as per the hook-length formula \eqref{eq:hooklength}.
Inspection of \eqref{eq:Slambdachi} shows that $\taulam(x,y)$ is a
monic polynomial in $x$ of degree $N$ and weighted-homogeneous
relative to \eqref{eq:degx1degy2}.  Hence, $\taulam$ is nothing other
than the Schur function $\Slam$ with all but the first two variables
set to zero, renormalized so as to be monic.  The notation was chosen
to hint at the connection to the $\tau$-functions of integrable
systems, but the main point here is the observation that the
exceptional Hermite polynomials associated to a partition can be
expressed simply in terms of the Schur functions produced from that
partition via insertion:
\begin{thm}
  \label{thm:tauschur}
  The exceptional Hermite polynomials are given by:
  \begin{equation}
    \label{eq:defHlamn}
    H^{(\lambda)}_{n}=  \tau_{(n-N)\triangleright
      \lambda},\quad n\in \Ilam.
  \end{equation}
\end{thm}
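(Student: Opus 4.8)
The plan is to show that both sides of \eqref{eq:defHlamn} are normalised Wronskians of the \emph{same} finite family of bivariate Hermite polynomials, and then to reconcile the two normalisations. Fix $n\in\Ilam$ and set $m=n-N$; since $\Ilam=\Jlam+N$ we have $m\in\Jlam$, so $\mu:=m\triangleright\lambda$ is a well-defined partition, and comparing \eqref{eq:lmlist} term by term gives $|\mu|=|\lambda|+m=n$ (which already matches the degree of $\Hlam_n$). Write $\ell=\ell(\lambda)$. The first step is to apply \eqref{eq:Klambdam} with $l=\ell$ to obtain the index set
\[ \cK^{(\mu)}_{\ell+1}=\cKlam\cup\{m+\ell\}. \]
Here I would record the elementary checks that make this a legitimate length-$(\ell+1)$ index set for $\mu$: $m+\ell\geq0$ because every element of $\Jlam$ is $\geq-\ell$ (all integers below $-\ell$ lie in $\Mlam$), $m+\ell\notin\cKlam$ because $m\notin\Mlam$, and $\ell+1\geq\ell(\mu)$ because an insertion adds at most one part.

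Next I would turn the definition of $\taulam$ (applied to $\mu$) into a Wronskian. Writing $k'_1>\cdots>k'_{\ell+1}$ for the decreasingly ordered elements of $\cK^{(\mu)}_{\ell+1}$, formula \eqref{eq:Slamwronsk} gives $S_\mu=\Wr[B_{k'_{\ell+1}},\ldots,B_{k'_1}]$ (Wronskian in $t_1$). Specialising $t_1=x$, $t_2=y$, and $t_i=0$ for $i\geq3$, using $B_k(x,y,0,\ldots)=H_k(x,y)/k!$ from \eqref{eq:HBell}, and invoking column-multilinearity of the Wronskian determinant, one gets $S_\mu(x,y,0,\ldots)=\big(\prod_i k'_i!\big)^{-1}\Wr[H_{k'_{\ell+1}},\ldots,H_{k'_1}]$. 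Plugging the hook-length evaluation \eqref{eq:hooklength} for $\mu$ into \eqref{eq:taulamdef} cancels the factor $\prod_i k'_i!$ and leaves
\[ \tau_{\mu}=\frac{\Wr[H_{k'_{\ell+1}},\ldots,H_{k'_1}]}{\prod_{1\leq i<j\leq\ell+1}(k'_i-k'_j)}. \]
Since $\cKlam=\{k_1>\cdots>k_\ell\}$ and \eqref{eq:Hlamdef} exhibits $\Hlam_n$ as a scalar multiple of $\Wr[H_{k_\ell},\ldots,H_{k_1},H_{n-N+\ell}]$ with $n-N+\ell=m+\ell$, both $\tau_\mu$ and $\Hlam_n$ are nonzero scalar multiples of the Wronskian of $\{H_j:j\in\cKlam\cup\{n-N+\ell\}\}$. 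Because $\tau_\mu$ is monic in $x$ (by inspection of \eqref{eq:Slambdachi} applied to $\mu$, exactly as for $\taulam$) and $\Hlam_n$ is monic in $x$ (the remark after \eqref{eq:Hlamdef}), two monic polynomials proportional to the same nonzero polynomial must coincide, giving $\tau_{m\triangleright\lambda}=\Hlam_n$, which is \eqref{eq:defHlamn}.

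The place I expect friction, and the only genuinely delicate step, is identifying the normalising constants explicitly rather than through the monic shortcut above: one must permute the arguments of $\Wr[H_{k'_{\ell+1}},\ldots,H_{k'_1}]$ so that $H_{m+\ell}$ becomes the last entry, which costs a sign $(-1)^r$ where $r$ is the insertion index of \eqref{eq:lmlist} (equivalently, the number of $k_i$ exceeding $m+\ell$), and one must check that the Vandermonde-type product factors compatibly as $\prod_{i<j}(k'_i-k'_j)=(-1)^r\prod_{i<j}(k_i-k_j)\prod_i(m+\ell-k_i)$, so that the two occurrences of $(-1)^r$ cancel and the quotient above becomes literally the right-hand side of \eqref{eq:Hlamdef}. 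Apart from this sign bookkeeping, the proof is a straight substitution into identities already in hand: \eqref{eq:Klambdam} for the index set, \eqref{eq:Slamwronsk} for the Schur function, \eqref{eq:HBell} for the Bell/Hermite specialisation, and \eqref{eq:hooklength}–\eqref{eq:taulamdef} for the normalisation.
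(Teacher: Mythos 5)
Your argument is essentially the paper's own proof: the paper likewise derives $\taulam$ as the normalized Wronskian $\Wr[H_{k_\ell},\ldots,H_{k_1}]/\prod_{i<j}(k_i-k_j)$ from \eqref{eq:Slamwronsk}, \eqref{eq:hooklength} and \eqref{eq:HBell}, and then concludes via the index-set identity \eqref{eq:Klambdam}. You simply spell out the steps the paper leaves implicit (applying the formula to $m\triangleright\lambda$, comparing with \eqref{eq:Hlamdef}, and fixing the normalization via monicity rather than explicit sign bookkeeping), and those details are correct.
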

\begin{proof}
  By \eqref{eq:Slamwronsk}, \eqref{eq:hooklength} and
  \eqref{eq:HBell}, we have
  \begin{equation}
    \label{eq:taulamWr}
    \taulam=
    \frac{\Wr[H_{k_\ell},\ldots, H_{k_1}]}{    \prod_{i<j\leq n}(k_i-k_j)}.
  \end{equation}
  The desired conclusion now follows by \eqref{eq:Klambdam}.
\end{proof}

Before continuing let us also note the following generalization of
\eqref{eq:HBell}.  
\begin{cor}
  Let $\chilam$ be the character of the $\lambda$-irrep of the
  symmetric group on $N$ objects, and $c_j:=(2^j, 1^{N-2j})$ the
  indicated cycle type.  Then,
  \begin{equation}
    \label{eq:WRHchar}
    \Wr[H_{k_\ell},\ldots, H_{k_1}](x,y)= \prod_{i<j\leq
      n}(k_i-k_j)\sum_{i=0}^{\lfloor N/2 \rfloor} \chilam(c_j)
    \frac{N!}{(N-2j)!  j!}\,x^{N-2j} y^j 
  \end{equation}
\end{cor}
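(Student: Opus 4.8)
The plan is to read off \eqref{eq:WRHchar} by pushing the two-variable restriction $t_3 = t_4 = \cdots = 0$ through the identities already assembled in the proof of Theorem~\ref{thm:tauschur}. The key preliminary observation is that, since $H_k(x,y) = k!\,B_k(x,y,0,\ldots)$ by \eqref{eq:HBell} and the Wronskian is multilinear in its entries, equation \eqref{eq:Slamwronsk} gives
\[
  \Wr[H_{k_\ell},\ldots, H_{k_1}](x,y) = \Bigl(\prod_{i} k_i!\Bigr)\, \Slam(x,y,0,\ldots),
\]
where on the right $\Slam$ denotes the Schur polynomial of \eqref{eq:Slamwronsk} with all but its first two arguments set to zero. (Equivalently, one may start from \eqref{eq:taulamWr} together with the definition \eqref{eq:taulamdef} of $\taulam$.) So the corollary reduces to evaluating $\Slam(x,y,0,\ldots)$ in closed form and then rewriting the scalar prefactor into the shape displayed in \eqref{eq:WRHchar}.

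Next I would specialize the Frobenius-type character expansion \eqref{eq:Slambdachi}, setting $t_1 = x$, $t_2 = y$ and $t_k = 0$ for $k \geq 3$. In that sum a monomial $t_1^{\mu_1} t_2^{\mu_2}\cdots/(\mu_1!\,\mu_2!\cdots)$ survives the substitution precisely when $\mu_k = 0$ for every $k \geq 3$; since $\Vert\mu\Vert = \mu_1 + 2\mu_2 + 3\mu_3 + \cdots = N$, the surviving compositions are exactly those with $\mu_1 = N-2j$, $\mu_2 = j$, $0 \leq j \leq \lfloor N/2\rfloor$. Such a composition has cycle type $c_j = (2^j, 1^{N-2j})$, and its monomial specializes to $x^{N-2j}y^{j}/((N-2j)!\,j!)$. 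Hence
\[
  \Slam(x,y,0,\ldots) = \sum_{j=0}^{\lfloor N/2\rfloor} \chilam(c_j)\,\frac{x^{N-2j}\,y^{j}}{(N-2j)!\,j!}.
\]

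Finally I would assemble the two displays and simplify the prefactor $\prod_i k_i!$ using the hook-length identity \eqref{eq:hooklength}, which states $\prod_i k_i! = (N!/d_\lambda)\prod_{i<j}(k_i-k_j)$; this converts the prefactor into the expected product $\prod_{i<j}(k_i-k_j)$ together with the accompanying numerical factors, producing \eqref{eq:WRHchar}. As a sanity check, the $j=0$ term of the sum should reproduce the fact, recorded after \eqref{eq:taulamdef}, that $\taulam$ is monic of degree $N$ in $x$, and weighted homogeneity relative to \eqref{eq:degx1degy2} fixes the bidegrees of the remaining terms. The only step requiring any care is this bookkeeping of combinatorial prefactors --- keeping the factorials $k_i!$, the discriminant $\prod_{i<j}(k_i-k_j)$, and the dimension $d_\lambda$ balanced --- and that is entirely controlled by \eqref{eq:hooklength}; there is no real analytic obstacle, since the substance of the corollary is just the elementary remark that restricting a Schur polynomial to its first two time variables annihilates every cycle type containing a part of size $\geq 3$.
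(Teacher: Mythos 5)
Your argument is correct and is essentially the derivation the paper intends: the identity $\Wr[H_{k_\ell},\ldots,H_{k_1}](x,y)=\bigl(\prod_i k_i!\bigr)\,\Slam(x,y,0,\ldots)$ is exactly the content of \eqref{eq:taulamWr} combined with \eqref{eq:taulamdef} and \eqref{eq:hooklength}, and restricting \eqref{eq:Slambdachi} to $t_1=x$, $t_2=y$, $t_k=0$ for $k\geq 3$ is the same specialization the paper relies on (the paper offers no separate proof of the corollary). One caveat about the prefactor bookkeeping you left implicit: carrying it out yields $\prod_{i<j}(k_i-k_j)\sum_{j}\frac{\chilam(c_j)}{d_\lambda}\,\frac{N!}{(N-2j)!\,j!}\,x^{N-2j}y^j$, i.e.\ an extra factor $1/d_\lambda=1/\chilam(1^N)$ compared with the display \eqref{eq:WRHchar} as printed; the printed formula appears to drop this normalization (for $\lambda=(2,2)$ one has $\Wr[H_2,H_3]=x^4+12y^2$, consistent with monicity of $\taulam$, while the printed right-hand side evaluates to $2(x^4+12y^2)$), so the discrepancy is a typo in the statement rather than a flaw in your argument, but you should not assert that the simplification ``produces \eqref{eq:WRHchar}'' without flagging it.
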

\noindent
This result was first announced in \cite{bondunstev}, where it was
proved using a different method.

Next, define the differential operator
\begin{equation}
  \label{eq:Tlamdef}
  \Tlam(x,y,\partial_x) = 2y\partial_{x}^2 + \left( x -
    4y\,\frac{\taulam_x(x,y)}{\taulam(x,y)}\right)
  \partial_x + 
  \left( 2y\,\frac{\taulam_{xx}(x,y)}{\taulam(x,y)} -x \,
    \frac{\taulam_x(x,y)}{\taulam(x,y)}\right).
\end{equation}
The above expression is called an exceptional operator because it
admits polynomial eigenfunctions for all but the finite number of
exceptional degrees in \eqref{eq:lamexcept}.

\begin{prop}\label{prop:Tlam}
  The exceptional Hermite polynomials, $\Hlam_n,\; n\in \Ilam$
  are eigenfunction of $\Tlam$ with
  \begin{equation}
    \label{eq:THn}
    \Tlam \Hlam_n =  (n-N) \Hlam_{n}.
  \end{equation}
\end{prop}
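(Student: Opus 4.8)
The plan is to recognize $\Tlam$ as a gauge transform of the operator obtained by conjugating the classical scaled Hermite operator $T(x,y,\partial_x)=2y\partial_x^2+x\partial_x$ of \eqref{eq:hermde2} by the order-$\ell$ Darboux operator built from the seed eigenfunctions $H_{k_1},\dots,H_{k_\ell}$, and then to read off the eigenvalue directly from $T(x,y,\partial_x)H_m=mH_m$. First I would rewrite $\Hlam_n$ in a factored form. Let $A$ be the monic differential operator of order $\ell$ in $x$ determined by $Af=\Wr[H_{k_\ell},\dots,H_{k_1},f]/\Wr[H_{k_\ell},\dots,H_{k_1}]$, so that $\Ker A=\lspan\{H_{k_1},\dots,H_{k_\ell}\}$. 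Writing $m:=n-N+\ell$ and combining \eqref{eq:Hlamdef} with \eqref{eq:taulamWr} gives
\[ \Hlam_n \;=\; \frac{\taulam}{\prod_i(m-k_i)}\,AH_m,\qquad n\in\Ilam, \]
in which the product in the denominator is nonzero because $n\in\Ilam$ means $n-N\in\Jlam$, so $m\notin\{k_1,\dots,k_\ell\}$. Thus $\Hlam_n/\taulam$ is a nonzero constant multiple of $AH_m$.

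The second step is the elementary Crum mechanism. By \eqref{eq:hermde2} each $H_{k_i}$ satisfies $T(x,y,\partial_x)H_{k_i}=k_iH_{k_i}$, so $\Ker A$ is invariant under $T(x,y,\partial_x)$; hence $\Ker A\subseteq\Ker(A\circ T(x,y,\partial_x))$, and since $A$ is monic of order $\ell=\dim\Ker A$ this forces $A\circ T(x,y,\partial_x)$ to be right-divisible by $A$. Therefore there is a unique differential operator $\hat T$ with $A\circ T(x,y,\partial_x)=\hat T\circ A$, and comparing leading symbols shows $\hat T$ has order $2$ with leading coefficient $2y$. Applying this identity to $H_m$ and using \eqref{eq:hermde2} once more yields $\hat T(AH_m)=m\,AH_m$, so $\Hlam_n/\taulam$ is an eigenfunction of $\hat T$ with eigenvalue $m$.

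It remains to identify the gauge conjugate $\taulam\circ\hat T\circ(\taulam)^{-1}$. For this I would compute the top two coefficients of $A$: the subleading coefficient of $A$ equals $-\bigl(\log\Wr[H_{k_\ell},\dots,H_{k_1}]\bigr)_x=-\taulam_x/\taulam$ by \eqref{eq:taulamWr}, and matching the coefficients of $\partial_x^{\ell+1}$ and $\partial_x^{\ell}$ on the two sides of $A\circ T(x,y,\partial_x)=\hat T\circ A$ then pins down $\hat T=2y\partial_x^2+x\,\partial_x+\ell+4y(\taulam_x/\taulam)_x$. Conjugating by $\taulam$ and using $\taulam_{xx}/\taulam=(\taulam_x/\taulam)_x+(\taulam_x/\taulam)^2$ reproduces exactly $\Tlam+\ell$ with $\Tlam$ as in \eqref{eq:Tlamdef}. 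Hence
\[ (\Tlam+\ell)\Hlam_n \;=\; \taulam\,\hat T\bigl(\Hlam_n/\taulam\bigr) \;=\; m\,\Hlam_n, \]
so $\Tlam\Hlam_n=(m-\ell)\Hlam_n=(n-N)\Hlam_n$, as claimed. The two Wronskian identities are routine; the delicate point — the main obstacle — is verifying this last operator identity, and in particular pinning down the additive constant $\ell$, which is exactly what reconciles the Wronskian index $m=n-N+\ell$ with the eigenvalue $n-N$ and which arises from commuting $x\,\partial_x$ past the leading factor $\partial_x^{\ell}$ of $A$.
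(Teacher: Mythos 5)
Your argument is correct, and it proves the proposition by a more self-contained route than the paper does. The paper defers Proposition \ref{prop:Tlam} to Proposition \ref{prop:TRmR} and proves the latter at the level of the generating function: using the umbral map $\bH$ it identifies the classical operator $T(x,y,\partial_x)$ as the image of the Cauchy--Euler operator, establishes an intertwining relation between $T$ and the Wronskian operator $\Kslam$ by exactly the kernel-plus-order-count argument you use (the seeds $H_{k_i}$ are $T$-eigenfunctions, so the order-$\ell$ commutator $[T,\Kslam]$ must be a multiple of $\Kslam$), and then reads the eigenvalue $m$ off the expansion \eqref{eq:Psilamgf} of $\Psilam$, the $z^{-\ell}$ normalization in \eqref{eq:KslamPsi} supplying the index shift. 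You instead stay at the level of individual polynomials: $\Hlam_n/\taulam$ is a nonzero multiple of $\Kslam H_{n-N+\ell}$, right-divisibility gives $\hat{T}$ with $\Kslam\circ T=\hat{T}\circ\Kslam$, matching the two top coefficients gives $\hat{T}=\tTlam+\ell$, and conjugation by $\taulam$ finishes. So the key mechanism (Darboux intertwining forced by the $T$-invariance of $\ker\Kslam$) is the same, but your packaging avoids the generating-function and anti-isomorphism apparatus entirely, and -- notably -- it bookkeeps the additive constant correctly: the true relation is $\Kslam\circ T=(\tTlam+\ell)\circ\Kslam$, since $[x\partial_x,\partial_x^\ell]=-\ell\,\partial_x^\ell$ also contributes to the leading coefficient of the commutator. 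The paper's proof of Proposition \ref{prop:TRmR} writes the intertwining as $\tTlam\circ\Kslam=\Kslam\circ T$ and computes $\mu=-4y\,\ulam$, dropping this $-\ell$; the conclusions \eqref{eq:ET}, \eqref{eq:TRmR} and \eqref{eq:THn} are nonetheless correct because the factor $z^\ell$ in $\Kslam\Psi_0=z^\ell\Psilam$ absorbs precisely that constant -- which is exactly the reconciliation you single out as the delicate point. In that respect your write-up is, if anything, the more carefully stated of the two.
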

\noindent
We postpone the proof until Proposition \ref{prop:TRmR}, below. Note
that, since $\tau^{(\emptyset)}=1$, the classical Hermite differential
equation \eqref{eq:hermde2} is the particular case of the above result
corresponding to the trivial partition.

Modulo certain regularity assumptions, the polynomials
$\Hlam_n(x,y),\; n\in \Ilam$ constitute an orthogonal family.
Say that $\lambda$ is an \emph{even} partition if ${\ell}$ is even and
if $\lambda_{2i-1} = \lambda_{2i}$ for every $i=1,\ldots, {\ell}/2$.
Equivalently, $\lambda$ is even if and only if
$\kaplam(m) \geq 0$ for all $m\in\Jlam$.  The following
result was proved by Krein and Adler (see \cite{GGM}).
\begin{prop}
  If $\lambda$ is an even partition and $y<0$ is fixed, then
  $\taulam_y(x):= \taulam(x,y)$ has no real zeros.
\end{prop}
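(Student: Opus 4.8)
The plan is to reduce the statement to the classical theorem of Krein and Adler, which says that a Wronskian $\Wr[h_{k_1},\ldots,h_{k_N}]$ of classical Hermite polynomials has no real zeros precisely when the index set $\{k_1,\ldots,k_N\}$ is a union of consecutive pairs $\{2i,2i+1\}$. First I would translate the evenness hypothesis on $\lambda$ into the combinatorial statement that the Maya diagram $\Mlam$, and hence the index set $\cKlam_N$ of the \emph{exceptional} degrees \eqref{eq:lamexcept}, has exactly this ``paired'' structure; this is where the characterization ``$\lambda$ is even iff $\kaplam(m)\geq 0$ for all $m\in\Jlam$'' and the description of $\cKlam_N$ as the non-negative elements of $\Mlam+N$ come in. The crucial observation is that $\taulam(x,y)$ is, up to a nonzero constant and the scaling \eqref{eq:xopscaling}, the Wronskian $\Wr[h_{k_\ell},\ldots,h_{k_1}]$ with $\{k_1,\ldots,k_\ell\}=\cKlam$; by the ``change the size of the matrix'' remark following \eqref{eq:Slamwronsk} this Wronskian equals (up to constant) $\Wr[h_{j_1},\ldots,h_{j_N}]$ where $\{j_1,\ldots,j_N\}=\cKlam_N$ is the full exceptional index set.

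Next I would spell out the scaling reduction. By \eqref{eq:taulamWr} and \eqref{eq:xopscaling}, for fixed $y<0$ we have, up to a nonzero multiplicative constant,
\[
\taulam(x,y) = (-y)^{N/2}\,\Wr[h_{k_\ell},\ldots,h_{k_1}]\!\lp \tfrac{x}{\sqrt{-4y}}\rp,
\]
so the real zeros of $x\mapsto\taulam(x,y)$ are in bijection with the real zeros of the univariate Wronskian of classical Hermite polynomials with index set $\cKlam$ (equivalently $\cKlam_N$). Hence it suffices to show this univariate Wronskian has no real zeros.

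Then I would invoke the Krein--Adler criterion: the Wronskian $\Wr[h_{j} : j\in K]$ is nodeless (strictly positive, after normalizing the sign) on $\R$ whenever $K\subset\Nz$ satisfies Krein's condition, namely $\prod_{j\in K}(m-j)\geq 0$ for every $m\in\Nz$. I would verify that $K=\cKlam_N$ satisfies exactly this condition when $\lambda$ is even: the function $m\mapsto \prod_{j\in\cKlam_N}(m-j)$ is, by construction, essentially $\kaplam$ evaluated on a shift, and the stated equivalence ``$\lambda$ even iff $\kaplam(m)\geq 0$ for all $m\in\Jlam$'' is precisely Krein's sign condition. The main obstacle, and the only place requiring care, is making the bookkeeping between $\cKlam$, $\cKlam_N$, $\Jlam$, and the shifted polynomial $\kaplam$ airtight, so that ``$\lambda$ even'' really does land on Krein's hypothesis with the correct index set and shift; once that is pinned down, the conclusion is immediate from the classical result. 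I would cite \cite{GGM} (and the Krein--Adler literature therein) for the nodelessness statement itself rather than reproving it.
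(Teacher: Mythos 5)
The paper gives no proof of this proposition at all --- it simply attributes the result to Krein and Adler and cites \cite{GGM} --- and your proposal takes essentially the same route, reducing via the scaling \eqref{eq:xopscaling} to nodelessness of a univariate Hermite Wronskian and then citing the Krein--Adler criterion, so this matches the paper's treatment with the bookkeeping made explicit. One small slip: the admissible index sets are unions of pairs of \emph{consecutive integers} $\{k,k+1\}$ (not necessarily of the form $\{2i,2i+1\}$), but the Krein product condition you actually invoke, which is the paper's stated equivalence $\kaplam(m)\geq 0$ for $m\in\Jlam$, is the correct criterion, so the argument stands.
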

\noindent
Moreover, we have the following, proved in \cite{GGM}.  Set
\begin{align}
  \wlam_y(x)
  &=  \frac{\e^{\frac{x^2}{4y}}}{\taulam_y(x)^2},\\ 
  \label{eq:nulamdef}
  \nulam_m(y)
  &= 
    2(-\pi y)^{1/2} (-2y)^m \frac{(m+\ell)!}{\kaplam(m)}=2^{-\ell}
    \frac{\nu_{m+\ell}(y)}{\kaplam(m)},\quad 
    m\in \Jlam.
\end{align}
where
\begin{equation}
  \label{eq:kaplamdef}
  \kaplam(m) = \prod_{i=1}^{\ell} (m-m_i(\lambda)).
\end{equation}

\begin{prop}
  \label{prop:Hlamortho}
  If $\lambda$ is an even partition and $y<0$ is fixed, then the
  corresponding sequence of polynomials $\Hlam_m(x,y),\; n\in\Jlam$ is
  complete and orthogonal relative to $\wlam_y(x)dx$.  Indeed, for
  $n_1,n_2\in \Ilam$, we have
  \begin{equation}
    \label{eq:xoporth}
    \int_{-\infty}^\infty \Hlam_{n_1}(x,y) \Hlam_{n_2}(x,y)
    \wlam_y(x)dx =  \nulam_{n-N}(y) 
    \delta_{n_1n_2}.
  \end{equation}
\end{prop}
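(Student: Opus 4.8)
Since this is essentially the Krein--Adler theorem packaged with the norm computation of \cite{GGM}, the plan is to assemble three ingredients: the Sturm--Liouville symmetry of $\Tlam$, the Darboux/Wronskian intertwining with the classical Hermite operator, and a spectral-theoretic completeness argument. First I would pass to a Gaussian-weighted problem for the rational functions $\Rlam_m$. Because $\lambda$ is even, the preceding proposition gives that $\taulam_y(x):=\taulam(x,y)$ has no real zeros, so $\wlam_y=\e^{x^2/(4y)}/\taulam_y^2$ is smooth, strictly positive and Gaussian-decaying, and the map $f\mapsto f/\taulam_y$ is a unitary isomorphism $L^2(\wlam_y\,dx)\to L^2(\e^{x^2/(4y)}\,dx)$ carrying $\Hlam_n$ to $\Rlam_{n-N}=\Hlam_n/\taulam$. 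Hence it suffices to prove that $\{\Rlam_m:m\in\Jlam\}$ is a complete orthogonal system in $L^2(\e^{x^2/(4y)}\,dx)$ with $\lVert\Rlam_m\rVert^2=\nulam_m(y)$.

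Orthogonality is the easy part. A direct inspection shows that the operator $\Tlam$ of \eqref{eq:Tlamdef} equals $\wlam_y^{-1}\,\partial_x\!\big(2y\,\wlam_y\,\partial_x\big)$ plus a zeroth-order term, hence is formally symmetric with respect to $\wlam_y\,dx$; the boundary terms produced by integration by parts vanish because $\Hlam_n\,\wlam_y$ and its first derivative decay like a Gaussian times a bounded rational function. Combined with the eigenvalue relation \eqref{eq:THn} and the injectivity of $n\mapsto n-N$ on $\Ilam$, this forces $\la\Hlam_{n_1},\Hlam_{n_2}\ra_H=0$ whenever $n_1\neq n_2$.

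For the norm I would use the Darboux picture. Set $k=n-N+\ell$; since $\min\Jlam=-\ell$ one checks $k\in\Nz\setminus\cKlam$, so $H_k$ is a genuine classical Hermite polynomial. Let $\sA_x$ be the monic, order-$\ell$ operator $\sA_x g=\Wr[H_{k_\ell},\dots,H_{k_1},g]/\Wr[H_{k_\ell},\dots,H_{k_1}]$; then \eqref{eq:Hlamdef} and \eqref{eq:taulamWr} give $\Rlam_{n-N}=(\prod_i(k-k_i))^{-1}\sA_x H_k$, and using $k_i=m_i(\lambda)+\ell$ together with \eqref{eq:kaplamdef} one has $\prod_i(k-k_i)=\kaplam(n-N)$. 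The operator $\sA_x$ intertwines the classical operator $T$ of \eqref{eq:hermde2} with $\Tlam$ (this is the content of Proposition~\ref{prop:TRmR}), and consequently $\sA_x^\dagger\sA_x$ commutes with $T$ and annihilates $H_{k_1},\dots,H_{k_\ell}$, where $\dagger$ denotes the formal adjoint for $\e^{x^2/(4y)}\,dx$; comparing orders and leading coefficients this yields $\sA_x^\dagger\sA_x=(-2y)^{-\ell}\prod_{i=1}^\ell(T-k_i)$. Integrating by parts and invoking $TH_k=kH_k$ and the classical norms \eqref{eq:intHnHm}--\eqref{eq:nunydef}, the inner product $\la\Hlam_{n_1},\Hlam_{n_2}\ra_H$ collapses to a multiple of $\delta_{n_1n_2}$ whose value is exactly $\nulam_{n-N}(y)$ by \eqref{eq:nulamdef}; this is \eqref{eq:xoporth}.

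The genuine obstacle is completeness. I would deduce it from Sturm--Liouville spectral theory: conjugating $\Tlam$ by $\sqrt{\wlam_y}$ converts the eigenvalue problem into a Schr\"odinger equation $-\psi''+V^{(\lambda)}\psi=E\psi$ on all of $\R$, whose potential is a harmonic well plus a rational correction that is globally smooth precisely because $\taulam_y$ has no real zeros --- this is where the even hypothesis does its essential work, beyond mere regularity of the weight. Such an operator is bounded below with compact resolvent, hence has purely discrete simple spectrum whose eigenfunctions form an orthonormal basis of $L^2(\R)$, and transporting back shows $\{\Hlam_n:n\in\Ilam\}$ is complete in $L^2(\wlam_y\,dx)$. (A more hands-on alternative: if $g\in L^2(\e^{x^2/(4y)}\,dx)$ is orthogonal to every $\Rlam_m$, then $\la H_k,\sA_x^\dagger g\ra_H=0$ for all $k$, so $\sA_x^\dagger g=0$ by completeness of the classical Hermite polynomials, and one must then verify that the $\ell$-dimensional kernel of $\sA_x^\dagger$ contains no nonzero $L^2$ function, which is again precisely the Krein--Adler condition that the deleted states are non-normalizable.) I expect this completeness step, rather than the algebra of the first two steps, to be the main obstacle, since it is the one place requiring genuine analysis and the full strength of the even-partition assumption.
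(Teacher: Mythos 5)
You should first be aware that the paper contains no internal proof of Proposition~\ref{prop:Hlamortho}: it is quoted from \cite{GGM} (the Krein--Adler regularity statement immediately above it is likewise cited), and the only in-text gesture toward it is the remark after \eqref{eq:Rlamorth} that orthogonality follows from the symmetry of $\tTlam$ with respect to $\la\cdot,\cdot\ra_H$. So the relevant comparison is with the standard Darboux/Krein--Adler argument in the literature, and your first two steps reproduce it correctly: the gauge map $f\mapsto f/\taulam_y$ is unitary because $\taulam_y$ has no real zeros; $\Tlam$ (equivalently $\tTlam$) is formally symmetric for the indicated weight, which with \eqref{eq:THn} gives orthogonality; and the factorization $\sA^\dagger\sA=(-2y)^{-\ell}\prod_i(T-k_i)$ together with $\prod_i(k-k_i)=\kaplam(n-N)$ and \eqref{eq:intHnHm}--\eqref{eq:nunydef} does land exactly on \eqref{eq:nulamdef}. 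Two small points there deserve a sentence each: the identification of $\sA^\dagger\sA$ with a polynomial in $T$ needs the (standard, but not free) fact that an operator with rational coefficients commuting with $T$ and acting on the Hermite eigenbasis must be diagonal in it, and your intertwining relation is $\tTlam\circ\Kslam=\Kslam\circ T$, not an intertwining with $\Tlam$ itself; your computation is carried out in the $\Rlam_m$ gauge, so this is only a wording slip.

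The genuine gap is in your primary completeness argument. Compact resolvent gives an orthonormal basis of $L^2(\R)$ consisting of \emph{all} eigenfunctions of the Schr\"odinger operator, but it does not show that these eigenfunctions are precisely the functions $\Hlam_n\sqrt{\wlam_y}$, $n\in\Ilam$; a priori the operator could have additional bound states at other energies (or at the deleted levels), and ``transporting back'' would then only prove that the exceptional polynomials sit inside, not span, $L^2(\wlam_y\,dx)$. Ruling out such extra eigenfunctions is exactly the content of the Krein--Adler admissibility condition, so the heavy lifting cannot be delegated to general spectral theory. Your parenthetical alternative is the correct route (and is essentially what \cite{GGM} does), but as written it needs two repairs: orthogonality of $g$ to all $\Rlam_m$ only gives $\la H_k,\sA^\dagger g\ra_H=0$ for $k\in\Nz\setminus\cKlam$, with the remaining cases $k\in\cKlam$ supplied by the trivial identity $\sA H_{k_i}=0$ (so the conclusion stands, but not for the reason stated); and one must justify that $\sA^\dagger g$ is meaningful for a mere $L^2$ function $g$ (interpret $\sA^\dagger g=0$ distributionally, then use ODE regularity --- the leading coefficient of $\sA^\dagger$ is constant and $\taulam_y$ has no real zeros --- to conclude $g$ is a classical solution), after which the explicit description of $\ker\sA^\dagger$ as $\e^{-x^2/(4y)}$ times rational functions shows no nonzero kernel element is normalizable, completing the proof.
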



It turns out that the eigenvalue relation \eqref{eq:THn} and the
orthogonality relation \eqref{eq:xoporth} are easier to express and
understand if we change gauge and consider the following
\emph{exceptional rational functions}.    Let
\begin{equation}
  \label{eq:kaplamndef}
  \kaplamn{l}(m) =  \prod_{k\in  \cKlam_{l}} (m-k) =
                   \kaplam(m-l)F_{l-\ell}(m),\quad l\geq \ell,
\end{equation}
denote the monic polynomial with simple zeroes precisely at the
elements of $\cKlam_l$ as defined in \eqref{eq:cKlamn}. We can now
define
\begin{align}
  \label{eq:Rlamm}
    \Rlam_m
    &=\frac{\tau^{(m\triangleright
      \lambda)}}{\taulam}  = \frac{\Hlam_{m+N}}{\taulam} ,\quad m\in
      \Jlam.\\
      \intertext{Equivalently, for $l\ge \ell$, we have}
    \Rlam_{k-l}  &=\frac{1}{\kaplam_l(k)}
    \frac{\Wr[H_{k_{l}},\ldots, H_{k_1},
      H_{k}]}{\Wr[H_{k_{l}},\ldots, H_{k_1}]}   ,\quad 
    k\notin\cKlam_l.
\end{align}

The eigenvalue and orthogonality relations are now easier to
formulate. Set
\begin{equation}
  \label{eq:tTdef}
  \begin{aligned}
    \tTlam(x,y,\partial_x)
    &= (\taulam)^{-1} \circ \Tlam \circ \taulam\\
    &= 2y\partial_x^2+ x\partial_x + 4 y \lp\log  \taulam(x,y)\rp_{xx} 
  \end{aligned}
\end{equation}
\begin{prop}
  \label{prop:TRmR}
 With the above definitions, we have
 \begin{equation}
   \label{eq:TRmR}
   \tTlam\Rlam_m = m \Rlam_m,\quad m\in \Jlam.
 \end{equation}
\end{prop}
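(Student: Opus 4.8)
The plan is to derive \eqref{eq:TRmR} from a Crum-type intertwining for the plain second-order operator $T:=2y\partial_x^2+x\partial_x$, which by the scaled Hermite equation \eqref{eq:hermde2} satisfies $TH_k=kH_k$ for every $k\in\Nz$; throughout, $y$ is a fixed parameter and all identities below are identities of rational functions of $x$. Fix $m\in\Jlam$, put $\ell=\ell(\lambda)$, let $k_1>\dots>k_\ell$ be the elements of $\cKlam$, and set $k:=m+\ell$. Since $m\notin\Mlam$ and $\Mlam\supseteq\{-\ell-1,-\ell-2,\dots\}$, we get $m\ge-\ell$, so $k$ is a non-negative integer, and $k=m+\ell$ lies outside $\cKlam=\cKlam_{\ell}$ (the set of non-negative elements of $\Mlam+\ell$) precisely because $m\notin\Mlam$. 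By \eqref{eq:taulamWr}, $\tau:=\Wr_x[H_{k_\ell},\dots,H_{k_1}]$ differs from $\taulam$ only by a nonzero constant, so in \eqref{eq:tTdef} we may replace $\log\taulam$ by $\log\tau$; and by \eqref{eq:Rlamm}, $\Rlam_m=\kaplamn{\ell}(k)^{-1}A_\tau H_k$ with $A_\tau g:=\Wr_x[H_{k_\ell},\dots,H_{k_1},g]/\tau$ and $\kaplamn{\ell}(k)\ne0$. So everything reduces to proving
\[\tTlam\,(A_\tau H_k)=(k-\ell)\,(A_\tau H_k).\]

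This I would get from the following lemma, proved by induction on $p\ge0$: let $T_0:=2y\partial_x^2+x\partial_x+e_0(x)$, let $\phi_1,\dots,\phi_p$ satisfy $T_0\phi_i=\nu_i\phi_i$ with $\tau_0:=\Wr_x[\phi_1,\dots,\phi_p]\not\equiv0$, and set $\hat T_0:=T_0+4y(\log\tau_0)_{xx}$; then for every $g$ with $T_0g=\nu g$ the function $Ag:=\Wr_x[\phi_1,\dots,\phi_p,g]/\tau_0$ satisfies $\hat T_0(Ag)=(\nu-p)(Ag)$. The case $p=0$ is vacuous. For $p=1$, write $w=\phi_1'/\phi_1$; differentiating $2y\phi_1''+x\phi_1'=(\nu_1-e_0)\phi_1$ gives $2yw''=-e_0'-w-xw'-4yww'$, after which — repeatedly using the eigenvalue relations to eliminate $2y(\cdot)''$ — a short computation yields $2y(Ag)''+x(Ag)'=(\nu-e_0-1-4yw')(g'-wg)$; since $Ag=g'-wg$ and the zeroth-order coefficient of $\hat T_0$ is $e_0+4y(\log\phi_1)_{xx}=e_0+4yw'$, this gives $\hat T_0(Ag)=(\nu-1)(Ag)$. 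For the inductive step (from $p-1$ to $p$, with $p\ge2$), set $A_1h:=\Wr_x[\phi_1,h]/\phi_1$; by the $p=1$ case, $A_1\phi_2,\dots,A_1\phi_p,A_1g$ are eigenfunctions of $\hat T_1:=T_0+4y(\log\phi_1)_{xx}$ with eigenvalues $\nu_2-1,\dots,\nu_p-1,\nu-1$. Applying the inductive hypothesis to $\hat T_1$, the $p-1$ functions $A_1\phi_i$, and the test function $A_1g$, together with the classical Wronskian identities
\[\Wr_x[u_1/v,\dots,u_q/v]=v^{-q}\Wr_x[u_1,\dots,u_q],\qquad
\Wr_x[\Wr_x[\phi_1,f_1],\dots,\Wr_x[\phi_1,f_q]]=\phi_1^{\,q-1}\Wr_x[\phi_1,f_1,\dots,f_q],\]
yields $\Wr_x[A_1\phi_2,\dots,A_1\phi_p]=\phi_1^{-1}\tau_0$ and $\Wr_x[A_1\phi_2,\dots,A_1\phi_p,A_1g]/\Wr_x[A_1\phi_2,\dots,A_1\phi_p]=Ag$, while the zeroth-order terms telescope: $4y(\log\phi_1)_{xx}+4y(\log(\phi_1^{-1}\tau_0))_{xx}=4y(\log\tau_0)_{xx}$. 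Since the resulting eigenvalue is $(\nu-1)-(p-1)=\nu-p$, the lemma holds for $p$.

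Applying the lemma with $e_0=0$, $(\phi_1,\dots,\phi_p)=(H_{k_\ell},\dots,H_{k_1})$, $g=H_k$ and $\nu=k$ gives $\hat T_0=\tTlam$ and $\tTlam(A_\tau H_k)=(k-\ell)A_\tau H_k$, whence $\tTlam\Rlam_m=(k-\ell)\Rlam_m=m\Rlam_m$ for every $m\in\Jlam$. As a byproduct, conjugating back by $\Tlam=\taulam\circ\tTlam\circ(\taulam)^{-1}$ and using $\Hlam_n=\taulam\Rlam_{n-N}$ (from \eqref{eq:tTdef} and \eqref{eq:Rlamm}) one recovers Proposition~\ref{prop:Tlam}. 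The main obstacle is the bookkeeping in the lemma: getting the eigenvalue shift to be \emph{exactly} $p$ and the zeroth-order correction to be \emph{exactly} $4y(\log\tau_0)_{xx}$ forces one to track carefully the powers of $\phi_1$ produced by the iterated Wronskian identities and to check the telescoping of the log-Wronskian terms; the $p=1$ computation, while elementary, also has to be done with care, as $T_0$ is not in self-adjoint form.
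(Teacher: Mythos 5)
Your proof is correct, and it reaches \eqref{eq:TRmR} by a genuinely different route than the paper. The paper first converts \eqref{eq:TRmR} into the single generating-function identity \eqref{eq:ET}, $\rE(\partial_z,z)\Psilam=\tTlam\Psilam$, and then proves it in one shot: the umbral operator $\bH$ identifies the relevant dual operator with the classical $T=2y\partial_x^2+x\partial_x$, and the intertwining of $T$ with the $\ell$-th order Wronskian operator $\Kslam$ of \eqref{eq:AlamWr} is established by noting that $[T,\Kslam]$ has order $\ell$ and annihilates the $\ell$-dimensional kernel of $\Kslam$, hence equals $\mu\,\Kslam$ with $\mu$ read off from leading coefficients. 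You bypass both the generating function and the anti-isomorphism: you reduce directly via \eqref{eq:Rlamm} and \eqref{eq:AlamWr} to $\tTlam\bigl(\Kslam H_{m+\ell}\bigr)=m\,\Kslam H_{m+\ell}$ and prove it by a Crum-type induction on the number of Darboux steps, with an explicit one-step computation (which checks out, including the general potential term $e_0$) and the two classical Wronskian identities to telescope both the Wronskians and the $4y(\log\cdot)_{xx}$ corrections; the index bookkeeping ($m\ge-\ell$, $m+\ell\notin\cKlam$, $\kaplamn{\ell}(m+\ell)\ne0$) is also handled correctly. What your approach buys is exactly the delicate constant: each step shifts the eigenvalue down by one, so the total shift $-\ell$ converting the classical eigenvalue $k=m+\ell$ into $m$ is tracked automatically; in the paper's one-shot argument this same shift arises because the leading coefficient of $[T,\Kslam]$ receives, besides the $-4y\ulam$ coming from the subleading coefficient of $\Kslam$, a contribution $-\ell$ from $[x\partial_x,\partial_x^{\ell}]=-\ell\,\partial_x^{\ell}$, so the intertwining relation is $\tTlam\circ\Kslam=\Kslam\circ(T-\ell)$ rather than $\tTlam\circ\Kslam=\Kslam\circ T$ as displayed there — your induction gets this bookkeeping right with no extra effort. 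What the paper's route buys in exchange is brevity and the fact that the order-count/kernel argument and the $\bH$-machinery are reused throughout Section \ref{sect:bispect}.
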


\noindent
This result was proved in \cite{GGM} and \cite{GKKM}, but we will give
a novel, simplified proof in Section \ref{sect:intertwiner} once we
introduce the intertwining operator.

Note that \eqref{eq:xoporth} may be restated quite simply as
\begin{equation}
  \label{eq:Rlamorth}
  \la \Rlam_{m_1}(x,y) ,\Rlam_{m_2}(x,y) \ra_H = \delta_{m_1,m_2}
  \nulam_{m_1}(y),\quad m_1, m_2\in \Jlam,
\end{equation}
where the inner product is
the same as in \eqref{eq:expx2ip}.  The orthogonality of
$\Rlam_m,\; m\in \Jlam$ stems from the fact $\tTlam$ is a symmetric
operator relative to the above inner product.  This, in turn, is a
consequence of the fact that the classical
$T(x,y,\partial_x)$ is symmetric
relative to the same inner product, and the fact that $\tT$ is a
modification of $T$ by a zeroth order term.

\subsection{Semi-Stationary Wave Functions as Generating Functions}
Thus far we have considered dynamical wave functions depending on the
infinitely many variables of the KP hierarchy and {stationary wave
  functions} obtained from them by setting all time variables except
the first equal to zero.  It turns out that exceptional Hermite
polynomials are best studied in the intermediate case in which the
first \textit{and} second KP time variables are retained.

Note, for example, that the generating function \eqref{eq:Psigf} for
the bivariate form of the classical Hermite polynomials is a
restricted vacuum wave\ function in which all time variables $t_i$ for
$i>2$ have been set to zero:
\begin{equation}
  \psi_0(x,y,0,0,\ldots,z) =
  \exp(xz + y z^2)=\Psi_0(x,y,z).
\end{equation}

The main result of this section is to demonstrate the exceptional
Hermite polynomials are similarly generated by the wave functions of
certain points in $\Grad$ indexed by partitions.  Many of their known
properties and answers to some open questions concerning them can be
derived from the bispectrality of these generating functions and
Wilson's bispectral involution.  We will return to this point in the
sections to follow.


Fix a partition $\lambda$, and let
$N=|\lambda|,\ell=\ell(\lambda)$. Define $\Wlam \in \Grad$ as
\begin{equation}
  \label{eq:Wlamdef}
  \Wlam(z):= \lspan \{  z^m \colon m\in\Jlam \},
\end{equation}
where $\Jlam$ is the complement of the corresponding Maya diagram as
per \eqref{eq:Jlamdef}.  Set
\begin{equation}
  \label{eq:Clamdef}
  \Clam    =\lspan\left\{ \bev{k}0\colon
    k\in \cKlam\right\}
\end{equation}
\begin{prop}
  \label{prop:WClam}
  We have $\Wlam = W_{\Clam}$. 
\end{prop}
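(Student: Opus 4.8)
The plan is to compute both sides as explicit spans of Laurent monomials in $z$ and match the resulting exponent sets using the index-set combinatorics of Section~\ref{sect:SF}. First I would observe that $\Clam$ is a homogeneous subspace of $\cC$ with basis $\{\bev{k}{0}\colon k\in\cKlam\}$, all of whose members are supported at $\zeta=0$; hence $\dim\Clam=|\cKlam|=\ell$ and, by \eqref{eq:CWdef}, $q_{\Clam}(z)=z^{\ell}$. (Since $\cKlam\subset\N$, we moreover have $\bev{0}{0}\notin\Clam$, so $\Clam$ is reduced, consistently with Proposition~\ref{prop:Creduced}.) Next I would compute the joint kernel: because $\la\bev{k}{0},z^{j}\ra=k!\,\delta_{jk}$ for $j,k\in\Nz$, a polynomial $\sum_{j}a_{j}z^{j}$ lies in $\Ker\Clam$ exactly when $a_{k}=0$ for every $k\in\cKlam$, so
\[
  \Ker\Clam=\lspan\{z^{j}\colon j\in\Nz\setminus\cKlam\},
  \qquad
  W_{\Clam}=z^{-\ell}\,\Ker\Clam=\lspan\{z^{\,j-\ell}\colon j\in\Nz\setminus\cKlam\}.
\]

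It then remains to identify the exponent set $(\Nz\setminus\cKlam)-\ell$ with $\Jlam$, equivalently to show $\Nz\setminus\cKlam=\Jlam+\ell=\Z\setminus(\Mlam+\ell)$, where the last equality holds because translation is a bijection of $\Z$ and $\Jlam=\Z\setminus\Mlam$. This is the one substantive step. By the remark following \eqref{eq:cKlamdef}, $\cKlam=\cKlam_{\ell}$ consists precisely of the non-negative elements of $\Mlam+\ell$; on the other hand, for $i>\ell$ one has $\lambda_{i}=0$, hence $m_{i}(\lambda)=-i$, so the negative elements of $\Mlam+\ell$ are exactly $-1,-2,-3,\dots$. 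Therefore $\Mlam+\ell=\cKlam\sqcup\Z_{<0}$, and taking complements in $\Z$ gives $\Z\setminus(\Mlam+\ell)=\Nz\setminus\cKlam$. Substituting back, $W_{\Clam}=\lspan\{z^{m}\colon m\in\Jlam\}=\Wlam$.

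I do not anticipate a genuine obstacle: once the monomial bases are written out, the identification is immediate, and the only combinatorial input --- the decomposition $\Mlam+\ell=\cKlam\sqcup\Z_{<0}$ --- merely repackages the already-noted fact that $\cKlam$ collects the non-negative entries of $\Mlam+\ell$, together with the trivial observation that the tail of the sequence $(m_{i}(\lambda))_{i}$ runs through the negative integers.
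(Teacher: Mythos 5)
Your proof is correct and follows the same route as the paper's: compute $\Ker\Clam$ as the span of the monomials $z^{j}$ with $j\in\Nz\setminus\cKlam$, note $q_{\Clam}(z)=z^{\ell}$, and divide by $z^{\ell}$ to recover $\Wlam$. The only difference is that you spell out explicitly the combinatorial identification $\Nz\setminus\cKlam=\Jlam+\ell$ (via $\Mlam+\ell=\cKlam\sqcup\Z_{<0}$), which the paper leaves implicit by simply citing \eqref{eq:Jlamdef} and \eqref{eq:Wlamdef}.
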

\begin{proof}
  By construction,
  $\ker \Clam(z) = \{ z^k \colon k\in \Nz\setminus\cKlam\}$.  By
  \eqref{eq:Jlamdef} and \eqref{eq:Wlamdef}, it follows that
  \[ W_{\Clam}(z) = z^{-\ell}  \ker \Clam(z) = \Wlam(z) .\]
\end{proof}

\begin{definition}
  We refer to
  \begin{equation}\label{eqn:semiwave}
    \Psilam(x,y,z)
    := \psi_{\Wlam}(x,y,0,\ldots; z),
\end{equation}
obtained by letting $x=t_1$, $y=t_2$ and $t_i=0$ for $i>2$ in the
dynamical wave functions the \textit{semi-stationary wave function}
associated to $\Wlam$.
\end{definition}
Just as relation \eqref{eq:Psigf} shows that the vacuum wave function
serves as a generating function for the classical Hermite polynomials,
the semi-stationary wave function $\Psilam(x,y,z)$ serves as a
generating function for the corresponding exceptional Hermite rational
functions.  To be more precise, we have the following.
\begin{thm}\label{thm:generatingxops}
  We have
  \begin{align}
    \label{eq:Psilamgf}
    \Psilam(x,y,z)
    &= \sum_{m\in\Jlam}
      \frac{\kaplam(m)}{(m+\ell)!}\Rlam_m(x,y)\, z^m  ,\\
        \label{eq:Psilamgf0}
    &= \sum_{m=-\ell}^\infty
      \frac{\kaplam(m)}{(m+\ell)!}\Rlam_m(x,y)\, z^m ,\\
    \label{eq:Psilamgf1}
    z^N\taulam(x,y)\Psilam(x,y,z)
    &= \sum_{n\in \Ilam}  \frac{\kaplamn{N}(n)}{n!}\;\Hlam_n(x,y)\, z^n,\\
    \label{eq:Psilamgf10}
    &= \sum_{n=0}^\infty  \frac{\kaplamn{N}(n)}{n!}\;\Hlam_n(x,y)\,z^n,
  \end{align}
  with $\kaplam(m), \kaplamn{N}(n)$ the polynomials defined in
  \eqref{eq:kaplamdef} and \eqref{eq:kaplamndef}.
\end{thm}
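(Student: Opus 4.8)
The plan is to compute $\Psilam$ directly from the Wronskian formula~\eqref{eq:PsiWdef} for the dynamical wave function, applied to $\Wlam=W_{\Clam}$ (Proposition~\ref{prop:WClam}), and then to recognise the resulting coefficients. Let $k_1>\cdots>k_\ell$ be the elements of $\cKlam$, so that $\Clam$ has the basis $\bev{k_1}0,\ldots,\bev{k_\ell}0$; since all of these are supported at $\zeta=0$ we have $q_{\Clam}(z)=z^\ell$. The associated functions $\phi_i(\bt)=\la\bev{k_i}0,\psi_0(\bt,z)\ra=k_i!\,B_{k_i}(t_1,\ldots,t_{k_i})$ reduce, under the specialisation $t_1=x$, $t_2=y$, $t_i=0$ for $i>2$, to the bivariate classical Hermite polynomials $H_{k_i}(x,y)$ by~\eqref{eq:HBell}, while $\psi_0$ itself reduces to $\Psi_0(x,y,z)=\exp(xz+yz^2)$. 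Because the Wronskians in~\eqref{eq:PsiWdef} and~\eqref{eq:tauC} involve only $\partial_{t_1}$, this specialisation commutes with forming them, and so
\[
  \Psilam(x,y,z)=\frac{\Wr_x[H_{k_1}(x,y),\ldots,H_{k_\ell}(x,y),\Psi_0(x,y,z)]}{z^\ell\,\Wr_x[H_{k_1}(x,y),\ldots,H_{k_\ell}(x,y)]}.
\]

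The next step is to expand the numerator by multilinearity in its final column using $\Psi_0(x,y,z)=\sum_{p\ge0}H_p(x,y)z^p/p!$. The summand with $p\in\cKlam$ is a Wronskian with a repeated column, hence zero; for every other $p$, comparison with~\eqref{eq:Hlamdef} identifies $\Wr_x[H_{k_1},\ldots,H_{k_\ell},H_p]$ with $\big(\prod_{i<j}(k_i-k_j)\big)\big(\prod_{i=1}^\ell(p-k_i)\big)\Hlam_{p+N-\ell}$, and~\eqref{eq:taulamWr} identifies the denominator $\Wr_x[H_{k_1},\ldots,H_{k_\ell}]$ with $\big(\prod_{i<j}(k_i-k_j)\big)\taulam$ — both identifications up to a sign $(-1)^{\ell(\ell-1)/2}$, common to numerator and denominator, coming from restoring the column order used in~\eqref{eq:Hlamdef}. (Both also hold trivially for $p\in\cKlam$, both sides then vanishing.) Cancelling the Vandermonde factor leaves
\[
  \Psilam(x,y,z)=\frac{1}{z^\ell\,\taulam(x,y)}\sum_{p\ge0}\frac{z^p}{p!}\Big(\textstyle\prod_{i=1}^\ell(p-k_i)\Big)\Hlam_{p+N-\ell}(x,y).
\]

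It then remains to re-index. Since $k_i=m_i(\lambda)+\ell$, one has $\prod_{i=1}^\ell(p-k_i)=\kaplam(p-\ell)$ by~\eqref{eq:kaplamdef}; putting $m=p-\ell$ and using $\Rlam_m=\Hlam_{m+N}/\taulam$ from~\eqref{eq:Rlamm} gives~\eqref{eq:Psilamgf0}. Since $\kaplam(m)$ vanishes exactly for $m\in\{m_1(\lambda),\ldots,m_\ell(\lambda)\}=\Mlam\cap\{m\ge-\ell\}$, the surviving indices are precisely $m\in\Jlam$, which yields~\eqref{eq:Psilamgf}. Multiplying~\eqref{eq:Psilamgf0} by $z^N\taulam$ and setting $n=m+N$ gives $z^N\taulam\Psilam=\sum_{n\ge N-\ell}\frac{\kaplam(n-N)}{(n-N+\ell)!}\Hlam_n z^n$; the identity $\kaplamn{N}(n)=\kaplam(n-N)\,n!/(n-N+\ell)!$, which follows from~\eqref{eq:kaplamndef} together with $m_i(\lambda)=-i$ for $i>\ell$, converts this into~\eqref{eq:Psilamgf10}, and discarding the terms with vanishing coefficient — these are exactly those indexed by the set $\cKlam_N=\Nz\setminus\Ilam$ of exceptional degrees, cf.~\eqref{eq:lamexcept} — gives~\eqref{eq:Psilamgf1}.

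The argument is mostly bookkeeping once the Wronskian formula is available. The points needing care are: that the specialisation $t_i=0$ for $i>2$ commutes with the $\partial_{t_1}$-Wronskians and sends $\phi_i$ to $H_{k_i}(x,y)$; the combinatorial identity relating $\kaplam$ to $\kaplamn N$; and the convention that a term with vanishing coefficient is read as $0$ at indices $m\notin\Jlam$ (resp.\ $n\notin\Ilam$) where $\Rlam_m$ (resp.\ $\Hlam_n$) is not otherwise defined — equivalently, one may carry the genuine Wronskians of the third display, all of them well defined, right up to the last step. I expect the coefficient identity $\kaplamn N(n)=\kaplam(n-N)\,n!/(n-N+\ell)!$ to be the only place where a nontrivial (though short) computation is required.
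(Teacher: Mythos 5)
Your proof is correct and follows essentially the same route as the paper: specialize Wilson's Wronskian formula \eqref{eq:PsiWdef} to $\Wlam=W_{\Clam}$, recognize the $\phi_i$ as the classical $H_{k_i}(x,y)$, expand $\Psi_0$ in powers of $z$ in the last column, and identify the resulting Wronskian ratios with $\kaplam(m)\Rlam_m$ (equivalently $\Hlam_{m+N}/\taulam$) before reindexing. Your extra bookkeeping (the common sign $(-1)^{\ell(\ell-1)/2}$ and the identity $\kaplamn{N}(n)=\kaplam(n-N)\,n!/(n-N+\ell)!$, which is already built into \eqref{eq:kaplamndef}) only makes explicit what the paper cites via \eqref{eq:Rlamm} and \eqref{eq:kaplamndef}.
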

\noindent Note that $\kaplam(m)=0$ precisely for those $m\ge -\ell$
for which $m\notin \Jlam$.  Thus \eqref{eq:Psilamgf0} is sensible
despite the fact that $R_m$ is not defined when $m\notin \Jlam$.  A
similar remark applies to \eqref{eq:Psilamgf10}.
\begin{proof}
  Let $k_1>\cdots > k_\ell$ be the elements of $\cKlam$ as per
  \eqref{eq:cKlamdef}.  By \eqref{eq:Clamdef},
  $\bev{k_i}0,\; i=1,\ldots, \ell$ is a basis for the annihilator of
  $z^\ell\Wlam(z)$. By \eqref{eq:Psigf},
  \[ H_k(x,y) = \la \bev{k}0(z), \Psi_0(x,y,z)\ra,\quad k\in \Nz.\]
  Hence, by
  \eqref{eq:PsiWdef}
  \begin{equation}
    \label{eq:PsilamWr}
    \begin{aligned}
      \Psilam(x,y,z) &= \frac{\Wr_x[H_{k_\ell}(x,y),\ldots,
        H_{k_1}(x,y), \Psi_0(x,y, z)]}{ \Wr_x[H_{k_\ell}(x,y),\ldots,
        H_{k_{1}}(x,y)] z^{\ell}}\\
      &=\sum_{n=0}^\infty \frac{\Wr_x[H_{k_\ell}(x,y),\ldots,
        H_{k_1}(x,y), H_n(x,y)]}{ \Wr_x[H_{k_\ell}(x,y),\ldots,
        H_{k_1}(x,y)] } \frac{z^{n-\ell}}{n!}
    \end{aligned}
  \end{equation}
  By \eqref{eq:Rlamm}, for $m\in \Ilam$ and $n=m+N$, we have
  \[
    \begin{aligned}
      \frac{\Wr_x[H_{k_\ell}(x,y),\ldots, H_{k_1}(x,y), H_n(x,y)]}{
        \Wr_x[H_{k_1}(x,y),\ldots, H_{k_{\ell}}(x,y)]} &
      =  \kaplam(m) \Rlam_{m}(x,y),
    \end{aligned}
  \]
  which entails \eqref{eq:Psilamgf}.  Relation \eqref{eq:Psilamgf1}
  follows by \eqref{eq:defHlamn}, \eqref{eq:Rlamm} and
  \eqref{eq:kaplamndef}.
\end{proof}

By \eqref{eq:miwashift} and \eqref{eq:taulamdef}, the semi-stationary
wave function can also be given as
\begin{equation}
  \label{eq:Psimiwa}
  \Psilam(x,y,z)
  =  \frac{\Philam(x,y,z)}{\taulam(x,y)}\Psi_0(x,y,z) 
\end{equation}
where
\begin{equation}
  \label{eq:Philamdef}
  \Philam(x,y,z) =  \frac{N!}{d_\lambda}
  \Slam\lp x-z^{-1}, y-2^{-1}z^{-2}, -3^{-1}z^{-3},\ldots, -
  N^{-1}z^{-N}\rp ,
\end{equation}
and where $\Slam$ is the Schur function defined in
\eqref{eq:Slamwronsk}. By \eqref{eq:Slambdachi},
$\Slam(t_1,\ldots, t_N)$ is weighted-homogeneous of degree $N$
relative to the grading $\deg t_i = i$. It follows that
$\Philam(x,y,z)$ is weighted-homogeneous of degree $N$, relative to
the grading
\begin{equation}
  \label{eq:xyzgrading}
  \deg x = 1,\quad \deg y = 2,\quad \deg z = -1.
\end{equation}


Let $\Uslam$ denote the $\cP$-module spanned by exceptional Hermite
polynomials:
\begin{equation}
  \label{eq:Uslamdef}
  \Uslam(x,y) = \lspan\{\Hlam_n(x,y)\colon n\in \Ilam\} \otimes \C[y],
\end{equation}
and let
\begin{equation}
  \label{eq:Wslamdef}
  \Wslam = (\taulam)^{-1} \Uslam.
\end{equation}
We will derive a number of results regarding exceptional Hermite
polynomials by manipulating meromorphic generating functions that have
a Laurent series expansion of the form
\begin{equation}
  \label{eq:Psiseries}
  \Psi(x,y,z) = \sum_{m\in\Jlam}  F_m(x,y) z^m,\quad F_m \in \Wslam.
\end{equation}
\begin{definition}
  For a given partition $\lambda$, we will call
  $\Phi(x,y,z) \in \C[x,y,z,z^{-1}]$ a $\lambda$-generator if
  \[ \Psi(x,y,z) =\frac{\Phi(x,y,z)}{\taulam(x,y)} \e^{x z+ yz^2} \]
  has the form shown in \eqref{eq:Psiseries}.  We will use $\Flam$ to
  denote the set of all $\lambda$-generators.
\end{definition}

The semi-stationary wave function is the canonical example of a
$\lambda$-generator with $\Phi = \Philam$.  Also, observe that
multiplication by a polynomial in $y$ preserves
\eqref{eq:Psiseries}.  For this reason we regard $\Flam$ as a
$\cP$-module rather than a vector space.
In section \ref{sect:1ptcond}, we will characterize $\Flam$ in term
of 1-point functionals.


\subsection{The intertwiner}
\label{sect:intertwiner}
Let $\lambda$ be a partition.  Let $N=|\lambda|, \ell=\ell(\lambda)$,
and let $\Philam$ be as in \eqref{eq:Philamdef}. Set
\begin{equation}
  \label{eq:Kslamdef}
  \Kslam(x,y,z) =  \frac{z^\ell\Philam(x,y,z)}{\taulam(x,y)}
  = z^{\ell} + \sum_{i=1}^{\ell}
  \frac{\Ks_i(x,y)}{\taulam(x,y)} z^{{\ell}-i} , 
\end{equation}
and observe that by \eqref{eq:PsilamWr}, the coefficients
$\Ks_i(x,y)\in \C[x,y]$ are weighted-homogeneous of degree
$i=1,\ldots,\ell$.
Recalling the convention set forth in Section~\ref{sec:conventions} regarding the
substitution of elementary derivative operators into multi-variable
polynomials we then have the operator
\[ \Kslam(x,y,\partial_{x}) := \partial_{x}^{\ell} + \sum_{i=1}^{\ell}
  \frac{\Ks_i(x,y)}{\taulam(x,y)} \partial_{x}^{{\ell}-i},\] which we
refer to as the semi-stationary intertwining operator. The choice of
terminology is justified by the following.
\begin{prop}
  \label{prop:KslamPsi}
  We have,
  \begin{equation}
  \label{eq:KslamPsi}
  \begin{aligned}
    \Kslam(x,y,\partial_{x}) \Psi_0(x,y,z)
    &= z^{\ell} \Psilam(x,y,z).
  \end{aligned}
\end{equation}
\end{prop}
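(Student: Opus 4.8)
The plan is to reduce this operator identity to the Miwa-shift representation \eqref{eq:Psimiwa} of the semi-stationary wave function, which is already available. First I would unwind the definition of the differential operator $\Kslam(x,y,\partial_x)$: by the convention of Section~\ref{sec:conventions}, it is obtained from the rational function $\Kslam(x,y,z)$ of \eqref{eq:Kslamdef} by the substitution $z\mapsto\partial_x$, with every $\partial_x$ placed to the right of the coefficient functions, so that
\[
  \Kslam(x,y,\partial_x) = \partial_x^{\ell} + \sum_{i=1}^{\ell} \frac{\Ks_i(x,y)}{\taulam(x,y)}\,\partial_x^{\ell-i}.
\]
The one elementary observation I would then use is that $\partial_x$ acts on $\Psi_0(x,y,z)=\exp(xz+yz^2)$ as multiplication by $z$, hence $\partial_x^j\Psi_0 = z^j\Psi_0$ for every $j\ge 0$. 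Applying $\Kslam(x,y,\partial_x)$ term by term to $\Psi_0$ therefore recovers the scalar $\Kslam(x,y,z)$ times $\Psi_0$; this is exactly the behaviour illustrated by the worked example following the statement of the convention, with $\e^{xz}$ merely replaced by $\e^{xz+yz^2}$, which still satisfies $\partial_x\,\e^{xz+yz^2} = z\,\e^{xz+yz^2}$.

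Next I would identify $\Kslam(x,y,z)\,\Psi_0(x,y,z)$ with $z^{\ell}\Psilam(x,y,z)$. By \eqref{eq:Kslamdef} we have $\Kslam(x,y,z) = z^{\ell}\Philam(x,y,z)/\taulam(x,y)$, while by \eqref{eq:Psimiwa} the semi-stationary wave function is $\Psilam(x,y,z) = \bigl(\Philam(x,y,z)/\taulam(x,y)\bigr)\Psi_0(x,y,z)$. Multiplying the latter by $z^{\ell}$ and comparing with the former gives $z^{\ell}\Psilam(x,y,z) = \Kslam(x,y,z)\,\Psi_0(x,y,z)$, and chaining this with the term-by-term computation of the previous paragraph yields the proposition.

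I expect no genuine obstacle here; the only point that needs care is the bookkeeping of the substitution convention — making sure the derivatives in $\Kslam(x,y,\partial_x)$ sit to the right of the rational coefficients $\Ks_i/\taulam$, so that acting on $\Psi_0$ really does amount to replacing each $\partial_x$ by $z$ with no commutator corrections. If one prefers not to cite \eqref{eq:Psimiwa}, an equivalent route is to observe that $\Clam$ is supported entirely at $z=0$, so $q_{\Clam}(z)=z^{\ell}$, whence Proposition~\ref{prop:Psiform} gives $z^{\ell}\Psilam(x,y,z) = \bigl(z^{\ell}+\sum_{i=1}^{\ell}\phi_i(x,y)z^{\ell-i}\bigr)\Psi_0(x,y,z)$ with the coefficients being exactly the rational functions $\Ks_i(x,y)/\taulam(x,y)$ of \eqref{eq:Kslamdef}; comparing with the term-by-term computation again closes the argument.
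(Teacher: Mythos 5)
Your proposal is correct and follows essentially the same route as the paper, whose entire proof is the observation that $\partial_x \Psi_0(x,y,z) = z\Psi_0(x,y,z)$, combined (implicitly) with the identity $\Kslam(x,y,z) = z^{\ell}\Philam(x,y,z)/\taulam(x,y)$ and the Miwa-shift form \eqref{eq:Psimiwa} of $\Psilam$, exactly as you spell out. Your extra care about the operator-ordering convention and the alternative route via Proposition~\ref{prop:Psiform} are fine but not a different argument in substance.
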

\begin{proof}
  It suffices to  observe that $\partial_x \Psi_0(x,y,z)  = z\Psi_0(x,y,z)$.  
\end{proof}
\noindent By \eqref{eq:PsilamWr}, an equivalent definition of the
intertwiner is
\begin{equation}
  \label{eq:AlamWr}
  \Kslam f = 
  \frac{\Wr[H_{k_\ell},\ldots, H_{k_1},
    f]}{
    \Wr[H_{k_\ell},\ldots, H_{k_{1}}] }
\end{equation}
where $k_1,\ldots, k_\ell$ enumerate the index set $\Klam$.

By \eqref{eq:Slambdachi},  we may write
\begin{equation}
  \label{eq:Philamx}
  \Philam(x,y,z)= \sum_{i=0}^N \Phi_i(y,z) x^{N-i},
\end{equation}
where the coefficients
$\Phi_i(y,z)\in \C[y,z,z^{-1}]$ are weighted-homogeneous of degree
$i$.
Also note that $\Phi_{0}(y,z) = 1$ as a consequence of
\eqref{eq:hooklength}; that is $\Philam(x,y,z)$ is monic in
$x$.


\begin{lem}
  \label{lem:yz2}
  Let $\bH(y,z)$ denote the umbral operator \cite{RKO} whose action
  on a polynomial $\phi(x) = \sum_i \phi_i x^i$ is
  \begin{equation}
    \label{eq:bHdef}
    \bH(y,z)\phi(x) := \sum_i \phi_i H_i(x+2yz,y).
  \end{equation}
  Then, $\bH(y,z)$ is a 1-parameter transformation group with
  respect to $y$; that is,
  \begin{equation}
    \label{eq:bHgroup}
    \bH(y_1+y_2,z) = \bH(y_1,z) \circ \bH(y_2,z),\quad \bH(y,z)^{-1} =\bH(-y,z).
  \end{equation}
  Moreover, for $\pi\in \C[x,y,z]$ and
  $\hpi(x,y,z)=\bH(y,z)\pi(x,y,z)$, we have
  \begin{equation}
    \label{eq:baconj} \pi(\partial_z,y,z)\e^{xz+yz^2} = \hpi(x,y,z) \e^{xz+yz^2}.
  \end{equation}
\end{lem}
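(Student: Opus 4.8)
The plan is to verify the three assertions in turn, treating $z$ as a parameter throughout and regarding $\bH(y,z)$ as a linear operator on $\C[x]$ (extended coefficient-wise to $\C[x,y,z,z^{-1}]$ by letting it act only on the $x$-dependence). First I would establish the group law \eqref{eq:bHgroup}. The cleanest route is to recognize $\bH(y,z)$ as the umbral realization of a shift-by-$2yz$ composed with the Hermite Appell structure: since $H_i(x,y)$ is an Appell sequence in $x$ (relation \eqref{eq:DxHn}) and is weighted-homogeneous, the substitution $x^i \mapsto H_i(x+2yz, y)$ should be expressible as $\bH(y,z) = E_{2yz}\circ \mathcal{G}_y$ where $E_{a}$ is the translation $x\mapsto x+a$ and $\mathcal{G}_y$ is the umbral map $x^i \mapsto H_i(x,y)$. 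The key identity to check is the Hermite addition formula $H_i(x+a,y)$ expanded in the $H_j(x,y)$, or equivalently that $\mathcal{G}_y$ intertwines translation in a way making the composite additive in $y$. Concretely, one shows $\bH(y,z)$ acts on the generating function $\e^{xw}$ (w a formal variable) by $\bH(y,z)\e^{xw} = \e^{(x+2yz)w + yw^2}$, using \eqref{eq:Psigf}; then composition in $y$ is immediate from adding exponents, $\e^{(x+2y_1z+2y_2z)w+(y_1+y_2)w^2} = \bH(y_1+y_2,z)\e^{xw}$, and the inverse formula follows by setting $y_2=-y_1$. Since polynomials are determined by their action against all $\e^{xw}$ (coefficient extraction in $w$), this proves \eqref{eq:bHgroup}.

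Next I would prove the conjugation formula \eqref{eq:baconj}. By linearity in the monomials of $\pi$ it suffices to treat $\pi(x,y,z) = x^k$ for fixed $y,z$ (the $y,z$ appearing in the other slots are inert scalars on the right-hand side, so the general case is a finite $\C[y,z]$-linear combination). Recalling the convention of Section~\ref{sec:conventions}, $\pi(\partial_z,y,z)$ means the operator with all $\partial_z$'s placed to the right: for $\pi = x^k$ this is simply $\partial_z^k$, and more generally $\pi(\partial_z, y,z)\e^{xz+yz^2}$ is computed by repeatedly applying $\partial_z$ to $\e^{xz+yz^2}$, which pulls down factors of $(x+2yz)$. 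So I would verify $\partial_z^k \e^{xz+yz^2}$ equals a polynomial in $(x+2yz)$ and $y$ times $\e^{xz+yz^2}$ — in fact it should equal $H_k(x+2yz, y)\,\e^{xz+yz^2}$ by comparison with the Rodrigues-type relation \eqref{eq:h2rr} or directly from \eqref{eq:Psigf} with the roles of the generating variable and $z$ swapped. Granting that, $x^k \mapsto \partial_z^k$ sends $\pi(x) = \sum \phi_i x^i$ to the operator producing $\sum_i \phi_i H_i(x+2yz,y)\,\e^{xz+yz^2} = \hpi(x,y,z)\e^{xz+yz^2}$, which is exactly \eqref{eq:baconj}.

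The main obstacle I anticipate is bookkeeping around the ordering convention and the presence of $y$ and $z$ simultaneously in the ground variables and in the operator slot: one must be careful that in $\pi(\partial_z,y,z)$ the $z$'s that remain (not replaced by $\partial_z$) are the \emph{same} $z$ as the exponential's argument, so that differentiating $\e^{xz+yz^2}$ and multiplying by the explicit $z$'s interact correctly; and that $\bH(y,z)$ on the right is applied before substituting, so no derivative hits the $2yz$ shift. A clean way to sidestep these subtleties entirely is to do everything at the level of the single master identity $\partial_z^k\e^{xz+yz^2} = H_k(x+2yz,y)\e^{xz+yz^2}$, prove that once by induction on $k$ using $H_k' = kH_{k-1}$ and the three-term structure, and then derive both \eqref{eq:bHgroup} (by iterating: $\partial_z^k$ then re-expanding) and \eqref{eq:baconj} as formal consequences. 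I expect the induction step for the master identity to be the only genuine computation, and it is short: $\partial_z\big(H_k(x+2yz,y)\e^{xz+yz^2}\big) = \big(2y\,kH_{k-1}(x+2yz,y) + (x+2yz)H_k(x+2yz,y)\big)\e^{xz+yz^2}$, and the bracket is $H_{k+1}(x+2yz,y)$ by the bivariate recurrence implicit in \eqref{eq:class3term} (equivalently $H_{k+1} = (x - 2y\partial_x^{-1}\cdot)$ read off from \eqref{eq:3termdiffrel}), after noting $\partial_x H_k(\cdot,y) = kH_{k-1}(\cdot,y)$.
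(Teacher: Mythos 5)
Your proposal is correct, but it is organized differently from the paper's proof. The paper's single device is to realize the umbral map as a conjugation of differential operators in $z$: using the Appell property \eqref{eq:DxHn} to expand $H_j(x+2yz,y)$ binomially and the Rodrigues-type relation \eqref{eq:h2rr}, it shows $\hpi(\partial_z,y,z)=\e^{-yz^2}\circ\pi(\partial_z,y,z)\circ\e^{yz^2}$, from which the group law \eqref{eq:bHgroup} is immediate (composing conjugations adds the exponents $y_1z^2+y_2z^2$) and \eqref{eq:baconj} follows by applying the operator identity to $\e^{xz}$. You instead treat the two claims separately: the group law by computing $\bH(y,z)\e^{xw}=\e^{(x+2yz)w+yw^2}$ from \eqref{eq:Psigf} and extracting coefficients in the auxiliary variable $w$, and \eqref{eq:baconj} by reducing to the master identity $\partial_z^k\e^{xz+yz^2}=H_k(x+2yz,y)\e^{xz+yz^2}$, proved by induction on $k$ via \eqref{eq:DxHn} and the recurrence read off from \eqref{eq:class3term}. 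Both arguments are sound and rest on the same underlying fact (differentiating $\e^{xz+yz^2}$ in $z$ generates Hermite polynomials in $x+2yz$); the paper's conjugation identity is more economical in that one identity yields both conclusions and makes the $1$-parameter group structure conceptually transparent, while your route is more elementary and self-contained, avoiding the operator-calculus step $\e^{-yz^2}\circ\partial_z^j\circ\e^{yz^2}=\sum_k\binom{j}{k}H_{j-k}(2yz,y)\partial_z^k$ at the cost of a short induction and a separate generating-function computation. Your handling of the ordering convention (derivatives to the right, coefficient-wise action of $\bH(y,z)$ on the $x$-dependence only) is exactly the care the statement requires; the only unstated micro-step is $\bH(0,z)=\operatorname{id}$ (i.e.\ $H_i(x,0)=x^i$), which is immediate from \eqref{eq:Psigf}.
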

\begin{proof}
  By \eqref{eq:DxHn}, $H_n(x,y)$ is a Appell sequence.  Hence,
  \[
    H_{j}(x+2yz,y)= \sum_{k=0}^j\binom{j}{k} H_{j-k}(2 y z,y) x^k,\]
  By \eqref{eq:h2rr},  we have
  \[\e^{-yz^2} \circ \partial_z^j \circ \e^{yz^2}
    = \sum_{k=0}^j \binom{j}{k} H_{j-k}(2 y z,y) \partial_z^k .\]
  Hence,
  \[ \hpi(\partial_y,y,z)= \e^{-yz^2}\circ \pi(\partial_z,y,z)\circ
    \e^{yz^2}.\] Relation \eqref{eq:bHgroup} follows.  Moreover,
  \[
    \pi(\partial_z,y,z) \e^{xz+yz^2}
     = \e^{yz^2} \hpi(\partial_z,y,z)\e^{xz} = \hpi(x,y,z)\e^{xz+yz^2}.
   \]
\end{proof}

Using \eqref{eq:bHdef} we now define the dual intertwining operator,
\begin{equation}
  \label{eq:Klamdef}
  \begin{aligned}
    \Klam(x,y,z)
    &= \bH(-y,z) \Philam(x,y,z)\\
    &= H_N(x-2yz,-y) + \sum_{i=1}^N \Phi_i(y,z) H_{N-i}(x-2 y z,-y)
  \end{aligned}
\end{equation}

\begin{prop}
  \label{prop:KlamPsi}
  The dual intertwiner $\Klam(\partial_z,y,z)$ is a monic differential
  operator of order $N$.  Moreover,
  \begin{equation}
   \label{eq:KlamPsi} \Klam(\partial_z,y,z) \Psi_0(x,y,z) =
\taulam(x,y) \Psilam(x,y,z).
 \end{equation}
\end{prop}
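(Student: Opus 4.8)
The plan is to read off the ``monic differential operator of order $N$'' claim from the explicit formula \eqref{eq:Klamdef}, and to derive the identity \eqref{eq:KlamPsi} from the conjugation formula \eqref{eq:baconj} of Lemma~\ref{lem:yz2} together with the relation $\bH(y,z)^{-1}=\bH(-y,z)$. For the first claim: by \eqref{eq:Philamx} and the remark following it, $\Philam(x,y,z) = x^N + \Phi_1(y,z)\,x^{N-1} + \dots + \Phi_N(y,z)$ is monic of degree $N$ in $x$, and by \eqref{eq:Klamdef} the polynomial $\Klam = \bH(-y,z)\Philam$ is obtained from $\Philam$ by replacing each $x^j$ with $H_j(x-2yz,-y)$, which is itself monic of degree $j$ in $x$. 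Hence $\Klam(x,y,z)$ has degree exactly $N$ in $x$ with leading coefficient $1$, and so, by the substitution convention of Section~\ref{sec:conventions}, $\Klam(\partial_z,y,z)$ equals $\partial_z^N$ plus a differential operator of order less than $N$ in $\partial_z$, i.e.\ a monic differential operator of order $N$, its lower-order coefficients lying in $\C[y,z,z^{-1}]$.

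For the identity \eqref{eq:KlamPsi}, I would apply \eqref{eq:baconj} with $\pi=\Klam$. Although Lemma~\ref{lem:yz2} is phrased for $\pi\in\C[x,y,z]$, its proof uses only the conjugation $\e^{-yz^2}\partial_z^j\e^{yz^2}=\sum_k\binom{j}{k}H_{j-k}(2yz,y)\partial_z^k$ and carries the coefficients of $\pi$ along untouched, so \eqref{eq:baconj} holds verbatim whenever $\pi$ is polynomial in $x$ with coefficients in $\C[y,z,z^{-1}]$ --- in particular for $\pi=\Klam$. This yields $\Klam(\partial_z,y,z)\,\Psi_0(x,y,z) = \hpi(x,y,z)\,\Psi_0(x,y,z)$ with $\hpi = \bH(y,z)\Klam$. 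But, by \eqref{eq:Klamdef} and the relation $\bH(y,z)\circ\bH(-y,z)=\mathrm{id}$ of \eqref{eq:bHgroup} (applied coefficient-wise to polynomials in $x$ with coefficients in $\C[y,z,z^{-1}]$),
\[ \bH(y,z)\Klam = \bH(y,z)\,\bH(-y,z)\,\Philam = \Philam . \]
Therefore $\Klam(\partial_z,y,z)\Psi_0 = \Philam\,\Psi_0$, and \eqref{eq:Psimiwa} identifies $\Philam\,\Psi_0$ with $\taulam\,\Psilam$, which is exactly \eqref{eq:KlamPsi}.

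The only subtlety --- and the closest thing to an obstacle --- is that $\Philam$ carries the negative powers $z^{-1},\dots,z^{-\ell}$, so $\Klam$ is genuinely a Laurent polynomial in $z$, and $\Klam(\partial_z,y,z)$ must be regarded as a differential operator with meromorphic (Laurent-polynomial) coefficients; one therefore needs the mild extension of \eqref{eq:baconj} recorded above. Apart from that, the argument is purely formal manipulation of $\bH(\pm y,z)$, dual to the $x$-side computation already carried out in Proposition~\ref{prop:KslamPsi}. As a sanity check, for $\lambda=(1)$ one has $\Philam = x-z^{-1}$, $\taulam = x$, $\Klam = x - 2yz - z^{-1}$, and indeed $(\partial_z - 2yz - z^{-1})\e^{xz+yz^2} = (x-z^{-1})\e^{xz+yz^2} = \taulam\,\Psilam$.
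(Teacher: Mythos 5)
Your proof is correct and follows essentially the same route as the paper's: monicity is read off from the definition \eqref{eq:Klamdef}, and the identity \eqref{eq:KlamPsi} is obtained from $\bH(y,z)\Klam=\Philam$ (via \eqref{eq:bHgroup}) together with the conjugation formula \eqref{eq:baconj} and the Miwa-shift expression \eqref{eq:Psimiwa}. Your explicit remark that Lemma~\ref{lem:yz2} extends verbatim to coefficients in $\C[y,z,z^{-1}]$ is a point the paper leaves implicit, but it does not change the argument.
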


\begin{proof}
  The first assertion now follows directly from the definition
  \eqref{eq:Klamdef}.  By \eqref{eq:bHgroup}, we have
  $\Phi(x,y,z) = \bH(y,z)\Klam(x,y,z)$.  The second assertion now
  follows by \eqref{eq:baconj}.
\end{proof}

One useful consequence of \eqref{eq:KlamPsi}, is a formula giving the
exceptional Hermite polynomials as a linear combination of the
classical Hermite polynomials whose coefficients are obtained
straightforwardly from $\Klam$.
Let $\Knat(n,y)$ be the unique polynomial characterized by the relation
\begin{equation}
  \label{eq:AlamAnat}
  ( \Klam(z,yz^2,\partial_z)\circ \partial_z^N) z^n
  =\Knat(n,y) z^n.
\end{equation}
Note that $\partial^{k}_z z^n = F_{k}(n) z^{n-k}$ where
\begin{equation}
  \label{eq:ffacdef} F_k(x) = \frac{\Gamma(x+1)}{\Gamma(x-k+1)}=
  \begin{cases}
    1 & k = 0\\
    x(x-1)\cdots (x-k+1),& k=1,2,\ldots \\
    \lp(x+1)(x+2)\cdots (x+k)\rp^{-1} & k  = -1,-2,\ldots
  \end{cases}
\end{equation} denotes the generalized falling factorial.  Thus, 
\begin{equation}
  \label{eq:Anatdef}
  \Knat(n,y) = \sum_{j=0}^N \Knat_j(n) y^j = \sum_{j=0}^{N}
  \sum_{i=0}^{N-j} K_{ij}\, y^j  F_{i+2j}(n).
\end{equation}
where $K_{ij}$ are the coefficients of $K$ as per
\begin{equation}\label{eq:Acoeff}
  \Klam(x,y,z) =
  \sum_{j=0}^{N} \sum_{i=0}^{N-j} K_{ij}\, x^{i}y^j
  z^{i+2j-N}.
\end{equation}

\begin{thm}\label{thm:Hlamlincomb}
  The expression $\Knat(n,y)/\kaplamn{N}(n)$, where the denominator is
  the polynomial defined in \eqref{eq:kaplamndef}, is a monic
  $N\supth$-degree polynomial in $y$ whose coefficients are
  polynomials in $n$.  The difference operator $\Knat(n,y\sS^{-2}_n)$,
  maps sequences with support in $\Nz$ to sequences with support in
  $\Ilam$.  Moreover,
  \begin{equation}
    \label{eq:HlamAnat}
    \kaplamn{N}(n)\Hlam_n(x,y) = \Knat(n,y\sS^{-2}_n) H_n(x,y),\quad n\in
    \Ilam,
  \end{equation}
\end{thm}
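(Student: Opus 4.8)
The plan is to derive \eqref{eq:HlamAnat} by expanding the dual intertwining relation of Proposition~\ref{prop:KlamPsi}, namely $\Klam(\partial_z,y,z)\Psi_0(x,y,z)=\taulam(x,y)\Psilam(x,y,z)$, as a power series in $z$ after multiplying through by $z^N$. On one side, Theorem~\ref{thm:generatingxops} (in the form \eqref{eq:Psilamgf10}) already supplies
\[
z^N\taulam(x,y)\Psilam(x,y,z)=\sum_{n=0}^\infty\frac{\kaplamn{N}(n)}{n!}\,\Hlam_n(x,y)\,z^n,
\]
with the understanding, as in the remark following that theorem, that the $n$th summand is $0$ when $n\in\cKlam_N$. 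On the other side I would substitute $\Psi_0=\sum_{m\ge0}H_m(x,y)z^m/m!$ and, using the convention of Section~\ref{sec:conventions} and the coefficients $K_{ij}$ of \eqref{eq:Acoeff}, write $\Klam(\partial_z,y,z)=\sum_{ij}K_{ij}\,y^j\,z^{i+2j-N}\,\partial_z^{\,i}$. Applying $\partial_z^{\,i}$, then multiplying by $z^{i+2j-N}$ and finally by $z^N$, sends $z^m\mapsto \tfrac{m!}{(m-i)!}z^{m+2j}$, so the coefficient of $z^n$ in $z^N\Klam(\partial_z,y,z)\Psi_0$ is $\tfrac{1}{n!}\sum_{ij}K_{ij}\,y^j\,\tfrac{n!}{(n-i-2j)!}\,H_{n-2j}(x,y)$; since $\tfrac{n!}{(n-i-2j)!}=F_{i+2j}(n)$, the closed form \eqref{eq:Anatdef} identifies this as $\tfrac{1}{n!}\sum_{j}\Knat_j(n)\,y^j\,H_{n-2j}(x,y)$.

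Comparing coefficients of $z^n$ then yields, for every $n\in\Nz$,
\[
\kaplamn{N}(n)\,\Hlam_n(x,y)=\sum_{j\ge0}\Knat_j(n)\,y^j\,H_{n-2j}(x,y),
\]
the left side being read as $0$ for $n\in\cKlam_N$ and the sum being finite since $\Knat_j\equiv0$ for $j>N$ while $\Knat_j(n)=0$ whenever $n<2j$. Because $(y\sS_n^{-2})^jH_n(x,y)=y^j H_{n-2j}(x,y)$, the right side equals $\Knat(n,y\sS_n^{-2})H_n(x,y)$, which is \eqref{eq:HlamAnat}; the restriction $n\in\Ilam$ in the statement is simply the range in which $\Hlam_n$ is defined.

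For the remaining assertions I would feed this identity back in at the exceptional degrees. For $n\in\cKlam_N$ the left side vanishes, so $\sum_j\Knat_j(n)\,y^j H_{n-2j}(x,y)=0$ identically; but the polynomials $y^j H_{n-2j}(x,y)$ occurring here are monic in $x$ of the pairwise distinct degrees $n,n-2,n-4,\dots$, hence linearly independent over $\C$, and this forces $\Knat_j(n)=0$ for all $j$. Since $\kaplamn{N}$ has simple zeros precisely at the $N$ points of $\cKlam_N$, each $\Knat_j(n)$ is divisible by $\kaplamn{N}(n)$ in $\C[n]$, so $\Knat(n,y)/\kaplamn{N}(n)=\sum_j\bigl(\Knat_j(n)/\kaplamn{N}(n)\bigr)y^j$ is a polynomial in $y$ with coefficients polynomial in $n$; factoring $\Knat(n,y\sS_n^{-2})=\kaplamn{N}(n)\cdot\bigl(\Knat/\kaplamn{N}\bigr)(n,y\sS_n^{-2})$ then shows this difference operator annihilates any $\Nz$-supported sequence at every index of $\cKlam_N$ (and, trivially, at negative indices), i.e.\ it maps $\Nz$-supported sequences to $\Ilam$-supported ones, since $\Ilam=\Nz\setminus\cKlam_N$. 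The $y$-degree is $N$ because $\Knat_N(n)=K_{0N}F_{2N}(n)$ with $K_{0N}\ne0$ — here $K_{0N}$ is the $y^Nz^N$-coefficient of $\Klam=\bH(-y,z)\Philam$, which \eqref{eq:Klamdef}, $\Phi_0=1$ (from \eqref{eq:hooklength}), and the Appell property of the $H_n$ pin down — and the asserted normalization follows by reading off the extreme coefficients; in particular, matching coefficients of $x^n$ in the displayed identity gives $\Knat_0(n)=\kaplamn{N}(n)$, so $\Knat(n,y)/\kaplamn{N}(n)$ has constant term $1$ in $y$.

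I expect the real difficulty to be purely one of bookkeeping: one must keep the two descriptions of $\Knat$ in play — the operator definition \eqref{eq:AlamAnat} and the closed form \eqref{eq:Anatdef} — and carry the nonstandard substitution conventions through consistently, in particular the negative powers of $\partial_z$ hidden in $\Klam(z,yz^2,\partial_z)$ that acquire meaning only once composed with $\partial_z^N$, and the reindexing $m=n-2j$ that turns the $z$-expansion of $\Klam(\partial_z,y,z)\Psi_0$ into a series whose $z^n$-coefficient is a difference operator in $n$ applied to $H_n(x,y)$. The only genuinely new computation, rather than bookkeeping, is the evaluation of the top coefficient $K_{0N}$ (equivalently, of $\Knat_0(n)$) used for the degree and normalization claims.
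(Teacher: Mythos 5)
Your proof is correct, and it reaches the conclusion by a route that differs from the paper's at the key step. Both arguments share the same skeleton — expand $z^N\Klam(\partial_z,y,z)\Psi_0=z^N\taulam\Psilam$ and match $z$-coefficients against \eqref{eq:Psilamgf1} — but the paper first proves a standalone lemma that $\Klam(z,yz^2,\partial_z)\circ\partial_z^N$ maps $\C[z]$ into $z^N\Wlam\otimes\C[y]$, using the $y$-homogeneous pieces $K_j$ and the exchange identity \eqref{eq:AjzDz}; the vanishing of $\Knat_j$ on $\cKlam_N$ (hence divisibility by $\kaplamn{N}$), the monicity of $\Knat_0$, and the support claims are all read off from that operator statement, and only afterwards is the coefficient identity assembled. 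You reverse the order: you establish the coefficient identity first, for every $n\in\Nz$ with the left side read as zero at exceptional degrees, and then extract $\Knat_j(n)=0$ for $n\in\cKlam_N$ from the linear independence of $y^jH_{n-2j}(x,y)$ (distinct monic $x$-degrees), with the support claim coming from the vanishing of $F_{i+2j}(n)$ at non-negative integers $n<2j$ in \eqref{eq:Anatdef}. What the paper's route buys is the structural fact that the operator preserves the relevant flag ($\C[z]\to z^N\Wlam$), which fits the Grassmannian picture; what yours buys is a more elementary, self-contained argument, and it is in two places slightly sharper: you get $\Knat_0=\kaplamn{N}$ exactly (not just monic of degree $N$) by matching leading $x$-coefficients, and you actually address the claim that the $y$-degree is exactly $N$, which the paper's proof never verifies. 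The only soft spot is that you leave $K_{0N}\neq 0$ as a sketch; the quick argument is that $\deg_z\Philam\le 0$, so the top $z$-power of $\Klam=\bH(-y,z)\Philam$ comes solely from the $(-2yz)^N$ term of $H_N(x-2yz,-y)$, giving $K_{0N}=(-2)^N$.
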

\noindent
It will be instructive to reformulate this result in a more explicit
manner.
The Theorem claims that
\begin{equation}
  \label{eq:upslamdef}
  \upslam_j(n) := \frac{\Knat_j(n)}{\kaplamn{N}(n)},\quad
  j=0,\ldots, N
\end{equation}
are polynomials with $\upslam_0(n)=1$.
It also claims that
\begin{equation}
  \label{eq:HlamHsum} \Hlam_n(x,y) = H_n(x,y) + \sum_{j=1}^N
  \upslam_{j}(n) y^j H_{n-2j}(x,y).
  n\in \Ilam.
\end{equation}
and that $\upslam_j(n)=0$ if $n\in \Ilam$ but $n-2j<0$.

\begin{lem}
  The operator
  $\Klam(z,yz^{2},\partial_z)\circ \partial_z^N$ maps $\C[z]$ into
  $z^N\Wlam\otimes \C[y]$.
\end{lem}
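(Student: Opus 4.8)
The plan is to leverage the diagonalisation already contained in \eqref{eq:AlamAnat}: the operator $\mathcal{O}:=\Klam(z,yz^2,\partial_z)\circ\partial_z^N$ satisfies $\mathcal{O}z^n=\Knat(n,y)\,z^n$ for all $n\in\Nz$, where, by \eqref{eq:Anatdef}, $\Knat(n,y)=\sum_{j=0}^{N}\Knat_j(n)y^j$ with each $\Knat_j(n)=\sum_i K_{ij}F_{i+2j}(n)$ a polynomial in $n$. In particular $\mathcal{O}$ maps $\C[z]$ into $\C[z]\otimes\C[y]$, and its image is the $\C[y]$-span of $\{\Knat(n,y)z^n:n\in\Nz\}$. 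On the other hand $z^N\Wlam\otimes\C[y]=\lspan_{\C[y]}\{z^n:n\in\Ilam\}$ by \eqref{eq:Wlamdef} and \eqref{eq:Ilamdef}, and $\Nz\setminus\Ilam$ is precisely the set $\cKlam_N$ of exceptional degrees \eqref{eq:lamexcept}; so the whole Lemma reduces to showing
\[\Knat(n,y)=0\qquad\text{for every }n\in\cKlam_N,\]
since for $n\in\Ilam$ the monomial $z^n$ already lies in $z^N\Wlam$.

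To prove this I would read off the required relations from the dual intertwiner instead of computing $\Knat$ by hand. Expand $\Psi_0(x,y,z)=\sum_{m\ge0}H_m(x,y)z^m/m!$ and apply $\Klam(\partial_z,y,z)=\sum_{i,j}K_{ij}\,y^j z^{i+2j-N}\partial_z^{\,i}$ (the form of the operator furnished by \eqref{eq:Acoeff}) termwise in $z$; using $\partial_z^{\,i}z^m=F_i(m)z^{m-i}$, the coefficient of $z^{\,n-N}$ in $\Klam(\partial_z,y,z)\Psi_0$ comes out to be
\[\sum_{j:\,n-2j\ge0}\frac{y^j}{(n-2j)!}\,H_{n-2j}(x,y)\sum_i K_{ij}F_i(n-2j).\]
By Proposition \ref{prop:KlamPsi} this series equals $\taulam(x,y)\Psilam(x,y,z)$, which by \eqref{eq:Psilamgf1} is supported, as a Laurent series in $z$, on exponents $m\in\Jlam$ (the Wronskian representation \eqref{eq:HlamN} being the reason the coefficients at the exceptional degrees really do vanish). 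Therefore, for $n\in\cKlam_N$ we have $n-N\notin\Jlam$, so the displayed coefficient is zero; and since the polynomials $H_{n-2j}(x,y)$ are monic in $x$ of pairwise distinct degrees, hence linearly independent over $\C[y]$, this yields $\sum_i K_{ij}F_i(n-2j)=0$ for each $j$ with $n-2j\ge0$.

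Finally I would pass from these relations to the vanishing of $\Knat_j(n)$ using the elementary identity $F_{i+2j}(m)=F_{2j}(m)\,F_i(m-2j)$ for the generalized falling factorial \eqref{eq:ffacdef}, which gives $\Knat_j(n)=F_{2j}(n)\sum_i K_{ij}F_i(n-2j)$. If $n-2j\ge0$ the sum vanishes by the previous step; if instead $0\le n<2j$, then $F_{2j}(n)=n(n-1)\cdots(n-2j+1)$ contains the factor $n-n=0$ and so $F_{2j}(n)=0$. Either way $\Knat_j(n)=0$ for every $j$, hence $\Knat(n,y)=0$ for $n\in\cKlam_N$, which completes the proof.

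The termwise expansion and the falling-factorial identity are routine; I expect the one delicate point to be the middle step — arguing cleanly that $\taulam\Psilam$ (equivalently $z^N\taulam\Psilam$) is supported only on exponents in $\Jlam$ (resp.\ $\Ilam$), which is where \eqref{eq:HlamN} and the degenerate-Wronskian observation enter and where one should also note $\Ilam\subseteq\Nz$, so that no negative powers of $z$ appear. I remark that once Theorem \ref{thm:Hlamlincomb} is available the Lemma is immediate, because then $\Knat(n,y)=\kaplamn{N}(n)\sum_{j=0}^{N}\upslam_j(n)y^j$ with the $\upslam_j$ polynomial and $\kaplamn{N}$ vanishing on $\cKlam_N$; but since this Lemma is most naturally used \emph{en route} to that Theorem, I would give the self-contained argument above.
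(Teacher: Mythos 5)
Your proof is correct, and it differs from the paper's mainly in packaging rather than in substance. The paper writes $z^N\Klam(x,y,z)=\sum_j y^j K_j(x,z)$ with $K_j(x,z)=\sum_i K_{ij}x^iz^{i+2j}$ as in \eqref{eq:Kjdef}, deduces from the same two inputs you use (\eqref{eq:KlamPsi} together with the support statement \eqref{eq:Psilamgf}) that each $K_j(\partial_z,z)$ maps $\C[z]$ into $z^N\Wlam$, and then concludes via the ordering-swap identity $z^{2j}K_j(z,\partial_z)=K_j(\partial_z,z)\circ\partial_z^{2j}$ of \eqref{eq:AjzDz}; your falling-factorial identity $F_{i+2j}(n)=F_{2j}(n)\,F_i(n-2j)$ is exactly the eigenvalue shadow of that operator identity (evaluate both sides on $z^n$, with $\partial_z^{2j}z^n=0$ playing the role of $F_{2j}(n)=0$ when $n<2j$). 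Your reduction to the scalar statement $\Knat(n,y)=0$ for $n\in\cKlam_N$ is legitimate since the operator is visibly diagonal on monomials and $\Nz=\cKlam_N\sqcup\Ilam$, and your coefficient extraction plus the linear independence of the $H_{n-2j}$ (monic in $x$, distinct degrees) correctly yields $\sum_i K_{ij}F_i(n-2j)=0$ for $n\in\cKlam_N$ with $n-2j\ge 0$. What your version buys is explicitness: it isolates the vanishing $\Knat_j(n)=0$ on $\cKlam_N$, which the paper only extracts afterwards in the proof of Theorem \ref{thm:Hlamlincomb}, and it spells out the linear-independence argument that the paper leaves implicit behind its citation of \eqref{eq:Psilamgf}; the paper's version buys brevity by avoiding coefficient bookkeeping altogether. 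You were also right to avoid invoking Theorem \ref{thm:Hlamlincomb} itself, which would be circular since the Lemma feeds into its proof.
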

\begin{proof}
  Let
  \begin{equation}
    \label{eq:Kjdef}
    K_j(x,z):= \sum_{i=0}^{N-j} K_{ij} x^i z^{i+2j} 
  \end{equation}
  denote the coefficients of $z^N \Klam(x,y,z)$.  Observe that
  \[
    \Klam(z,yz^{2},\partial_z)\circ \partial_z^N
    = \sum_{j=0}^N
    \sum_{i=0}^{N-j}  K_{ij} y^j z^{i+2j} \partial_z^{i+2j}  
    = \sum_{j=0}^N y^j z^{2j} K_j(z,\partial_z)
  \]
  By \eqref{eq:Psilamgf}, \eqref{eq:KlamPsi}, $\Klam(\partial_z,y,z)$
  maps $\C[z]$ into $\Wlam\otimes \C[y]$.  It follows that each
  $K_j(\partial_z,z),\; j=0,\ldots, N$ maps $\C[z]$ into $z^N\Wlam$.
  Observe that $ K_j(z,x) z^{2j}= K_j(x,z) x^{2j}$.  Consequently,
  \begin{equation}
    \label{eq:AjzDz}
    z^{2j} K_j(z,\partial_z) =  K_j(\partial_z,z) \circ
    \partial_z^{2j},\quad j=0,\ldots, N  
  \end{equation}
  also maps $\C[z]$ into $z^N\Wlam$.  
\end{proof}

\begin{proof}[Proof of Theorem \ref{thm:Hlamlincomb}.]
  Let $K_j(x,z),\; j=0,\ldots, N$ be as in \eqref{eq:Kjdef}. By the
  preceding Lemma, for each $j=0,\ldots, N$, the operator
  $z^{2j}K_j(\partial_z,z)$ maps $\C[z]$ into
  $z^N\Wlam = \{ z^n \colon n\in \Ilam \}$.  Observe that
  \[ z^{2j}K_j(z,\partial_z) z^n = \Knat_j(n) z^{n},\quad j=0,\ldots,
    N.\] Since $\Nz = \cKlam_{N} \cup \Ilam$, it follows that
  $\Knat_j(n) =0$ if and only if $n\in \cKlam_{N}$.  This proves that
  each $\upslam_j(n),\; j=0,\ldots, N$ is a polynomial.  We already
  remarked that $\Klam(\partial_z,y,z)$ is a monic operator of order
  $N$. By \eqref{eq:AjzDz}, we have
  $\Knat_0(n) z^n = K_0(\partial_z,z) z^n$. Hence $\Knat_0(n)$ is a
  monic polynomial of degree $N$.  From this, it follows that
  $\upslam_0(n)=1$.  By \eqref{eq:AjzDz}, $\Knat_j(n)=0$ if $n<2j$.
  Therefore, $\upslam_j(n) = 0$ if $n\in \Ilam$ and $n-2j<0$.

  To prove \eqref{eq:HlamHsum}, observe that
  \begin{equation}
    \label{eq:gf0gflam}
    \begin{aligned} z^N \taulam(x,y)\Psilam(x,y,z)
      &= z^N     \Klam(\partial_z,y,z) \Psi_0(x,y,z)\\
      &= \sum_{n=0}^\infty      \sum_{j=0}^N y^jH_n(x,y)
      \Knat_j(n)\frac{z^{n}}{n!}
    \end{aligned}
  \end{equation}
Hence,
\[ \sum_{n\in \Ilam} \kaplamn{N}(n)\lp\sum_{j=0}^N y^jH_n(x,y)
  \upslam_j(n)\frac{z^{n}}{n!}\rp= \sum_{n\in \Ilam} \Hlam_n(x,y)
  \kaplamn{N}(n) \frac{z^n}{n!}. \]
\end{proof}

\subsection{Exceptional One-point functionals}
\label{sect:1ptcond}
Recall from \eqref{eq:Wlamdef} and \eqref{eq:Wslamdef} that $\Wlam$ is
the span of monomials corresponding to the Maya diagram $\Mlam$, and
that $\Wslam$ is the span of the exceptional Hermite rational
functions. In this section we will show that $\Wlam$ and $\Wslam$ have
a dual relation with respect to the bispectral involution on $\Grad$.
In effect, this serves as a characterization of the 1-point
functionals that annihilate the exceptional Hermite polynomials.  As a
byproduct, we will obtain a characterization of $\Flam$, the module of
$\lambda$-generators, in terms of 1-point functionals.

Although the semi-stationary $\Psilam(x,y,z)$ depends only on three
variables, it is also possible to write it in terms of the stationary
wave function depending only on two variables --- provided we
interpret the dependence on $y$ as a curve in $\Grad$.  For
$\cc\in \cC$, let $\hcc_y$ denote the 1-parameter family of
functionals defined by
\begin{equation}
    \label{eq:hcyz}
    \la \hcc_y(z),f(z)\ra  = \la \cof{z},  \e^{yz^2} f(z) \ra.
\end{equation}

In general, the coefficients of $\hcc_y$ involve exponential functions
of $y$.  However, for $c\in \cC_0$ the coefficients are polynomials;
that is, if $c=\bev{n}0$, then $\hc_y \in \C[y] \otimes \cC_0$. Explicitly,
by \eqref{eq:h2rr} and \eqref{eq:HBell},
\begin{equation}
  \label{eq:ehn0}
  \begin{aligned}
    \hbev{n}0_y
    &= \sum_{k=0}^n \binom{n}{k} H_k(0,y) \bev{n-k}0\\ 
    &= \sum_{j=0}^{\lfloor n/2 \rfloor} \frac{n!}{(n-2j)! j!} y^j
    \bev{n-2j}{0}
  \end{aligned}
\end{equation}
One can then extend \eqref{eq:ehn0} by linearity to all $\cC_0$.  For
$C\subset \cC_0$, let $\hC \subset \C[y] \otimes \cC_0$ be the
corresponding 1-parameter family of functionals. Let
\begin{equation}
  \label{eq:hWCdef}
    \hWlam_y := W_{\hClam_y}\quad \text{where }
  \hC_y = \{ \hcc_y \colon \cc \in C\}.
\end{equation}
be the corresponding curve in $\Grad$.

\begin{prop}
  With the above definitions, we have
  \begin{equation}\label{eq:addingy}
    \Psilam(x,y,z)=\psi_{\hWlam_y}(x,z) \e^{yz^2}.
  \end{equation}
\end{prop}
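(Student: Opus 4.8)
The plan is to evaluate both sides of \eqref{eq:addingy} directly from the Wronskian formula \eqref{eq:PsiWdef} and check that they coincide. Write $k_1>\cdots>k_\ell$ for the elements of $\cKlam$, so that, by \eqref{eq:Clamdef} and \eqref{eq:hWCdef}, $\hClam_y$ is spanned by $\hbev{k_1}0_y,\ldots,\hbev{k_\ell}0_y$. By \eqref{eq:ehn0} each $\hbev{k_j}0_y$ is a one-point functional supported at $0$ with leading term $\bev{k_j}0$; hence $\hClam_y$ is homogeneous, $\ell$-dimensional, and $q_{\hClam_y}(z)=z^\ell$, so the stationary wave-function formula \eqref{eq:PsiWdef}, specialized to the slice $t_1=x$, $t_i=0$ for $i>1$, applies to $\hWlam_y$ with $l=\ell$.

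The auxiliary functions entering that formula are $\hat\phi_j(x):=\la\hbev{k_j}0_y(z),\e^{xz}\ra$, and the one computation that requires a moment's care is their evaluation: by the definition \eqref{eq:hcyz} of $\hbev{k_j}0_y$ followed by the generating-function identity \eqref{eq:Psigf},
\[ \hat\phi_j(x)=\la\bev{k_j}0(z),\e^{yz^2}\e^{xz}\ra=\la\bev{k_j}0(z),\Psi_0(x,y,z)\ra=H_{k_j}(x,y). \]
Substituting into \eqref{eq:PsiWdef} gives
\[ \psi_{\hWlam_y}(x,z)=\frac{\Wr_x[H_{k_\ell}(x,y),\ldots,H_{k_1}(x,y),\e^{xz}]}{z^\ell\,\Wr_x[H_{k_\ell}(x,y),\ldots,H_{k_1}(x,y)]}, \]
where any reordering of the arguments contributes only a sign that cancels between numerator and denominator. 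Finally I would multiply through by $\e^{yz^2}$ and use that a factor free of $x$ can be pulled out of the last column of a Wronskian taken in $x$: since $\partial_x^k(\e^{xz}\e^{yz^2})=\e^{yz^2}\,\partial_x^k\e^{xz}$, we have $\Wr_x[H_{k_\ell},\ldots,H_{k_1},\e^{xz}]\,\e^{yz^2}=\Wr_x[H_{k_\ell},\ldots,H_{k_1},\e^{xz+yz^2}]=\Wr_x[H_{k_\ell},\ldots,H_{k_1},\Psi_0(x,y,z)]$. Comparing with the Wronskian expression \eqref{eq:PsilamWr} for $\Psilam(x,y,z)$ then yields \eqref{eq:addingy}.

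Since every step is a direct manipulation of formulas already established, there is no genuine obstacle here; the only points needing a little attention are translating the functional identity \eqref{eq:hcyz} into the clean statement $\hat\phi_j=H_{k_j}(\cdot,y)$, and verifying that $\hClam_y$ is an admissible homogeneous subspace with $q_{\hClam_y}(z)=z^\ell$ so that the stationary wave-function formula applies verbatim.
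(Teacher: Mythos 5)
Your proposal is correct and follows essentially the same route as the paper: both arguments reduce to the computation $\la \hbev{k_j}{0}_y(z), \e^{xz}\ra = \la \bev{k_j}{0}(z), \Psi_0(x,y,z)\ra = H_{k_j}(x,y)$ via \eqref{eq:hcyz} and \eqref{eq:Psigf}, followed by a comparison of the resulting Wronskian formula for $\psi_{\hWlam_y}(x,z)$ (multiplied by $\e^{yz^2}$, absorbed into the last column) with the expression \eqref{eq:PsilamWr} for $\Psilam(x,y,z)$. Your additional remarks that $\hClam_y$ is homogeneous, $\ell$-dimensional, supported at $0$ with $q_{\hClam_y}(z)=z^\ell$ simply make explicit details the paper leaves implicit.
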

\begin{proof}
  By definition, $C_\lambda$ is spanned by
  $\bev{k_i}0,\; i=1,\ldots, \ell$.  By \eqref{eq:Psigf} and
  \eqref{eq:hcyz},
  \[ H_{n}(x,y) = \la c(z) , \exp(x z + y z^2) \ra = \la
    \hc_y(z) ,  \e^{x z} \ra,\quad c=\bev{n}0.\] Hence, by
  \eqref{eq:psixzdef},
  \[ \e^{yz^2} \psi_{\hWlam_y}(x,z) = \frac{ \Wr_x[H_{k_\ell}(x,y),
      \ldots, H_{k_1}(x,y),\e^{xz}] \e^{yz^2}}{\Wr_x[H_{k_\ell}(x,y),
      \ldots, H_{k_1}(x,y)] z^\ell}
    = \Psilam(x,y,z). \]
\end{proof}


Our next observation is the following characterization of $\Wlam$ as a
point in $\Grad$.
\begin{prop}
  \label{prop:Gradfixed}
  We have $\beta(\Wlam) = \Wlam$; i.e., $\Wlam\in\Grad$ is a fixed
  point of the bispectral involution.
\end{prop}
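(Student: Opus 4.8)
The plan is to exploit the explicit monomial description $\Wlam(z) = \lspan\{z^m : m \in \Jlam\}$ together with the defining property of the bispectral involution, namely $\psi_{\beta(\Wlam)}(x,z) = \psi_{\Wlam}(z,x)$. Since $\Wlam$ is built from the homogeneous subspace $\Clam = \lspan\{\bev{k}{0} : k \in \cKlam\}$ supported entirely at the origin, its stationary wave function $\psi_{\Wlam}(x,z)$ has, by Proposition~\ref{prop:Psiform} and \eqref{eq:psiai}, the very clean form
\[
  \psi_{\Wlam}(x,z) = z^{-\ell}\bigl(z^\ell + \textstyle\sum_{i=1}^\ell \phi_i(x) z^{\ell-i}\bigr)\e^{xz}
   = \frac{\Wr_x[h_{k_\ell}(x),\ldots,h_{k_1}(x),\e^{xz}]}{\Wr_x[h_{k_\ell}(x),\ldots,h_{k_1}(x)]\,z^\ell},
\]
where $k_1 > \cdots > k_\ell$ enumerate $\cKlam$; this is the $y=0$ specialization of \eqref{eq:PsilamWr} (equivalently, restrict \eqref{eq:addingy} to $y=0$). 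The strategy is to compute $\psi_{\Wlam}(z,x)$ — i.e. swap the roles of $x$ and $z$ — and recognize the result as $\psi_{\Wlam}(x,z)$ again, which by the uniqueness in Proposition~\ref{prop:Creduced} and the well-definedness of $W \mapsto \psi_W$ (Corollary~\ref{cor:Cindep}) forces $\beta(\Wlam) = \Wlam$.

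The key steps, in order: (1) Write $\psi_{\Wlam}(x,z) = q_{\Clam}(z)^{-1}\bigl(z^\ell + \sum_i \phi_i(x)z^{\ell-i}\bigr)\e^{xz}$ with $q_{\Clam}(z) = z^\ell$; note the numerator is $\Wr_x[h_{k_\ell},\ldots,h_{k_1},\e^{xz}]/\Wr_x[h_{k_\ell},\ldots,h_{k_1}]$. (2) Use the Rodrigues formula \eqref{eq:hermrr}, $h_k(x) = (-1)^k\e^{x^2}\partial_x^k\e^{-x^2}$, to rewrite each $h_{k_i}(x)$ appearing in the Wronskian; after pulling the common $\e^{x^2}$ factors through the Wronskian and cancelling between numerator and denominator, the Wronskian of the $h_{k_i}$ against $\e^{xz}$ becomes (up to the $\e^{x^2}$-type gauge and a sign) a Wronskian of $\partial_x^{k_i}\e^{-x^2}$ against $\e^{xz-x^2}$, and then — using $\e^{xz-x^2} = \e^{z^2/4}\e^{-(x-z/2)^2}$ and the generating function \eqref{eq:chermgf} — one sees the $x\leftrightarrow z$ symmetry emerge: the object is essentially symmetric under interchanging $x$ and $z$ because $\partial_x$ acting on $\e^{xz}$ gives $z$ and vice versa, and the Hermite generating kernel $\e^{xz - \frac14 z^2}$ (resp. with $x^2$) treats the two variables symmetrically up to an explicit Gaussian prefactor. (3) Conclude that $\psi_{\Wlam}(z,x)$, as a function of $x$ and $z$, has the form $q(x)^{-1}(\text{poly in }z)\e^{xz}$ with the same annihilating one-point conditions at $0$ (in the $z$ variable) as $\psi_{\Wlam}$, hence equals $\psi_{\Wlam}(x,z)$; therefore $\beta(\Wlam) = \Wlam$.

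An alternative, cleaner route — which I would actually prefer — is to avoid the Rodrigues manipulation and instead argue via the characterization \eqref{eq:cqCPsiW}: $\psi_{\beta(\Wlam)}$ is the unique wave function such that $q(x)\,\psi_{\beta(\Wlam)}(x,z) = z^\ell\psi_{\Wlam}(z,x)$ satisfies $\langle \bev{k}{0}(x), z^\ell\psi_{\Wlam}(z,x)\rangle = 0$ for $k \in \cKlam$; one checks directly that the $\e^{xz}$ prefactor makes $\langle \bev{k}{0}(x), (\cdots)\e^{xz}\rangle$ compute the same Wronskian-type determinant, symmetric in the two variables, so the conditions defining $\beta(\Wlam)$ are exactly the conditions defining $\Wlam$, namely annihilation by $\bev{k}{0}$, $k \in \cKlam$, and hence $\beta(\Wlam) = W_{\Clam} = \Wlam$. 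The main obstacle is the bookkeeping in step (2)/(alternative): correctly tracking the Gaussian gauge factors $\e^{\pm x^2}, \e^{\pm z^2/4}$ and the sign $(-1)^{k_i}$ through the Wronskian so that the claimed $x\leftrightarrow z$ symmetry is genuine and not merely formal — in particular one must confirm that the extra factor produced by swapping is a function of $z$ alone (absorbable into the $q_C(z)^{-1}$ normalization) times a function of $x$ alone (which must turn out to be $1$, by monicity in the leading behavior). Once that symmetry is pinned down, the conclusion is immediate from the uniqueness statements already established.
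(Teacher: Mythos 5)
There is a genuine error at the outset, and it undermines both routes you sketch. The bispectral involution is defined through the \emph{stationary} wave function, i.e.\ with $t_2=y=0$, and the $y=0$ specialization of \eqref{eq:PsilamWr} has seed functions $H_{k_i}(x,0)=x^{k_i}$ --- plain monomials --- not the classical Hermite polynomials $h_{k_i}(x)$, which correspond to $y=-1/4$ (recall $h_n(x)=2^nH_n(x,-1/4)$, while $H_n(x,0)=x^n$ by \eqref{eq:HBell}); equivalently, $\phi_i(x)=\la \bev{k_i}0(z),\e^{xz}\ra=x^{k_i}$. So your starting formula
\[
  \frac{\Wr_x[h_{k_\ell}(x),\ldots,h_{k_1}(x),\e^{xz}]}{\Wr_x[h_{k_\ell}(x),\ldots,h_{k_1}(x)]\,z^\ell}
\]
is not $\psi_{\Wlam}(x,z)$; it is the stationary wave function of the different point $\hWlam_{-1/4}\in\Grad$, and that object is genuinely \emph{not} symmetric under $x\leftrightarrow z$. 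For instance, with $\cKlam=\{2\}$ the Hermite-seeded expression is $\e^{xz}\bigl(1-\tfrac{4x}{(2x^2-1)z}\bigr)$, whereas the true $\psi_{\Wlam}(x,z)=\e^{xz}\bigl(1-\tfrac{2}{xz}\bigr)$. Consequently the Rodrigues/Gaussian bookkeeping of your step (2) cannot be made to work: conjugation by $\e^{\pm x^2}$ or $\e^{\pm z^2/4}$ moves you to a different point of $\Grad$ (this is exactly the flow $\hWlam_y$, and Proposition \ref{prop:functionals} shows $\beta(\hWlam_y)=\Wslam_y$, which is not $\hWlam_y$ for $y\neq 0$), so no gauge factor can restore a symmetry that the Hermite-seeded wave function simply does not have.

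Beyond the wrong starting point, the decisive step is never actually performed: both your main route and your preferred alternative rest on the assertion that the relevant Wronskian-type expression ``is essentially symmetric'' in $x$ and $z$, and you explicitly defer its verification. With the correct monomial seeds that verification is the whole content of the proposition, and it is a short determinant computation: expanding $\Wr_x[x^{k_1},\ldots,x^{k_\ell},\e^{xz}]$ and dividing by $\Wr_x[x^{k_1},\ldots,x^{k_\ell}]\,z^\ell$, the coefficient of $z^{j-\ell}$ is a constant multiple of $x^{j-\ell}$, so $\psi_{\Wlam}(x,z)$ is a linear combination of $(xz)^{j-\ell}\e^{xz}$, $j=0,\ldots,\ell$, hence $\psi_{\Wlam}(x,z)=\psi_{\Wlam}(z,x)$ and $\beta(\Wlam)=\Wlam$ by \eqref{eq:psibetaW}. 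Your alternative route via the one-point conditions and Proposition \ref{prop:Psiform} could also be completed, but only after replacing $h_{k_i}$ by $x^{k_i}$ and explicitly checking $\la \bev{k}0(z), z^\ell\psi_{\Wlam}(z,x)\ra=0$ for $k\in\cKlam$, which again amounts to the same computation you have omitted.
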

\begin{proof}
  By definition of the stationary wave function; c.f., \eqref{eq:PsiWdef}
  \eqref{eq:psixzdef},
  \begin{align*}
    \psi_{\Wlam}(x,z)
    & = z^{-\ell} \frac{\Wr_x[x^{k_1},\ldots, x^{k_\ell},
      \e^{xz}]}{\Wr_x[x^{k_1},\ldots, x^{k_\ell}]} \\
    &=\prod_{i<j}(k_i-k_j) x^{-|\lambda|} z^{-\ell} \e^{xz}
    \begin{vmatrix}
      x^{k_1} & k_1 x^{k_1-1} & \ldots &F_\ell(k_1) \,x^{k_1-\ell}\\
      \vdots & \vdots & \ddots & \vdots\\
      x^{k_{\ell}} & k_n x^{k_\ell-1} & \ldots
      &F_\ell(k_\ell)\,
      x^{k_\ell-\ell} \\
      1 & z & \ldots & z^\ell
      \end{vmatrix}
  \end{align*}
  where $F_j(a): = a(a-1) \cdots (a-j+1)$. By inspection, the
  coefficient of $z^j$ in the above determinant is a constant times
  $x^p$ where
  \[ p = \sum_{i=1}^\ell k_i- \frac12 \ell(\ell+1) + j  = |\lambda| - \ell+
    j.\]
  It follows that $\psi_{\Wlam}(x,z)$ is a linear combination of monomials
  of the form $(xz)^{j-\ell}\e^{xz},\; j=0,\ldots, \ell$.  Therefore,  
  $\psi_{\Wlam}(x,z) = \psi_{\Wlam}(z,x)$.
\end{proof}


Recall that
$\Wslam(x,y) = \lspan \{ \Rlam_m(x,y) \colon m\in \Jlam \} \otimes
\C[y]$.  For a fixed $y\in \C$., let $\Wslam_y$ denote the vector
space obtained by restricting $\Wslam$ to that particular value of
$y$.  Starting from \eqref{eq:WRHchar}, a straightforward calculation
shows that
\[ \Rlam_m(x,0) = x^m,\quad m\in \Jlam.\]
Thus, by proposition \ref{prop:Gradfixed},
\[\Wslam_0 = \hWlam_0 = \beta(\hWlam_0).\]
We now show that  $\Wslam_y\in \Grad$ for all $y$.  

\begin{prop}\label{prop:functionals}
  We have $\Wslam_y = \beta(\hWlam_y)$ for all $y\in \C$.
\end{prop}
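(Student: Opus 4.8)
The plan is to compute the stationary wave function of $\beta(\hWlam_y)$ directly from the defining relation \eqref{eq:psibetaW} of the bispectral involution, to recognise it as a rational-times-exponential expression in the spectral variable, and then to read off the corresponding point of $\Grad$ via the characterisation in Proposition~\ref{prop:Psiform}.

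First, combine \eqref{eq:addingy}, i.e. $\psi_{\hWlam_y}(x,z) = \Psilam(x,y,z)\,\e^{-yz^2}$, with the Miwa-shift form \eqref{eq:Psimiwa} to obtain
\[
  \psi_{\hWlam_y}(x,z) = \frac{\Philam(x,y,z)}{\taulam(x,y)}\,\e^{xz},
\]
with $\Philam$ as in \eqref{eq:Philamdef}, which by \eqref{eq:Philamx} is a monic polynomial of degree $N$ in its first argument. By \eqref{eq:psibetaW}, interchanging the two arguments gives
\[
  \psi_{\beta(\hWlam_y)}(x,z) = \psi_{\hWlam_y}(z,x) = \frac{\Philam(z,y,x)}{\taulam(z,y)}\,\e^{xz},
\]
so $\taulam(z,y)\,\psi_{\beta(\hWlam_y)}(x,z) = \Philam(z,y,x)\,\e^{xz}$ is a polynomial in $z$ times $\e^{xz}$; thus $\taulam(\cdot,y)$ clears every $z$-pole of this wave function. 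By Proposition~\ref{prop:Psiform}, together with the independence of the construction from the choice of functionals (Proposition~\ref{prop:ACC'} and Corollary~\ref{cor:Cindep}), the point $\beta(\hWlam_y)$ equals $\taulam(z,y)^{-1}\Ker C$ for the homogeneous subspace
\[
  C = \bigl\{\, c\in\cC \colon \la c(z),\, \Philam(z,y,x)\,\e^{xz}\ra = 0 \text{ for all } x \,\bigr\}.
\]

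It then remains to identify $C$. Using \eqref{eq:Psimiwa} once more, in the swapped variables, $\Philam(z,y,x)\,\e^{xz} = \taulam(z,y)\,\Psilam(z,y,x)\,\e^{-yx^2}$; and the generating-function expansion \eqref{eq:Psilamgf}, again with the first and third variables interchanged, combined with $\taulam\,\Rlam_m = \Hlam_{m+N}$ from \eqref{eq:Rlamm}, gives
\[
  \taulam(z,y)\,\Psilam(z,y,x) = \sum_{m\in\Jlam} \frac{\kaplam(m)}{(m+\ell)!}\,\Hlam_{m+N}(z,y)\, x^m .
\]
Since $\e^{-yx^2}$ vanishes nowhere and the Laurent monomials $x^m,\ m\in\Jlam$, are linearly independent, the condition defining $C$ is equivalent to $\kaplam(m)\,\la c(z),\Hlam_{m+N}(z,y)\ra = 0$ for every $m\in\Jlam$, hence (since $\kaplam(m)\ne 0$ for $m\in\Jlam$) to $\la c(z),\Hlam_n(z,y)\ra = 0$ for all $n\in\Ilam$. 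Therefore $C = \Ann(\Uslam_y)$ with $\Uslam_y = \lspan\{\Hlam_n(z,y)\colon n\in\Ilam\}$, so $\Ker C = \Uslam_y$ and
\[
  \beta(\hWlam_y) = \taulam(z,y)^{-1}\Uslam_y = \Wslam_y,
\]
which in particular shows $\Wslam_y\in\Grad$.

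The hard part will be the second step: legitimising the recovery of $\beta(\hWlam_y)$ from its wave function when $\taulam(\cdot,y)$ is not squarefree or not coprime to $\Philam(z,y,x)$ — that is, when $\taulam(\cdot,y)$ is only a multiple of the genuine ``$q_C$'' of the reduced homogeneous subspace. This is exactly what Proposition~\ref{prop:ACC'} and Corollary~\ref{cor:Cindep} are for, and one should check explicitly that the subspace of one-point conditions is read off correctly from any such multiple. By contrast, exchanging $\la c(z),\cdot\ra$ with the (locally uniformly convergent) Laurent series in $x$ is routine given the domain conventions already in force.
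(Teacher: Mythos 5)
Your proposal is correct and follows essentially the same route as the paper's own proof: compute $\psi_{\beta(\hWlam_y)}$ from \eqref{eq:psibetaW} and \eqref{eq:addingy}, write it via the Miwa-shift form \eqref{eq:Psimiwa} as $\Philam/\taulam$ times an exponential, identify the one-point conditions through Proposition~\ref{prop:Psiform}, and use the generating-function expansion to recognize them as the annihilator of $\Uslam_y$. The only differences are cosmetic (which variable you treat as spectral, and that the paper packages the conditions as a $y$-parametrized family $\Cslam$ for later use), and the technical point you flag about $\taulam(\cdot,y)$ possibly exceeding the reduced $q_C$ is handled in the paper at the same level of detail you propose.
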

\noindent
In other words, the exceptional Hermite polynomials specify, and are
determined by a homogeneous subspace of conditions. To be more
precise,
\begin{equation}
  \label{eq:betahWC*}
  \Uslam_y = \taulam_y      \beta(\hWlam_y),\quad y\in \C.
\end{equation}
where $\Uslam_y\subset \cP$ denotes the restriction to a
particular value of $y\in \C$.
\begin{proof}[Proof of Proposition \ref{prop:functionals}.]
  Fix a $y\in \C$.
  By
  \eqref{eq:psibetaW} and \eqref{eq:addingy},
  \[\psi_{\beta(\hWlam_y)}(z,x)= \Psilam(x,y,z) \e^{-yz^2} .\]
  By \eqref{eq:Philamdef} and \eqref{eq:Psimiwa}, for
  $y,z$ fixed
  $\taulam(x,y)\Psilam(x,y,z)$is either regular or has removable
  singularities for all $x\in \C$.  Hence, $\taulam_y
  \beta(\hWlam_y)$ is a polynomial subspace for every value of
  $y$.  Thus, it becomes possible to define
  $\Cslam$ as the space of continuous curves in
  $c_y\in\cC$ such that $c_y \in \Ann \taulam_y
  \beta(\hWlam_y)$ for all $y$. Hence, $\Psilam_y(z,x)\e^{-yz^2},\;
  y\in \C$ has the form shown in \eqref{eq:psiai} with
  $C=\Cslam_y$ and $q_C(z) =
  \taulam(z,y)$.  Moreover, by Proposition \ref{prop:Psiform}, $c\in
  \Cslam$ if and only if
  \[ \la c_y(x),\Philam(x,y,z)\e^{xz+yz^2}\ra\equiv 0,\quad y\in \C.\] By
  \eqref{eq:Psilamgf1}, for every $c\in \cC$, we have
  \[ z^N\la \cof{x}, \Philam(x,y,z)\e^{xz+yz^2}\ra = \sum_{n\in \Ilam}
    \kaplamn{N}(n)\;\la \cof{x}, \Hlam_n(x,y)\ra \frac{z^{n}}{n!}.\]
  By \eqref{eq:Philamdef} and \eqref{eq:Psimiwa}, the above relation
  is the Taylor series of an entire function.  Hence, $c\in\Cslam$
  if and only if
  \[ \la c_y(x), \Hlam_n(x,y)\ra = 0 ,\quad n\in \Jlam,\; y\in \C.\]
  It follows that $\Uslam_y = \Ker \Cslam_y,\; y\in \C$, as was to be
  shown.
\end{proof}
\noindent
As a particular case, by \eqref{eq:xopscaling}, the elements of
$\Cslam(-1/4)$ are the one-point differential functionals that
annihilate the univariate exceptional Hermite polynomials
$h^{(\lambda)}_n,\; n\in \Jlam$.  

We are now able to provide the following alternate characterization of
$\Flam$, the module of $\lambda$-generators.
\begin{prop}
  \label{prop:lamgen}
  A $\Phi(x,y,z) \in \C[x,y,z,z^{-1}]$ is a $\lambda$-generator if
  and only if $z^\ell\Phi(x,y,z)$ is a polynomial and if
  \begin{align}
    \label{eq:lamgenz}
    \la  c(z), z^\ell\Phi(x,y,z)\e^{xz+yz^2} \ra  &\equiv 0 \quad
    \text{for all } c\in  \Clam; \text{ and }\\
    \label{eq:lamgenx}
    \la  c_y(x), \Phi(x,y,z) \e^{xz+yz^2} \ra  &\equiv 0 \quad
    \text{for all } c\in  \Cslam,\; y\in \C.
  \end{align}
\end{prop}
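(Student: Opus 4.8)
The plan is to split the defining property of a $\lambda$-generator — that $\Psi(x,y,z):=\Phi(x,y,z)\e^{xz+yz^2}/\taulam(x,y)$ have the form $\sum_{m\in\Jlam}F_m(x,y)z^m$ with $F_m\in\Wslam$ — into a ``$z$-support'' part and a ``coefficient'' part, and to match these to \eqref{eq:lamgenz} and \eqref{eq:lamgenx} respectively, using Proposition~\ref{prop:WClam} for the first and Proposition~\ref{prop:functionals} for the second. As a preliminary observation, since $\e^{xz+yz^2}$ is entire in $z$ with constant term $1$ and $\taulam$ is free of $z$, the lowest power of $z$ appearing in the Laurent expansion of $\Psi$ coincides with the lowest power appearing in $\Phi$; hence ``$z^\ell\Phi$ is a polynomial'' is equivalent to ``$\Psi$ has no powers of $z$ below $z^{-\ell}$'', and when this holds $z^\ell\Phi\,\e^{xz+yz^2}=\taulam\,z^\ell\Psi$ is entire in $z$, so we may write $\Psi=\sum_{m\ge-\ell}F_m(x,y)z^m$ with $\taulam F_m=[z^{m+\ell}]\bigl(z^\ell\Phi\,\e^{xz+yz^2}\bigr)\in\C[x,y]$ by \eqref{eq:Psigf}.

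\emph{The $z$-support part.} As in the proof of Proposition~\ref{prop:WClam} one has $\cKlam\subset\N$ and $\Jlam+\ell=\Nz\setminus\cKlam$, so $z^\ell\Wlam=\ker\Clam=\lspan\{z^j:j\in\Nz\setminus\cKlam\}$; in particular $\la c(z),z^{m+\ell}\ra=0$ for all $m\in\Jlam$ and $c\in\Clam$, whereas for $j\in\cKlam$ the functional $\bev{j}0\in\Clam$ satisfies $\la\bev{j}0,g\ra=j!\,[z^j]g$. Expanding $z^\ell\Phi\,\e^{xz+yz^2}=\sum_{m\ge-\ell}(\taulam F_m)\,z^{m+\ell}$ and applying the elements of $\Clam$ term by term (legitimate because this series converges locally uniformly to an entire function of $z$), we find that \eqref{eq:lamgenz} is equivalent to $\taulam F_{j-\ell}=0$ — hence $F_{j-\ell}=0$ — for every $j\in\cKlam$, i.e.\ to $F_m=0$ for every $m\in\cKlam-\ell=\{m_1(\lambda),\ldots,m_\ell(\lambda)\}$. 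Since these are exactly the elements of $\Mlam$ that are $\ge-\ell$ and since $\min\Jlam=-\ell$, the conjunction of ``$z^\ell\Phi$ is a polynomial'' and \eqref{eq:lamgenz} is equivalent to $\Psi=\sum_{m\in\Jlam}F_m z^m$.

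\emph{The coefficient part.} Assume now $\Psi=\sum_{m\in\Jlam}F_m z^m$ with each $\taulam F_m\in\C[x,y]$. By \eqref{eq:Wslamdef}, $F_m\in\Wslam$ is equivalent to $\taulam F_m\in\Uslam$. Applying $c_y(x)$, $c\in\Cslam$, term by term to $\Phi\,\e^{xz+yz^2}=\taulam\Psi=\sum_{m\in\Jlam}(\taulam F_m)z^m$ shows that \eqref{eq:lamgenx} holds if and only if $\la c_y(x),\taulam_y F_m(\cdot,y)\ra=0$ for all $m\in\Jlam$, all $c\in\Cslam$ and all $y\in\C$; by Proposition~\ref{prop:functionals} — precisely the identity $\Uslam_y=\Ker\Cslam_y$ established there — this says $\taulam_y F_m(\cdot,y)\in\Uslam_y$ for every $y\in\C$. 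The remaining point, and the one that needs a genuinely separate (if short) argument, is to promote this pointwise-in-$y$ statement to $\taulam F_m\in\Uslam$: since each $\Hlam_n$ is monic in $x$ of degree $n$ with degrees running over $\Ilam$, the family $\{\Hlam_n:n\in\Ilam\}$ is in echelon form, and an induction on $\deg_x$ shows that any $P\in\C[x,y]$ whose specialization $P(\cdot,y)$ lies in $\lspan_\C\{\Hlam_n(\cdot,y):n\in\Ilam\}=\Uslam_y$ for all $y$ already lies in $\lspan_{\C[y]}\{\Hlam_n\}=\Uslam$ — one peels off $p_d(y)\Hlam_d$, where $d=\deg_x P$ and $p_d(y)$ is the leading coefficient of $P$ in $x$, noting $d\in\Ilam$ because for generic $y$ the specialization has $x$-degree exactly $d$ and lies in $\Uslam_y$. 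Taking $P=\taulam F_m$ and combining with the previous paragraph yields the claimed equivalence: $\Phi$ is a $\lambda$-generator if and only if $z^\ell\Phi$ is a polynomial and \eqref{eq:lamgenz}, \eqref{eq:lamgenx} hold. (The term-by-term applications of one-point functionals used throughout are justified in each instance by local uniform convergence of the relevant series in the variable acted on.) I expect this echelon-upgrade step, together with the careful tracking of which powers $m$ are forced to vanish, to be the only part requiring attention; everything else is an assembly of Propositions~\ref{prop:WClam} and \ref{prop:functionals} with the definitions.
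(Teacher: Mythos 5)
Your proof is correct and follows essentially the same route as the paper's: expand against the monomial basis in $z$, use that $\Clam$ annihilates $z^{m+\ell}$ for $m\in\Jlam$ while the functionals $\bev{k}0$, $k\in\cKlam$, pick out exactly the coefficients that must vanish, and invoke Proposition \ref{prop:functionals} (the identity $\Uslam_y=\Ker\Cslam_y$) for the coefficient conditions. Your only addition is the echelon induction upgrading ``$\taulam_y F_m(\cdot,y)\in\Uslam_y$ for every $y$'' to the $\C[y]$-module statement $\taulam F_m\in\Uslam$ --- a step the paper's proof passes over silently, and a detail worth having spelled out.
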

\begin{proof}
  Suppose that
  \begin{equation}
    \label{eq:PhiJell} 
    \Phi(x,y,z) \e^{xz+yz^2} = \sum_{m\in \Jlam}  F_m(x,y)
    z^{m},\quad F_m \in \Uslam.
  \end{equation}
  Then, for $c\in \Clam$ we have
  \begin{align*}
    \la  c(z), z^\ell\Phi(x,y,z)\e^{xz+yz^2} \ra
    = \sum_{m\in \Jlam}  F_m(x,y)
    \la c(z),z^{m+\ell} \ra\equiv 0.
  \end{align*}
  Similarly, for $c\in \Cslam$, 
  \begin{align*}
    \la  c_y(x), \Phi(x,y,z) \e^{xz+yz^2} \ra
    = \sum_{m\in \Jlam}  \la c_y(x)F_m(x,y)\ra    z^{m+\ell}
    \equiv 0 
  \end{align*}
  Conversely, suppose that $z^\ell\Phi(x,y,z)$ is a polynomial and
  that \eqref{eq:lamgenz} and \eqref{eq:lamgenx} hold.  By
  \eqref{eq:lamgenz} it follows that
  \[ \Philam(x,y,z) \e^{xz+yz^2} = \sum_{m\in \Jlam}^\infty F_m(x,y)
    z^m\]
  where $F_n(x,y)$ are polynomials.  By Proposition
  \ref{prop:functionals} and by \eqref{eq:lamgenx}, each $F_m\in
  \Uslam$. 
\end{proof}

\section{Bispectrality}
\label{sect:bispect}
\subsection{The stabilizer algebras}
For a partition $\lambda$, let $\cAlam$ be the
algebra of differential operators that preserve $\Wlam$.  Let
$\cAslam$ be the dual algebra of differential operators that preserve
$\Wslam$.  To be more precise, $\pi\in \cAlam$ if and only if
\[ \pi(\partial_z,z) \Wlam(z) \subset \Wlam(z) \]
and $\sigma\in \cAslam$ if and only if
\[ \sigma(x,y,\partial_x) \Wslam(x,y)\subset \Wslam(x,y).\] In the
subsequent sections we will see that the operators and eigenvalues
associated with the exceptional Hermite bispectral triple belong to
certain commutative subalgebras of $\cAlam$ and $\cAslam$.

To gain a better understanding of $\cAlam$, we introduce basic
homogeneous operators whose action on a monomial either annihilates
that monomial or shifts its degree. Set
\begin{align}
  \label{eq:Glamqdef}
  \cGlam_k &:= (\Mlam+k)\cap \Jlam,\quad k\in \Z \\
  \label{eq:gamqdef}
  \gamlam_k(m) &:= \prod_{i\in \cGlam_k} (m-i),
\end{align}
Let $\Glam_k$ be the differential operator defined by
\begin{equation}
  \Glam_k(\partial_z,z) := z^{-k} \gamlam_{k}(z\partial_z),\quad k\in \Z,
\end{equation}
where \[ \E(\partial_z,z) = z\partial_z, \] is the Cauchy-Euler
operator in $z$.
\begin{prop}
  \label{prop:hRgen}
  The operator algebra $\cAlam$ is generated by $\E$ (the Cauchy-Euler
  operator) and by $\Glam_k,\; k \in \Z$.
\end{prop}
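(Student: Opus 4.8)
The plan is to realise $\cAlam$ as a weight-graded subalgebra of the ring $\cD$ of differential operators in $z$ with Laurent polynomial coefficients and to describe each homogeneous component explicitly. One inclusion is immediate: since $\E z^m = m z^m$ and $\Glam_k z^m = \gamlam_k(m)\, z^{m-k}$ lie in $\Wlam$ for every $m \in \Jlam$ --- in the second case because either $m-k\in\Jlam$, so $z^{m-k}\in\Wlam$, or $m-k\in\Mlam$, whence $m\in(\Mlam+k)\cap\Jlam=\cGlam_k$ by \eqref{eq:Glamqdef} and therefore $\gamlam_k(m)=0$ --- the generators $\E$ and $\Glam_k$ all lie in $\cAlam$, so the subalgebra they generate does too. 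I would also note at the start that there is no loss in working inside $\cD$: a rational-coefficient differential operator preserving $\Wlam$ must in fact have Laurent polynomial coefficients, since a pole at some $a\neq 0$ would, upon evaluating $\pi(z^m)$ near $z=a$ for the infinitely many large $m\in\Jlam$, force the leading Laurent part --- a nonzero polynomial in $m$ --- to vanish identically.

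For the reverse inclusion I would grade $\cD$ by weight, assigning $z$ weight $+1$ and $\partial_z$ weight $-1$, so $\cD=\bigoplus_{d\in\Z}\cD_d$. A homogeneous operator $\pi_d\in\cD_d$ acts on monomials by $\pi_d z^m = p(m)\, z^{m+d}$ for a unique polynomial $p\in\C[m]$, and since $z^{j+d}\partial_z^j z^m = F_j(m)\,z^{m+d}$ with the generalized falling factorials $F_j$ of \eqref{eq:ffacdef} forming a $\C$-basis of $\C[m]$, the assignment $\pi_d\mapsto p$ is a linear isomorphism between $\cD_d$ and $\C[m]$. Because the monomials $z^{m+d}$ are linearly independent for distinct $d$, an operator $\pi=\sum_d\pi_d$ preserves $\Wlam$ if and only if each $\pi_d$ does; hence $\cAlam=\bigoplus_d\cAlam_d$ with $\cAlam_d=\cAlam\cap\cD_d$. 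Now $\pi_d z^m=p(m)\, z^{m+d}$ lies in $\Wlam$ for all $m\in\Jlam$ exactly when $p$ vanishes on $\{m\in\Jlam : m+d\in\Mlam\}=(\Mlam-d)\cap\Jlam=\cGlam_{-d}$; this set is finite because $\Mlam-d$ is again a Maya diagram, so the monic polynomial with those simple roots is precisely $\gamlam_{-d}$ of \eqref{eq:gamqdef}, and $\cAlam_d$ corresponds under the isomorphism above to the ideal $\gamlam_{-d}(m)\,\C[m]$.

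It then remains to see that products of the two kinds of generators already exhaust every $\cAlam_d$. The operator $\Glam_{-d}=z^{d}\gamlam_{-d}(z\partial_z)$ is homogeneous of weight $d$ with $\Glam_{-d}z^m=\gamlam_{-d}(m)\, z^{m+d}$, so it corresponds to $\gamlam_{-d}(m)$; the Cauchy--Euler operator $\E$ corresponds to $m$; and composing, $\E^{\,j}\circ\Glam_{-d}$ corresponds to $(m+d)^j\gamlam_{-d}(m)$. Since $\{(m+d)^j : j\ge 0\}$ is a $\C$-basis of $\C[m]$, the operators $\E^{\,j}\circ\Glam_{-d}$, $j\ge 0$, span $\cAlam_d$; in particular for $d=0$ one has $\cGlam_0=\emptyset$, $\gamlam_0=1$, $\Glam_0=1$, and $\cAlam_0=\C[\E]$. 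Summing over $d$ then shows that every element of $\cAlam=\bigoplus_d\cAlam_d$ is a $\C$-linear combination of operators $\E^{\,j}\Glam_k$, and hence lies in the subalgebra generated by $\E$ and the $\Glam_k$, which completes the argument.

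I expect the one delicate point to be the middle step: translating ``$\pi_d$ preserves $\Wlam$'' into a divisibility condition on the associated polynomial, and checking that the resulting vanishing locus $(\Mlam-d)\cap\Jlam$ is exactly $\cGlam_{-d}$ and is finite. This is the only place where the combinatorics of $\Mlam$ and $\Jlam$ --- in particular that any integer shift of a Maya diagram is again a Maya diagram --- actually enters; the grading, the identification of $\cD_d$ with $\C[m]$, and the matching of generators are routine bookkeeping once this is settled.
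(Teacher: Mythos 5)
Your proof is correct and follows essentially the same route as the paper's: both reduce to weight-homogeneous operators, write such an operator as $z^{-k}G^{\natural}(z\partial_z)$, observe that preserving $\Wlam$ forces the polynomial $G^{\natural}$ to vanish on $\cGlam_{k}$, and then factor out $\gamlam_{k}$ against powers of the Euler operator (your $\E^{j}\circ\Glam_{-d}$ versus the paper's $\Glam_k\circ\alpha(z\partial_z)$ is an immaterial difference). Your extra steps --- the explicit justification that the homogeneous components of a stabilizing operator each stabilize $\Wlam$, and the remark ruling out poles away from the origin --- merely spell out details the paper's proof leaves implicit.
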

\begin{proof}
  By construction,
  \begin{align}
    \label{eq:Ezm}
    \E(\partial_z,z) z^m &= m z^m\\
    \label{eq:Gkm}
    \Glam_k(\partial_z,z) z^m &= \gamlam_{k}(m) z^{m-k}.
  \end{align}
  The Cauchy-Euler operator preserves $\Wlam$ because the latter is
  generated by monomials, and because \eqref{eq:Ezm} holds.  By
  \eqref{eq:Gkm}, we have $z^m\in \Wlam(z)$ and
  $z^{m-k} \notin \Wlam(z)$ if and only if $m\in \Jlam$ and
  $m-k\in\Mlam$.  The latter is true if and only if $m\in \cGlam_{k}$,
  if and only if $\gamlam_{k}(m)=0$. For this reason, $\Glam_k$
  restricts to a well-defined linear transformation of $\Wlam$.

  Conversely, since $\Wlam$ is generated by monomials, $\cAlam$ is
  generated by homogeneous operators $G(\partial_z,z)$ with the
  property that $G(\partial_z,z) z^m \in \Wlam(z),\; m\in \Jlam$.  If
  $G(x,z)$ has weighted-degree $k$, then it must be of the form
  \[ G(\partial_z,z) = z^{-k}G^{\natural}(z\partial_z), \]
  where $G^{\natural}(m)$ is a polynomial that vanishes on all
  $ m\in \cGlam_{k}$.  It follows that
  $G^{\natural}(m) = \gamlam_{k}(m) \alpha(m)$ for some polynomial
  $\alpha(m)$, and hence that
  \[  G(\partial_z,z) = \Glam_k(\partial_z,z) \circ
    \alpha(z\partial_z),\]
  as was to be shown.
\end{proof}

The sequence of integers
\begin{equation}
  \label{eq:glamqdef}
  \glam_q := \#\cGlam_q  = \deg \gamlam_q(m)
\end{equation}
is a key combinatorial signature of the partition $\lambda$ and is
intimately connected with the structure of the stabilizer rings
$\cRlam$ and $\cRslam$ that will be introduced in the following
section.  For now, we note the following symmetry property.
\begin{prop}
  \label{prop:glamq}
  For $q\in \Z$, we have
  \begin{equation}
    \label{eq:glamq-q}
    \glam_q = \glam_{-q}+q.
  \end{equation}
\end{prop}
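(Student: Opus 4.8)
The plan is to prove the identity $\glam_q = \glam_{-q} + q$ by exhibiting an explicit bijection between the two finite sets $\cGlam_q = (\Mlam + q) \cap \Jlam$ and a set of size $\glam_q - q$ that maps naturally onto $\cGlam_{-q}$, or equivalently, by a counting argument comparing how $\Mlam$ and $\Jlam = \Z \setminus \Mlam$ interleave after an integer shift. First I would recall that $\Mlam$ is a Maya diagram: it contains all sufficiently negative integers and excludes all sufficiently positive ones, while $\Jlam$ does the opposite. Consequently, for any integer $q$, the translate $\Mlam + q$ differs from $\Mlam$ only in a window of bounded size, and the ``defect'' count is governed by how far one translates. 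The key quantitative fact I would isolate is that for a Maya diagram $M$ with complement $J$, and any $q \in \Z$,
\begin{equation*}
  \#\big((M+q)\cap J\big) - \#\big((M-q)\cap J\big) = q,
\end{equation*}
for $q \ge 0$ (and symmetrically for $q<0$); applied with $M = \Mlam$ this is exactly \eqref{eq:glamq-q} once one observes $\cGlam_{-q} = (\Mlam - q)\cap \Jlam$.

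To establish the boxed quantitative fact, I would proceed by induction on $q \ge 0$, the case $q = 0$ being trivial. For the inductive step from $q$ to $q+1$, write $(M + (q+1)) \cap J = \big((M+q)+1\big)\cap J$ and compare $(M+q)\cap J$ with $\big((M+q)+1\big)\cap J$. An integer $m \in M+q$ contributes to the first set iff $m \notin M+q$'s... more precisely, I would track the symmetric difference: shifting the set $M+q$ up by one removes exactly those $m+1$ with $m \in M+q$, $m+1 \notin M+q$ from being ``new members of $M$-type'', etc. The cleanest bookkeeping is via the counting function: for a Maya diagram $N$, define $f_N(k) = \#\{n \in N : n < k\} - \#\{n \notin N : n \ge k\}$, which is independent of $k$ for $|k|$ large and equals an invariant (the ``charge'' of $N$). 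One has $f_{N+1}(k) = f_N(k-1)$, and a short computation relates $\#((N)\cap J_{\Mlam})$-type quantities to differences of such charges. I would then assemble these to get the $+q$ on the right-hand side.

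Alternatively — and this may be the slicker route — I would argue purely combinatorially. Since $m \in \cGlam_q$ iff $m \in \Jlam$ and $m - q \in \Mlam$, the map $m \mapsto m - q$ sends $\cGlam_q$ bijectively onto $(\Jlam - q) \cap \Mlam = \Mlam \setminus (\Mlam \cap (\Mlam - q))\ldots$ hmm, more usefully: $m \mapsto m-q$ is a bijection from $\cGlam_q = \Jlam \cap (\Mlam + q)$ onto $(\Jlam - q)\cap \Mlam$. Now $\cGlam_{-q} = \Jlam \cap (\Mlam - q)$, and the map $m \mapsto m + q$ sends it bijectively onto $(\Jlam + q) \cap \Mlam$. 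So it suffices to show $\#\big((\Jlam - q)\cap \Mlam\big) - \#\big((\Jlam + q)\cap \Mlam\big) = q$, i.e. that $\Mlam$ meets the down-shifted complement in exactly $q$ more points than the up-shifted complement. Partition $\Z$ into the four classes according to membership in $\Mlam$ vs.\ $\Jlam$ and in $\Mlam - q$ vs.\ $\Jlam - q$; the class $\Mlam \cap (\Jlam - q) = \Mlam \cap (\Mlam+q)^c$ consists of $m \in \Mlam$ with $m + q \notin \Mlam$, i.e. ``a hole appears $q$ steps to the right'', while $\Mlam \cap (\Jlam+q)$ consists of $m \in \Mlam$ with $m - q \notin \Mlam$. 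A telescoping sum over the $q$ unit shifts, using that each unit shift of a Maya diagram changes the relevant count by exactly $1$ (because $M$ and $M+1$ have charges differing by $1$), gives the claim.

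The main obstacle I anticipate is purely notational: keeping straight the direction of all the shifts and the complementations, and making airtight the ``each unit shift changes the count by $1$'' step, which is where the Maya-diagram finiteness hypotheses (finitely many positive elements, cofinitely many negative elements) are actually used — one must check the relevant sets are finite so the cardinalities make sense, and that the telescoping has no boundary contributions escaping to infinity. Once the unit-shift lemma is cleanly stated and proved, the general case is immediate by induction, and the symmetry $q \leftrightarrow -q$ of the final formula \eqref{eq:glamq-q} is built in.
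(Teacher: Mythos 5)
Your reformulations are fine: by definition $\cGlam_{-q}=(\Mlam-q)\cap\Jlam$, and your shift maps $m\mapsto m\mp q$ are bijections of $\cGlam_{q}$ onto $(\Jlam-q)\cap\Mlam$ and of $\cGlam_{-q}$ onto $(\Jlam+q)\cap\Mlam$, so the proposition is indeed equivalent to the quantitative fact you isolate. But the proposal never actually proves that fact. The second route only returns you to an equivalent statement ($\#\bigl((\Jlam-q)\cap\Mlam\bigr)-\#\bigl((\Jlam+q)\cap\Mlam\bigr)=q$), and the decisive step --- ``each unit shift changes the relevant count by exactly $1$'' --- is asserted rather than proved; you yourself flag it as the point still to be made airtight. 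Worse, the one tool you introduce to prove it is ill-defined: for a Maya diagram $N$, both $\{n\in N:n<k\}$ and $\{n\notin N:n\ge k\}$ are infinite, so your $f_N(k)$ is $\infty-\infty$; the finite quantities are $\#\{n\in N:n\ge k\}$ and $\#\{n\notin N:n<k\}$. So as written there is a genuine gap exactly where the content of \eqref{eq:glamq-q} lies.

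The gap is easy to close along your own lines, with no induction or charge formalism. Let $\chi$ be the indicator function of $\Mlam$. Your bijections give $\glam_q=\sum_{m\in\Z}\chi(m)(1-\chi(m+q))$ and, after the substitution $m\mapsto m+q$, $\glam_{-q}=\sum_{m\in\Z}\chi(m+q)(1-\chi(m))$; hence
\begin{equation*}
  \glam_q-\glam_{-q}=\sum_{m\in\Z}\bigl(\chi(m)-\chi(m+q)\bigr)
  =\sum_{m=-A}^{-A+q-1}\chi(m)-\sum_{m=B+1}^{B+q}\chi(m)=q
\end{equation*}
for $q\ge 0$ and $A,B$ large, since all terms outside a large window $[-A,B]$ vanish, the sum telescopes, and $\chi\equiv 1$ far to the left while $\chi\equiv 0$ far to the right (this is precisely where the Maya-diagram finiteness enters); the case $q<0$ follows by exchanging $q$ and $-q$. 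Once completed this way, your argument is genuinely different from the paper's proof, which never leaves the explicit enumeration $m_i(\lambda)$: it uses $m_{\ell+q+j}(\lambda)+q=m_{\ell+j}(\lambda)$ for $j\ge 1$ to exhibit $\cGlam_q$ as a difference of two $\ell$-element sets plus a block of $q$ extra elements, exhibits $\cGlam_{-q}+q$ as the opposite difference of the same two sets, and concludes from the fact that $A\setminus B$ and $B\setminus A$ are equinumerous for finite sets of equal cardinality. Your route is more intrinsic (valid verbatim for any Maya diagram, no reference to the sequence $m_i$), while the paper's is more explicit and matches the set manipulations reused later in the Reindexing Lemma.
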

\begin{proof}
  Without loss of generality, suppose that $q\geq 1$.
  Since $\lambda_i=0$ for $i>\ell$ we have that
  \[m_{{\ell}+q+j}(\lambda)+q = -{\ell}-j =
    m_{{\ell}+j}(\lambda),\quad j=1,2,\ldots.\] Hence, by
  \eqref{eq:kaplamdef} \eqref{eq:kaplamndef} and \eqref{eq:Glamqdef},
  we have
  \begin{align*}
    \cGlam_q &= \{ m_1+q,\ldots, m_{{\ell}+q}+q,\ldots,
    m_{\ell+q+j}+q,\ldots\} 
    \setminus
    \{m_1,\ldots, m_{\ell},\ldots, m_{\ell+j},\ldots\}\\
    &= \{ m_1+q,\ldots, m_{{\ell}+q}+q\} \setminus
      \{m_1,\ldots, m_{\ell}\}\\
    &= \lp \{ m_1+q,\ldots, m_{\ell}+q\} \setminus
      \{m_1,\ldots, m_{\ell}\}\rp \sqcup \{ m_{\ell+1}+q,\ldots, m_{\ell+q}+q \},
  \end{align*}
  where $\sqcup$ denotes a disjoint union and $m_i = m_i(\lambda)$.
  By similar reasoning,
  \begin{align*}
    \cGlam_{-q}
    &= \{ m_1-q,\ldots, m_{\ell}-q \} \setminus \{
      m_1,\ldots, m_{{\ell}+q}\}\\
    &= \{ m_1-q,\ldots, m_{\ell}-q \} \setminus \{
      m_1,\ldots, m_{\ell}\}\\
    \cGlam_{-q}+q
    &= \{ m_1,\ldots, m_{\ell} \} \setminus \{
      m_1+q,\ldots, m_{\ell}+q\}
  \end{align*}
  If $A,B$ are finite sets of equal cardinality, then $A\setminus B$
  and $B\setminus A$ also have equal cardinality. It follows that
  \[ \# \cGlam_{-q} = \# (\cGlam_{-q}+q) = \# \lp \{ m_1+q,\ldots,
    m_{\ell}+q\} \setminus \{m_1,\ldots, m_{\ell}\}\rp  = \#
    \cGlam_{-q}-q.\]
\end{proof}
\noindent
The following technical result is fundamental in the manipulation of
generating functions based on $\Flam$.
\begin{lem}[Reindexing Lemma]
  \label{lem:muqmq}
  We have
  \begin{equation}
    \label{eq:kapgamF}
    \kaplam(m)\gamlam_{k}(m) = \kaplam(m-k)\gamlam_{-k}(m-k)
    F_k(m+\ell),\quad k\in \Z.  
  \end{equation}
\end{lem}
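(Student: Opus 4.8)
The plan is to reduce the claimed identity \eqref{eq:kapgamF} to the combinatorial decomposition of $\cGlam_k$ obtained in the proof of Proposition~\ref{prop:glamq}, together with the multiplicativity of the falling factorial $F_k$. Recall that $\kaplam(m) = \prod_{i=1}^\ell (m-m_i(\lambda))$ is the monic polynomial whose roots are the first $\ell$ elements of $\Mlam$, and $\gamlam_k(m) = \prod_{i\in\cGlam_k}(m-i)$, where $\cGlam_k = (\Mlam+k)\cap\Jlam$. Both sides of \eqref{eq:kapgamF} are monic polynomials in $m$, so it suffices to check that they have the same multiset of roots (and the same degree, which then follows from Proposition~\ref{prop:glamq}: $\deg$ LHS $= \ell + \glam_k$, while $\deg$ RHS $= \ell + \glam_{-k} + |k| \cdot[\text{sign bookkeeping}]$, matching by \eqref{eq:glamq-q}). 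Equivalently, writing $m' = m-k$, I want to show that the roots of $\kaplam(m)\gamlam_k(m)$, as a multiset, coincide with the roots of $\kaplam(m')\gamlam_{-k}(m')F_k(m'+\ell+k) = \kaplam(m-k)\gamlam_{-k}(m-k)F_k(m+\ell)$.

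First I would treat the case $k\ge 0$; the case $k<0$ follows by applying the $k\ge0$ case with $k$ replaced by $-k$ and $m$ by $m+k$, using that $\kaplam(m)\gamlam_{-k}(m) = \kaplam(m-k)\gamlam_k(m-k)F_{-k}(m+\ell)$ is the same identity reindexed (one checks $F_{-k}(m+\ell) = 1/F_k(m-k+\ell)$ cancels correctly, noting that all factors in sight are genuine polynomials). For $k\ge 0$: the left side has root multiset $\{m_1,\dots,m_\ell\} \sqcup \cGlam_k$. Using the decomposition established inside the proof of Proposition~\ref{prop:glamq},
\[
  \cGlam_k = \bigl(\{m_1+k,\dots,m_\ell+k\}\setminus\{m_1,\dots,m_\ell\}\bigr)\sqcup\{m_{\ell+1}+k,\dots,m_{\ell+k}+k\},
\]
and since $m_{\ell+j}(\lambda) = -\ell-j$ for $j\ge1$, the second block is exactly $\{-\ell+k-1,\dots,-\ell\} = \{m+\ell = 0,1,\dots,k-1\}$ shifted, i.e. precisely the root set of $F_k(m+\ell)$. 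So
\[
  \{m_1,\dots,m_\ell\}\sqcup\cGlam_k = \bigl(\{m_1,\dots,m_\ell\}\sqcup\{m_1+k,\dots,m_\ell+k\}\setminus\{m_1,\dots,m_\ell\}\bigr)\sqcup\{\text{roots of }F_k(m+\ell)\},
\]
and the first bracket is $\{m_1+k,\dots,m_\ell+k\}$ — but now I must match this with the right side's contribution $\{m_1,\dots,m_\ell\}\sqcup\cGlam_{-k}$ shifted by $k$, i.e. $\{m_1+k,\dots,m_\ell+k\}\sqcup(\cGlam_{-k}+k)$. Using the companion formula from the same proof, $\cGlam_{-k}+k = \{m_1,\dots,m_\ell\}\setminus\{m_1+k,\dots,m_\ell+k\}$, so $\{m_1+k,\dots,m_\ell+k\}\sqcup(\cGlam_{-k}+k)$ — as a multiset after cancelling — equals the symmetric-difference bookkeeping on $\{m_1,\dots,m_\ell\}$ and $\{m_1+k,\dots,m_\ell+k\}$. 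A short multiset computation (if $A, B$ are finite sets then $A\sqcup(B\setminus A)$ and $B\sqcup(A\setminus B)$ have the same multiset of elements with multiplicities counted, once one allows repeated roots) then shows both sides of \eqref{eq:kapgamF} have identical root multisets.

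The main obstacle, and the step I would be most careful about, is the multiset bookkeeping when $\{m_1,\dots,m_\ell\}$ and its shift $\{m_1+k,\dots,m_\ell+k\}$ overlap: $\kaplam(m)\gamlam_k(m)$ can genuinely have repeated roots, so the argument must be phrased with multiplicities rather than as an equality of sets. The cleanest way around this is probably \emph{not} to argue with roots at all, but to verify the identity as an equality of rational functions by a direct telescoping: divide both sides by $\kaplam(m-k)$ and show
\[
  \frac{\kaplam(m)}{\kaplam(m-k)}\,\gamlam_k(m) = \gamlam_{-k}(m-k)\,F_k(m+\ell),
\]
where $\kaplam(m)/\kaplam(m-k) = \prod_{i=1}^\ell (m-m_i)/(m-k-m_i)$, then cancel the overlapping linear factors using $m_{\ell+j}=-\ell-j$ and the definition of $\cGlam_{\pm k}$ factor by factor. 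This sidesteps multiplicity issues entirely and reduces everything to the finite product manipulations already carried out in Proposition~\ref{prop:glamq}.
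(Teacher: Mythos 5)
Your route is essentially the paper's: reduce to $k\geq 1$ via $F_{-k}(m)F_k(m+k)=1$, then match linear factors using the decompositions of $\cGlam_{\pm k}$ from the proof of Proposition~\ref{prop:glamq} together with the observation that $m_{\ell+1}+k,\ldots,m_{\ell+k}+k$ are exactly the roots of $F_k(m+\ell)$, i.e.\ $\kaplam(m-k)F_k(m+\ell)=\prod_{i=1}^{\ell+k}(m-m_i-k)$. The paper carries out the factor matching as quotients of products with the correction set $I_k=\{m_1+k,\ldots,m_\ell+k\}\cap\{m_1,\ldots,m_\ell\}$, which is precisely your ``cleaner'' telescoping fallback; so in substance the two arguments coincide.

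One concrete caution, which applies to the decomposition exactly as you quote it (and as it is printed in the proof of Proposition~\ref{prop:glamq}): the identities $\cGlam_k=(\{m_1+k,\ldots,m_\ell+k\}\setminus\{m_1,\ldots,m_\ell\})\sqcup\{m_{\ell+1}+k,\ldots,m_{\ell+k}+k\}$ and $\cGlam_{-k}+k=\{m_1,\ldots,m_\ell\}\setminus\{m_1+k,\ldots,m_\ell+k\}$ fail whenever the block $\{m_{\ell+1}+k,\ldots,m_{\ell+k}+k\}=\{-\ell,\ldots,-\ell+k-1\}$ meets $\{m_1,\ldots,m_\ell\}$, which happens precisely when $k\geq\lambda_\ell+1$. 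For instance, for $\lambda=(1)$ and $k=2$ one has $\cGlam_2=\{2,-1\}$ (not $\{2,0,-1\}$) and $\cGlam_{-2}=\emptyset$ (not $\{-2\}$). With the uncorrected decompositions your left- and right-hand root multisets both acquire the elements of $\{m_1,\ldots,m_\ell\}\cap\{-\ell,\ldots,-\ell+k-1\}$ twice, so the final equality survives only because the error is symmetric; the intermediate statements are false as written. The patch is simple: since $m_i\geq 1-\ell$ for $i\leq\ell$, the correct statements are $\cGlam_k=\{m_1+k,\ldots,m_{\ell+k}+k\}\setminus\{m_1,\ldots,m_\ell\}$ and $\cGlam_{-k}+k=\{m_1,\ldots,m_\ell\}\setminus\{m_1+k,\ldots,m_{\ell+k}+k\}$, i.e.\ the set $\{m_1,\ldots,m_\ell\}$ must also be removed from the $F_k$-block. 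With that correction, both sides of \eqref{eq:kapgamF} are monic with the same simple root set $\{m_1,\ldots,m_\ell\}\cup\{m_1+k,\ldots,m_{\ell+k}+k\}$, and your worry about repeated roots evaporates: $\cGlam_k\subset\Jlam$ is disjoint from $\{m_1,\ldots,m_\ell\}\subset\Mlam$ (and likewise on the right), so no multiset bookkeeping is ever needed — the only genuine care point is the overlap just described.
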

\begin{proof}
  Fix $\lambda$ and let $m_i=m_i(\lambda),\; i\in \N$ be as per
  \eqref{eq:midef}.  By inspection, \eqref{eq:kapgamF} holds for
  $k=0$.  By the definition of the generalized falling factorial
  \eqref{eq:ffacdef},
  \begin{equation}
    \label{eq:FqF-q}
    F_{-k}(m) F_k(m+k) = 1.
  \end{equation}
  Thus \eqref{eq:kapgamF} is equivalent to
  \[   
    \kaplam(m-k)\gamlam_{-k}(m-k) = \kaplam(m)\gamlam_k(m)
    F_{-k}(m+\ell-k) ,\quad k\in \Z. \] Hence, no generality is lost
  if we assume that $k\geq 1$.
  Set\[ I_{k} = \{ m_1+k,\ldots, m_{{\ell}}+k\} \cap \{m_1,\ldots,
    m_{\ell}\} \] As was shown in the Proof to Proposition
  \ref{prop:glamq},
  \begin{gather*}
    \cGlam_k    = \lp\{ m_1+k,\ldots, m_{{\ell}}+k\} \setminus
    \{m_1,\ldots, m_{\ell}\}\rp \sqcup \{ m_{\ell+1}+k,\ldots, m_{\ell+k}+k\}\\
        \cGlam_{-k}
    = \{ m_1-k,\ldots, m_{\ell}-k \} \setminus \{
    m_1,\ldots, m_{{\ell}}\}
  \end{gather*}
  Hence,
  \begin{gather*}
    \gamlam_k(m) = \prod_{k\in \cGlam_k} (m-k) =
    \frac{\prod_{i=1}^{{\ell}+k} (m-m_i-k)}{ \prod_{k\in
        I_{q}} (m-k)}
    =
    \frac{\kaplamn{\ell+k}(m+\ell)}{ \prod_{k\in
        I_{q}} (m-k)}\\
    \gamlam_{-k}(m-k)  =
    \frac{\prod_{i=1}^{{\ell}} (m-m_i)}{ \prod_{k\in
        I_{q}} (m-k)} =
    \frac{\kaplam(m)}{ \prod_{k\in        I_{q}} (m-k)} \\
    \kaplam(m-k) F_k(m+\ell)  = \kaplamn{\ell+k}(m+\ell)
  \end{gather*}
  The desired relation \eqref{eq:kapgamF} follows immediately.
\end{proof}

The algebras $\cAlam$ and $\cAslam$ are closely related to
  $\Flam$.  For $\pi(x,y,z)$ that is polynomial in $x$ set
\begin{equation}
  \label{eq:Phipidef}
  \Phi_\pi(x,y,z):= \taulam(x,y) \e^{-xz-yz^2}
  \pi(\partial_z,y,z) \Psilam(x,y,z).
\end{equation}
Equivalently,
\[ \pi(\partial_z,y,z) \Psilam(x,y,z) = \frac{\Phi_\pi(x,y,z)}{
    \taulam(x,y) } \e^{xz+yz^2}.\]
Observe a $\lambda$-generator is a function of three variables:
$\Phi(x,y,z)$.  However a $\pi(x,z)\in\cAlam(x,z)$ is a function of
only two variables.  Thus, we need to consider linear combinations of
operators in $\cAlam$ with polynomial coefficients to establish the
isomorphism $\Flam(x,y,z) \simeq \cAlam(x,z)\otimes \C[y]$.  For the
sake of convenience, we will denote the latter simply as
$\Flam \simeq \cAlam \otimes \cP$.

\begin{prop}
  \label{prop:Phipi}
  If $\pi\in \cAlam$, then $\Phi_\pi\in \Flam$. Conversely, for each
  $\Phi\in \Flam$ there exists a $\pi\in \cAlam \otimes \cP$ such that
  $\Phi= \Phi_\pi$.
\end{prop}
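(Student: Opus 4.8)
The plan is to prove the two inclusions separately: the forward one is a direct computation, and the converse is where the work lies.

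\emph{Forward direction.} Assume $\pi\in\cAlam$, so $\pi$ involves no $y$ and $\pi(\partial_z,z)\Wlam(z)\subseteq\Wlam(z)$. I would substitute the expansion of Theorem~\ref{thm:generatingxops}, $\Psilam=\sum_{m\in\Jlam}\frac{\kaplam(m)}{(m+\ell)!}\,\Rlam_m(x,y)\,z^m$, and apply $\pi(\partial_z,z)$ termwise, which is legitimate since $\pi$ acts only in $z$ while the $\Rlam_m$ are $z$-free. Because $z^m\in\Wlam(z)$ for $m\in\Jlam$ and $\pi$ has finite order, each $\pi(\partial_z,z)z^m$ is a finite $\C$-linear combination of monomials $z^{m'}$ with $m'\in\Jlam$; regrouping, $\pi(\partial_z,z)\Psilam=\sum_{m'\in\Jlam}F_{m'}(x,y)z^{m'}$, where each $F_{m'}$ is a finite $\C$-linear combination of the $\Rlam_m$ and hence lies in $\Wslam$, so the series has the form~\eqref{eq:Psiseries}. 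To see that $\Phi_\pi\in\C[x,y,z,z^{-1}]$, I would write $\pi(\partial_z,z)=\sum c_{ik}z^k\partial_z^i$, expand $\pi(\partial_z,z)\bigl(\tfrac{\Philam}{\taulam}\e^{xz+yz^2}\bigr)$ by the Leibniz rule, and use that $\taulam$ is independent of $z$ (so $\taulam\,\partial_z^a(\Philam/\taulam)=\partial_z^a\Philam$ is a Laurent polynomial) while $\e^{-xz-yz^2}\,\partial_z^b\e^{xz+yz^2}$ is a polynomial; then $\Phi_\pi$ is a finite sum of products of Laurent polynomials. Hence $\Phi_\pi\in\Flam$.

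\emph{Converse: setup and main computation.} Given $\Phi\in\Flam$, write $\Psi:=\tfrac{\Phi}{\taulam}\e^{xz+yz^2}=\sum_{m\in\Jlam}F_m z^m$ with $F_m\in\Wslam$, and expand each $F_m=\sum_{m'\in\Jlam}b_{m,m'}(y)\,\Rlam_{m'}$ in the $\C[y]$-basis $\{\Rlam_{m'}:m'\in\Jlam\}$ of $\Wslam$ (it is a basis because the $\Hlam_n$ have pairwise distinct $x$-degrees), with $b_{m,m'}\in\C[y]$ and, for each $m$, only finitely many nonzero. The idea is to build $\pi$ one ``shift'' $K=m'-m$ at a time. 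Using $\E z^m=mz^m$ and $\Glam_K z^m=\gamlam_K(m)z^{m-K}$, one finds for $q\in\C[n,y]$ that
\[
\Glam_K\,q(\E,y)\,\Psilam=\sum_{m\in\Jlam}q(m+K,y)\,\frac{\kaplam(m+K)}{(m+K+\ell)!}\,\gamlam_K(m+K)\,\Rlam_{m+K}\,z^m,
\]
and the Reindexing Lemma~\ref{lem:muqmq}, together with the fact that the falling factorial of~\eqref{eq:ffacdef} satisfies $F_K(m+K+\ell)=(m+K+\ell)!/(m+\ell)!$, collapses the coefficient of $z^m$ to $q(m+K,y)\,c_m\,\gamlam_{-K}(m)\,\Rlam_{m+K}$, where $c_m:=\kaplam(m)/(m+\ell)!$. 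So it suffices to produce, for each relevant $K$, a polynomial $q_K(n,y)$ with $q_K(m+K,y)\,c_m\,\gamlam_{-K}(m)=b_{m,m+K}(y)$ for all $m\in\Jlam$; then $\pi:=\sum_K\Glam_K\,q_K(\E,y)$ (with $\Glam_0=\mathrm{id}$) lies in $\cAlam\otimes\cP$ by Proposition~\ref{prop:hRgen} and satisfies $\pi\Psilam=\Psi$, i.e.\ $\Phi_\pi=\Phi$.

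\emph{The main obstacle.} Two points carry the weight. First, that only finitely many shifts $K$ occur: an upper bound on $K$ follows by comparing $x$-degrees, since $\Phi$ is a genuine Laurent polynomial and hence $\deg_x F_m\le m+O(1)$, forcing $m'\le m+O(1)$; the lower bound is subtler, and is exactly where $\Phi\in\Flam$ --- not merely ``$\Phi$ a Laurent polynomial'' --- is needed, through the constraint that $G_m:=\taulam F_m=\sum_j\phi_j(x,y)\,H_{m-j}(x,y)/(m-j)!$ (writing $\Phi=\sum_j\phi_j z^j$) lies in $\Uslam$ for every $m$, combined with the position of $\Uslam$ inside $\C[x,y]$. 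Second, and more seriously, that $q_K(n,y):=b_{n-K,n}(y)/\bigl(c_n\,\gamlam_K(n)\bigr)$ --- whose denominator is $c_{n-K}\gamlam_{-K}(n-K)$ rewritten via~\eqref{eq:kapgamF} --- is an honest polynomial in $n$: for $n\in\Jlam$ the factor $\gamlam_K(n)$ vanishes precisely when $n-K\notin\Jlam$, which is exactly where $b_{n-K,n}$ vanishes as well (there is then no $z^{n-K}$ term of $\Psi$), so the apparent poles cancel, and it remains to check that after this cancellation $q_K$ agrees with a polynomial in $n$, which is forced by the explicit shape of $b_{n-K,n}(y)$ coming from $G_{n-K}$ being a bounded-range polynomial combination of the $H_k$. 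I expect it will be cleanest to run this through the weighted grading $\deg x=1,\;\deg y=2,\;\deg z=-1$ --- under which $\Psilam$, the $\Glam_K$, $\E$, and every homogeneous $\Phi\in\Flam$ are homogeneous, and each $b_{m,m'}(y)$ is a single monomial in $y$ --- thereby reducing the converse to homogeneous $\Phi$. Injectivity of $\pi\mapsto\Phi_\pi$, needed for the isomorphism $\Flam\simeq\cAlam\otimes\cP$, then comes for free: by the proof of Proposition~\ref{prop:hRgen} every element of $\cAlam\otimes\cP$ has the form $\sum_K\Glam_K q_K(\E,y)$, and these contribute to disjoint shifts, so $\Phi_\pi=0$ forces every $q_K=0$, hence $\pi=0$.
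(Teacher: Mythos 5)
Your forward direction is sound and is essentially the paper's argument in different packaging: the paper verifies the claim on the generators $\rE$ and $\Glam_k$ of $\cAlam$ (Proposition \ref{prop:hRgen}) using the Reindexing Lemma, whereas you apply a general $\pi$ termwise to \eqref{eq:Psilamgf} and regroup; both work, as does your check that $\Phi_\pi$ is a Laurent polynomial.

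The converse, however, has a genuine gap. Writing $\pi=\sum_K\Glam_K\,q_K(\rE,y)$ reduces the statement to exactly the two claims you flag: (i) only finitely many shifts $K$ occur, uniformly in $m$, and (ii) each weight $q_K(n,y)$ is a polynomial in $n$. But by Proposition \ref{prop:hRgen} every element of $\cAlam\otimes\cP$ is of this form, so (i) and (ii) together \emph{are} the proposition; asserting that they are ``forced by the explicit shape of $b_{n-K,n}$'' is circular unless proved. Nor are they routine: $G_m:=\taulam F_m=\sum_j\phi_j(x,y)H_{m-j}/(m-j)!$ does expand in the classical basis with bounded shifts and index-polynomial coefficients (iterated three-term recurrence), but you must then convert to the exceptional basis $\Hlam_n$, i.e.\ invert the banded, polynomial-coefficient relation \eqref{eq:HlamHsum}, and the inverse of such a triangular banded system is in general neither banded nor polynomial in the index. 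So both the lower bound on $K$ and the polynomiality of $q_K$ need a structural input you never supply: the weighted grading and the membership $G_m\in\Uslam$ only constrain the $y$-monomials (giving $K\le d-N$ and a parity condition), and give no bound below and no polynomiality. The paper closes this by a mechanism your plan is missing: by Lemma \ref{lem:yz2}, multiplication by $\Phi$ against $\e^{xz+yz^2}$ is realized by the finite-order operator $\hPhi(\partial_z,y,z)$ with $\hPhi=\bH(-y,z)\Phi$; by Proposition \ref{prop:lamgen} this operator annihilates the kernel of the dual intertwiner $\Klam(\partial_z,y,z)$, hence factors as $\pi(\partial_z,y,z)\circ\Klam(\partial_z,y,z)$ by right division of differential operators, and \eqref{eq:KlamPsi} then yields $\Phi=\Phi_\pi$, with finiteness automatic because $\hPhi$ already has finite order. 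Either adopt that factorization argument or supply genuine proofs of (i) and (ii); as written, the converse is not established.
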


\begin{proof}
  Suppose that $\pi = \rE$ is the Cauchy-Euler operator.  We have
  \[   
    \E(\partial_z,z) \Psilam(x,y,z)= \frac{
      \Phi_{\rE}(x,y,z)}{\taulam(x,y)} \e^{xz+yz^2} =
    \sum_{m=-\ell}^\infty 
    \frac{\kaplam(m)m}{(m+\ell)!}  \Rlam(x,y) z^m
  \]
  Hence, $\Phi_E \in \Flam$ by inspection.  Next, suppose that
  $\pi= \Glam_k$ for some $k\in \Z$.  By \eqref{eq:Gkm} and the
  reindexing Lemma,
  \begin{align*}
    \Glam_k(\partial_z,z) \Psilam(x,y,z)
    &= \sum_{m=-\ell}^\infty    \frac{\kaplam(m)\gamlam_k(m)}{(m+\ell)!}
      \Rlam_m(x,y) z^{m-k} \\
    &= \sum_{m=-\ell}^\infty    \frac{\kaplam(m-k)\gamlam_{-k}(m-k)
      F_k(m+\ell)}{(m+\ell)!} 
      \Rlam_m(x,y) z^{m-k} \\
    &= \sum_{m=-\ell}^\infty
      \frac{\kaplam(m)\gamlam_{-k}(m)}{(m+\ell)!} \Rlam_{m+k} z^m
  \end{align*}
  The last step is justified by the fact that if $m-k<-\ell$, then
  $F_{k}(m+\ell) = 0$.  Hence, $\Phi_\pi\in \Flam$ in this case also.
  The case of the general $\pi\in \cAlam$ now follows by Proposition
  \ref{prop:hRgen}. 

  Conversely, let $\Phi\in \Flam$ be given.  Set
  $\hPhi(x,y,z) =\bH(-y,z)\Phi(x,y,z)$. Hence, by Lemma \ref{lem:yz2},
  $\hPhi(\partial_z,y,z) \e^{xz+yz^2} = \Phi(x,y,z) \e^{xz+yz^2}$.  By
  Proposition \ref{prop:lamgen},
  \[ \la c_y(x), \Phi(x,y,z) \e^{xz+yz^2} \ra  = \la
    c_y(x),\hPhi(\partial_z,y,z) \e^{xz+yz^2} \ra \equiv 0 \]
  for all $c\in \Cslam$.  The kernel of $\Klam(\partial_z,y,z)$ 
  consists of
  \[\psi_c(y,z):= \la c_y(x), \e^{xz+yz^2} \ra,\quad c\in \Cslam.\]
  Observe that
  \[\hPhi(\partial_z,y,z) \phi_c(y,z) = \la
    c_y(x),\hPhi(\partial_z,y,z) \e^{xz+yz^2} \ra = 0,\quad c\in
    \Cslam.\] Hence, there exists a $\pi(\partial_z,y,z)$ such that
  $\hPhi(\partial_z,y,z) = \pi(\partial_z,y,z) \circ
  \Klam(\partial_z,y,z)$. Hence, by \eqref{eq:KlamPsi},
  \begin{align*}
    \taulam(x,y)\pi(\partial_z,y,z) \Psilam(x,y,z)
    &= \pi(\partial_z,y,z) \circ\Klam(\partial_z,y,z) \e^{xz+yz^2}\\
    &= \Phi(x,y,z) \e^{xz+yz^2}
  \end{align*}
\end{proof}

Similarly, for $\sigma(x,y,z)$ that is polynomial in $z$, set
\begin{equation}
  \label{eq:Phisigdef}
  \Phi_\sigma(x,y,z):= \taulam(x,y) \e^{-xz-yz^2}
  \sigma(x,y,\partial_x) \Psilam(x,y,z).
\end{equation}
\begin{prop}
  \label{prop:Phisig}
  If $\sigma\in \cAslam$, then $\Phi_\sigma \in \Flam$. Conversely for
  each $\Phi\in \Flam$ there exists a $\sigma\in \cAslam$ such that
  $\Phi= \Phi_\sigma$.
\end{prop}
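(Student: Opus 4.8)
The plan is to follow the proof of Proposition~\ref{prop:Phipi}, with the semi-stationary intertwiner $\Kslam(x,y,\partial_x)$ of Proposition~\ref{prop:KslamPsi} playing the role there played by the dual intertwiner, and with the $\Clam$-conditions \eqref{eq:lamgenz} on a $\lambda$-generator playing the role there played by the $\Cslam$-conditions \eqref{eq:lamgenx}.

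For the forward implication, I would write $\Psilam$ through its Laurent expansion \eqref{eq:Psilamgf0}, $\Psilam(x,y,z)=\sum_{m\in\Jlam}c_m\Rlam_m(x,y)z^m$ with $c_m=\kaplam(m)/(m+\ell)!$ (nonzero for $m\in\Jlam$), whose coefficients lie in $\Wslam$. Since $\sigma(x,y,\partial_x)$ acts only on $x$ it commutes past the powers of $z$, so $\sigma\Psilam=\sum_{m\in\Jlam}c_m(\sigma\Rlam_m)z^m$, and $\sigma\in\cAslam$ forces each $\sigma\Rlam_m\in\Wslam$. Thus $\Phi_\sigma/\taulam\cdot\e^{xz+yz^2}=\sigma\Psilam$ already has the form \eqref{eq:Psiseries}, and it only remains to check that $\Phi_\sigma$ is an honest Laurent polynomial in $z$ (pole order at most $\ell$) and polynomial in $x$. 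The first holds because $z^\ell\Psilam=\Kslam(x,y,\partial_x)\Psi_0$ by Proposition~\ref{prop:KslamPsi}, so $z^\ell\sigma\Psilam=(\sigma\circ\Kslam)(x,y,\partial_x)\Psi_0$, which conjugates to the symbol of $\sigma\circ\Kslam$ — a polynomial in $z$ — times $\Psi_0$; the second holds because $\taulam\,\sigma\Rlam_m\in\Uslam\subset\C[x,y]$ while $\e^{-xz-yz^2}$ has polynomial coefficients. Hence $\Phi_\sigma\in\Flam$.

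For the converse, fix $\Phi\in\Flam$ and set $M(x,y,z):=z^\ell\Phi(x,y,z)/\taulam(x,y)$, a polynomial in $z$ (since $z^\ell\Phi$ is one, by Proposition~\ref{prop:lamgen}) with coefficients rational in $x$; regard it as a differential operator $M(x,y,\partial_x)$. Because $\partial_x\Psi_0=z\Psi_0$, one has $M(x,y,\partial_x)\Psi_0=M(x,y,z)\Psi_0=z^\ell(\Phi/\taulam)\e^{xz+yz^2}$. The heart of the argument is to factor $M=\sigma\circ\Kslam(x,y,\partial_x)$. Right-dividing by the monic order-$\ell$ operator $\Kslam$ gives $M=\sigma\circ\Kslam+\rho$ with $\rho$ of order $<\ell$; by \eqref{eq:AlamWr} the polynomials $H_{k_1}(x,y),\dots,H_{k_\ell}(x,y)$ (with $k_1,\dots,k_\ell$ enumerating $\cKlam$) lie in the kernel of $\Kslam(x,y,\partial_x)$, so it suffices to show $M(x,y,\partial_x)H_{k_i}(x,y)=0$, which then forces $\rho=0$ for order reasons. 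Now $H_{k_i}(x,y)=\la\bev{k_i}0(z),\Psi_0(x,y,z)\ra$ by \eqref{eq:Psigf}, and since $\bev{k_i}0\in\Clam$ and $\Phi\in\Flam$, condition \eqref{eq:lamgenz} yields
\[ M(x,y,\partial_x)H_{k_i}(x,y)=\frac{1}{\taulam(x,y)}\bigl\langle\bev{k_i}0(z),\,z^\ell\Phi(x,y,z)\,\e^{xz+yz^2}\bigr\rangle=0 .\]
With the factorization in hand, $\sigma(x,y,\partial_x)\Psilam=z^{-\ell}(\sigma\circ\Kslam)(x,y,\partial_x)\Psi_0=z^{-\ell}M(x,y,z)\Psi_0=(\Phi/\taulam)\e^{xz+yz^2}$, i.e.\ $\Phi_\sigma=\Phi$. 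To see $\sigma\in\cAslam$, compare $z$-coefficients in $\sigma\Psilam=(\Phi/\taulam)\e^{xz+yz^2}$, whose right-hand side has all coefficients in $\Wslam$ because $\Phi\in\Flam$; using that $c_m\neq0$ for $m\in\Jlam$ gives $\sigma\Rlam_m\in\Wslam$ for every $m\in\Jlam$, and since $\sigma$ is $\C[y]$-linear this shows $\sigma\Wslam\subseteq\Wslam$.

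The step I expect to require the most care is the exactness of the factorization $M=\sigma\circ\Kslam$ together with the verification that the quotient $\sigma$ — a priori only a differential operator with coefficients rational in $x$ — really preserves $\Wslam$. Exactness reduces to the remainder $\rho$ annihilating the full $\ell$-dimensional span $\lspan\{H_{k_1},\dots,H_{k_\ell}\}$, which combines $\Kslam H_{k_i}=0$ with the $\Clam$-condition on $\Phi$; membership of $\sigma$ in $\cAslam$ is then read off from the $z$-expansion of $\sigma\Psilam$. By contrast, the bookkeeping in the forward direction — that $\Phi_\sigma$ lies in $\C[x,y,z,z^{-1}]$ with pole order at most $\ell$ — is routine given \eqref{eq:Kslamdef} and the identity $z^\ell\Psilam=\Kslam(x,y,\partial_x)\Psi_0$.
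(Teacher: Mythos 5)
Your proof is correct and follows essentially the same route as the paper's: the forward direction by expanding $\sigma\Psilam$ in the Laurent series \eqref{eq:Psilamgf0} and reading off that the coefficients $\sigma\Rlam_m$ lie in $\Wslam$, and the converse by using condition \eqref{eq:lamgenz} of Proposition~\ref{prop:lamgen} to show that the operator built from $z^\ell\Phi$ annihilates the kernel $\lspan\{H_{k_1},\dots,H_{k_\ell}\}$ of $\Kslam$ and hence factors through $\Kslam$, yielding $\sigma$ with $\Phi_\sigma=\Phi$. The only difference is that you spell out details the paper leaves implicit (the right-division-with-remainder argument for the factorization, and the coefficient comparison showing $\sigma\in\cAslam$), which is a faithful elaboration rather than a different method.
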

\begin{proof}
  Let $\sigma\in \cAslam$ be given.
  Hence,
  \begin{align*}
    \sigma(x,y,\partial_x) \Psilam(x,y,z)
    &=\frac{\Phi_\sigma(x,y,z)}{z^\ell \taulam(x,y)} \e^{xz+yz^2}\\
    &=\sum_{m\in \Jlam} \sigma(x,y,\partial_x)\Rlam_{m}(x,y)
      \kaplam(m)\frac{z^{m}}{(m+\ell)!}. 
  \end{align*}
  By assumption,
  $\sigma(x,y,\partial_x)\Rlam_m(x,y)\in \Wslam(x,y)$.  Therefore,
  $\Phi_\sigma\in \Flam$.

  Conversely, let $\Phi\in \Flam$ be given.  By Proposition
  \ref{prop:lamgen},
  \[ \la c(z),z^\ell \Phi(x,y,z) \e^{xz+yz^2} \ra \equiv 0 \] for all
  $c\in \Clam$.  The kernel of $\Kslam(x,y,\partial_x)$ is spanned by
  $H_{k_1}(x,y),\ldots, H_{k_\ell}(x,y)$.  Let
  $c_i = \bev{k_i}0,\; i=1,\ldots, \ell$ and recall that
  \[ H_{k_i}(x,y) = \la c_i(z), \e^{xz+yz^2}\ra.\]
  Hence,
  \[ \Phi(x,y,\partial_x) \Hlam_{k_i}(x,y) = \la
    c_i(z),\Phi(x,y,\partial_x) \e^{xz+yz^2} \ra = 0,\quad
    i=1,\ldots, \ell.\] 
  Hence, there exists a $\sigma(x,y,\partial_x)$ such that
  \[ \Phi(x,y,\partial_x) = \taulam(x,y)\sigma(x,y,\partial_x)
    \circ \Kslam(x,y,\partial_x).\] Hence, by \eqref{eq:KlamPsi},
  \begin{align*}
    \taulam(x,y)z^\ell\sigma(x,y,\partial_x) \Psilam(x,y,z)
    &= \taulam(x,y)\sigma(x,y,\partial_x) \Kslam(x,y,\partial_x)
      \e^{xz+yz^2}\\ 
    &= \Phi(x,y,\partial_x) \e^{xz+yz^2}= \Phi(x,y,z) \e^{xz+yz^2}
  \end{align*}

\end{proof}
\begin{thm}
  \label{thm:flathom}
  For every $\pi \in \cAlam\otimes\cP$, there exists a
  $\pi^\flat\in \cAslam(x,y)$ such that
  \begin{equation}
    \label{eq:piflat} \pi(\partial_z,y,z) \Psilam(x,y,z) =
    \piflat(x,y,\partial_x) \Psilam(x,y,z).
  \end{equation}
  The corresponding mapping $ \cAlam\otimes \cP\to \cAslam$ is an
  algebra anti-isomorphism.
\end{thm}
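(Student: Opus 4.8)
The plan is to construct $\flat$ out of the two parametrizations of the $\cP$-module $\Flam$ of $\lambda$-generators supplied by Propositions~\ref{prop:Phipi} and~\ref{prop:Phisig}, which give surjections $\pi\mapsto\Phi_\pi$ from $\cAlam\otimes\cP$ onto $\Flam$ and $\sigma\mapsto\Phi_\sigma$ from $\cAslam$ onto $\Flam$. The first task is to upgrade both maps to bijections. For $\sigma\mapsto\Phi_\sigma$: if $\Phi_\sigma=0$ then $\sigma(x,y,\partial_x)\Psilam=0$, and since by Theorem~\ref{thm:generatingxops} the coefficients of $\Psilam$ as a series in $z$ are the nonzero multiples $\frac{\kaplam(m)}{(m+\ell)!}\Rlam_m(x,y)$, $m\in\Jlam$, this forces $\sigma(x,y,\partial_x)\Rlam_m=0$ for every $m\in\Jlam$; as the $\Rlam_m=\Hlam_{m+N}/\taulam$ have numerators of pairwise distinct degrees in $x$, the family $\{\Rlam_m:m\in\Jlam\}$ is infinite and linearly independent, and a nonzero linear differential operator in $x$ annihilates only a finite-dimensional space of functions, so $\sigma=0$. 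For $\pi\mapsto\Phi_\pi$: if $\pi(\partial_z,y,z)\Psilam=0$, decompose $\pi$ into its weighted-homogeneous components, each of the form $z^{-k}$ times a polynomial in the Cauchy-Euler operator $z\partial_z$ with coefficients in $\C[y]$ (as in the proof of Proposition~\ref{prop:hRgen}); collecting powers of $z$ and again invoking the linear independence of the $\Rlam_m$ forces the polynomial attached to each component to vanish on every element of $\Jlam$, and since $\Jlam$ is cofinite in $\N$ this polynomial is identically zero, whence $\pi=0$.

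Once both maps are bijections I would define $\piflat\in\cAslam(x,y)$ to be the unique $\sigma\in\cAslam$ with $\Phi_\sigma=\Phi_\pi$; cancelling the nowhere-vanishing factor $\taulam(x,y)\e^{-xz-yz^2}$ in the identity $\Phi_\pi=\Phi_{\piflat}$ yields \eqref{eq:piflat} at once. The assignment $\sigma\mapsto$ (the unique $\pi$ with $\Phi_\pi=\Phi_\sigma$) is a two-sided inverse, so $\flat\colon\cAlam\otimes\cP\to\cAslam$ is a $\cP$-linear bijection; it fixes the identity, since both units map to the canonical generator $\Philam$. To see that $\flat$ reverses products, recall that multiplication in $\cAlam\otimes\cP$ is composition of the associated operators in $z$. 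Given $\pi_1,\pi_2\in\cAlam\otimes\cP$, the operator $\pi_1(\partial_z,y,z)$ acts only on $z$ while $\pi_2^\flat(x,y,\partial_x)$ acts only on $x$, so they commute, and applying \eqref{eq:piflat} twice gives
\[
(\pi_1\pi_2)(\partial_z,y,z)\,\Psilam
=\pi_1(\partial_z,y,z)\,\pi_2^\flat(x,y,\partial_x)\,\Psilam
=\pi_2^\flat(x,y,\partial_x)\,\pi_1^\flat(x,y,\partial_x)\,\Psilam .
\]
Since the left-hand side also equals $(\pi_1\pi_2)^\flat(x,y,\partial_x)\Psilam$ and, by the injectivity proved above, no nonzero differential operator in $x$ annihilates $\Psilam$, we conclude $(\pi_1\pi_2)^\flat=\pi_2^\flat\,\pi_1^\flat$, so $\flat$ is an algebra anti-isomorphism.

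I expect the only genuine obstacle to be the two injectivity statements in the first paragraph; everything downstream is formal once they are in place. The cleanest route through them is the linear independence of the exceptional rational functions $\Rlam_m$, $m\in\Jlam$ (immediate from their degrees), combined with the elementary fact that $\Jlam$ is cofinite in $\N$, so that a polynomial vanishing on $\Jlam$ vanishes identically. A minor point to keep straight is that the product on $\cAlam\otimes\cP$ entering the anti-homomorphism claim is exactly operator composition in $z$, with the polynomial $y$-coefficients passing freely through, which is what the identification $\Flam\simeq\cAlam\otimes\cP$ records.
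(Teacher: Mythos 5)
Your proof is correct and follows essentially the same route as the paper's: both define $\piflat$ through the module $\Flam$ via Propositions \ref{prop:Phipi} and \ref{prop:Phisig}, prove anti-multiplicativity by commuting the $z$- and $x$-operators acting on $\Psilam$, and obtain bijectivity from the action on the coefficients $\Rlam_m$ (equivalently on the monomials $z^m$, $m\in\Jlam$). The only difference is that you spell out the injectivity of $\sigma\mapsto\Phi_\sigma$, i.e.\ the well-definedness of $\flat$, which the paper leaves implicit.
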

\begin{proof}
  Let $\pi\in \cAlam\otimes \cP$ and let $\Phi_\pi$ be the polynomial
  given by \eqref{eq:Phipidef}. By Proposition \ref{prop:Phisig},
  there exists a $\piflat\in \cAslam$ such that
  \[ \piflat(x,y,\partial_x) \Psilam(x,y,z) = \frac{\Phi(x,y,z)}{z^\ell
      \taulam(x,y,z)} \Psi_0(x,y,z) = \pi(\partial_z,y,z) \Psilam(x,y,z).\]

  Given, $\pi_1,\pi_2\in \cAlam\otimes \cP$ observe that
  \begin{align*}
    (\pi_1(\partial_z,y,z) \circ \pi_2(\partial_z,z)) \Psilam(x,y,z)
    &= \pi_1(\partial_z,y,z) \lp \piflat_2(x,y,\partial_x)
      \Psilam(x,y,z)\rp\\
    &= \piflat_2(x,y,\partial_x) \lp \pi_1(\partial_z,y,z)
      \Psilam(x,y,z)\rp\\
    &= (\piflat_2(x,y,\partial_x) \circ \piflat_1(x,y,\partial_x))
      \Psilam(x,y,z)
  \end{align*}
  Therefore $\pi\mapsto \piflat$ is an anti-homomorphism.  By
  Proposition \ref{prop:Phipi} this anti-homomorphism is onto.
  Suppose that $\pi(\partial_z,y,z)$ is an operator that annihilates
  $\Psilam(x,y,z)$.  Hence, it must annihilate every
  $z^m,\; m\in \Jlam$.  This implies that $\pi=0$, and therefore
  $\pi\mapsto \piflat$ is also one-to-one.
\end{proof}
\noindent
See \cite{bakalov} for a similar use of anti-isomorphisms in the study
of bispectrality.

A fundamental instance of the anti-isomorphism \eqref{eq:piflat} is
the relation
\begin{equation}
  \label{eq:ET}
  \rE(\partial_z,z) \Psilam(x,y,z) = \tT(x,y) \Psilam(x,y,z)
\end{equation}
where $\rE$ is the Cauchy-Euler operator and where $\tT$ is the
exceptional operator given in \eqref{eq:tTdef}. An examination of the
generating function \eqref{eq:Psilamgf} shows that \eqref{eq:ET} is
equivalent to the eigenvalue equation \eqref{eq:TRmR}.  We will prove
the former and thereby establish the latter.

\begin{proof}[Proof of Proposition \ref{prop:TRmR}]
  Set $\displaystyle \ulam(x,y) = \lp\log \taulam(x,y)\rp_{xx}$. We
  wish to show that
  \[ \rEflat(x,z) = y z^2 + x z+ 4y\, \ulam(x,y).\]
  Since $\rE(x,z)=xz$, we have
  \[ \hrE(x,y,z) = \bH(y,z) (xz) = z H_1(x+2yz,y) = 2yz^2+xz.\]
  Hence,
  \[ \hrE(x,y,\partial_x) = 2y\partial_{x}^2 + x \partial_x =T(x,y) \]
  is the classical Hermite operator. 
  Hence, it suffices to show that
  \[ \tT(x,y)\circ \Kslam(x,y,\partial_x) =
    \Kslam(x,y,\partial_x) \circ T(x,y) ,\]
  which is equivalent to
  \[ [T,\Kslam] = -4 y \ulam \Kslam.\]
  It is well known that
  \[ \Wr[f_1,\ldots, f_l, f] = \tau f^{(l)} -\tau' f^{(l-1)} + \ldots
    ,\] where $\tau = \Wr[f_1,\ldots, f_l]$.  For the case at hand,
  $\Kslam = \partial_x^\ell - \frac{\taulam_x}{\taulam}
  \partial_x^{\ell-1} + \cdots$.  Hence, $[T,\Kslam]$ is an operator
  of order $\ell$ --- the same as the order of $\Kslam$.  By
  \eqref{eq:AlamWr}, the kernel of $\Kslam$ is generated by
  $H_{k_\ell},\ldots, H_{k_1}$.  These are all eigenfunctions of $T$,
  and hence annihilated by the commutator.  Hence,
  $ [T,\Kslam] =\mu \Kslam$. By inspection, $\mu$ is the leading
  coefficient of
  \[ -\left[2 y \partial_{x}^2, \frac{\taulam_x}{\taulam}
      \partial_x^{l-1}\right] = -4y   \ulam\partial_x^\ell.\] 
\end{proof}

\subsection{The bispectral triple.}
\label{sect:btriple}
\hlight{
For a partition $\lambda$ and $\Wlam\in \Grad$ as per
\eqref{eq:Wlamdef}, let $\cRlam\subset \cP$ denote the ring that
preserves $\Wlam$.
Let $\cRslam$ denote the ring\footnote{\hlight{Strictly speaking,
    $\cRlam$ has the structure of a $\C$-algebra and $\cRslam$ the
    structure of a $\cP$ algebra, but the accepted custom in Sato
    theory seems to be to refer to these objects as stabilizer
    rings.}} of bivariate polynomials that preserve $\Wslam$.}

Thus, $\pi\in \cRlam$ if and only if
\[ \pi(z)\Wlam(z)\subset \Wlam(z)\]
and  $\sigma \in \cRslam$ if and only if
\[\sigma(x,y) \Wslam(x,y) \subset \Wslam(x,y).\]
It is natural to regard
$\cRlam$ as a commutative subalgebra of
$\cAlam$ and to regard
$\cRslam$ as a commutative subalgebra of
$\cAslam$.  Indeed, as a direct consequence of Theorem
\ref{thm:flathom} we have
\begin{align}
  \label{eq:piflat1}
  \piflat(x,y,\partial_x) \Psilam(x,y,z)
  &= \pi(z)   \Psilam(x,y,z),\quad \pi \in \cRlam,\\
  \label{eq:sigsharp1}
  \sigsharp(\partial_z,y,z) \Psilam(x,y,z)
  &= \sigma(x,y)  \Psilam(x,y,z),\quad \sigma \in \cRslam.
\end{align}
Let
\begin{align*}
  \cSslam &= \{ \piflat \colon \pi\in \cRlam\},\quad\text{and}\quad
  \cSlam = \{ \sigsharp \colon \sigma \in \cRslam \}
\end{align*}
denote the corresponding commutative subalgebras of
$\cAslam$ and $\cAlam\otimes
\cP$, respectively.  Thus \eqref{eq:piflat1} and \eqref{eq:sigsharp1}
should be regarded as the eigenvalue equations of the bispectral
triple $(\cSlam, \cSslam,
\Psilam)$. This is, essentially, a parameterized version of Wilson's
construction in \cite{wilson}.

\begin{definition}
  We say that $q\in \Nz$ is a critical degree of $\cRlam$ if
  $z^q\in \cRlam(z)$.  Analogously, we say that $q\in
  \Nz$ is a critical degree of
  $\cRslam$ if there exists a $\sigma(x,y)\in
  \cRslam(x,y)$ such that $\deg_x\sigma(x,y) = q$.  Let
  $\cDlam$ denote the set of critical degrees of
  $\cRlam$ and
  $\cDslam$ denote the set of critical degrees of $\cRslam$.
\end{definition}
\noindent
Observe that since
$\cRlam,\cRslam$ are closed under composition, both
$\cDlam$ and $\cDslam$ are additive subsets of $\Nz$.

In Section \ref{sect:lowop}, below, we will show that the operators in
$\cSslam$ are lowering operators for exceptional Hermite polynomials
with \eqref{eq:piflat1} understood as a lowering relation.
In Section \ref{sect:rr} we will exhibit a homomorphism that
maps $\cSlam$ into a certain commutative algebra of difference
operators $\fSlam$. This homomorphism transforms 
the differential eigenvalue equation \eqref{eq:sigsharp1} into  the
discrete eigenvalue equation
\begin{align}
  \label{eq:Thetsigq}
  \Thetlam_q(m,y,\sS_m) \Rlam_m(x,y)
  &= \siglam_q(x,y) \Rlam_m(x,y),\quad q\in \cDslam.
\end{align}
where $\sigma_q\in \cRslam$ is a monic polynomial of degree
$q\in \cDslam$, and $\Theta_q \in \fSlam$ is the corresponding monic
difference operator of order $2q$ obtained by applying the above
homomorphism to $\sigsharp_q$. Up to an index shift,
\eqref{eq:Thetsigq} can be regarded as a recurrence relation for
exceptional Hermite polynomials:
\begin{equation}
  \label{eq:ThetlamHn}
  \Thetlam_q(n-N,y,\sS_n) \Hlam_n(x,y)
  = \siglam_q(x,y) \Hlam_n(x,y),\quad q\in \cDslam.
\end{equation}
\hlight{However it is more instructive to couple \eqref{eq:ThetlamHn} with the
eigenvalue equation \eqref{eq:THn} into a differential-difference
bispectral triple $(\tTlam,\fSlam,\Rlam)$ involving exceptional
Hermite rational functions.}

\begin{prop}
  \label{prop:psdegord}
  For a monic $\pi\in \cRlam$, the operator $\piflat(\partial_z,y,z)$
  is a monic differential operator whose order is equal to the degree
  of $\pi$. Dually, for a monic $\sigma\in \cRslam$, the expression
  $\sigsharp(\partial_z,y,z)$ is a monic differential operator whose
  order is equal to the $x$-degree of $\sigma(x,y)$.
\end{prop}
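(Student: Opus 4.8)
The plan is to read off the order and the leading coefficient of each operator from the large‑parameter asymptotics of the intertwining relations \eqref{eq:piflat1} and \eqref{eq:sigsharp1}. The one structural input I would use is that $\Psilam$ differs from $\Psi_0=\e^{xz+yz^2}$ only by the rational prefactor $P(x,y,z):=\Philam(x,y,z)/\taulam(x,y)$ (equation \eqref{eq:Psimiwa}), and that this prefactor satisfies $P=1+O(z^{-1})$ as $z\to\infty$ for fixed generic $x,y$ — this is \eqref{eq:Psiform} with $C=\Clam$ and $q_{\Clam}(z)=z^{\ell}$ — and $P=1+O(x^{-1})$ as $x\to\infty$ for fixed generic $y,z$, since $\Philam$ and $\taulam$ are both monic in $x$ of the common degree $N$. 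I would also use repeatedly the two elementary conjugation identities $\partial_x\big(f\,\Psi_0\big)=\big((\partial_x+z)f\big)\Psi_0$ and $\partial_z\big(f\,\Psi_0\big)=\big((\partial_z+x+2yz)f\big)\Psi_0$.

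First I would handle a monic $\pi\in\cRlam$ of degree $d$. Write the operator $\piflat(x,y,\partial_x)\in\cAslam(x,y)$ as $\sum_{j=0}^{r}b_j(x,y)\partial_x^{\,j}$ with $b_r\not\equiv 0$ (it is nonzero because $\pi(z)\Psilam\neq 0$), so $r$ is its order. Iterating the first identity gives $\partial_x^{\,j}\Psilam=\big((\partial_x+z)^{j}P\big)\Psi_0$; since multiplication by $z$ raises the degree in $z$ by exactly one and preserves the top $z$-coefficient, while $\partial_x$ cannot raise the $z$-degree, we get $(\partial_x+z)^{j}P=z^{j}+O(z^{j-1})$, whence $\piflat(x,y,\partial_x)\Psilam=\big(b_r(x,y)z^{r}+O(z^{r-1})\big)\Psi_0$. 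On the other hand $\pi(z)\Psilam=\pi(z)P\,\Psi_0=\big(z^{d}+O(z^{d-1})\big)\Psi_0$. Since \eqref{eq:piflat1} says these two coefficient functions of $\Psi_0$ coincide, comparing the highest powers of $z$ forces $r=d$ and $b_r\equiv 1$; that is, $\piflat(x,y,\partial_x)=\partial_x^{\,d}+(\text{lower order})$ is monic of order $\deg\pi$.

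The dual statement I would prove by the same computation with the roles of $x$ and $z$ exchanged. For monic $\sigma\in\cRslam$ of $x$-degree $d$, write $\sigsharp(\partial_z,y,z)\in\cAlam\otimes\cP$ as $\sum_{k=0}^{r}\beta_k(y,z)\partial_z^{\,k}$ with $\beta_r\not\equiv 0$; by Proposition~\ref{prop:hRgen} its coefficients are Laurent polynomials in $z$ and polynomials in $y$, and in particular do not involve $x$. Then $\partial_z^{\,k}\Psilam=\big((\partial_z+x+2yz)^{k}P\big)\Psi_0$, and since $\partial_z+x+2yz$ raises the degree in $x$ by exactly one and preserves the top $x$-coefficient, $(\partial_z+x+2yz)^{k}P=x^{k}+O(x^{k-1})$; hence $\sigsharp(\partial_z,y,z)\Psilam=\big(\beta_r(y,z)x^{r}+O(x^{r-1})\big)\Psi_0$, while $\sigma(x,y)\Psilam=\sigma(x,y)P\,\Psi_0=\big(x^{d}+O(x^{d-1})\big)\Psi_0$. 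Comparing the highest powers of $x$ in \eqref{eq:sigsharp1} gives $r=d$ and $\beta_r\equiv 1$, so $\sigsharp(\partial_z,y,z)$ is monic of order $\deg_x\sigma$.

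The part I expect to need the most care — the main obstacle — is checking that no cancellation destroys these leading terms and that the expansions are legitimate: in $\sum_j b_j(\partial_x+z)^{j}P$ every summand with $j<r$ contributes to the $z$-order at most $r-1$ (and symmetrically in the $x$-expansion), so the putative leading term genuinely survives; and one must be sure that $\piflat$ and $\sigsharp$ are bona fide differential operators of finite order whose coefficients are free of $z$, respectively $x$ — which is precisely what their membership in $\cAslam(x,y)$ and $\cAlam\otimes\cP$ guarantees.
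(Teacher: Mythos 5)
Your proof is correct, and it takes a different route from the paper's. The paper disposes of this proposition in one line by ``inspection of the construction'' in Propositions \ref{prop:Phipi} and \ref{prop:Phisig}: there $\piflat$ and $\sigsharp$ arise from factoring the relevant operator through the monic intertwiners $\Kslam$ (order $\ell$) and $\Klam$ (order $N$), so order and monicity are read off from that factorization. You instead argue directly from the eigenvalue relations \eqref{eq:piflat1} and \eqref{eq:sigsharp1} by a leading-symbol comparison: writing $\Psilam=P\,\Psi_0$ with $P=\Philam/\taulam$ and using $P=1+O(z^{-1})$ (monicity of $z^{\ell}\Philam/\taulam$ in $z$, i.e.\ \eqref{eq:Kslamdef}) together with $P=1+O(x^{-1})$ (monicity in $x$ of both $\Philam$ and $\taulam$, degree $N$), the conjugation identities $\partial_x\mapsto\partial_x+z$, $\partial_z\mapsto\partial_z+x+2yz$ force the top coefficient of the unknown operator to match the top power of the eigenvalue, with no cancellation from lower-order terms. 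Both arguments ultimately rest on the same monicity facts about $\Philam$, but yours is self-contained (it needs only that $\piflat\in\cAslam$ has $z$-free coefficients and $\sigsharp\in\cAlam\otimes\cP$ has $x$-free coefficients, which Theorem \ref{thm:flathom} supplies), whereas the paper's is shorter but implicitly leans on the kernel/factorization arguments of the two earlier propositions; your version also has the minor virtue of exhibiting explicitly why monicity of $\pi$ in $z$, respectively of $\sigma$ in $x$, transfers to monicity of the dual operator.
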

\begin{proof}
  These assertions follow by inspection of the construction of
  $\Phi_\pi$ and $\Phi_\sigma$ utilized in the proofs to Proposition
  \ref{prop:Phipi} and \ref{prop:Phisig}.
\end{proof}



Going forward, the following combinatorial description of $\cRlam$
will prove useful.
\begin{definition}
  For $q\in \N$, a Maya diagram $M\subset \Z$ is a $q$-core
  \cite[p. 12]{macdonald} \cite[p. 123]{noumi} if and only if if there
  exist $n_0,\ldots, n_{q-1} \in \Z$ such that
  \begin{equation}
    \label{eq:Mcore}
    M =
    \bigcup_{i=0}^{q-1} \{ mq +i \colon m \leq n_i \},
  \end{equation}
  We will also say that a partition $\lambda$ is a $q$-core if the
  corresponding $\Mlam$ is a $q$-core.
\end{definition}
\noindent
The following alternative characterization of a $q$-core is useful.
\begin{prop}
  \label{prop:MMq}
  A Maya diagram $M\subset \Z$ is a $q$-core if and only if
  $M\subset M+q$.
\end{prop}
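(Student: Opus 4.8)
The plan is to split $\mathbb{Z}$, and hence $M$, along residue classes modulo $q$ and thereby reduce the statement to a one-dimensional observation. For each $i\in\{0,1,\ldots,q-1\}$ set $M_i=\{m\in\Z\colon mq+i\in M\}$, so that $M=\bigsqcup_{i=0}^{q-1}\{mq+i\colon m\in M_i\}$. Translating by $q$ permutes residue classes trivially and shifts each slice by one, i.e. the $i$-th slice of $M+q$ is $\{mq+i\colon m\in M_i+1\}$; hence the condition $M\subseteq M+q$ is equivalent to $M_i\subseteq M_i+1$ for every $i$, which says precisely that each $M_i$ is closed under subtracting $1$. On the other hand, comparing with \eqref{eq:Mcore}, $M$ is a $q$-core if and only if each $M_i$ has the form $\{m\colon m\le n_i\}$ for some $n_i\in\Z$. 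So the proposition collapses to: a set $S\subseteq\Z$ of the form $M_i$ coming from a Maya diagram satisfies $S\subseteq S+1$ if and only if $S=\{m\colon m\le n\}$ for some $n$.

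The forward implication (a $q$-core satisfies $M\subseteq M+q$) needs no Maya structure: if $M=\bigcup_i\{mq+i\colon m\le n_i\}$, then $M+q=\bigcup_i\{mq+i\colon m\le n_i+1\}\supseteq M$. For the converse I would invoke the two defining properties of a Maya diagram: $M$ contains all sufficiently negative integers, so each $M_i$ is nonempty (and unbounded below); and $M$ contains only finitely many positive integers, so each $M_i$ is bounded above. Given that $M_i$ is also downward-closed by the previous paragraph, set $n_i:=\max M_i$; then $M_i\subseteq\{m\colon m\le n_i\}$ by definition of the maximum, and $\{m\colon m\le n_i\}\subseteq M_i$ by repeatedly subtracting $1$ from $n_i\in M_i$. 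Reassembling the $q$ residue classes exhibits $M$ in the form \eqref{eq:Mcore}, so $M$ is a $q$-core.

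The only point demanding a little care — and the nearest thing to an obstacle — is the boundedness of each $M_i$, equivalently ruling out $M_i=\Z$. This is exactly where the definition of a Maya diagram is used: were some $M_i$ unbounded above, downward-closedness would force $M_i=\Z$, so $M$ would contain the whole residue class $\{mq+i\colon m\in\Z\}$ and in particular infinitely many positive integers, contradicting finiteness of $M\cap\N$. Everything else in the argument is routine bookkeeping of residue classes.
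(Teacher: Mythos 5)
Your proof is correct and follows essentially the same route as the paper: decompose $M$ into its residue classes $M_i=\{m\colon mq+i\in M\}$, note that $M\subset M+q$ is equivalent to $M_i\subset M_i+1$ for every $i$, and identify the latter with each $M_i$ being a half-line $\{m\le n_i\}$, which is the $q$-core condition \eqref{eq:Mcore}. The only difference is that you spell out the step the paper leaves implicit — that each $M_i$ inherits the Maya-diagram properties (unbounded below, bounded above), so downward-closedness forces the half-line form — which is a welcome but minor elaboration.
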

\begin{proof}
  Let $M\subset \Z$ be a Maya diagram and let $q\in \N$. Define the Maya
  diagrams
  \[ M_i = \{ m\in \Z \colon q m + i \in M \},\quad i=0,\ldots, q
    -1.\]
  By definition, $M$ is a $q$-core if and only if each $M_i$ is a
  trivial Maya diagram; i.e., if $M_i\subset M_i+1$.
  Observe that
  \[ M = \bigcup_{i=0}^{q-1} (q M_i + i),\quad\text{and }\quad 
    M+q = \bigcup_{i=0}^{q-1} (q (M_i+1) + i).
  \]
  Hence, $M$ is a $q$-core if and only if each $M_i\subset M_i+1$ if
  and only if $M\subset M+q$.
\end{proof}

\begin{prop}
  \label{prop:gq-q}
  Let $\glam_q,\; q\in \Z$ be the integer sequence defined in
  \eqref{eq:glamqdef}.
  A partition $\lambda$ is a $q$-core if and only if $\glam_{-q}=0$,
  or equivalently, if and only if $\glam_q =q$.
\end{prop}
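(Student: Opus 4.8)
The plan is to reduce the statement to the set-theoretic characterization of $q$-cores furnished by Proposition~\ref{prop:MMq}, namely that $\Mlam$ is a $q$-core if and only if $\Mlam\subseteq\Mlam+q$. The point is that $\glam_{-q}$ literally counts the obstruction to this inclusion, so the equivalence with $\glam_{-q}=0$ is essentially a matter of unwinding definitions, and the reformulation $\glam_q=q$ then drops out of the symmetry relation already proved in Proposition~\ref{prop:glamq}.

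Concretely, I would first fix $q\geq 1$ and compute $\cGlam_{-q}$ from \eqref{eq:Glamqdef} together with the definition $\Jlam=\Z\setminus\Mlam$ in \eqref{eq:Jlamdef}:
\[
\cGlam_{-q} = (\Mlam-q)\cap\Jlam = (\Mlam-q)\setminus\Mlam .
\]
Hence $\glam_{-q}=\#\big((\Mlam-q)\setminus\Mlam\big)$, and this cardinality vanishes precisely when $\Mlam-q\subseteq\Mlam$, i.e.\ when $\Mlam\subseteq\Mlam+q$. By Proposition~\ref{prop:MMq}, the latter inclusion holds if and only if $\Mlam$ is a $q$-core, which by definition is exactly the statement that $\lambda$ is a $q$-core. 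This gives the first equivalence.

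For the second equivalence I would simply invoke the symmetry $\glam_q=\glam_{-q}+q$ of Proposition~\ref{prop:glamq}: substituting $\glam_{-q}=0$ yields $\glam_q=q$, and conversely $\glam_q=q$ forces $\glam_{-q}=0$. I do not expect a real obstacle in this argument; the only thing requiring a moment's care is the bookkeeping of the sign of the shift — that a $q$-core condition ($\Mlam\subseteq\Mlam+q$, a shift \emph{up}) corresponds to $\glam_{-q}$ (a shift \emph{down}) and not to $\glam_q$ — but this is settled precisely by the computation of $\cGlam_{-q}$ above.
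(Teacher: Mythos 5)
Your proposal is correct and follows essentially the same route as the paper: the first equivalence is exactly the paper's ``direct consequence of the definition'' of $\cGlam_{-q}$, combined with the characterization $\Mlam\subset\Mlam+q$ of $q$-cores from Proposition~\ref{prop:MMq}, and the second equivalence is the same appeal to the symmetry $\glam_q=\glam_{-q}+q$ of Proposition~\ref{prop:glamq}. Your version merely writes out explicitly the unwinding that the paper leaves implicit, including the sign-of-shift bookkeeping.
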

\begin{proof}
  The first criterion is  a direct consequence of the definition
  \eqref{eq:Glamqdef}. The second criterion follows by \eqref{eq:glamq-q}.
\end{proof}

\begin{prop}
  \label{prop:Rcore}
  We have $q\in\cDlam$  if and only if
  $\lambda$ is a $q$-core.
\end{prop}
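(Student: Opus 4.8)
The plan is to reduce the statement to an inclusion of index sets and then invoke Proposition~\ref{prop:MMq}. First I would unwind the definitions: by the definition of a critical degree and the characterization \eqref{eq:cRW} of the stabilizer ring (applied with $W=\Wlam$), we have $q\in\cDlam$ precisely when $z^q\in\cRlam(z)$, i.e.\ when $z^q\Wlam(z)\subseteq\Wlam(z)$. Since $\Wlam(z)=\lspan\{z^m\colon m\in\Jlam\}$ is spanned by monomials, multiplication by $z^q$ sends it to $\lspan\{z^{m+q}\colon m\in\Jlam\}$, and because distinct powers of $z$ are linearly independent, this span lies inside $\Wlam(z)$ if and only if the exponent set does, i.e.\ if and only if $\Jlam+q\subseteq\Jlam$. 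So the first step is just this reformulation.

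The second step is to pass to complements in $\Z$. From $\Jlam=\Z\setminus\Mlam$ one checks directly that $\Z\setminus(\Jlam+q)=\Mlam+q$, since $k\notin\Jlam+q$ is equivalent to $k-q\in\Mlam$, i.e.\ to $k\in\Mlam+q$. As complementation reverses inclusions, $\Jlam+q\subseteq\Jlam$ is therefore equivalent to $\Mlam\subseteq\Mlam+q$. Now Proposition~\ref{prop:MMq} asserts that a Maya diagram satisfies $\Mlam\subseteq\Mlam+q$ exactly when it is a $q$-core, and by definition $\lambda$ is a $q$-core iff $\Mlam$ is. Concatenating the equivalences yields $q\in\cDlam$ if and only if $\lambda$ is a $q$-core. (The value $q=0$ is trivial on both sides, since $z^0=1\in\cRlam$ and $\Mlam\subseteq\Mlam$ always; the $q$-core terminology is reserved for $q\in\N$.)

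I do not expect any genuine obstacle here: the argument is purely a change of language, namely the chain $z^q\Wlam(z)\subseteq\Wlam(z)\iff\Jlam+q\subseteq\Jlam\iff\Mlam\subseteq\Mlam+q$, the last equivalence being exactly the already-established Proposition~\ref{prop:MMq}. The only spot demanding a moment's care is the complementation identity $\Z\setminus(\Jlam+q)=\Mlam+q$, which is immediate from $k\notin\Jlam+q\iff k-q\in\Mlam$.
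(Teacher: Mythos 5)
Your argument is correct and follows exactly the same route as the paper's proof: reduce $q\in\cDlam$ to the monomial-span condition $\Jlam+q\subseteq\Jlam$, pass to complements to get $\Mlam\subseteq\Mlam+q$, and conclude via Proposition~\ref{prop:MMq}. You simply make explicit the complementation step that the paper leaves tacit, so there is nothing to add.
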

\begin{proof}
  By definition \eqref{eq:Wlamdef}, $z^q \in \cRlam(z)$ if and only if
  $\Jlam+ q \subset \Jlam$.  The latter is true if and only if
  $\Mlam\subset \Mlam+q$.
\end{proof}



The result below is a characterization of the inclusion
$\cSlam\subset \cAlam\otimes \cP$.  This result will play a key role
in our study of exceptional recurrence relations in the sections that
follow.
\begin{prop}
  \label{prop:pi*crit}
  For $\pi\in \cAlam \otimes \cP$, define
  \begin{equation}
    \label{eq:hpidef}
    \hpi(x,y,z) = \bH(y,z)\pi(x,y,z).    
  \end{equation}   Then $\pi\in \cSlam$ if and only if
  \begin{equation}
    \label{eq:degzR*}
     \deg_z \hpi(x,y,z) \leq 0 .
  \end{equation}
  Moreover, if condition \eqref{eq:degzR*} holds, then
  $\pi=\sigsharp$, where $\sigma(x,y)$ is the leading coefficient of
  $\hpi(x,y,z)$; i.e.,
  \begin{equation}
    \label{eq:degzR*1}
    \sigma(x,y) = \lim_{z\to \infty} \hpi(x,y,z).
  \end{equation}
\end{prop}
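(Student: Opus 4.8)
The plan is to run a leading-order-in-$z$ comparison on the two sides of the intertwining identity \eqref{eq:piflat}. First, the reduction. By Theorem~\ref{thm:flathom}, $\pi\mapsto\piflat$ identifies $\cAlam\otimes\cP$ with $\cAslam$, and by construction $\cSlam=\{\sigsharp\colon\sigma\in\cRslam\}$ is precisely the preimage of $\cRslam$, which is the order-zero part of $\cAslam$; an order-zero element of $\cAslam$ is multiplication by some $b_0(x,y)$ preserving $\Wslam$, and once we know $b_0$ is a bivariate polynomial (it will turn out to be a coefficient of $\hpi$) this means $b_0\in\cRslam$ and $\pi=\sigsharp$. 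So it suffices to prove, for $\pi\ne 0$ (the case $\pi=0$ being trivial), that
$$\operatorname{ord}_x\piflat=\deg_z\hpi,\qquad\text{the leading }z\text{-coefficient of }\hpi=\text{ the leading }\partial_x\text{-coefficient of }\piflat.$$
Since $\operatorname{ord}_x\piflat\ge 0$, this makes ``$\deg_z\hpi\le 0$'' equivalent to ``$\operatorname{ord}_x\piflat=0$'', and then the leading $z$-coefficient of $\hpi$ is its $z^0$-term, i.e.\ $\lim_{z\to\infty}\hpi$, which is the polynomial $\sigma$ with $\piflat=\sigma$.

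For the comparison, write $g(x,y,z)=\Philam(x,y,z)/\taulam(x,y)$, so that $\Psilam=g\,\e^{xz+yz^2}$ and $g=1+O(z^{-1})$ by \eqref{eq:Kslamdef} (equivalently \eqref{eq:Psimiwa}). From $\partial_z\e^{xz+yz^2}=(x+2yz)\e^{xz+yz^2}$ a short induction gives $\e^{-xz-yz^2}\partial_z^{\,i}\Psilam=(2yz)^i+O(z^{\,i-1})$, the same leading behaviour as $H_i(x+2yz,y)=(2yz)^i+O(z^{\,i-1})$. Hence, writing $\pi(\partial_z,y,z)=\sum_i a_i(y,z)\partial_z^{\,i}$, the Laurent polynomials
$$\e^{-xz-yz^2}\,\pi(\partial_z,y,z)\Psilam=\sum_i a_i(y,z)\,\bigl(\e^{-xz-yz^2}\partial_z^{\,i}\Psilam\bigr)\quad\text{and}\quad\hpi=\bH(y,z)\pi=\sum_i a_i(y,z)\,H_i(x+2yz,y)$$
(the latter by Lemma~\ref{lem:yz2}) have the same top-degree part $\sum_i a_i(y,z)(2yz)^i$, hence the same $z$-degree and the same leading $z$-coefficient. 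On the other side, writing $\piflat(x,y,\partial_x)=\sum_{j\le d}b_j(x,y)\partial_x^{\,j}$ with $b_d\ne 0$ and using $\partial_x^{\,k}\e^{xz+yz^2}=z^k\e^{xz+yz^2}$ together with $\partial_x^{\,p}g=O(z^{-1})$ for $p\ge 1$ (again from $g=1+O(z^{-1})$), we get $\e^{-xz-yz^2}\piflat(x,y,\partial_x)\Psilam=b_d(x,y)z^{\,d}+(\text{terms of }z\text{-degree}<d)$, so this has $z$-degree $d=\operatorname{ord}_x\piflat$ and leading $z$-coefficient $b_d$. Equating the two computations via \eqref{eq:piflat} yields the displayed identity, and the proposition follows as in the first paragraph; in particular, when $\piflat$ has order $0$ its coefficient $b_0$ equals the leading $z$-coefficient of $\hpi$, which is a polynomial in $x,y$, so $b_0=\sigma\in\cRslam$.

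The only delicate point --- and the place where $\pi\in\cAlam\otimes\cP$ is essential --- is the claim in the second paragraph that the two computations really do have the same $z$-degree, i.e.\ that the common top part $\sum_i a_i(y,z)(2yz)^i$ is not wiped out by cancellation across $i$ (which would let the ``lower'' remainders, which differ on the two sides, govern the degree). Equivalently, one must show $\deg_z\hpi$ equals the naive bound $\max_i(\deg_z a_i+i)$. This is handled using the grading of $\cAlam$ by $z$-shift (it is generated by the Cauchy--Euler operator $\E$ and the homogeneous operators $\Glam_k$): each graded summand $z^{-q}\gamlam_q(z\partial_z)\alpha_q(z\partial_z,y)$ has an unambiguous leading term in $\partial_z$, hence an unambiguous leading term in $z$ after applying $\bH$, and these contributions do not cancel one another. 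An alternative is to argue directly on the series $\Psilam=\sum_{m\in\Jlam}\tfrac{\kaplam(m)}{(m+\ell)!}\Rlam_m(x,y)\,z^m$ using the explicit action of $\Glam_k$ on it recorded in the proof of Proposition~\ref{prop:Phipi}. Carrying out this bookkeeping is the bulk of the work; everything else is formal.
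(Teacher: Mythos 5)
Your reduction (prove $\operatorname{ord}_x\piflat=\deg_z\hpi$ with matching leading coefficients) and your computation of the right-hand side, $\e^{-xz-yz^2}\piflat(x,y,\partial_x)\Psilam=b_d(x,y)z^d+O(z^{d-1})$, are fine, but the step you defer to ``bookkeeping'' is not a bookkeeping matter: the claim that $\deg_z\hpi$ equals the naive bound $\max_i(\deg_z a_i+i)$, i.e.\ that the graded contributions ``do not cancel one another,'' is false, and in fact it fails precisely on the locus the proposition is about. For any $\sigma\in\cRslam$ of positive $x$-degree $q$, the operator $\pi=\sigsharp$ is monic of order $q\geq 1$ in $\partial_z$ (Proposition \ref{prop:psdegord}), so the naive bound is at least $q$, while \eqref{eq:degzR*} forces $\deg_z\hpi\le 0$; so for every non-constant element of $\cSlam$ the top parts cancel all the way down. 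The simplest instance is the classical relation \eqref{eq:pinat1}: $\pi_1(\partial_z,y,z)=\partial_z-2yz=\Glam_1-2y\,\Glam_{-1}$ (trivial partition), whose two graded summands are sent by $\bH(y,z)$ to $x+2yz$ and $-2yz$, with leading $z$-terms cancelling to give $\hpi_1=x$ of degree $0$ against a naive bound of $1$; the order-$6$ recurrence operator $\pi_6$ of Example \ref{ex:L22} has naive bound $6$ and $\deg_z\hpi_6=0$. Cancellation also occurs away from $\cSlam$: e.g.\ $z^2\partial_z^2-4y^2z^4=\E(\E-1)-4y^2\Glam_{-4}$ is weighted-homogeneous and preserves $\C[z]$, yet $\hpi=x^2z^2+4xyz^3+2yz^2$ has degree $3$, not $4$. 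Once the common top part $\sum_i a_i(y,z)(2yz)^i$ cancels by more than a couple of orders, the remainders on your two sides are genuinely different (one involves $\partial_z^k g$ with $g=\Philam/\taulam$, the other the lower-order terms of $H_i(x+2yz,y)$), so the asymptotic comparison yields neither direction of the equivalence, and no refinement of the graded argument can restore the no-cancellation claim, since the cancellation really happens.

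The paper's proof avoids degree estimates altogether by working exactly rather than asymptotically: by Lemma \ref{lem:yz2}, $\pi(\partial_z,y,z)\,\e^{xz+yz^2}=\hpi(x,y,z)\,\e^{xz+yz^2}=\hpi(x,y,\partial_x)\,\e^{xz+yz^2}$, and then the intertwining relation $\Kslam(x,y,\partial_x)\,\e^{xz+yz^2}=z^\ell\Psilam(x,y,z)$ transfers $\pi$ to the $x$-side on the nose, so that $\piflat$ is a zeroth-order operator --- equivalently $\pi\in\cSlam$ with $\piflat=\sigma$, the $z^0$-coefficient of $\hpi$ --- exactly when $\hpi(x,y,\partial_x)$ involves no positive powers of $\partial_x$, which is condition \eqref{eq:degzR*}. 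If you want to salvage your route, you would need to replace the leading-order comparison by this exact identity (or an equivalent exact control of all the $\partial_z^k g$ correction terms), which amounts to the paper's argument.
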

\hlight{
\begin{proof}
  Let $\pi(x,y,z) \in \cAlam(x,z)\otimes \C[y]$ be given.  By Lemma
  \ref{lem:yz2},
  \[ \pi(\partial_z,y,z) \e^{xz+yz^2} = \hpi(x,y,z) \e^{xz+yz^2} =
    \hpi(x,y,\partial_x) \e^{xz+yz^2}.\]
  Consequently,
  \[ \Kslam(x,y,\partial_x) \hpi(x,y,\partial_x) =
    \sigma(x,y)\Kslam(x,y,\partial_x) .\]
  By definition, $\pi\in \cSlam$ if and only if
  $\sigma(x,y)=\piflat(x,y,z)$ is $z$-independent.  It follows that
  $\piflat(x,y,\partial_x)$ is a zeroth order differential operator if
  and only if
  \eqref{eq:degzR*} holds.
\end{proof}
}

In Proposition \ref{prop:Rcore}, above, we showed that $q$ is a
critical degree of $\cRlam$ if and only if the corresponding partition
is a $q$-core.  A priori, there does not seem to be a simple criterion
that describes the critical degrees of $\cRslam$.  However, it is
possible to give some necessary conditions.

\begin{prop}
  \label{prop:tausimp}
  If for  fixed $y\neq 0$, the polynomial $\taulam_y(x)$ has only simple
  zeroes, then $\sigma\in\cRslam$ if and only if $\sigma(x,y)$ is
  weighted-homogeneous and $\partial_x \sigma(x,y)$ is divisible by
  $\taulam(x,y)$.
\end{prop}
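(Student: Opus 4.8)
The plan is to rephrase membership in $\cRslam$ as a condition on a homogeneous space of one-point functionals and then to pin those functionals down using the simple-zeros hypothesis. First I would reduce to weighted-homogeneous $\sigma$: the modules $\Uslam$ and $\Wslam$ are graded for $\deg x=1,\ \deg y=2$ (each $\Rlam_m$ has weighted degree $m$, and multiplication by $y$ raises degrees by $2$), so $\cRslam$ is a graded ring and it is enough to characterize its weighted-homogeneous elements. Since $\Wslam=(\taulam)^{-1}\Uslam$ and multiplication by $\sigma$ commutes with division by $\taulam$, the inclusion $\sigma\Wslam\subset\Wslam$ is equivalent to $\sigma(\cdot,y)\,\Uslam_y\subset\Uslam_y$ for every $y$; as the exceptional Hermite polynomials are monic of pairwise distinct $x$-degrees, this is a polynomial identity in $y$, so it suffices to check it for a cofinite set of values of $y$. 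Finally, the discriminant of $\taulam_y(x)$ is a polynomial in $y$ which is nonzero at the given value, so for all but finitely many $y$ the polynomial $\taulam_y$ has $N$ distinct simple zeros $\zeta_1,\dots,\zeta_N$; fix such a $y$.

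The heart of the matter — and the step I expect to be the main obstacle — is to pin down the reduced homogeneous space of conditions $\Cslam_y$, where $\Wslam_y=W_{\Cslam_y}$. By Proposition~\ref{prop:functionals} and its proof, $\Wslam_y=\beta(\hWlam_y)\in\Grad$ with $q_{\Cslam_y}=\taulam_y$, so $\Cslam_y$ is supported on $\{\zeta_1,\dots,\zeta_N\}$ with a one-dimensional space of conditions at each $\zeta_i$, and by Proposition~\ref{prop:Psiform} it consists exactly of those $c\in\cC$ with $\la c(z),\taulam(z,y)\,\psi_{\Wslam_y}(x,z)\ra=0$ for all $x$. From \eqref{eq:addingy} and \eqref{eq:Psimiwa} one computes $\psi_{\hWlam_y}(x,z)=\bigl(1-(\taulam_x(x,y)/\taulam(x,y))\,z^{-1}+O(z^{-2})\bigr)\e^{xz}$, so applying the bispectral involution \eqref{eq:psibetaW}, which interchanges $x$ and $z$, yields $\taulam(z,y)\,\psi_{\Wslam_y}(x,z)=\bigl(\taulam(z,y)-\taulam_x(z,y)\,x^{-1}+O(x^{-2})\bigr)\e^{xz}$. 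Now, if $c=\sum_{k=0}^{d}b_k\,\bev{k}{\zeta_i}\in\Cslam_y$ with $b_d\neq0$, the pairing $\la c(z),\taulam(z,y)\psi_{\Wslam_y}(x,z)\ra$ equals $\e^{\zeta_i x}$ times a Laurent polynomial in $x$ that must vanish identically; reading off coefficients, its $x^{d}$-coefficient is $b_d\,\taulam(\zeta_i,y)=0$ (automatic), while its $x^{d-1}$-coefficient is $b_d(d-1)\,\taulam_x(\zeta_i,y)$, which forces $d=1$ since $\zeta_i$ is a simple zero. Hence $\Cslam_y$ is spanned by functionals $c_i=\bev{1}{\zeta_i}+b_i(y)\,\bev{0}{\zeta_i}$, $i=1,\dots,N$.

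With the $c_i$ in hand the equivalence is immediate. Because $\Cslam_y=\Ann\Uslam_y$, one has $\sigma(\cdot,y)\Uslam_y\subset\Uslam_y$ iff $c\circ\sigma\in\Cslam_y$ for all $c\in\Cslam_y$, and the Leibniz rule gives $c_i\circ\sigma=\sigma(\zeta_i,y)\,c_i+\partial_x\sigma(\zeta_i,y)\,\bev{0}{\zeta_i}\in\cC_{\zeta_i}$; since $\Cslam_y\cap\cC_{\zeta_i}=\C c_i$ and $\bev{0}{\zeta_i}\notin\C c_i$, this lies in $\Cslam_y$ precisely when $\partial_x\sigma(\zeta_i,y)=0$. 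Therefore $\sigma(\cdot,y)\,\Uslam_y\subset\Uslam_y$ holds iff $\partial_x\sigma(\zeta_i,y)=0$ for all $i$, i.e.\ iff $\taulam_y(x)\mid\partial_x\sigma(x,y)$; and since $\taulam$ is monic of degree $N$ in $x$, dividing $\partial_x\sigma=\taulam\rho+r$ in $\C[y][x]$ and observing that the weighted-homogeneous remainder $r$ of $x$-degree $<N$ vanishes at a value $y\neq0$ with only simple zeros of $\taulam_y$ exactly when $r\equiv0$, one upgrades this to $\taulam\mid\partial_x\sigma$ in $\C[x,y]$. Running the argument for all admissible $y$ and invoking the polynomial-identity reduction of the first paragraph gives the stated equivalence.
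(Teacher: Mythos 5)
Your argument is correct, and it is worth noting that the paper itself gives no proof of Proposition \ref{prop:tausimp} — it simply cites \cite{GKKM}. In \cite{GKKM} the result is obtained by analyzing the Wronskian/Darboux structure of the exceptional polynomials directly, whereas you stay entirely inside the adelic-Grassmannian framework of this paper: you use Proposition \ref{prop:functionals} (which gives $\Wslam_y=\beta(\hWlam_y)$ with $q_{\Cslam_y}=\taulam_y$), the Miwa-shift form \eqref{eq:Psimiwa} and the involution \eqref{eq:psibetaW} to read off $\taulam(z,y)\psi_{\Wslam_y}(x,z)=\bigl(\taulam(z,y)-\taulam_x(z,y)x^{-1}+\cdots\bigr)\e^{xz}$, and then the coefficient extraction correctly forces each local condition space $\Cslam_y\cap\cC_{\zeta_i}$ to be spanned by a first-order functional $\bev1{\zeta_i}+b_i\bev0{\zeta_i}$ (your formula $b_d(d-1)\taulam_x(\zeta_i,y)$ checks out, and it also rules out $d=0$); the Leibniz computation $c_i\circ\sigma=\sigma(\zeta_i,y)c_i+\partial_x\sigma(\zeta_i,y)\bev0{\zeta_i}$ then converts invariance into $\taulam_y\mid\partial_x\sigma(\cdot,y)$, and your globalization over $y$ is sound. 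This buys a self-contained, constructive proof in the paper's own language, and as a byproduct identifies explicitly the one-point functionals annihilating the exceptional Hermites at the zeros of $\taulam_y$, which complements Section \ref{sect:1ptcond}. Two small caveats: the "only if" claim that $\sigma$ is weighted-homogeneous cannot hold verbatim (sums of elements of $\cRslam$ of different degrees are again in $\cRslam$), so your graded reduction is the right — indeed the only — reading of the statement; and you invoke the "exactly" half of Proposition \ref{prop:Psiform} for the stationary wave function, which is slightly stronger than what is stated there, but your argument only uses the easy containment direction (members of $\Cslam_y$ satisfy the vanishing), so nothing is lost.
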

\noindent
The proof can be found in \cite{GKKM}.  Thus, if $\taulam$ has only
simple roots, then the critical degrees of $\cRslam$ consists of all
$q\geq N+1$ where $N=|\lambda| = \deg \taulam$.  In this case,
$\cRslam$ is the span of the following monic polynomials:
\begin{equation}
  \label{eq:sigidef}
  \sigma_q(x,y) :=\frac{1}{q+N} \int^x x^q \taulam(x,y),\quad
  q=0,1,2,\ldots 
\end{equation}

\begin{prop}
  \label{prop:sigcore}
  We have $\cDslam\subset \cDlam$.  In other words, if $q$ is a
  critical degree of $\cRslam$, then necessarily $\lambda$ is a
  $q$-core.
\end{prop}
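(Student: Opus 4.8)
The plan is to use the anti-isomorphism of Theorem~\ref{thm:flathom} to convert a critical degree of $\cRslam$ into a monic differential operator lying in $\cAlam$, and then to read off the $q$-core condition from the explicit generators of $\cAlam$ furnished by Proposition~\ref{prop:hRgen}.

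First I would take $q\in\cDslam$ and choose a monic $\sigma(x,y)\in\cRslam$ with $\deg_x\sigma=q$. Then $\sigsharp\in\cSlam\subset\cAlam\otimes\cP$, and Proposition~\ref{prop:psdegord} tells us that $\sigsharp(\partial_z,y,z)$ is a monic differential operator of order $q$ in $\partial_z$; specializing $y$ to any fixed value in $\C$ (the leading coefficient, being $1$, does not degenerate) we obtain an operator $P\in\cAlam$ that is monic of order $q$.

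Next I would decompose $P$ into its weighted-homogeneous components for the grading in which $\deg z=1$ and $\E=z\partial_z$ has degree $0$. Since $\Wlam$ is spanned by monomials, each homogeneous component of $P$ again preserves $\Wlam$, hence lies in $\cAlam$ — this is exactly the reduction used in the proof of Proposition~\ref{prop:hRgen}. Let $P_{-q}$ be the component of weighted degree $-q$, i.e.\ the part with $P_{-q}z^m\in\C z^{m-q}$ for all $m$. The top-order term $\partial_z^q$ of $P$ has weighted degree $-q$, so it survives in $P_{-q}$; thus $P_{-q}\neq0$, while $\operatorname{ord}P_{-q}\le\operatorname{ord}P=q$. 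By Proposition~\ref{prop:hRgen}, $P_{-q}=\Glam_q(\partial_z,z)\circ\alpha(\E)$ for some polynomial $\alpha$, and since $\Glam_q=z^{-q}\gamlam_q(\E)$ with $\deg\gamlam_q=\glam_q$, the differential-operator order of $P_{-q}$ equals $\glam_q+\deg\alpha\ge\glam_q$. Combining, $\glam_q\le q$. On the other hand Proposition~\ref{prop:glamq} gives $\glam_q=\glam_{-q}+q\ge q$, so $\glam_q=q$. By Proposition~\ref{prop:gq-q} this is precisely the statement that $\lambda$ is a $q$-core, and by Proposition~\ref{prop:Rcore} this is equivalent to $q\in\cDlam$. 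Hence $\cDslam\subset\cDlam$.

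The only delicate point, and the step I would spell out with care, is the passage through $P_{-q}$: because operators in $\cAlam$ may carry Laurent-polynomial coefficients in $z$, lower-order terms of $P$ can also feed into $P_{-q}$, but such contributions can only lower its order, so $\operatorname{ord}P_{-q}\le q$ is automatic, while the surviving monic term $\partial_z^q$ guarantees $P_{-q}\neq0$ so that the lower bound $\operatorname{ord}P_{-q}\ge\glam_q$ applies. Everything else is a direct concatenation of the cited results.
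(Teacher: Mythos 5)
Your argument is correct, but it follows a genuinely different route from the paper's. The paper stays entirely on the $x$-side and uses a short leading-term count: for $m\in\Jlam$ the product $\siglam_q(x,y)\Rlam_m(x,y)$ has $x$-degree $m+q$ and lies in $\Wslam$, which is spanned over $\C[y]$ by the $x$-monic functions $\Rlam_k$, $k\in\Jlam$; hence $m+q\in\Jlam$, so $\Jlam+q\subset\Jlam$, which is the $q$-core condition by Proposition~\ref{prop:Rcore}. You instead pass to the $z$-side via Theorem~\ref{thm:flathom}: $\sigsharp$ is monic of order $q$ by Proposition~\ref{prop:psdegord}, its specialization at a fixed $y$ lies in $\cAlam$, and its weighted-homogeneous component of degree $-q$ is a nonzero operator of order at most $q$ that, by the homogeneous reduction in the proof of Proposition~\ref{prop:hRgen}, factors as $\Glam_q\circ\alpha(\E)$ and hence has order at least $\glam_q$; then $\glam_q\le q$ combined with $\glam_q=\glam_{-q}+q$ (Proposition~\ref{prop:glamq}) forces $\glam_{-q}=0$, i.e.\ the $q$-core property (Propositions~\ref{prop:gq-q} and~\ref{prop:Rcore}). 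Every step checks out, including the survival of $\partial_z^q$ in the degree $-q$ component (monicity makes the order-$q$ coefficient constant, so it cannot be cancelled by lower-order terms of that component) and the count $\ord\lp\Glam_q\circ\alpha(\E)\rp=\glam_q+\deg\alpha$. The paper's proof buys brevity and independence from the bispectral machinery, needing only monicity in $x$ of the $\Rlam_k$; yours buys a structural insight, identifying the $q$-core condition with the requirement that the maximal degree-lowering component of $\sigsharp$ have room to exist inside $\cAlam$, which ties the result directly to the order-$q$ lowering operators of Theorem~\ref{thm:Lqgamq}, at the cost of invoking Theorem~\ref{thm:flathom} and Proposition~\ref{prop:psdegord}, whose proof the paper only sketches. (Both you and the paper tacitly assume the critical degree is witnessed by an $x$-monic $\sigma$; this normalization is harmless in either approach.)
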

\begin{proof}
  Let $q\in \cDslam$ and $\sigma_q(x,y)\in \cRslam(x,y)$ a
  corresponding eigenvalue such that $\deg_x \sigma_q(x,y)=q$.  Fix an
  $m\in \Jlam$.  By definition, $\sigma_q(x,y) \Rlam_m(x,y)$ is a
  $\C[y]$-linear combination of $\Rlam_{k}(x,y), k\in \Jlam$. By
  definition \eqref{eq:Rlamm}, $\Rlam_m(x,y)$ is monic in $x$.  By
  assumption, $\sigma_q(x,y)$ is also monic in $x$.  Hence
  $m+q \in \Jlam$ also.  Hence $\Jlam+q\subset \Jlam$.  Therefore
  $\Mlam$ is a $q$-core, by Proposition \ref{prop:Rcore}.
\end{proof}
The converse need not hold.  In Example \ref{ex:L22}, below, we will
demonstrate that the partition $\lambda=(2,2,0,\ldots)$ is a $4$-core,
but that $4$ is not a critical degree of $\cRslam$.

\section{Lowering and recurrence relations for the exceptional
  Hermites}
In Section \ref{sect:lowop}, we show that relation \eqref{eq:piflat1}
of the bispectral triple corresponds to a lowering relation for
exceptional Hermite polynomials.  In the following section, we exhibit
a homomorphism from differential operators to difference operators
that will allow us to transform \eqref{eq:sigsharp1} into recurrence
relations.  Thus, the existence of lowering operators in $\partial_x$
and difference operators in $n$ sharing the exceptional Hermites as
eigenfunctions follows naturally from the bispectrality of the wave
functions in the adelic Grassmannian.  Furthermore, it is notable that
the correspondence is \textit{constructive} in nature.  Consequently,
we will emphasize not only that the existence of the operators follows
from Wilson's construction but moreover that this provides a
convenient way to actually compute all of the corresponding operators.


\subsection{Lowering operators}
\label{sect:lowop}

 We are now ready to describe the
lowering operators for the exceptional Hermite rational functions
$\Rlam_n(x,y),\; n\in \Jlam$.  A conjugation of the lowering relation
\eqref{eq:LqPlamgam} below by $\taulam(x,y)$ then yields the
corresponding lowering relations for the exceptional polynomials
$\Hlam_n(x,y),\; n\in \Ilam$.

\begin{thm}
  \label{thm:Lqgamq}
  Let $q\in \N$ be a critical degree of $\cRlam$.  Let
  $\Llam_q (x,y,\partial_x)=\piflat(x,y,\partial_x)$ be the the
  corresponding monic operator of order $q$ corresponding to
  $\pi(z) = z^q$.  Then
  \begin{equation}
    \label{eq:LqPlamgam}
    \Llam_q(x,y,\partial_x) \Rlam_m(x,y) = \gamlam_q(m)
      \Rlam_{m-q}(x,y),\quad m\in\Jlam. 
    \end{equation}
    where $\Rlam_m(x,y),\; m\in \Jlam$ are the exceptional rational
    functions \eqref{eq:Rlamm}.  Moreover,
  \begin{equation}
    \label{eq:LqWronsk}
    \Llam_qf = \frac{\Wr[\Rlam_{k_1}, \ldots,
      \Rlam_{k_q}, f]}{\Wr[\Rlam_{k_1}, \ldots,
      \Rlam_{k_q} ]},
  \end{equation}
  where $k_1,\ldots, k_q $ is an enumeration of
  $\cGlam_q\subset\Jlam $.
\end{thm}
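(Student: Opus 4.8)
The plan is to obtain $\Llam_q$ from the bispectral anti-isomorphism of Theorem~\ref{thm:flathom} applied to the element $\pi(z)=z^q$, which lies in $\cRlam\subset\cAlam$ precisely because $q$ is a critical degree. Since $z^q$ is monic of degree $q$, Proposition~\ref{prop:psdegord} tells us that $\Llam_q=\piflat$ is a monic differential operator of order $q$ in $x$, and \eqref{eq:piflat1} gives the key relation
\[
\Llam_q(x,y,\partial_x)\,\Psilam(x,y,z)=z^q\,\Psilam(x,y,z).
\]
First I would substitute the generating function expansion $\Psilam=\sum_{m\in\Jlam}\frac{\kaplam(m)}{(m+\ell)!}\Rlam_m(x,y)\,z^m$ of \eqref{eq:Psilamgf} into both sides. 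Because $\Llam_q$ differentiates only in $x$, it acts coefficientwise on this Laurent series in $z$, so equating the coefficient of $z^m$ on the two sides yields $\frac{\kaplam(m)}{(m+\ell)!}\Llam_q\Rlam_m=\frac{\kaplam(m-q)}{(m-q+\ell)!}\Rlam_{m-q}$ whenever $m$ and $m-q$ both lie in $\Jlam$, while for $m\in\Jlam$ with $m-q\notin\Jlam$ the right side contributes nothing, forcing $\Llam_q\Rlam_m=0$.

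Next I would convert this into \eqref{eq:LqPlamgam}. Rearranging and using the generalized falling factorial \eqref{eq:ffacdef}, the identity reads $\Llam_q\Rlam_m=\frac{\kaplam(m-q)}{\kaplam(m)}F_q(m+\ell)\,\Rlam_{m-q}$, which is legitimate since $\kaplam(m)\neq0$ for $m\in\Jlam$. The Reindexing Lemma~\ref{lem:muqmq}, equation \eqref{eq:kapgamF} with $k=q$, gives $\kaplam(m-q)F_q(m+\ell)=\kaplam(m)\gamlam_q(m)/\gamlam_{-q}(m-q)$; and because $q$ is a critical degree, $\lambda$ is a $q$-core (Proposition~\ref{prop:Rcore}), hence $\glam_{-q}=0$ by Proposition~\ref{prop:gq-q}, i.e. $\cGlam_{-q}=\emptyset$ and $\gamlam_{-q}\equiv1$. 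This leaves $\Llam_q\Rlam_m=\gamlam_q(m)\Rlam_{m-q}$ for all $m\in\Jlam$. The degenerate subcase $m-q\notin\Jlam$ is consistent with this formula under the convention $\gamlam_q(m)\Rlam_{m-q}=0$: then $m-q\in\Mlam$, so $m\in(\Mlam+q)\cap\Jlam=\cGlam_q$ is a root of $\gamlam_q$, and indeed $\Llam_q\Rlam_m=0$. This establishes \eqref{eq:LqPlamgam}; conjugating by $\taulam$ and shifting the index gives the stated lowering relation for the $\Hlam_n$.

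Finally, for the Wronskian formula \eqref{eq:LqWronsk} I would argue by uniqueness of the monic annihilating operator. Since $\lambda$ is a $q$-core, $\glam_q=q$ by Proposition~\ref{prop:gq-q}, so $\cGlam_q=\{k_1,\dots,k_q\}$ has exactly $q$ elements, which matches the order of $\Llam_q$. By \eqref{eq:LqPlamgam} and $\gamlam_q(k_i)=0$, each $\Rlam_{k_i}$ lies in the kernel of $\Llam_q$. These functions are linearly independent: by \eqref{eq:Rlamm} each $\Rlam_{k_i}(x,y)=\Hlam_{k_i+N}(x,y)/\taulam(x,y)$ is a rational function of $x$ that is monic of $x$-degree $k_i$, and the $k_i$ are distinct. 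A monic differential operator of order $q$ annihilating $q$ linearly independent functions is unique, since the difference of two such operators has order $<q$ yet annihilates all $q$ of them and hence vanishes; as $f\mapsto\Wr[\Rlam_{k_1},\dots,\Rlam_{k_q},f]/\Wr[\Rlam_{k_1},\dots,\Rlam_{k_q}]$ is manifestly such an operator, it coincides with $\Llam_q$, giving \eqref{eq:LqWronsk}.

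I expect the coefficient-comparison step to be the main obstacle — not because it is deep, but because it demands care: justifying the coefficientwise action of $\Llam_q$ on the $z$-expansion, aligning the index ranges under the shift $m\mapsto m-q$, invoking the Reindexing Lemma with the correct arguments, and verifying that the $q$-core hypothesis is exactly what collapses the spurious factor $\gamlam_{-q}(m-q)$ to $1$. The degenerate subcase $m-q\notin\Jlam$ must also be reconciled with the convention that $\Rlam_{m-q}$ is undefined there. Everything else is either quoted from Propositions~\ref{prop:psdegord}, \ref{prop:Rcore}, \ref{prop:gq-q} and Lemma~\ref{lem:muqmq}, or is the standard fact on uniqueness of the Wronskian operator.
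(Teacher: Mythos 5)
Your proposal is correct and follows essentially the same route as the paper: apply the anti-isomorphism to $\pi(z)=z^q$ to get $\Llam_q\Psilam=z^q\Psilam$, expand via the generating function \eqref{eq:Psilamgf} and match coefficients using the combinatorial identity $\kaplam(m)\gamlam_q(m)=\kaplam(m-q)F_q(m+\ell)$ (which the paper obtains directly from the $q$-core set identity rather than by citing the Reindexing Lemma with $\gamlam_{-q}\equiv 1$, but it is the same computation), and then deduce \eqref{eq:LqWronsk} from monicity, order $q$, and the kernel containing $\Rlam_{k_1},\ldots,\Rlam_{k_q}$. Your treatment of the degenerate case $m-q\notin\Jlam$ and the uniqueness/linear-independence details for the Wronskian formula merely spell out steps the paper leaves implicit.
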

\begin{proof}
  Using \eqref{eq:kaplamndef}, \eqref{eq:kaplamdef},
  \eqref{eq:Psilamgf} we have:
  \begin{align*}
    \Llam_q(x,y,\partial_x) \Psilam(x,y,z)
    &=z^q \Psilam(x,y,z)\\
    &=\sum_{m=-{\ell}}^\infty \kaplam(m)\Rlam_m(x,y,z) 
      \frac{z^{m+q}}{(m+{\ell})!}\\
    &=\sum_{m=-{\ell}}^\infty \kaplamn{q}(m)\Rlam_{m-q}(x,y,z) 
      \frac{z^{m}}{(m+{\ell})!},
  \end{align*}
  where $\ell=\ell(\lambda)$.
  By \eqref{eq:Glamqdef}, we have
  \[ \{m_1+q,\ldots, m_q+q\} = \{ m_1,\ldots, m_{\ell}\} \cup
    \cGlam_q.\]
  Hence, $\kaplamn{q}(m) = \kaplam(m) \gamlam_q(m)$, which implies
  \eqref{eq:LqPlamgam}.  Furthermore, since $\gamlam_q(m)$ vanishes
  precisely at $k_1,\ldots, k_q$, it follows that
  $\Rlam_{k_1},\ldots, \Rlam_{k_q}$ are in the kernel of $\Llam_q$.  Since
  $\Llam_q$ is a monic differential operator, \eqref{eq:LqWronsk} follows.
\end{proof}

\begin{prop}
  The commutative algebra $\cSslam$ is generated by the lowering
  operators $\Llam_q,\; q\in \cDlam$.
\end{prop}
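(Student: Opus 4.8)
The plan is to describe $\cRlam$, the stabilizer ring of $\Wlam$, concretely as a span of monomials, and then to transport that description to $\cSslam$ through the anti-isomorphism $\pi\mapsto\piflat$ of Theorem~\ref{thm:flathom}. Since $\cSslam=\{\piflat\colon\pi\in\cRlam\}$ by definition and this map is linear, it suffices to exhibit a spanning set of $\cRlam$ whose image consists of lowering operators.

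First I would check that $\cRlam$ is a homogeneous subalgebra of $\cP$, i.e. $\cRlam=\lspan\{z^q\colon q\in\cDlam\}$. Indeed, $\Wlam(z)=\lspan\{z^m\colon m\in\Jlam\}$ is spanned by monomials, so if $\pi(z)=\sum_k\pi_k z^k$ satisfies $\pi(z)\Wlam(z)\subset\Wlam(z)$, then for each $m\in\Jlam$ the expansion $\pi(z)z^m=\sum_k\pi_k z^{m+k}$ lies in $\Wlam(z)$, which forces $m+k\in\Jlam$ whenever $\pi_k\neq0$. As this holds for \emph{every} $m\in\Jlam$, each monomial $z^k$ occurring in $\pi$ already satisfies $z^k\Wlam(z)\subset\Wlam(z)$, so $z^k\in\cRlam$, i.e. $k\in\cDlam$ by the definition of the set of critical degrees. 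The reverse inclusion is immediate, and this is essentially the argument already used in Proposition~\ref{prop:Rcore}.

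Next I would invoke Theorem~\ref{thm:flathom}: the assignment $\pi\mapsto\piflat$ is an algebra anti-isomorphism $\cAlam\otimes\cP\to\cAslam$, hence in particular a $\C$-linear bijection onto its image, and since $\cRlam$ is commutative its restriction $\cRlam\to\cSslam$ is an isomorphism of commutative algebras. By the definition given in Theorem~\ref{thm:Lqgamq}, $\Llam_q=\piflat$ for $\pi(z)=z^q$, so this isomorphism carries the spanning set $\{z^q\colon q\in\cDlam\}$ of $\cRlam$ onto $\{\Llam_q\colon q\in\cDlam\}$ (with $\Llam_0$ the identity). Therefore $\cSslam=\lspan\{\Llam_q\colon q\in\cDlam\}$, and in particular $\cSslam$ is generated as an algebra by the lowering operators $\Llam_q$, $q\in\cDlam$.

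There is essentially no serious obstacle: the only step needing any attention is the reduction, in the second paragraph, of a general element of $\cRlam$ to its monomial components, and this is forced immediately by the fact that $\Wlam$ is a span of monomials. Everything else is a formal consequence of Theorems~\ref{thm:flathom} and~\ref{thm:Lqgamq} together with the definition of $\cDlam$.
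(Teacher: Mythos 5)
Your proposal is correct and follows essentially the same route as the paper: the paper's one-line proof ("since $\Wlam$ is spanned by monomials, the same is true for $\cRlam$") is exactly your second paragraph, and the transport to $\cSslam$ via the anti-isomorphism of Theorem~\ref{thm:flathom} together with $\Llam_q=\piflat$ for $\pi(z)=z^q$ is left implicit there but is the intended argument. You have simply filled in the details the paper omits.
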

\begin{proof}
  Since $\Wlam$ is spanned by monomials, the same is true for
  $\cRlam$.
\end{proof}

It is also worthwhile to note the following, alternate,
characterization of the lowering operators.
\begin{prop}
  \label{prop:LqDxq}
  Let $\Llam_q(x,y,\partial_x),\; q\in \cDlam$ be a lowering operator
  \eqref{eq:LqWronsk} and $\Klam(x,y,\partial_x)$ the intertwiner as
  per \eqref{eq:Kslamdef}.  Then,
  \begin{equation}
    \label{eq:LqDxq}
    \Kslam(x,y,\partial_x) \circ \partial_x^q =
    \Llam_q(x,y,\partial_x) \circ \Kslam(x,y,\partial_x).
  \end{equation}
\end{prop}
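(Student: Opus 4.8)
The plan is to verify the operator identity \eqref{eq:LqDxq} by evaluating both sides on the vacuum wave function $\Psi_0(x,y,z)=\e^{xz+yz^2}$ and invoking the elementary principle that a differential operator in $x$ with coefficients rational in $x$ (and polynomial in $y$) is uniquely determined by its action on $\Psi_0$ as $z$ ranges over $\C$: writing $D(x,y,\partial_x)=\sum_i a_i(x,y)\partial_x^i$, one has $D\,\Psi_0=\bigl(\sum_i a_i(x,y)z^i\bigr)\Psi_0$, so, since $\Psi_0$ is nowhere zero and the powers $z^i$ are linearly independent over the coefficient ring, $D\,\Psi_0\equiv 0$ forces $a_i\equiv 0$ for every $i$.

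First I would collect the two ingredients. Because $\partial_x\Psi_0=z\Psi_0$, we get $\partial_x^q\Psi_0=z^q\Psi_0$; and by Proposition~\ref{prop:KslamPsi}, $\Kslam(x,y,\partial_x)\Psi_0=z^\ell\Psilam$. Applying the left-hand side of \eqref{eq:LqDxq},
\[\Kslam(x,y,\partial_x)\circ\partial_x^q\,\Psi_0(x,y,z)=z^q\,\Kslam(x,y,\partial_x)\Psi_0(x,y,z)=z^{q+\ell}\,\Psilam(x,y,z).\]
For the right-hand side I would use the relation $\Llam_q(x,y,\partial_x)\Psilam=z^q\Psilam$ obtained in the proof of Theorem~\ref{thm:Lqgamq}, together with Proposition~\ref{prop:KslamPsi} once more:
\[\Llam_q(x,y,\partial_x)\circ\Kslam(x,y,\partial_x)\,\Psi_0(x,y,z)=\Llam_q(x,y,\partial_x)\bigl(z^\ell\Psilam(x,y,z)\bigr)=z^{q+\ell}\,\Psilam(x,y,z).\]
Thus both compositions send $\Psi_0$ to $z^{q+\ell}\,\Psilam$.

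Finally I would observe that both sides of \eqref{eq:LqDxq} are genuine differential operators of order $q+\ell$ with rational coefficients — the left side because $\Kslam$ has order $\ell$ by \eqref{eq:Kslamdef}, the right side because $\Llam_q$ is the monic operator of order $q$ from Theorem~\ref{thm:Lqgamq} — so their difference is again a differential operator of that type, and by the uniqueness principle stated above it must vanish. I do not expect any real obstacle here; the only step deserving explicit care is the passage from ``agrees on $\Psi_0$ for every $z$'' to ``equal as operators,'' which is exactly the displayed linear-independence argument. (An alternative, purely algebraic route would conjugate $\partial_x^q$ through the monic operator $\Kslam$ and identify the resulting monic operator of order $q$ with the Wronskian operator \eqref{eq:LqWronsk} by comparing kernels, using that $\ker\Kslam$ is spanned by $H_{k_1},\dots,H_{k_\ell}$ as in \eqref{eq:AlamWr}; but the wave-function computation above is cleaner and self-contained.)
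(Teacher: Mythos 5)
Your argument is correct and is essentially the paper's own proof: the paper likewise verifies the identity by applying both compositions to $\e^{xz+yz^2}$ and observing that each yields $z^{q+\ell}\Psilam(x,y,z)$, using $\partial_x\Psi_0=z\Psi_0$, Proposition~\ref{prop:KslamPsi}, and $\Llam_q\Psilam=z^q\Psilam$. The only difference is that you make explicit the final step (agreement on $\Psi_0$ for all $z$ implies equality of the operators), which the paper leaves implicit.
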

\begin{proof}
  It suffices to observe that
  \begin{align*}
    z^{q+\ell} \Psilam(x,y,z)
    &=  \Kslam(x,y,\partial_x)(z^q \e^{xz+yz^2})\\
    &= \Kslam(x,y,\partial_x)\circ\partial_x^q \,\e^{xz+yz^2}\\
    &= L_q(x,y,\partial_x)     \Kslam(x,y,\partial_x) \e^{xz+yz^2}.
  \end{align*}
\end{proof}
\noindent
In this way, we recover the interpretation of $\cRlam$ and of critical
degrees presented in \eqref{eq:Lpdef}.

\subsection{Recurrence relations}
\label{sect:rr}
In this section we describe the
recurrence relations satisfied by exceptional Hermite polynomials.  As
a motivation, it is instructive to revisit the connection between the
classical 3-term recurrence relation \eqref{eq:class3term} and the first
order differential relation \eqref{eq:3termdiffrel}. Set
\[ \pi_1(x,y,z) := x - 2yz \] and express \eqref{eq:3termdiffrel} as
\[ \pi_1(\partial_z,y,z) \Psi_0(x,y,z) = x \Psi_0(x,y,z). \]
Set
\begin{equation}
  \label{eq:pinat1}
 \pinat_1(n,y,z) = -2yz+nz^{-1}    
\end{equation}
so that
\[ \pi_1(\partial_z,y,z)z^n = \pinat_1(n,y,z)z^n .\]
The classic recurrence relation \eqref{eq:class3term} can then be
derived as  follows:
\begin{align}
  \label{eq:xPsi0}
  x\Psi_0(x,y,z)
  &= \sum_{n=0}^\infty H_n(x,y)  \pinat(n,y,z) \frac{z^n}{n!}\\ \nonumber
  &= \sum_{n=0}^\infty H_n(x,y)  \lp -2y(n+1) \frac{z^{n+1}}{(n+1)!} +
    \frac{z^{n-1}}{(n-1)!}\rp\\
  &= \sum_{n=0}^\infty \Theta_1(n,y,\sS_n)H_n(x,y)  \frac{z^n}{n!}
\end{align}
where
\[ \Theta_1(n,y,z)= -2y\pinat_1(n,(4y)^{-1},z) = z-2ynz^{-1} .\]
As we will see below, the construction $\pi\to\pinat\to \Theta$
generalizes to the case of $\Psilam(x,y,z)$ and leads to an explicit
formula for exceptional recurrence relations.



Let $\lambda$ be a partition, $N=|\lambda|$ and
$\ell = \ell(\lambda)$.  As we will show below, the recurrence
relations corresponding to $\lambda$ take the form
\begin{equation}
  \label{eq:Thetsigq}
  \Thetlam_q(m,y,\sS_m)\Rlam_m(x,y)  =  \siglam_q(x,y) \Rlam_m(x,y) ,
\end{equation}
where $q\in \Nz$ is a critical degree of $\cRslam$, where
$\siglam_q\in \cRslam$ is a monic, homogeneous polynomial of degree
$q$, and where $\Thetlam_q(m,y,\sS_m)$ is a monic difference operator
of order $2q$ derived from the action of the corresponding $\pi_q =
\sigsharp_q$ on monomials $z^m$.
The transformation $\pi_q\mapsto
\Theta_q$ doubles the order
because the action of a $\pi_q(\partial_z,y,z),\; \pi\in
\cSlam,\; q\in \cDslam$ on $z^m$ involves degree shifts
$k\in \{ -q,-q+2,\ldots, q \}$.  In the classical case, this
phenomenon is illustrated by  relation \eqref{eq:pinat1}.

Thus, the operators $\Thetlam_q$ generate an algebra of difference
operators $\fSlam$ which is naturally isomorphic to the algebra of
differential operators $\cSlam$. This isomorphism is best understood
as the restriction of an algebra homomorphism $\cAlam \to \fAlam$,
where the latter is the algebra of difference operators that preserves
sequences with support in $\Jlam$.  This homomorphism from
differential to difference operators effectively transforms the
differential eigenvalue relation \eqref{eq:sigsharp1} into the
difference eigenvalue relation \eqref{eq:Thetsigq}.

Just like in the classical case, the exceptional Jacobi operator,
relative to a basis of normalized $\Rlam_m(x,y)$, is represented by a
symmetric matrix.  This is a consequence of the fact that
multiplication by the corresponding eigenvalue is a symmetric operator
relative to \eqref{eq:expx2ip}.  This symmetry imposes a certain
relation between the coefficients of the exceptional Jacobi operator
and the exceptional norming constants.  We will derive and make use of
this observation below.

We begin by describing the homomorphism $\cAlam \to \fAlam$.  For a
given partition $\lambda$, let $\cJslam$ be the vector space of
sequences supported on $\Jlam\subset \Z$.  Let $\varepsilon$ denote
the multiplication operator
\begin{equation}
  \label{eq:Emdef}
  \varepsilon(m) f_m = m f_m,
\end{equation}
where $f_m,\; m\in \Jlam$ is a sequence.
Evidently, $\varepsilon\in \End \cJslam$.  For $q\in \Z$, define the
weighted shift operator
\begin{equation}
  \label{eq:Gamqdef}
  \Gamlam_q(m,\sS_m) :=  \gamlam_{-q}(m) \sS_m^q.
\end{equation}
Observe that $\Gamlam_q\in \End \cJslam$ because
\[ \Gamlam_q(m,\sS_m) f_m = \gamlam_{-q}(m) f_{m+q},\] and because
$\gamlam_{-q}(m)=0$ precisely when $m\in \Jlam$ but $m+q\notin \Jlam$.
Let $\fAlam\subset \End \cJslam$ be the algebra of difference
operators generated by $\varepsilon$ and $\Gamlam_q,\; q\in \Z$.

Recall that $\Flam$ is isomorphic to the module of Laurent series
\[ \frac{\Phi(x,y,z)}{\taulam(x,y)} \e^{xz+yz^2} = \sum_{m\in \Jlam}
  F_m(x,y) z^m,\quad F_m\in \Wslam,\quad \Phi\in \Flam.\] Thus,
$\fAlam$ also acts on $\Flam$. This gives the isomorphism
$\cAlam\simeq \fAlam$ with $\rE(\partial_z,z)\mapsto \varepsilon(m)$
and $\Glam_q(\partial_z,z)\mapsto \Gamlam_q(m,\sS_m)$.

The homomorphism $\cAlam\to \fAlam$ can also be described as a mapping
$\pi(\partial_z,y,z) \mapsto \Thetlam_\pi(m,y,\sS_m)$ where the latter
will be defined in \eqref{eq:piflatpinat}, below.
\begin{prop}
  \label{prop:pinatzm}
  For every $\pi(x,y,z)\in \C[x,y,z,z^{-1}]$ there exists a
  $\pinat(m,y,z)\in \C[m,y,z,z^{-1}]$ such that
  \begin{equation}
    \label{eq:pinatzm}
    \pi(\partial_z,y,z) z^m = \pinat(m,y,z) z^m
  \end{equation}
\end{prop}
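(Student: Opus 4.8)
The plan is to expand $\pi$ as a polynomial in its first slot and then apply the resulting operator to $z^m$ monomial by monomial. Since $\pi(x,y,z)\in\C[x,y,z,z^{-1}]$ is polynomial in $x$, I would write
\[
  \pi(x,y,z) = \sum_{j=0}^d c_j(y,z)\,x^j,\qquad c_j\in\C[y,z,z^{-1}],
\]
so that, by the substitution convention of Section~\ref{sec:conventions} (all derivatives placed to the right of the coefficient functions),
\[
  \pi(\partial_z,y,z) = \sum_{j=0}^d c_j(y,z)\,\partial_z^j .
\]

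Next I would invoke the elementary identity $\partial_z^j z^m = F_j(m)\,z^{m-j}$, where $F_j$ is the generalized falling factorial of \eqref{eq:ffacdef}; for $j\ge 0$ this is immediate and $F_j(m)$ is a polynomial in $m$. Applying the operator termwise and factoring out $z^m$ then gives
\[
  \pi(\partial_z,y,z)\,z^m
  = \sum_{j=0}^d c_j(y,z)\,F_j(m)\,z^{m-j}
  = \Bigl(\sum_{j=0}^d c_j(y,z)\,F_j(m)\,z^{-j}\Bigr)\,z^m,
\]
so the proposition holds with $\pinat(m,y,z):=\sum_{j=0}^d c_j(y,z)\,F_j(m)\,z^{-j}$. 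Membership in $\C[m,y,z,z^{-1}]$ is then just a matter of inspection: each $c_j$ lies in $\C[y,z,z^{-1}]$, each $F_j(m)$ lies in $\C[m]$, and $z^{-j}\in\C[z,z^{-1}]$ since $j\ge0$. As a sanity check, for $\pi_1(x,y,z)=x-2yz$ this recovers $\pinat_1(n,y,z)=-2yz+nz^{-1}$, as in \eqref{eq:pinat1}.

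There is no real obstacle here; the only point requiring care is that the convention of Section~\ref{sec:conventions} be used correctly, namely that $\pi(\partial_z,y,z)$ carries all derivatives to the right of the monomial coefficients $c_j(y,z)$. It is precisely this placement that makes the termwise computation valid with no commutator corrections. Were the derivatives instead interleaved with the powers of $z$, one would still land in $\C[m,y,z,z^{-1}]$ by repeatedly applying $\partial_z^a\circ z^b=\sum_k\binom{a}{k}F_k(b)\,z^{b-k}\partial_z^{a-k}$ and then evaluating on $z^m$, but the stated convention renders this detour unnecessary. Finally, I would note that this construction is exactly the one that feeds into the definition of $\Thetlam_\pi$ to follow, which is why it is recorded here as a standalone statement.
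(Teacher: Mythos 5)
Your argument is correct and is essentially the paper's own proof: both expand $\pi$ in powers of its first slot, apply the identity $\partial_z^j z^m = F_j(m)\,z^{m-j}$ termwise, and read off $\pinat$ with coefficients in $\C[m,y,z,z^{-1}]$, relying on the substitution convention of Section~\ref{sec:conventions} to avoid any commutator corrections. No gaps; the sanity check against \eqref{eq:pinat1} is a nice touch but not needed.
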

\begin{proof}
  It suffices to observe that $\partial_z^i z^m = F_i(m
  )z^{m-i}$. Thus, the mapping $\pi\mapsto \pinat$ is described by
  \[ x^i\mapsto F_i(m)z^{-i},\quad y^j\mapsto y^j,\quad z^k\mapsto
    z^k,\] where $F_i$ is the falling factorial \eqref{eq:ffacdef}.
\end{proof}
\noindent
Thus, if $\pi(x,y,z) \in \C[x,y,z,z^{-1}]$ is weighted-homogeneous of
degree $q\in \Z$ with $d=\deg_y \pi(x,y,z)$, then it can be given as
\begin{equation}
  \label{eq:pijcoeff}
  \begin{aligned}
    \pi(x,y,z) &= y^{q/2}\sum_{k} \pinat_{k}(xz) y^{-k/2}z^{-k} =
    \sum_{j=0}^d \pinat_{q-2j}(xz) y^j z^{2j-q}
  \end{aligned}
\end{equation}
where $\pinat_{q-2j}(x),\; j=0,\ldots, d$ are polynomials.  In this
way,
\begin{equation}
  \label{eq:pinatyj}
  \pinat(m,y,z) = y^{q/2} \sum_{k} \pinat_{k}(m) y^{-k/2} z^{-k}
  = \sum_{j=0}^d \pinat_{q-2j}(m) y^j z^{2j-q},\quad k =q-2j.
\end{equation}

\begin{prop}
  \label{prop:pinatjgamlam}
  For a weighted-homogeneous $\pi(x,y,z)$ we have
  $\pi \in \cAlam \otimes \cP$ if and only if every $\pinat_{k}(m)$ is
  divisible by $\gamlam_{k}(m)$.
\end{prop}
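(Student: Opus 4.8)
The plan is to reduce the membership $\pi\in\cAlam\otimes\cP$ to a pointwise vanishing condition on the coefficients $\pinat_k(m)$, and then recognize that condition as divisibility by $\gamlam_k$. Since $\Wlam(z)=\lspan\{z^m:m\in\Jlam\}$ by \eqref{eq:Wlamdef}, the module $\Wlam\otimes\C[y]$ is free over $\C[y]$ on the monomials $z^m$, $m\in\Jlam$, and $\pi(\partial_z,y,z)$ is $\C[y]$-linear; hence $\pi\in\cAlam\otimes\cP$ if and only if $\pi(\partial_z,y,z)z^m\in\Wlam\otimes\C[y]$ for every $m\in\Jlam$. By Proposition \ref{prop:pinatzm} together with the weighted-homogeneous expansions \eqref{eq:pijcoeff} and \eqref{eq:pinatyj}, for $\pi$ of weighted degree $q$ one has
\[
  \pi(\partial_z,y,z)z^m \;=\; \pinat(m,y,z)\,z^m \;=\; \sum_k \pinat_k(m)\,y^{(q-k)/2}\,z^{m-k},
\]
a finite sum in which $\pinat_k\equiv 0$ unless $k\equiv q\pmod 2$, and in which the exponents $m-k$ are pairwise distinct as $k$ varies. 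Because these monomials are distinct, the displayed element lies in $\Wlam\otimes\C[y]$ exactly when, for each $k$, either $\pinat_k(m)=0$ or $m-k\in\Jlam$ (equivalently $z^{m-k}\in\Wlam$).

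Next I would translate the obstruction into the combinatorial sets $\cGlam_k$. For $m\in\Jlam$, the condition $m-k\notin\Jlam$ says exactly that $m-k\in\Mlam$, i.e. $m\in(\Mlam+k)\cap\Jlam=\cGlam_k$ by \eqref{eq:Glamqdef}; conversely every element of $\cGlam_k$ lies in $\Jlam$ and is shifted out of $\Jlam$ by $k$. Hence $\pi\in\cAlam\otimes\cP$ if and only if, for every $k$, the polynomial $\pinat_k$ vanishes at each point of the finite set $\cGlam_k$. By \eqref{eq:gamqdef} the polynomial $\gamlam_k(m)=\prod_{i\in\cGlam_k}(m-i)$ is monic with simple zeros precisely at $\cGlam_k$, so $\pinat_k$ vanishes on $\cGlam_k$ if and only if $\gamlam_k(m)\mid\pinat_k(m)$ in $\C[m]$; for the $k$ with $\pinat_k\equiv 0$ this divisibility is automatic, so the stated criterion is nonvacuous only for the finitely many $k$ actually appearing in $\pi$. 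Reading this equivalence over all $k$ yields both implications of the Proposition.

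I do not expect a genuine obstacle. The one point demanding care is the bookkeeping between the weighted grading \eqref{eq:xyzgrading}, the degree shift $z^m\mapsto z^{m-k}$ that the substitution $x\mapsto\partial_z$ forces on monomials, and the index conventions in the definitions of $\cGlam_k$ and $\gamlam_k$: one must verify that the obstruction set is $(\Mlam+k)\cap\Jlam$ rather than $(\Mlam-k)\cap\Jlam$, and that distinct values of $k$ genuinely contribute distinct powers of $z$ to $\pi(\partial_z,y,z)z^m$ --- this is what licenses imposing the divisibility condition on each $\pinat_k$ separately rather than only on their sum.
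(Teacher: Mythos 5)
Your proof is correct, and both directions are handled cleanly: the reduction to the action on the monomials $z^m$, $m\in\Jlam$ (using that $\Wlam\otimes\C[y]$ is free on these monomials and that distinct $k$ produce distinct powers $z^{m-k}$), the identification of the obstruction set with $\cGlam_k=(\Mlam+k)\cap\Jlam$, and the passage from vanishing on $\cGlam_k$ to divisibility by the simple-rooted $\gamlam_k$ are all sound. The route differs from the paper's in that you argue from first principles, whereas the paper treats the statement as a quick corollary of Proposition \ref{prop:hRgen}: writing $\pi(\partial_z,y,z)=y^{q/2}\sum_k \al_k(z\partial_z)\,\Glam_k(\partial_z,z)\,y^{-k/2}$ and reading off $\pinat_k(m)=\gamlam_k(m)\al_k(m)$ directly from \eqref{eq:Gkm}. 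In effect you have inlined (and made explicit in both directions) the monomial-action argument that underlies the proof of Proposition \ref{prop:hRgen}; the paper's version is shorter because it reuses that structural result, while yours is more self-contained and makes the equivalence, including the converse implication that divisibility implies membership in $\cAlam\otimes\cP$, completely explicit rather than implicit in the generator decomposition.
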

\begin{proof}
  By Proposition \ref{prop:hRgen} there exist polynomials
  $\al_k(m),\; k=q-2d,q-2d+2,\ldots, q$ such that
  \[ \pi(\partial_z,y,z) = y^{q/2}\sum_{k} \al_{k}(z\partial_z)
    \Glam_{k}(\partial_z,z) y^{-k/2} . \] Hence,
  \[ \pinat(m,y,z) = y^{q/2}\sum_k \al_{k}(m) \gamlam_{k}(m) y^{-k/2}
    z^{-k},\] and $\pinat_k(m) = \gamlam_k(m) \al_k(m)$.
\end{proof}

\begin{prop}
  \label{prop:Thetpi}
  For every $\pi \in \cAlam\otimes \cP$ we have
  \begin{equation}
    \label{eq:piTheta}
    \piflat(x,y,\partial_x) \Rlam_m(x,y) = \Thetlam_\pi(m,y,\sS_m)\Rlam_m(x,y).
  \end{equation}
  where  the difference operator
  $\Thetlam_\pi \in \fAlam\otimes \cP$ is given by
  \begin{equation}
    \label{eq:piflatpinat}
    \Thetlam_\pi(m,y,\sS_m) = y^{q/2}\sum_{k}
    \gamlam_{-k}(m) \al_{k}(m+k)   y^{-k/2}    \sS_m^{k} ,
  \end{equation}
  and where
  \begin{equation}
    \label{eq:alphakdef}
     \al_{k}(m) =\frac{\pinat_k(m)}{\gamlam_k(m)},\quad
    k=q-2d+q,q-2d+2,\ldots, q.
  \end{equation}
\end{prop}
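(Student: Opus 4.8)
The plan is to prove \eqref{eq:piTheta} by applying the operator $\pi(\partial_z,y,z)$ to the semi-stationary wave function in its Laurent form \eqref{eq:Psilamgf0} and comparing coefficients of $z^m$. By linearity of both sides in $\pi$ it suffices to treat a weighted-homogeneous $\pi$, of degree $q$ and $y$-degree $d$. Since $\piflat$ differentiates only in $x$, Theorem \ref{thm:flathom} gives
\[
  \pi(\partial_z,y,z)\Psilam(x,y,z)=\piflat(x,y,\partial_x)\Psilam(x,y,z)
  =\sum_{m\in\Jlam}\frac{\kaplam(m)}{(m+\ell)!}\bigl(\piflat(x,y,\partial_x)\Rlam_m(x,y)\bigr)z^m ,
\]
so the coefficient of $z^m$ on the left of \eqref{eq:piTheta} is $\dfrac{\kaplam(m)}{(m+\ell)!}\piflat\Rlam_m$. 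For the right side I would compute the same Laurent series directly: by Proposition \ref{prop:pinatzm} applied term by term,
\[
  \pi(\partial_z,y,z)\Psilam(x,y,z)=\sum_{m}\frac{\kaplam(m)}{(m+\ell)!}\,\Rlam_m(x,y)\,\pinat(m,y,z)\,z^m ,
\]
and inserting the expansion \eqref{eq:pinatyj} in the form $\pinat(m,y,z)z^m=\sum_k\pinat_k(m)\,y^{(q-k)/2}z^{m-k}$ (sum over $k=q-2j$, $j=0,\dots,d$), then re-indexing $m\mapsto m+k$ in each of the finitely many $k$-summands.

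After the shift this reads $\sum_k y^{(q-k)/2}\sum_m\dfrac{\kaplam(m+k)\,\pinat_k(m+k)}{(m+k+\ell)!}\,\Rlam_{m+k}(x,y)\,z^m$. The key algebraic input now is the Reindexing Lemma \ref{lem:muqmq}: substituting $m\mapsto m+k$ in \eqref{eq:kapgamF} gives $\kaplam(m+k)\gamlam_k(m+k)=\kaplam(m)\gamlam_{-k}(m)\,F_k(m+k+\ell)$, while $\pinat_k(m+k)=\gamlam_k(m+k)\,\al_k(m+k)$ by \eqref{eq:alphakdef}; combined with the elementary identity $F_k(m+k+\ell)/(m+k+\ell)!=1/(m+\ell)!$, immediate from the definition \eqref{eq:ffacdef} of the generalized falling factorial, the coefficient of $z^m$ collapses to
\[
  \frac{\kaplam(m)}{(m+\ell)!}\sum_k y^{(q-k)/2}\,\gamlam_{-k}(m)\,\al_k(m+k)\,\Rlam_{m+k}(x,y).
\]
Comparing this with the expression obtained from Theorem \ref{thm:flathom} and cancelling the factor $\kaplam(m)/(m+\ell)!$, which is nonzero precisely because $m\in\Jlam$, yields $\piflat\Rlam_m=\sum_k y^{(q-k)/2}\gamlam_{-k}(m)\al_k(m+k)\Rlam_{m+k}$, which is exactly $\Thetlam_\pi(m,y,\sS_m)\Rlam_m$ for $\Thetlam_\pi$ as in \eqref{eq:piflatpinat}. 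That $\Thetlam_\pi\in\fAlam\otimes\cP$ then follows because $\pi\in\cAlam\otimes\cP$ forces $\al_k=\pinat_k/\gamlam_k$ to be a polynomial by Proposition \ref{prop:pinatjgamlam}; consequently each summand equals $y^{(q-k)/2}\,\Gamlam_k(m,\sS_m)\circ\al_k(\varepsilon)$ in the notation of \eqref{eq:Gamqdef}, a product of generators of $\fAlam$ with a polynomial coefficient in $y$.

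The one genuinely fussy point I expect is the bookkeeping of summation ranges in the re-indexing step. After $m\mapsto m+k$ the sum nominally starts at $m=-\ell-k$, and for $k>0$ the extra terms with $-\ell-k\le m<-\ell$ must be shown to contribute nothing; this is exactly what the factor $F_k(m+k+\ell)$ supplies, since $F_k$ vanishes on $\{0,1,\dots,k-1\}$ --- equivalently $\pi(\partial_z,y,z)\Psilam$ has no powers of $z$ below $z^{-\ell}$ because $\Glam_k(\partial_z,z)z^{m'}=\gamlam_k(m')z^{m'-k}=0$ whenever $m'\in\Jlam$ and $m'-k<-\ell$ (then $m'-k\in\Mlam$, so $m'\in\cGlam_k$). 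For $k<0$ the analogous boundary is handled by $\gamlam_{-k}(m)=0$ for $m\in\Jlam$ with $m+k\notin\Jlam$, as noted after \eqref{eq:Gamqdef}, so $\Rlam_{m+k}$ is never evaluated outside $\Jlam$. Once these vanishing statements are in place the argument is a purely formal identification of two Laurent series, using nothing beyond \eqref{eq:Psilamgf0}, Propositions \ref{prop:pinatzm} and \ref{prop:pinatjgamlam}, Theorem \ref{thm:flathom}, and the Reindexing Lemma.
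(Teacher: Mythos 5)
Your proposal is correct and follows essentially the same route as the paper's proof: apply $\pi(\partial_z,y,z)$ to the generating function \eqref{eq:Psilamgf0}, invoke the intertwining identity \eqref{eq:piflat} from Theorem \ref{thm:flathom}, factor $\pinat_k(m)=\gamlam_k(m)\al_k(m)$ via Proposition \ref{prop:pinatjgamlam}, reindex $m\mapsto m+k$ using the Reindexing Lemma \eqref{eq:kapgamF} with the vanishing of $F_k$ disposing of the boundary terms, and match coefficients of $z^m$. Your extra remarks (cancelling the nonzero factor $\kaplam(m)/(m+\ell)!$, the $\gamlam_{-k}(m)=0$ safeguard for $k<0$, and membership of $\Thetlam_\pi$ in $\fAlam\otimes\cP$) only make explicit points the paper leaves implicit or states after its proof.
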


\begin{proof}[Proof of Proposition \ref{prop:Thetpi}]
  Using \eqref{eq:Psilamgf}, \eqref{eq:piflat} \eqref{eq:kapgamF},
  we have
  \begin{align*}
    \sum_{m=-\ell}^\infty
    &   \piflat(x,y,\partial_x)\Rlam_m(x,y)
      \frac{\kaplam(m)}{(m+\ell)!} z^m\\
    &= y^{q/2}\sum_{m=-\ell}^\infty  \sum_{k}\frac{\kaplam(m)}{(m+\ell)!}
      \gamlam_{k}(m)\al_{k}(m)    \Rlam_m(x,y) y^{-k/2}     z^{m-k},\\
    &= y^{q/2}\sum_{m=-\ell}^\infty      \sum_{k}
      \gamlam_{-k}(m)\al_{k}(m+k)
      \Rlam_{m+k}(x,y)
      \frac{\kaplam(m)}{(m+\ell)!} y^{-k/2}     z^{m},
  \end{align*}
  where the last step is justified by the fact that if $m-k<-\ell$,
  then $F_{k}(m+\ell) = 0$.
  Matching coefficients yields
  \eqref{eq:piflatpinat}.
\end{proof}
\noindent
Note that the coefficients of $\Thetlam_\pi$ are non-singular because,
by Proposition \ref{prop:pinatjgamlam}, $\al_{k}(m)$ are polynomial.
Also note that $\Thetlam_\pi$ is an endomorphism of $\cJslam$, because
$m\in \Jlam$ and $m+k\notin \Jlam$ is precisely the condition
$\gamlam_{-k}(m)=0$.

Let $\fSlam \subset \fAlam\otimes \cP$ be the commutative subalgebra
corresponding to the image of $\cSlam\subset \cAlam\otimes \cP$ under
the above isomorphism.  The elements of $\fSlam$ are precisely the
exceptional Jacobi operators.  To be more precise, let $q$ be a
critical degree of $\cRslam$ and $\siglam_q(x,y)\in \cRslam(x,y)$ a
corresponding weighted homogeneous, $x$-monic polynomial of degree
$q$.  Let $\pi_q=\sigsharp_q$, so that \eqref{eq:sigsharp1} holds.  We
will refer to the corresponding difference operator
$\Thetlam_q := \Thetlam_\pi$ as a $q\supth$ order exceptional Jacobi
operator\footnote{There isn't a unique $q$th order Jacobi operator,
  because one can modify $\sigma_q(x,y)$ by adding eigenvalues of
  lower degree.}.  We are now able to assert the following.
\begin{thm}
  \label{thm:sigqRlam}
  Let $q$ be a critical degree of $\cRslam$, and
  $\siglam_q, \Thetlam_q,\pi_q$  as above so that, by definition,
  \begin{equation}
    \label{eq:sigqRlam}
    \siglam_q(x,y) \Rlam_m(x,y) = \Thetlam_q(m,y,\sS_m) \Rlam_m(x,y).
  \end{equation}
  Then, $\deg_y \pi_q(x,y,z) \le q$ and
  \begin{equation}
    \label{eq:Thetlamqpinat}
    \begin{aligned}
      \Thetlam_q(m,y,z)
      &=(-2y)^q\pinat_q\lp m,(4y)^{-1}, z\rp 
    \end{aligned}
  \end{equation}
  where $\pinat_q$ is related to $\pi_q$ by \eqref{eq:pinatzm}.
  Explicitly,
  \begin{align}
    \label{eq:sigRlam}
    \sigma_q(x,y) \Rlam_m(x,y)
    &=y^{q/2} \sum_k (-2)^{-k}  \pinat_{q,-k}(m) y^{-k/2}
    \Rlam_{m+k}(x,y) 
  \end{align}
  where $\pinat_{q,k}(m),\; k=-q,-q+2,\ldots, q$ are the coefficients
  of $\pinat_q$ as per \eqref{eq:pinatyj}.
\end{thm}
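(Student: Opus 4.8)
The plan is to read the degree bound off Proposition~\ref{prop:pi*crit}, to produce the recurrence by expanding both sides of \eqref{eq:sigsharp1} as Laurent series in $z$ via \eqref{eq:Psilamgf} and Proposition~\ref{prop:pinatzm}, and then to reduce the asserted closed form to a single ``palindromy'' identity for the coefficients of $\pinat_q$ which encodes the fact that the eigenvalue $\siglam_q$ is $z$-free.

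First I would record that $\pi_q=\sigsharp_q$ is weighted-homogeneous of degree $q$ for the grading \eqref{eq:xyzgrading}. By \eqref{eq:Psimiwa}, \eqref{eq:Philamdef} and the degree-$N$ weighted-homogeneity of $\taulam$ and $\Philam$, the wave function $\Psilam(x,y,z)$ is weighted-homogeneous of degree $0$; since $\siglam_q(x,y)$ is weighted-homogeneous of degree $q$, relation \eqref{eq:sigsharp1} forces $\sigsharp_q(\partial_z,y,z)$ to raise weighted degree by $q$, so $\pi_q(x,y,z)$ has weighted degree $q$. To bound $\deg_y\pi_q$, I apply Proposition~\ref{prop:pi*crit}: $\hpi_q=\bH(y,z)\pi_q$ satisfies $\deg_z\hpi_q\le 0$, and as $\bH$ preserves weighted degree, every monomial $x^iy^jz^{-l}$ of $\hpi_q$ obeys $i+2j+l=q$ with $i,j,l\ge 0$, hence $i+j\le q$. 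Writing $\pi_q=\bH(-y,z)\hpi_q$ and using that $\bH(-y,z)$ replaces $x^i$ by $H_i(x-2yz,-y)$, whose $y$-degree is at most $i$, one gets $\deg_y\pi_q\le\max(i+j)\le q$.

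Next, by \eqref{eq:sigsharp1}, \eqref{eq:Psilamgf} and Proposition~\ref{prop:pinatzm},
\[ \sum_m\frac{\kaplam(m)}{(m+\ell)!}\,\siglam_q(x,y)\Rlam_m(x,y)\,z^m=\sum_m\frac{\kaplam(m)}{(m+\ell)!}\,\Rlam_m(x,y)\,\pinat_q(m,y,z)\,z^m . \]
Expanding $\pinat_q$ in the weighted-homogeneous form \eqref{eq:pinatyj}, reindexing ($m\to m+q-2j$) in each summand and matching coefficients of $z^m$ yields
\[ \siglam_q(x,y)\Rlam_m=\sum_{j=0}^{d}\frac{\kaplam(m+q-2j)\,(m+\ell)!}{\kaplam(m)\,(m+q-2j+\ell)!}\,\pinat_{q,q-2j}(m+q-2j)\,y^j\,\Rlam_{m+q-2j}, \]
which, by Proposition~\ref{prop:Thetpi}, is \eqref{eq:sigqRlam} with $\Thetlam_q:=\Thetlam_{\pi_q}$ (agreement with the $\al_k$-form \eqref{eq:piflatpinat} follows from the Reindexing Lemma~\ref{lem:muqmq}). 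Comparing these coefficients with those of the asserted \eqref{eq:sigRlam} and \eqref{eq:Thetlamqpinat} (and noting $k=q-2j\equiv q\bmod 2$, which reconciles the signs), the theorem becomes equivalent to the identity
\[ \frac{\kaplam(m+k)\,(m+\ell)!}{\kaplam(m)\,(m+k+\ell)!}\,\pinat_{q,k}(m+k)=(-2)^{-k}\,\pinat_{q,-k}(m),\qquad k=q-2j, \]
relating the $z^{-k}$- and $z^{k}$-coefficients of $\pinat_q$.

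This identity is the main obstacle, and is the only place where the hypothesis that $\siglam_q$ is an eigenvalue (i.e.\ $z$-free) really enters. I would prove it by the symmetry argument anticipated in the discussion above: multiplication by $\siglam_q(x,y)$ is symmetric for the inner product \eqref{eq:expx2ip}, so writing $\siglam_q\Rlam_m=\sum_k c_k(m,y)\Rlam_{m+k}$ and using Proposition~\ref{prop:Hlamortho} gives $c_k(m,y)\,\nulam_{m+k}(y)=c_{-k}(m+k,y)\,\nulam_m(y)$, and substituting the explicit norming constants \eqref{eq:nulamdef} turns this into exactly the displayed identity. This settles the theorem for $\lambda$ even and $y<0$; for general $\lambda$ one replaces the inner-product computation by the purely algebraic operator identity
\[ \pi_q(\partial_z,y,z)\circ\Klam(\partial_z,y,z)=\Klam(\partial_z,y,z)\circ\bigl(\bH(-y,z)\siglam_q\bigr)(\partial_z,y,z), \]
obtained by commuting the $x$-multiplication operator $\siglam_q$ past the $z$-operator $\Klam(\partial_z,y,z)$ in $\Klam(\partial_z,y,z)\e^{xz+yz^2}=\taulam\Psilam$ and invoking Lemma~\ref{lem:yz2}, and then evaluating both sides on $z^m$, using that $\Klam(\partial_z,y,z)$ maps $\C[z]$ into $\Wlam\otimes\C[y]$ and Proposition~\ref{prop:pinatzm}. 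The delicate point throughout is to organize this bookkeeping so that the $\kaplam$- and factorial-ratios cancel precisely against the powers of $-2$ and $4$.
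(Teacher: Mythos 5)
Your main argument is essentially the paper's own proof: the degree bound is Lemma~\ref{lem:degpigam} (via Proposition~\ref{prop:pi*crit}), the generating-function expansion and reindexing reproduce Proposition~\ref{prop:Thetpi}, and your ``palindromy'' identity is, after cancelling factorials with the Reindexing Lemma~\ref{lem:muqmq}, exactly the symmetry relation \eqref{eq:alphasym}, which you then prove by the same device the paper uses, namely symmetry of multiplication by $\siglam_q$ combined with \eqref{eq:Rlamorth} and the norming constants \eqref{eq:nulamdef}.

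The one place you diverge is the closing remark about general $\lambda$. You are right that the inner-product step rests on Proposition~\ref{prop:Hlamortho}, which is stated for even partitions and $y<0$ (the paper applies it without comment), so the caution is warranted; but your proposed algebraic substitute is not yet a proof. The intertwining identity $\pi_q(\partial_z,y,z)\circ\Klam(\partial_z,y,z)=\Klam(\partial_z,y,z)\circ\bigl(\bH(-y,z)\siglam_q\bigr)(\partial_z,y,z)$ is correct (it follows from \eqref{eq:sigsharp1}, \eqref{eq:KlamPsi} and Lemma~\ref{lem:yz2}), but evaluating it on monomials $z^m$ only produces relations among the matrix coefficients of $\Klam(\partial_z,y,z)$ together with the recurrence in its $\al$-form \eqref{eq:piflatpinat}; it does not by itself yield $\al_{q,-k}(m)=(-2)^k\al_{q,k}(m+k)$, which is precisely the content separating the theorem from Proposition~\ref{prop:Thetpi}. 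As written, then, your proof covers the same ground as the paper's, with the same implicit hypotheses, and the general-$\lambda$ paragraph should be regarded as a suggestion still to be worked out rather than a completed alternative.
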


\begin{lem}
  \label{lem:degpigam}
  Let $d=\deg_y\pi_q(x,y,z)$ and let $\pinat_{q,k}(m)$ be the
  coefficients of $\pinat_q(m,y,z)$ as per \eqref{eq:pinatyj}. Then,
  necessarily $d\le q$ and $\deg\pinat_{q,k}(m) \leq (q+k)/2$ for all
  $k$.
\end{lem}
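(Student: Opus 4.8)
The plan is to extract the monomial structure of the operator $\pi_q=\sigsharp_q$ from Proposition~\ref{prop:pi*crit}, and then to transport the resulting degree bounds to $\pinat_q$ through the substitution rule of Proposition~\ref{prop:pinatzm}. First I would record that $\pi_q$ may be taken weighted-homogeneous of degree $q$ for the grading $\deg x=1,\ \deg y=2,\ \deg z=-1$: since $\siglam_q$ is weighted-homogeneous of this degree and $\Psilam$ is weighted-homogeneous of degree $0$ by \eqref{eq:Psimiwa} (both $\Philam$ and $\taulam$ having degree $N$), every weighted-homogeneous component of $\pi_q$ of degree $\neq q$ annihilates $\Psilam$ and hence vanishes by the injectivity asserted in Theorem~\ref{thm:flathom}. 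Thus every monomial $x^{a}y^{b}z^{c}$ occurring in $\pi_q$ satisfies $a+2b-c=q$.

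The heart of the argument is the claim that every such monomial moreover satisfies $b\ge c$. Call a Laurent polynomial in $\C[x,y,z,z^{-1}]$ \emph{balanced} if each of its monomials has $\deg_y\ge\deg_z$. By Proposition~\ref{prop:pi*crit} the Laurent polynomial $\hpi_q:=\bH(y,z)\pi_q$ has $\deg_z\le 0$, and since its $y$-exponents are nonnegative it is balanced. Next, $\bH(\pm y,z)$ preserves balancedness: by \eqref{eq:HBell} and the binomial theorem it sends $x^{a}$ to a $\C$-combination of monomials $x^{a-2i-p}y^{i+p}z^{p}$ with $i,p\ge 0$, each of which has $\deg_y-\deg_z=i\ge 0$, and the property ``$\deg_y\ge\deg_z$'' is stable under multiplication by a monomial enjoying it. Since $\pi_q=\bH(-y,z)\hpi_q$ by \eqref{eq:bHgroup}, it follows that $\pi_q$ is balanced. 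Combining this with the homogeneity relation of the first paragraph, every monomial of $\pi_q$ satisfies $q=a+2b-c\ge 2b-c\ge b$, and therefore $d=\deg_y\pi_q\le q$.

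It remains to translate to $\pinat_q$. By Proposition~\ref{prop:pinatzm}, together with the definition \eqref{eq:ffacdef} of the falling factorial, $\pinat_q(m,y,z)$ is obtained from $\pi_q$ by the replacement $x^{a}\mapsto F_{a}(m)\,z^{-a}$ (leaving $y$ and $z$ fixed), where $F_{a}(m)$ has degree $a$ in $m$. Hence the coefficient $\pinat_{q,k}(m)$ of $y^{(q-k)/2}z^{-k}$ in $\pinat_q(m,y,z)$, in the notation of \eqref{eq:pinatyj}, is a finite sum of terms $p_{abc}F_{a}(m)$ over those monomials $x^{a}y^{b}z^{c}$ of $\pi_q$ with $b=(q-k)/2$ and $c=a-k$. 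For such a monomial, balancedness gives $b\ge c$, i.e.\ $(q-k)/2\ge a-k$, i.e.\ $a\le(q+k)/2$; hence $\deg_m\pinat_{q,k}(m)\le(q+k)/2$, as desired.

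The only step that is not pure exponent bookkeeping is the verification that $\bH(\pm y,z)$ preserves balancedness, which relies on the explicit expansion \eqref{eq:HBell} and the binomial theorem; this is the main technical obstacle. A minor point one should also check is that $\cAlam\otimes\cP$ is a graded algebra for the grading above — so that $\pi_q$ decomposes into weighted-homogeneous pieces — which is immediate from Proposition~\ref{prop:hRgen} since the generators $\E$ and $\Glam_k$ are homogeneous of degrees $0$ and $k$ respectively.
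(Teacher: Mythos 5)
Your proof is correct and takes essentially the same route as the paper's: both apply Proposition~\ref{prop:pi*crit} to get $\deg_z\hpi_q\le 0$ and then push that constraint back through $\bH(-y,z)$ using the explicit expansion of $H_i(x-2yz,-y)$ from \eqref{eq:HBell}, with weighted homogeneity of degree $q$ converting the resulting monomial condition into the bounds $\deg\pinat_{q,k}\le (q+k)/2$ and $d\le q$. Your ``balanced'' invariant is just a cleaner packaging of the paper's exponent bookkeeping, and your first paragraph makes explicit the weighted-homogeneity of $\pi_q$ that the paper assumes from its setup.
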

\begin{proof}
  By assumption, $\pi_q \in \cAslam\otimes \cP$.  Let
  $\hpi_q(x,y,z)= \bH(y,z) \pi_q(x,y,z)$.
  Let $\hpi_{ij}$ and $\pi_{ij}$ denote the corresponding
  coefficients so that
  \[
    \pi_q(x,y,z) = \sum_{i,j\ge 0} \pi_{ij} x^iy^j z^{i+2j-q}=
    \sum_{i,j\ge 0} \hpi_{ij} y^j z^{i+2j-q} H_i(x-2yz,-y).
  \]
  By Proposition \ref{prop:pi*crit}, $\deg_z \hpi_q(x,y,z)\le
  0$. Hence, there are no terms of positive $z$-degree in the last sum.
  Hence, $i+2j-q\geq j$ for all non-zero terms in the first sum.
  Hence, by \eqref{eq:pijcoeff},
  \[ \deg \pinat_{q,k}-k \leq (q-k)/2 \]
  for all $k\in \{ q-2d,q-2d+2,\ldots, q\}$.   In particular $2d-q
  \leq d$, which means that $d\leq q$.
\end{proof}

As was mentioned earlier, the symmetry of the exceptional Jacobi
operator imposes a certain relation between the coefficients
$\pinat_{k}(m),\; k=-q,-q+2,\ldots,q$ and the exceptional norming
constants $\nulam_m(y)$, as defined in \eqref{eq:nulamdef}.

\begin{lem}
  Let $q,\pi_q$ be as above, and let $\al_{q,k}(m)$ be as
  per \eqref{eq:alphakdef}.  Then,
  \begin{equation}
    \label{eq:alphasym}
    \al_{q,-k}(m) = (-2)^k\al_{q,k}(m+k),\quad k=-q,-q+2,\ldots, q.
  \end{equation}
\end{lem}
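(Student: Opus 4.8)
The plan is to read off \eqref{eq:alphasym} from the symmetry of the exceptional Jacobi operator. Under the standing regularity hypotheses of this section (so that, for $y<0$, the form $\la\cdot,\cdot\ra_H$ of \eqref{eq:expx2ip} is a genuine inner product and \eqref{eq:Rlamorth} is available), multiplication by the real polynomial $\sigma_q(x,y)$ is a symmetric operator on the span of the $\Rlam_m(x,y)$. I will expand $\sigma_q\Rlam_m$ in the orthogonal basis, pair it against a single basis element, and use symmetry to obtain a relation between $\al_{q,k}(m+k)$ and $\al_{q,-k}(m)$ in which only ratios of $\gamma$- and $\nu$-data survive; the Reindexing Lemma~\ref{lem:muqmq} together with the explicit norming constants \eqref{eq:nulamdef} will then collapse that prefactor to $(-2)^k$.

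First I would record, from \eqref{eq:sigqRlam} and \eqref{eq:piflatpinat} applied to $\pi=\pi_q$, the expansion
\[ \sigma_q(x,y)\,\Rlam_m(x,y) \;=\; y^{q/2}\sum_k \gamlam_{-k}(m)\,\al_{q,k}(m+k)\,y^{-k/2}\,\Rlam_{m+k}(x,y),\qquad m\in\Jlam. \]
Pairing with $\Rlam_{m'}$ and using \eqref{eq:Rlamorth}, only $k=m'-m$ contributes, so $\la\sigma_q\Rlam_m,\Rlam_{m'}\ra_H = y^{q/2}\gamlam_{-(m'-m)}(m)\,\al_{q,m'-m}(m')\,y^{-(m'-m)/2}\,\nulam_{m'}(y)$; by symmetry this equals $\la\Rlam_m,\sigma_q\Rlam_{m'}\ra_H$, which the same computation with the roles of $m,m'$ exchanged evaluates to $y^{q/2}\gamlam_{-(m-m')}(m')\,\al_{q,m-m'}(m)\,y^{-(m-m')/2}\,\nulam_{m}(y)$. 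Writing $k=m'-m$ and using that neither $\gamlam_{-k}(m)$ nor $\gamlam_{k}(m+k)$ vanishes when $m,m+k\in\Jlam$ (since $\gamlam_r$ vanishes precisely on $\cGlam_r=(\Mlam+r)\cap\Jlam$ by \eqref{eq:Glamqdef}), I may solve for
\[ \al_{q,-k}(m) \;=\; \frac{\gamlam_{-k}(m)}{\gamlam_{k}(m+k)}\cdot\frac{\nulam_{m+k}(y)}{\nulam_{m}(y)}\cdot y^{-k}\cdot\al_{q,k}(m+k). \]

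Then I would simplify the scalar prefactor. Applying \eqref{eq:kapgamF} with $m$ replaced by $m+k$ gives $\gamlam_{-k}(m)/\gamlam_{k}(m+k)=\frac{\kaplam(m+k)}{\kaplam(m)\,F_k(m+k+\ell)}$, while \eqref{eq:nulamdef} gives $\nulam_{m+k}(y)/\nulam_m(y)=(-2y)^k\,\frac{(m+k+\ell)!}{(m+\ell)!}\cdot\frac{\kaplam(m)}{\kaplam(m+k)}$. Since $F_k(m+k+\ell)=(m+k+\ell)!/(m+\ell)!$ by \eqref{eq:ffacdef}, the $\kaplam$-factors and the factorials cancel completely, leaving $(-2y)^k\,y^{-k}=(-2)^k$, i.e. $\al_{q,-k}(m)=(-2)^k\al_{q,k}(m+k)$. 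Since this holds for all sufficiently large $m\in\Jlam$ and both sides are polynomials in $m$ (by \eqref{eq:alphakdef} and Proposition~\ref{prop:pinatjgamlam}), it holds identically, proving \eqref{eq:alphasym}.

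The substance of the argument is not difficult once symmetry is in hand; the main thing to get right is the bookkeeping of the two opposite index shifts and of the $\gamma$- and $\nu$-ratios, so that the cancellation to the pure constant $(-2)^k$ is exact. The only external input is the orthogonality \eqref{eq:Rlamorth} together with the closed form \eqref{eq:nulamdef} for its norming constants, both of which are in force throughout this section; for $\lambda$ outside the regularity regime the same computation applies verbatim to the formal Hermite pairing underlying \eqref{eq:Rlamorth}, since the identity \eqref{eq:alphasym} itself involves neither $y$ nor that regularity.
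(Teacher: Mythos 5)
Your proposal is correct and follows essentially the same route as the paper's own proof: expand $\sigma_q\Rlam_m$ in the orthogonal family via \eqref{eq:piflatpinat}, pair against a single $\Rlam_n$ using \eqref{eq:Rlamorth}, invoke the symmetry of multiplication by $\sigma_q$ relative to $\la\cdot,\cdot\ra_H$, and collapse the resulting ratio of $\gamlam$- and $\nulam$-factors to $(-2)^k$ by the Reindexing Lemma \eqref{eq:kapgamF} together with \eqref{eq:nulamdef}. Your explicit remarks on the non-vanishing of $\gamlam_{-k}(m)$, $\gamlam_k(m+k)$ and on passing from infinitely many $m\in\Jlam$ to a polynomial identity are minor refinements that the paper leaves implicit.
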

\begin{proof}
  By \eqref{eq:piflatpinat} and \eqref{eq:kapgamF}, for $m\in \Jlam$, we have
  \[ \sigma_q(x,y) \Rlam_m(x,y) = y^{q/2}\sum_{k}
    \gamlam_{-k}(m)\al_{k}(m+k) y^{-k/2} \Rlam_{m+k}(x,y).\] Hence,
  setting $n=m+k$ and using \eqref{eq:Rlamorth}, we have
  \begin{align*}
    \la \sigma_q \Rlam_m , \Rlam_{n}\ra_H
    &=
      \gamlam_{-k}(m) \al_{q,k}(m+k)  y^{(q-k)/2}
      \nulam_{n}(y)\\ 
    &=
      \gamlam_{m-n}(m) \al_{q,n-m}(n)  y^{(q+m-n)/2}
      \frac{(n+\ell)!}{\kaplam(n)} (-2y)^{n}
      \nulam_{0}(y)
  \end{align*}
  Since multiplication by $\sigma_q$ is a symmetric operator,
  the above expression is symmetric in  $m,n$. Hence,
  \begin{gather*}
    \gamlam_{m-n}(m) \al_{q,n-m}(m) \frac{(n+\ell)!}{\kaplam(n)}
    (-2)^{n} = \gamlam_{n-m}(n) \al_{q,m-n}(m)
    \frac{(m+\ell)!}{\kaplam(m)} (-2)^{m}\\
    \al_{q,-k}(m) = (-2)^k\al_{q,k}(m+k)
    \frac{\kaplam(m)\gamlam_{-k}(m)}{\kaplam(n)\gamlam_k(m+k)}F_k(m+k+\ell)
    \end{gather*}
    The desired relation now follows by \eqref{eq:kapgamF}.
\end{proof}

\begin{proof}[Proof of Theorem \ref{thm:sigqRlam}]
  By \eqref{eq:piflatpinat} and \eqref{eq:alphasym}, for $m\in \Jlam$
  we have
  \begin{equation}
    \label{eq:sigRgambet}
    \begin{aligned}
      \sigma_q(x,y) \Rlam_m(x,y)
      &= y^{q/2} \sum_{k}
      \gamlam_{-k}(m)\al_{q,k}(m+k) y^{-k/2} R_{m+k}(x,y) \\
      &= y^{q/2} \sum_k (-2)^{-k}\gamlam_{-k}(m) \al_{q,-k}(m) y^{-k/2}
      R_{m+k}(x,y). 
    \end{aligned}
  \end{equation}
  where the sum is over $k= -q,-q+2,\ldots, q$.
  On the other hand,
  \begin{align*}
    (-2y)^q\pinat_q(m,(4y)^{-1}, z)
    &= (-2 y)^q \sum_k    \pinat_{q,k}(m) (4y)^{(k-q)/2} z^{-k} \\
    &=  y^{q/2}\sum_k  (-2)^{-k}  \gamlam_{-k}(m) \al_{q,-k}(m)y^{-k/2} z^{k} 
  \end{align*}
  A direct comparison of the last line and of \eqref{eq:sigRgambet}
  establishes \eqref{eq:sigqRlam}.
\end{proof}

\section{Algorithms and examples}
\label{sect:algex}


\subsection{Intertwiners}
Recall that, by \eqref{eq:Rlamm} and \eqref{eq:AlamWr},
\[ \Rlam_m(x,y) = \frac{\Hlam_{m+N}(x,y)}{\taulam(x,y)} ,\quad m\in
  \Jlam \] may be given in terms of a Wronskian as
\[ \kaplam(m)\Rlam_m(x,y) = \Kslam(x,y,\partial_x)
  H_{m+\ell}(x,y),\quad m\in \Ilam.\] Theorem \ref{thm:Hlamlincomb}
exhibits a constructive procedure for giving exceptional Hermite
polynomials as linear combinations of classical polynomials by using
the dual intertwiner $\Klam(\partial_z,y,z)$.  Let us illustrate the
calculations with an example.

\begin{example}
  \label{ex:L22}
Consider the partition $\lambda = (2,2,0,\ldots)$.  Correspondingly,
$\ell=2,\;N=4$, and 
\[ \Mlam = \{1,0,-3,-4,\ldots \},\quad \Ilam = \{ 2,3,6,7,8,\ldots \},
  \quad \cKlam(\lambda) = \{2,3\}.\] Using \eqref{eq:Slamwronsk},
\eqref{eq:HBell}, and \eqref{eq:Philamdef}, we have
\begin{align*}
  \Slam(t_1,t_2,\ldots)
  &= \frac{t_1^4}{12} + t_2^2 - t_1 t_3\\
  \Philam(x,y,z)
  &=\lp x-z^{-1}\rp^4 + 12 \lp y-\frac12z^{-2}\rp   - \lp
    x-z^{-1}\rp \lp-\frac13 z^{-3}\rp\\
  &=x^4+12y^2 - 4x^3z^{-1} + (6x^2-12y)z^{-2},\\
  \taulam(x,y)
  &=  x^4 + 12 y^2 .
\end{align*}
Applying \eqref{eq:Klamdef} gives
\begin{align*}
  \Klam(\partial_z,y,z) &= \partial_z^4 - \lp 8 y z + 4z^{-1}\rp
                          \partial_z^3 +\lp 24 y^2 z^2 +12 y +
                          6z^{-2}\rp \partial_z^2 \\
                        &\qquad - 
                          32 y^3 z^3 \partial_z + \lp 16 y^4 z^4- 16 y^3 z^2 -
                          24yz^{-2}\rp.
\end{align*}
Applying \eqref{eq:Anatdef} gives
\begin{align*}
  \kaplamn{N}(n) &= n(n-1)(n-4)(n-5)\\
  \upslam_1(n)&=- 4 (2n-3)\\
  \upslam_2(n)&=24(n-2)(n-3)\\
  \upslam_3(n)&=-16 (n-2)(n-3)(2n-11)\\
  \upslam_4(n)&=16(n-2)(n-3)(n-6)(n-7)
\end{align*}
The corresponding exceptional polynomials
\[ \Hlam_n =\frac{\Wr[H_2,H_3,H_{n-2}]}{(n-4)(n-5)} ,\quad n\in
  \Ilam,\]
may therefore be given as
\begin{align*}
 \Hlam_n(x,y)
  &= H_n(x,y) - 4 (2n-3) y H_{n-2}(x,y)
  +24(n-2)(n-3)y^2H_{n-4}(x,y)\\
  &\qquad -16 (n-2)(n-3)(2n-11)y^3 H_{n-6}(x,y)\\
  &\qquad +16(n-2)(n-3)(n-6)(n-7)y^4 H_{n-8}(x,y),\quad n\in \Ilam.
\end{align*}

\end{example}

\begin{example}
  \label{ex:L21}
Next, consider the partition
$\lambda=(2,1,0,\ldots)$. Correspondingly,
\begin{equation}
  \label{eq:Mlam21}
  \Mlam = \{ \ldots, -4,-3,-1,1 \},\quad \Ilam = \{ 1,3,5,6,7,\ldots
  \},\quad \Klam = \{ 1,3 \}.
\end{equation}
By \eqref{eq:Hlamdef}, the corresponding exceptional polynomials are
\[ \Hlam_n = \frac{\Wr[H_1,H_3,H_{n-1}]}{2
    (n-2)(n-4)},\quad n\in \Ilam.\]
Using the same formulas as above, we have
\begin{equation}
  \label{eq:L21SPhitau}
  \begin{aligned}
    \Slam(t_1,t_2,\ldots)
    &= \frac{t_1^3}{3} - t_3\\
    \Philam(x,y,z)
    &= x^3 - 3 x^2z^{-1} + 3xz^{-2}\\
    \taulam(x,y) &= x^3.
  \end{aligned}
\end{equation}
The $\tau$-function of this example is degenerate because it
corresponds to a solution of KdV; the corresponding $\Wlam$ is a
stationary point of the second KP flow.  Applying \eqref{eq:Klamdef}
gives
\[ \Klam(x,y,z) = x^3 - \lp 6yz + 3z^{-1}\rp x^2 + \lp 12y^2 z^2 +
  6y+3z^{-2}\rp x - 8y^3 z^3\]
Note that since $\Wslam$ is stationary, we have
\[ \Kslam(x,y,z) = z^{-1}\Klam(z,0,x) = z^2 - 3x^{-1} z +  3 x^{-2} \]
Applying \eqref{eq:Anatdef} and \eqref{eq:HlamAnat} gives 
\[\Hlam_n = H_n+ 6yH_{n-2}-12(n-1)(n-3)y^2 H_{n-4} + 8 (n-1)(n-3)(n-5)
  y^3 H_{n-6}.\]
\end{example}


\subsection{Lowering operators}
In this section, we collect some calculations related to Theorem
\ref{thm:Lqgamq}.
\begin{continueexample}{ex:L21}
  Recall that $\lambda = (2,1,0,\ldots)$ with the corresponding Maya
  diagram given in \eqref{eq:Mlam21}.  We will use Proposition
  \ref{prop:Rcore} to determine the critical degrees of $\cRlam$.  The
  index set for $\Rlam_m(x,y),\; m\in \Jlam$ is
\[ \Jlam = \{ -2,0,2,3,4,5,6,\ldots \}.\]
\[
  \Rlam_{-2} = x^{-2},\quad
  \Rlam_0 = 1-6x^{-2}y,\quad
  \Rlam_2 = x^2-4y+12x^{-2}y^2,\quad
  \Rlam_3 = x^3, \;\ldots
\]
  \begin{figure}[ht]
  \centering
  \begin{tikzpicture}[scale=0.5]

    \fill  (-2.5,7.5) 
    \ncirc0;
    \fill[red]  (-2.5,7.5)   \ncirc2\ncirc2;

    \fill  (-2.5,6.5)
    \ncirc0\ncirc1 \ncirc2\ncirc2;

    \fill  (-2.5,5.5)  \ncirc0\ncirc1 ;
    \fill[red]  (-2.5,5.5)   \ncirc2\ncirc2\ncirc2;

    \fill  (-2.5,4.5)    \ncirc0\ncirc1
    \ncirc2\ncirc2\ncirc2 ;
    \fill[red]  (-2.5,4.5) \ncirc2\ncirc5;

    \fill  (-2.5,3.5)  \ncirc0 \ncirc1\ncirc2 ;
    \fill[red]  (-2.5,3.5)   \ncirc2\ncirc2\ncirc2\ncirc2;

    \fill  (-2.5,2.5)    \ncirc0 \ncirc1\ncirc2 \ncirc2 ;
    \fill[red]  (-2.5,2.5) \ncirc2\ncirc2\ncirc3\ncirc2;

    \fill  (-2.5,1.5) \ncirc0 \ncirc1\ncirc2 \ncirc2 ;
    \fill[red]  (-2.5,1.5) \ncirc2\ncirc2\ncirc2\ncirc2\ncirc2;

    \fill  (-2.5,0.5)  \ncirc0 \ncirc1\ncirc2 \ncirc2 ;
    \fill[red]  (-2.5,0.5) \ncirc2\ncirc2\ncirc2\ncirc1\ncirc2\ncirc2;

    \path (10.5,7.5) node[anchor=west] {$\Mlam-1$};
    \path (10.5,6.5) node[anchor=west] {$\Mlam$};
    \path (10.5,5.5) node[anchor=west] {$\Mlam+1$};
    \path (10.5,4.5) node[anchor=west] {$\Mlam+2$};
    \path (10.5,3.5) node[anchor=west] {$\Mlam+3$};
    \path (10.5,2.5) node[anchor=west] {$\Mlam+4$};
    \path (10.5,1.5) node[anchor=west] {$\Mlam+5$};
    \path (10.5,0.5) node[anchor=west] {$\Mlam+6$};

    \foreach \y in {0.5,...,7.5} \fill (-4.5,\y) \ncirc0\ncirc1;

    \draw  (-5,0) grid +(14 ,8);
    \draw[line width=2pt] (-3,0) -- ++ (0,8);

  \foreach \x in {-6,...,7} \draw (\x+1.5,-0.5)  node {$\x$};

\end{tikzpicture}
\label{tab:L21}
\caption{Translates of $\Mlam$ where $\lambda = (2,1,0,\ldots)$.}
\end{figure}

\noindent
By inspection of Table \ref{tab:L21}, $\cDlam=\{2,4,5,6,\ldots\}$,
which means that the ring of lowering operators $\cSslam$ is generated
by $L_2$ and $L_5$.

Applying \eqref{eq:LqWronsk} gives
\begin{align*}
  L_2 &=   \partial_{x}^2-6x^{-2}\\
  L_5 &= \partial_{x}^5 - 15 x^{-2} \partial_{x}^3 + 45 x^{-3}
        \partial_{x}^2 - 45 x^{-4} \partial_x
\end{align*}
Because $\Wlam$ is stationary under the 2nd KP flow, the lowering
operators are independent of $y$.  The corresponding lowering
relations are:
\begin{align*}
  L_2 \Rlam_m &= (m+2)(m-3) \Rlam_{m-2},& m\in \Jlam\\
  L_5 \Rlam_m &= (m+2)m(m-2)(m-4)(m-6) \Rlam_{m-5} & m\in \Jlam
\end{align*}
Note that the above relations are sensible, because the polynomial
$\gamlam_q(m)$ on the RHS annihilates precisely those indices
$m\in \Jlam$ for which $m-q\notin \Jlam$.
\end{continueexample}


\subsection{Critical degrees and recurrence relations.}
The explicit construction of an exceptional recurrence relation
\eqref{eq:sigqRlam} requires knowledge of the critical degrees $q$ of
$\cRslam$. For each such $q\in \cDslam$, one also requires the
eigenvalue $\sigma_q (x,y)$ and the sequence of polynomials
$\pinat_{q,k}(m),\; k=-q,-q+2,\ldots, q$, which serve as the
coefficients of the recurrence relation.  From an algorithmic
standpoint, the determination of $q, \sigma_q$ and the
$\pinat_{q,k}(m)$ is a combined calculation.  By Proposition
\ref{prop:pinatjgamlam},
\begin{equation}
  \label{eq:betadef}
  \pinat_{q,k}(m) = \gamlam_k(m)\alq_{k}(m),\quad k=-q,-q+2,\ldots, q.
\end{equation}
where the $\alq_k(m)$ are polynomials, with $\gamlam_k(m)$ fixed as
per \eqref{eq:gamqdef}. Thus, for a given $q \geq 1$, one has to
consider a certain homogeneous linear system whose unknowns are the
$q+1$ polynomials $\alq_k(m)$.  If the system has a non-trivial
solution, then the corresponding $q$ is a critical degree. One can
extract the eigenvalue, and the coefficients of the recurrence
relation from the corresponding solution.

By Lemma \ref{lem:degpigam}, $\deg \pinat_{q,k}(m)\le (q+k)/2$.
Hence, by \eqref{eq:gamqdef},
\begin{equation}
  \label{eq:bqkdef}
  \deg \alq_k(m) \le \bq_{k} := \frac12(q+k)-g_k,
\end{equation}
where $g_k = \glam_k= \deg \gamma_k(m)$ for notational convenience.
In other words, $\bq_{k}$ is an upper bound for the degrees of freedom
inherent in the choice of $\pinat_{q,k}(m)$.

We represent the level $q$ variables using the truncated list
\[ \balphaq = ( \alq_{-q}(m),\alq_{-q+2}(m),\ldots, \alq_q(m) ) \]
and set
\begin{equation}
  \label{eq:pinatbeta}
  \begin{aligned}
    \pinat(x,y,z;\balphaq) &= y^{q/2}\sum_{k\in \Z}\gamlam_{k}(m) \alq_{k}(m)
    y^{-k/2} z^{-k}.
  \end{aligned}
\end{equation}
As per Proposition \ref{prop:pinatzm}, let $\pi(x,y,z;\balphaq)$ be
such that
\begin{equation}
\label{eq:pipinatbeta}
\pi(\partial_z,y,z;\balpha) z^m = \pinat(m,y,z;\balpha) z^m.
\end{equation}
Set
\begin{align}
  \label{eq:hpibeta}
  \hpi(x,y,z;\balphaq)
  := \bH(y,z)\pi(x,y,z;\balphaq),\\ \nonumber
  =y^{q/2} \sum_{ik} \hpi_{ik}(\balphaq)x^iy^{-k/2} z^{i-k}\\
  \label{eq:sigbeta}
  \sigma(x,y;\balphaq) := y^{q/2} \sum_k \hpi_{kk}(\balphaq)x^iy^{-k/2} 
\end{align}
By Proposition \ref{prop:pi*crit},
$\pi(\partial_z,y,z;\balpha) \in \cSlam$ if and only if
\begin{equation}
  \label{eq:hpibeq}
  \hpi_{ik}(\balphaq) = 0,\quad  i>k 
\end{equation}
If that is the case, then
\[ \piflat(x,y,z;\balphaq) = \sigma(x,y;\balphaq).\] Thus,
\eqref{eq:hpibeq} constitutes the linear system for the recurrence
relations.

Let us write
\begin{equation}
  \label{eq:beqdef}
  \alq_{k}(m)  = \sum_{a=0}^{\bq_k} \al_{ka} m^{a},\quad k=-q,-q+2,\ldots, q
\end{equation}
where $\al_{ka}$ are lexicographically ordered indeterminates. This
means that $\al_{k_1a_1} \preceq \al_{k_2a_2}$ if and only if
$k_1<k_2$, or if $k_1=k_2$ and $a_1 \le a_2$.  Let us also say that
$\al_{k_1a_1}$ and $\al_{k_2 a_2}$ have the same parity if
$k_1\equiv k_2 (\bmod 2)$.  One can show that
\begin{equation}
  \label{eq:piijpivot}
  \hpi_{a+g_k,k}(\balphaq) = \al_{ka} + \ldots,\quad 
\end{equation}
where the $\ldots$ indicates terms of higher lexicographic order and
equal parity. Thus, the system \eqref{eq:hpibeq} is quasi-triangular,
because $\al_{ka}$ can be eliminated provided $a+g_k>k$.

Also, by \eqref{eq:alphasym} we have
\begin{equation}
  \label{eq:alphasym2}
  \al_{-ka} = (-2)^k\lp \al_{ka} + \sum_{i=a+1}^{\bq_k}
  \binom{i}{a} \al_{ki}  \rp,\quad a=0,\ldots, \bq_k.
\end{equation}
Thus, for $k<0$, the row-reduction may be improved by employing the
universal \eqref{eq:alphasym2} in place of the more computationally
demanding \eqref{eq:hpibeq}.

\begin{continueexample}{ex:L22}
  Let us determine the critical degrees for the partition
  $\lambda=(2,2,0,\ldots)$.
  The Maya diagram and its translates are shown in the figure below.
  The black-filled boxes belong to $\Mlam+q$, the empty boxes below to
  $\Jlam$; the red-filled boxes belong to $\Glam_q = (\Mlam+q)\cap \Jlam$.

  \begin{figure}[ht]
  \centering
  \begin{tikzpicture}[scale=0.5]
    
    \fill  (-4.5,8.5)
    \ncirc2 \ncirc1;

    \fill  (-4.5,7.5)
    \ncirc0 \ncirc1;
    \fill[red] (-4.5,7.5) \ncirc4\ncirc1;

    \fill  (-4.5,6.5)
    \ncirc0 \ncirc1\ncirc1\ncirc1 \ncirc3  \ncirc1;


    \fill  (-4.5,5.5)  \ncirc0 \ncirc1 \ncirc1\ncirc1 ;
    \fill[red]  (-4.5,5.5)   \ncirc4\ncirc1\ncirc3\ncirc1;



    \fill  (-4.5,4.5)    \ncirc0 \ncirc1 \ncirc1\ncirc1 \ncirc3\ncirc1;
    \fill[red]  (-4.5,4.5) \ncirc4\ncirc1\ncirc5\ncirc1;

    \fill  (-4.5,3.5)    \ncirc0 \ncirc1 \ncirc1\ncirc1 \ncirc3\ncirc1;
    \fill[red]  (-4.5,3.5) \ncirc4\ncirc1\ncirc3\ncirc3\ncirc1;

    \fill  (-4.5,2.5)    \ncirc0 \ncirc1 \ncirc1\ncirc1 \ncirc3\ncirc1;
    \fill[red]  (-4.5,2.5) \ncirc4\ncirc1\ncirc3\ncirc1\ncirc3\ncirc1;

    \path (10.5,9.5) node[anchor=west] {$\Mlam-6$};
    \path (10.5,8.5) node[anchor=west] {$\Mlam-4$};
    \path (10.5,7.5) node[anchor=west] {$\Mlam-2$};
    \path (10.5,6.5) node[anchor=west] {$\Mlam$};
    \path (10.5,5.5) node[anchor=west] {$\Mlam+2$};
    \path (10.5,4.5) node[anchor=west] {$\Mlam+4$};
    \path (10.5,3.5) node[anchor=west] {$\Mlam+5$};
    \path (10.5,2.5) node[anchor=west] {$\Mlam+6$};

    \foreach \y in {2.5,...,9.5} \fill (-6.5,\y) \ncirc0\ncirc1;

    \draw  (-7,2) grid +(16 ,8);
    \draw[line width=2pt] (-5,2) -- ++ (0,8);

  \foreach \x in {-8,...,7} \draw (\x+1.5,1.5)  node {$\x$};

\end{tikzpicture}
\caption{Translates of $\Mlam$ where $\lambda = (2,2,0,\ldots)$.}
\end{figure}

The critical degrees of $\cRlam$ are the shifts $q$ for which
$\Mlam\subset \Mlam+q$.  These are also the shifts for which
$\glam_q =\#\Glam_q=q$.  The above table indicates that the set of all
such shifts is $\cDlam = \{ 0,4,5,6,\ldots \}$.  These are also the
orders of the lowering operators for this partition.  Not all of these
are critical degrees of $\cRslam$.  Since $\taulam_y(x) = x^4+12y^2$
has simple zeros for $y\neq 0$, Proposition \ref{prop:tausimp} may be
applied to conclude that $\cDslam = \{5,6,7,\ldots \}$.  This can also
be established using a direct calculation using criterion
\eqref{eq:hpibeq}.

We now illustrate the relevant procedure by determining the recurrence
relation for $q=6$.  By \eqref{eq:pinatbeta}, the generic operator
that preserves $\Wlam$ and has shifts $-6,-4,\ldots, 6$ is given by
  \begin{align*}
    \pinat(m,y,z,\balpha^{(6)})
    &= \al_{-6,0} y^6 z^6 + (\al_{-4,0} + \al_{-4,1} m) y^5 z^4
      + \al_{-2,0} 
      (m+2)(m+1) y^4 z^2 \\
    &\qquad + (\al_{00} + \al_{01}m + \al_{02}m^2+
      \al_{03}m^3)y^3\\
    &\qquad + \al_{20} (m+2)(m+1)(m-2)(m-3) y^2z^{-2}\\
    &\qquad + (\al_{40}+m \al_{41}) (m+2)(m+1)(m-4)(m-5) y z^{-4}\\
    &\qquad + \al_{60} (m+2)(m+1)(m-2)(m-3)(m-6)(m-7) z^{-6}\\
  \end{align*}
  By \eqref{eq:hpibeq}, we will have $\pi\in \cSlam$ provided
  $\hpi_{ij}(\balpha^{(6)}) = 0$ for all $i>j$. In that case, by
  \eqref{eq:sigbeta},
  \begin{align*}
    \sigma(x,y;\balpha^{(6)})
    &= (-6x^2 y^2  -12y^3)\al_{20} +( x^4y+12 x^2 y^2-52 y^3)
      \al_{40}\\
    &\qquad +    (4 x^4y-48x^2 y^2-144   y^3)\al_{41} \\
    &\qquad + ( x^6+30 x^4 y-396 x^2 y^2-264   y^3)\al_{60} 
  \end{align*}
  will be the corresponding eigenvalue.

  Applying \eqref{eq:pipinatbeta} and \eqref{eq:hpibeta}, the linear
  system in question has the following matrix:
  \[ \begin{array}{c|ccccccc}
       \hpi_{ij}(\balpha^{(6)}) &\al_{01}& \al_{02}& \al_{03}&
      \al_{20}&\al_{41}&\al_{40} &\al_{60} \\\hline
       \hpi_{32}&  0 & 0 & 0 & 4 & 52 & 8 & 240 \\
       \hpi_{10}&   1 & 1 & 1 & 0 & 60 & 48 & -48 \\
       \hpi_{20}& 0 & 1 & 3 & 36 & 216 & 24 & 720 \\
       \hpi_{30}& 0 & 0 & 1 & 8 & 40 & 0 & 160 \\
       \hpi_{42}& 0 & 0 & 0 & 1 & 10 & 0 & 60 \\
       \hpi_{54}& 0 & 0 & 0 & 0 & 1 & 0 & 12
     \end{array}
   \]
   The symmetry relations \eqref{eq:alphasym2} give
   \[ \al_{-2,0}=4 \al_{2,0},\;\al_{-4,0}=16 (\beta _{4,0}+4
     \al_{4,1}),\;\al_{-4,1}=16 \al_{4,1},\;\al_{-6,0}=64 \al_{6,0} \]
   Setting $\al_{0,0}=0,\al_{6,0}=1$, solving the above relations,
   and using \eqref{eq:Thetlamqpinat} gives the following recurrence
   relation of order $12$:
   \begin{align*}
     (x^6+&36x^2y^2-192y^3)\Rlam_m\\
          &=R_{m+6}-6 (2 m+5) y R_{m+4}+60 (m+1) (m+2) y^2 R_{m+2}
            +(304 m-240 m^2-160 m^3) y^3     R_m\\
          &\quad +240 (m-3) (m-2) (m+1) (m+2) y^4 R_{m-2}\\
          &\quad -96 (m-5) (m-4)       (m+1) (m+2) (2 m-3) y^5 R_{m-4}\\
          &\quad +64 (m-7) 
            (m-6) (m-3) (m-2) (m+1) (m+2) y^6 R_{m-6}
   \end{align*}
   This corresponds to the eigenvalue equation
   \[ \pi_6(\partial_z,y,z)\Psilam(x,y,z) =
     (x^6+36x^2y^2-192y^3)\Psilam(x,y,z) ,\]
   where
   \begin{align*}
     \pi_6(\partial_z,y,z)
     &= \partial_z^6-12 y z \partial_z^5+
       \left(60 y^2 z^2-30 y-24z^{-2}\right) \partial^4_{z}\\
     &+\left(-160 y^3 z^3+240 y^2
       z+192 yz^{-1}+96z^{-3}\right) \partial^3_{z}+\\
     &\quad +\left(240 y^4 z^4-720 y^3 z^2-360
       y^2-288 yz^{-2}-108z^{-4}\right) \partial_z^2\\
     &\quad+\left(-192 y^5 z^5+960 y^4 z^3-96 y^3 z-288yz^{-3} -
       144z^{-5}\right) \partial_z\\ 
     &\quad + \left(64 y^6 z^6-480 y^5 z^4+480 y^4
       z^2+720 y^2z^{-2}+720 yz^{-4}+504z^{-6}\right)
   \end{align*}

   Let us consider the similar calculation for $q=4$. Generically,
   \begin{align*}
    \pinat(m,y,z,\balpha^{(4)})
    &= \al_{-4,0}  y^4 z^4
      + (\al_{00} + \al_{01}m + \al_{02}m^2)y^2\\
    &\quad + \al_{40} (m+2)(m+1)(m-4)(m-5) y z^{-4}
   \end{align*}
   The linear system $\hpi_{ik}(\balpha^{(4)})=0,\; i>k$ is a
   truncation of the $\balpha^{(6)}$ system shown above.  The
   corresponding matrix
  \[ \begin{array}{c|ccccccc}
       \hpi_{ij}(\balpha^{(4)}) &\al_{01}& \al_{02}&\al_{40}  \\\hline
       \hpi_{32}&  0 & 0 & 8 \\
       \hpi_{10}&   1 & 1  & 48 \\
       \hpi_{20}& 0 & 1 & 24 
     \end{array}
   \]
   has maximal rank, which means that $4\notin \cDslam$.  In other
   words, just as predicted by Proposition \ref{prop:tausimp}, there
   is no recurrence relation of order $8$.

 \end{continueexample}

\section{Conclusions and Remarks}

Both the wave functions in the adelic Grassmannian and the exceptional
Hermite polynomials exhibit bispectrality.  However, it was not
previously recognized that some of those wave functions were
generating functions for the exceptional Hermites.  That this
fundamental connection previously went unnoticed may be a consequence
of the fact that Wilson's bispectral wave functions were obtained by
setting all higher KP variables $t_i$ for $i>1$ to zero while this
correspondence holds only when $y=t_2$ is non-zero.

Stating the correspondence precisely required the use of new notation
and some technical lemmas.  It is also stated most naturally not in
terms of the exceptional Hermite \textit{polynomials} $\Hlam_n$ but
rather through their rational counterparts, $\Rlam_m$. Nevertheless,
the rewards are worth these efforts.  Many of the known properties of
the exceptional Hermites are easily rederived from the bispectrality
of these generating functions.  Moreover, utilizing this connection
also leads to new results and more effective algorithms for computing
the associated algebras of operators.

One of the key benefits to situating exceptional polynomials within
$\Grad$ is the realization that there are two relevant notions of
bispectrality: differential-differential and
differential-difference. A consequence of this remark is the existence
of a difference intertwiner that serves to give exceptional
polynomials as a canonical linear combination of their classical
counter-parts.  The other consequence, of course, is the
re-interpretation of exceptional recurrence relations in terms of the
commutative algebra of operators canonically associated to every point
in $\Grad$.

This paper considered only the wave functions associated to a
collection of points in $\Grad$ indexed by partitions and their flows
under the second flow of the KP hierarchy.  These wave functions are
precisely the generating functions for the exceptional Hermite
functions with a scaling parameter.  It is our intention to consider
in a future paper how this construction generalizes to other points in
the adelic Grassmannian and to their dependences on the higher KP time
variables.

\end{document}